\newcommand{\items}{\begin{itemize}[leftmargin=25pt,rightmargin=15pt]
  \setlength\itemsep{2pt}}
\newcommand{\stopitems}{\end{itemize}}
\newtheorem{thm}{Theorem} 
\newtheorem{cor}{Corollary}
\newtheorem{theorem}{Theorem}[section] 
\newtheorem*{theorem*}{Theorem}
\newtheorem{lemma}[theorem]{Lemma}
\newtheorem{conjecture}[theorem]{Conjecture}
\newtheorem*{conjecture*}{Conjecture}
\newtheorem*{question*}{Question}
\newtheorem*{lemma*}{Lemma}
\newtheorem{proposition}[theorem]{Proposition}
\newtheorem{corollary}[theorem]{Corollary}
\newtheorem*{corollary*}{Corollary}
\theoremstyle{definition}
\newtheorem{definition}[theorem]{Definition}
\newtheorem{remark}[theorem]{Remark}
\newtheorem{example}[theorem]{Example}
\newtheorem*{example*}{Example}
\newtheorem*{remark*}{Remark}
\newtheorem*{remarks*}{Remarks}
\newtheorem*{addenda*}{Addenda}
\newtheorem*{construction*}{Construction}
\renewcommand{\phi}{\varphi}
\newcommand{\unred}[1]{ \ignorespaces}  
\title[Counting $SL(2,\mathbb{C})$ connections on Seifert-fibered spaces]{Counting $SL(2,\mathbb{C})$ connections on Seifert-fibered spaces}
\author{Juan Muñoz-Echániz}
\address{Simons Center for Geometry and Physics, State University of New York, Stony Brook, 11794, USA}
\email{jmunozechaniz@scgp.stonybrook.edu}
\begin{document}
\maketitle
\setlength{\headheight}{12.0pt}

\begin{abstract}
We study the $SL(2, \mathbb{C})$ character variety of a Seifert-fibered homology $3$-sphere from the point of view of gauge theory. Namely, we introduce a class of perturbations of the $SL(2,\mathbb{C})$ Chern--Simons functional and prove a localisation result: the perturbed critical points either approach a compact subset of the $SL(2, \mathbb{C})$ character variety or else `escape to infinity'. Furthermore, the Euler characteristic and Poincaré polynomial of the stable locus of the character variety are obtained by suitably counting the localising critical points.

As an application, we obtain formulae for the Euler characteristic and Poincaré polynomial of the stable locus of the $SL(2, \mathbb{C})$ character variety of a Seifert-fibered homology $3$-sphere. In particular, we prove that the Euler characteristic equals the Milnor number (divided by four) of any weighted-homogeneous isolated complete intersection singularity whose link is the given $3$-manifold.

\end{abstract}

\section{Introduction}

In the recent years, new invariants of manifolds of dimension $3$ and $4$ have been introduced, motivated by $SL(2,\mathbb{C})$ generalisations of the anti-self-duality equation \cite{VW,KW,fivebranes,haydys}. Abouzaid--Manolescu \cite{abouzaid-manolescu,cote-manolescu} have defined a sheaf-theoretic model $HP^\ast (Y)$ for $SL(2, \mathbb{C})$ Floer cohomology of a closed oriented $3$-manifold $Y$ based on the theory of perverse sheaves and derived critical loci \cite{bussi,joyce,shifted}; and Tanaka--Thomas \cite{tanaka-thomas,tanaka-thomasii} have defined Vafa--Witten invariants of complex projective surfaces using virtual localisation. 
However, it remains an open problem to \textit{re-interpret these invariants in differential-geometric terms}. The main obstacle here is the non-compactness of the relevant gauge-theoretic moduli spaces, due to the phenomenon of energy concentration (`bubbling') along codimension $2$ singular sets studied by Taubes and other authors \cite{taubes-flat,taubes-kw,taubes-vw,HW,endshiggs,WZ}.

In this article, we consider the $3$-dimensional case. For a $3$-manifold $Y$ (always assumed closed, oriented and connected) its $SL(2, \mathbb{C})$ \textit{character scheme} (informally, the character `variety') is the affine scheme $\mathscr{M} = \mathscr{M}(Y)$ obtained from the scheme of representations $\rho : \pi_1 (Y) \to SL(2, \mathbb{C})$ as the affine GIT quotient by the conjugation $SL(2, \mathbb{C})$ action:
\[
\mathscr{M} = \mathrm{Hom}( \pi_1 (Y) , SL(2, \mathbb{C}) )\sslash SL(2, \mathbb{C} ).
\]
A representation $\rho$ is \textit{polystable} (or completely reducible) if every $\rho$--invariant line $L \subset \mathbb{C}^2$ admits a $\rho$--invariant complement, and is \textit{stable} (or irreducible) if it admits no $\rho$--invariant line at all. The (closed) points of $\mathscr{M}$ are in correspondence with the orbits of polystable representations, and we denote by $\mathscr{M}^\ast \subset \mathscr{M}$ the open subset of stable representations. 


From the point of view of gauge theory, $\mathscr{M}^\ast$ arises as the critical locus of the $SL(2, \mathbb{C})$ Chern--Simons functional on a suitable space of $SL(2, \mathbb{C})$ connections. 
In this article, we consider the case when $Y$ is a \textit{Seifert-fibered homology sphere} and interpret the Euler characteristic and Poincaré polynomials of $\mathscr{M}^\ast $ as a \textit{gauge-theoretic count of critical points} (Theorems \ref{theorem:localisationseifert}-\ref{theorem:localisationseifert_exact}). As an application, we derive formulae for these (Corollary \ref{corollary:poincare}) and prove that the Euler characteristic of $\mathscr{M}^\ast$ equals one-fourth the \textit{Milnor number} of any weighted-homogeneous isolated complete intersection singularity whose link is $Y$ (Corollary \ref{corollary:milnornumber}). 


\subsection{A perturbation of the Chern--Simons functional}

Let $Y$ be a $3$-manifold (always assumed closed, oriented and connected), equipped with a fixed $SL(2, \mathbb{C})$ bundle $E \to Y$ (necessarily trivial, since $\mathrm{dim}Y =3$). The $SL(2, \mathbb{C})$ \textit{Chern--Simons functional} is a natural function on the space of $SL(2, \mathbb{C})$ connections on $E$, whose value on a connection $\mathbb{A}$ is denoted $CS ( \mathbb{A} ) \in \mathbb{C}$ (cf. Definition \ref{definition:complexCS}). The critical points of $CS$ are the \textit{flat} connections: $F_{\mathbb{A}} = 0$. 
We would like to extract a numerical invariant of $Y$ by `counting' flat $SL(2, \mathbb{C})$ connections--much as in the $SU(2)$ case \cite{taubes-casson,floer,donaldson-floer}. However, the lack of a good compactness theory makes this more difficult. 
Instead, we propose to count (certain) critical points of a \textit{perturbation} of the $SL(2, \mathbb{C})$ Chern--Simons functional, which we now describe. We shall later use this perturbation to obtain well-defined counts of $SL(2, \mathbb{C})$ connections on Seifert-fibered homology spheres (Theorem \ref{theorem:localisationseifert} and Theorem \ref{theorem:localisationseifert_exact} below).


Our starting point is to fix a Riemannian metric $g$ on $Y$ and a reduction of the structure group of $E$ to $SU(2) \subset SL(2,\mathbb{C})$. An $SL(2, \mathbb{C})$ connection $\mathbb{A}$ is then uniquely expressed as $\mathbb{A} = A + i \Phi$ where $A$ is an $SU(2)$ connection and $\Phi$ is a $1$-form valued in the bundle $\mathfrak{su}(E)$ of skew-adjoint endomorphisms of $E$. Fixing an angle $\theta \in \mathbb{R}/2\pi\mathbb{Z}$ we can consider the function $S_\theta (\mathbb{A} )= \mathrm{Re}(e^{-i \theta} CS (\mathbb{A} )) \in \mathbb{R}$, whose critical points are also the flat connections.

\begin{definition}\label{perturbation:intro} For a parameter $\varepsilon \in \mathbb{R}$ we consider the \textit{perturbation} of $S_\theta$ whose value at an $SL(2, \mathbb{C})$ connection $\mathbb{A} = A + i \Phi$ is\\
\begin{align*}
S^{\varepsilon}_\theta  (\mathbb{A}) := S_\theta (\mathbb{A}) + \varepsilon \| \Phi \|_{L^{2}}^2 + \varepsilon^2  f ( \underline{\sigma}(\mathbb{A} ) ) 
\end{align*}
where:
\begin{enumerate}
\item $\| \Phi \|_{L^2}^2 = - \int_Y \mathrm{Tr} (\Phi \wedge \ast \Phi )$ is the \textit{$L^2$ norm squared} of $\Phi$. 
\item For some integer $N>0$, $f: \mathbb{C}^N \to \mathbb{R}$ is a smooth function and $\underline{\sigma}(\mathbb{A}) =  ( \sigma_1 (\mathbb{A} ) , \ldots , \sigma_N (\mathbb{A}))$ is a function where each component $\sigma_i ( \mathbb{A} )$ is an $SL(2, \mathbb{C})$ \textit{holonomy perturbation} (or `regularised' Wilson loop operator) associated with a smooth solid torus embedding $q_i : S^1 \times D^2 \hookrightarrow Y $ and a non-negative $2$-form $\mu_i$ on $D^2$ with integral one which vanishes near the boundary:
\[
\sigma_i ( \mathbb{A} ) = \int_{D^2} \mathrm{Tr} ( \mathrm{Hol}_{\mathbb{A}} (q(\cdot , z ) )) \cdot \mu_i (z)  .
\]
\end{enumerate}
\end{definition}

To properly make sense of this perturbation we now describe the appropriate space of connections on which $S_{\theta}^{\varepsilon}$ acts. In our context--with a fixed Riemannian metric and a reduction to $SU(2)$--we will say that $\mathbb{A}$ is \textit{polystable} (or completely reducible) if $d_{A}^\ast \Phi = 0$, where $d_{A}^\ast$ is the $L^2$--adjoint of the exterior covariant derivative, and that $\mathbb{A}$ is \textit{stable} (or irreducible) if in addition its stabilizer $\mathscr{G}_{\mathbb{A}}$ under the action of the $SU(2)$ gauge group $\mathscr{G}$ is finite (necessarily, it must then be $\{ \pm I \}$, the center of $SU(2)$). It will be convenient to work with connections in the Sobolev class $L^{2}_k$ and gauge transformations in $L^{2}_{k+1}$ (where $k \geq 2$ shall suffice for our purposes). The space $\mathscr{B}^\ast$ of stable $\mathscr{G}$--orbits of the action can then be regarded as an `infinite-dimensional' Kähler quotient (of the space of all $SL(2, \mathbb{C})$ connections by the $\mathscr{G}$--action) and is naturally a Hilbert Kähler manifold (cf. Corollary \ref{corollary:Bhilbert}):
\[
\mathscr{B}^\ast = \Big\{ \mathbb{A} = A + i \Phi \, | \, d_{A}^\ast \Phi = 0 \text{ and } \mathscr{G}_{\mathbb{A}} = \{ \pm I \}  \Big\}/\mathscr{G}.
\]

The function $S_{\theta}^\varepsilon$ has periods under the action of gauge-transformation (except when $\theta =  \pm \pi/2$ modulo $2 \pi$). Nevertheless, its derivative $d S_{\theta}^\varepsilon$ makes sense as a smooth $1$-form on $\mathscr{B}^\ast$. The critical locus of $S_\theta = S_{\theta}^0$ on $\mathscr{B}^\ast$ 
is the moduli space of $\mathscr{G}$--orbits of \textit{stable flat connections}
\begin{align*}
F_{A} - \Phi \wedge \Phi = 0 \quad , \quad d_{A}\Phi = 0 \quad , \quad d_{A}^\ast\Phi  = 0 
\end{align*}
which by a fundamental result of Donaldson and Corlette \cite{donaldson-harmonic,corlette-harmonic} is identified homeomorphically with the stable locus $\mathscr{M}^\ast$ of the character variety (with the analytic topology). Subsequently, we shall use the notation $\mathscr{M}^\ast$ to refer both to the critical locus of $S_\theta$ on $\mathscr{B}^\ast$ and to the stable locus of the $SL(2, \mathbb{C})$ the character variety of $Y$, and the notation $\mathscr{M}_{\theta}^{\ast , \varepsilon }$ for the critical locus of the perturbation $S_{\theta}^\varepsilon$.

When it comes to `counting', a crucial property is that the \textit{Hessian} of $S^{\varepsilon}_\theta$ at an orbit $[\mathbb{A}]$ defines a self-adjoint Fredholm operator on the ($L^2$ completion of the) tangent space to $\mathscr{B}^\ast$ at $[\mathbb{A}]$.
In addition, $\mathscr{D}_{[\mathbb{A}],\theta}^\varepsilon$ has a complete $L^2$--orthonormal system of eigenvectors
, and its eigenvalues form a discrete set which is unbounded both from above and below (see Theorem \ref{theorem:fredholm}).
We say $[\mathbb{A}] \in \mathscr{M}_{\theta}^{\ast, \varepsilon}$ is \textit{non-degenerate} if $\mathscr{D}_{[\mathbb{A}] , \theta}^\varepsilon$ is invertible; thus $[\mathbb{A}]$ is an \textit{isolated} point of $\mathscr{M}_{\theta}^{\ast , \varepsilon }$ by standard results in Fredholm theory, and we can try to count $[\mathbb{A}]$ with a sign defined using the spectral flow construction.

 The perturbed moduli space $\mathscr{M}_{\pm \pi/2}^{\ast , \varepsilon}$ has been studied by Taubes \cite{taubes-vw,taubes24a,taubes24b} (without the $\varepsilon^2$ perturbation in Definition \ref{perturbation:intro}, i.e. set $f = 0$). Its compactness theory follows the same principle as in the unperturbed case (`compactness modulo bubbling in codimension 2'). Nevertheless, it is reasonable to expect that the compactness theory of $\mathscr{M}_{\theta}^{\ast, \varepsilon }$ \textit{should simplify for small $\varepsilon \neq 0$}. The intuition here 
 comes from the fact that the perturbed moduli space should be closely-related to the `critical
  locus' of $\| \Phi \|_{L^2}^2 : \mathscr{M}^\ast \to \mathbb{R}$, and the latter is a \textit{proper} function \cite{taubes-flat}. For example, if $\mathscr{M}^\ast$ was non-singular and $\| \Phi \|_{L^2}^2$ was an algebraic function on $\mathscr{M}^\ast$, then its critical locus would be compact. (Some of this intuition will be made precise below).



\subsection{Localisation results for Seifert-fibered spaces}

We would like to understand the behavior of the moduli spaces $\mathscr{M}_{\theta}^{\ast , \varepsilon }$ for a small non-zero real parameter $\varepsilon$. 

In this article, we focus on the case when $Y$ is a \textit{Seifert-fibered} $3$-manifold: there is an orbifold principal $S^1$--bundle bundle $S^1 \curvearrowright Y  \xrightarrow{\pi} C$ over a closed oriented $2$-orbifold $C$.
By a \textit{Seifert} metric $g$ on the Seifert-fibered $3$-manifold $(Y, \pi )$ we shall mean a Riemannian metric on $Y$ obtained as follows: choose an orbifold Riemannian metric $g_C$ on $C$ and a connection $1$-form $i\eta \in \Omega^1 (Y, i \mathbb{R} )$ on the principal $S^1$--bundle $\pi$ whose curvature is a constant multiple of the Riemannian volume form $\omega_C$ of $(C, g_C )$ (i.e. $d\eta = \mathrm{const}\cdot  \pi^\ast \omega_C$); then $g = \eta \otimes \eta + \pi^\ast g_C .$


\begin{thm}\label{theorem:localisationseifert}
Let $(Y,\pi , g)$ be a Seifert-fibered homology $3$-sphere equipped with a Seifert metric $g$. Then there exists a compact subset $\mathscr{Z} = \mathscr{Z}(g) \subset \mathscr{M}^\ast$, a collection $q_1 , \ldots , q_N : S^1 \times D^2 \hookrightarrow Y$ of disjoint smooth embeddings, and a smooth function $f : \mathbb{C}^N \rightarrow \mathbb{R}$ such that the following holds. Given any open set $\mathscr{U}\subset \mathscr{B}^\ast$ and a constant $C > 0$ such that
\begin{itemize}
\item $\mathscr{Z} \subset \mathscr{U} \subset \{  \| \Phi \|_{L^\infty} < C\}$
\item the closure of $\mathscr{U}$ does not contain the trivial connection,
\end{itemize}
there exists a constant $\varepsilon_0 = \varepsilon_0 ( g , q_i , f, \mathscr{U} , C ) > 0$ such that for $0 < |\varepsilon | \leq \varepsilon_0$ and $\theta \in \mathbb{R}/2\pi \mathbb{Z}$ we have:
\begin{enumerate}
\item If $[\mathbb{A}] \in \mathscr{M}_{\theta}^{\ast ,\varepsilon}$ then either $[(A, \Phi )] \in \mathscr{U}$, or else $\| \Phi \|_{L^\infty} > C$. Furthermore, points in $\mathscr{M}_{\theta}^{\ast , \varepsilon} \cap \mathscr{U}$ are non-degenerate critical points of $S_{\theta}^\varepsilon$ and comprise a finite set.

\item The count of points of $\mathscr{M}_{\theta}^{\ast , \varepsilon}$ in $\mathscr{U}$ with signs given by the mod $2$ spectral flow $\mathbf{sf}_2 (\mathscr{D}_{[\mathbb{A}], \theta}^t ) \in \mathbb{Z}/2$ of the path of Hessians $\mathscr{D}_{[\mathbb{A}],\theta}^t$ with $t$ going from $t = \varepsilon$ to $t = 0$ recovers the Euler characteristic of the moduli space of stable flat $SL(2, \mathbb{C})$ connections on $Y$:
\[
\sum_{[\mathbb{A}] \in \mathscr{M}_{\theta }^{\ast , \varepsilon} \cap \mathscr{U}} (-1)^{\mathbf{sf}_{2}(\mathscr{D}_{[\mathbb{A}],\theta}^t )} = \chi ( \mathscr{M}^\ast ) .
\]
\end{enumerate}
\end{thm}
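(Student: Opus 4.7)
The plan is to identify $\mathscr{Z}$ with the critical locus of a natural moment map on $\mathscr{M}^\ast$ and to reduce the theorem to a Morse-theoretic count. For a Seifert metric, fiberwise rotation induces a Kähler $S^1$-action on $\mathscr{B}^\ast$ whose moment map is, up to a constant, $\mu := \|\Phi\|_{L^2}^2$. Via the non-abelian Hodge correspondence for orbifolds, $\mathscr{M}^\ast$ is biholomorphic to a moduli of stable $SL(2,\mathbb{C})$ Higgs bundles on the base $2$-orbifold $C$; Hitchin's theorem combined with the properness of $\mu$ (Taubes) then yields that $\mathrm{Crit}(\mu) \subset \mathscr{M}^\ast$, the set of complex variations of Hodge structure, is compact and the downward gradient flow of $\mu$ retracts $\mathscr{M}^\ast$ onto it. I take $\mathscr{Z} := \mathrm{Crit}(\mu)$, so that $\chi(\mathscr{M}^\ast) = \chi(\mathscr{Z})$. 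For the Morse-ification data, I choose holonomy loops $q_1, \ldots, q_N$ whose smooth Wilson-loop observables $\sigma_i$ separate tangent directions at every point of $\mathscr{Z}$ (density of regularised Wilson loops in the algebra of regular functions on $\mathscr{M}$, plus a generic choice of regularising disks), and a generic $f$ so that $f \circ \underline{\sigma}$ restricts to a Morse function on the Morse--Bott manifold $\mathscr{Z}$.

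For part (1), I would argue by contradiction. Suppose there is a sequence $[\mathbb{A}_n] \in \mathscr{M}_\theta^{\ast, \varepsilon_n}$ with $\varepsilon_n \to 0$, $[\mathbb{A}_n] \notin \mathscr{U}$, $\|\Phi_n\|_{L^\infty} \leq C$, and staying away from the trivial connection. The Euler--Lagrange equation for $S_\theta^{\varepsilon_n}$ reads, schematically,
\[
F_{\mathbb{A}_n} = \varepsilon_n \cdot P(A_n, \Phi_n) + \varepsilon_n^2 \cdot Q(\mathbb{A}_n),
\]
together with the Coulomb slice condition $d_{A_n}^\ast \Phi_n = 0$, where $P$ is first order in $\Phi_n$ and $Q$ is controlled by the $C^1$ norm of $f$ via the holonomy perturbations. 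Under the $L^\infty$ bound on $\Phi_n$, no bubbling occurs and standard elliptic bootstrapping gives uniform $L^2_k$ bounds on $\mathbb{A}_n$, so a subsequence converges smoothly (modulo gauge) to a non-trivial stable flat connection $[\mathbb{A}_\infty] \in \mathscr{M}^\ast$. Splitting the variational equation into tangential and normal components to $\mathscr{M}^\ast$ at $[\mathbb{A}_\infty]$, the first-order tangential part forces $d\mu|_{[\mathbb{A}_\infty]} = 0$, so $[\mathbb{A}_\infty] \in \mathscr{Z} \subset \mathscr{U}$, contradicting $[\mathbb{A}_n] \notin \mathscr{U}$. A Liapunov--Schmidt reduction based at each point of $\mathscr{Z}$ then puts the critical points of $S_\theta^\varepsilon$ in $\mathscr{U}$ in bijection with Morse critical points of $\mu + \varepsilon \, f \circ \underline{\sigma}$ on $\mathscr{M}^\ast$ near $\mathscr{Z}$, establishing non-degeneracy and finiteness.

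For part (2), I compare the mod-$2$ spectral flow of $\mathscr{D}_{[\mathbb{A}],\theta}^t$ with Morse-index parity. By the Fredholm theory of Theorem~\ref{theorem:fredholm}, at each critical point $[\mathbb{A}]$ the Hessian splits into a tangential piece (carrying the Morse information on $\mathscr{M}^\ast$) and a normal piece that is invertible and spectrum-separated from zero in a neighborhood of $\mathscr{Z}$ by the Kuranishi model at complex variations of Hodge structure, so only the tangential eigenvalues contribute to spectral flow between $t = \varepsilon$ and $t = 0$. This identifies $\mathbf{sf}_2(\mathscr{D}_{[\mathbb{A}],\theta}^t)$ with the Morse index parity of the corresponding critical point of $\mu + \varepsilon f \circ \underline{\sigma}$, and summing gives $\chi(\mathscr{Z}) = \chi(\mathscr{M}^\ast)$ by the standard Morse(--Bott) equality. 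The hardest step is the compactness argument in the middle paragraph: although the $L^\infty$ bound on $\Phi_n$ rules out bubbling, the perturbation $\varepsilon \|\Phi\|_{L^2}^2$ breaks the conformal symmetry of the flatness equations and one must combine Coulomb gauge, elliptic regularity, and the slice theorem on the Hilbert manifold $\mathscr{B}^\ast$ quantitatively in $\varepsilon$ to extract a stable flat limit and to verify that the limit is indeed a critical point of $\mu$.
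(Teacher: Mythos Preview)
Your overall strategy matches the paper's: take $\mathscr{Z}=\mathrm{Crit}(\|\Phi\|_{L^2}^2|_{\mathscr{M}^\ast})$, establish its compactness via Hitchin/Taubes, embed $\mathscr{Z}$ in $\mathbb{C}^N$ via holonomy functions, choose $f$ to Morse-ify, then argue by contradiction for the localisation and identify the signed count with $\chi(\mathscr{Z})=\chi(\mathscr{M}^\ast)$.

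There is, however, a genuine gap in your Liapunov--Schmidt step. You claim the perturbed critical points are in bijection with critical points of $\mu+\varepsilon\,f\circ\underline{\sigma}$ on $\mathscr{M}^\ast$, hence at leading order with $\mathrm{Crit}\big((f\circ\underline{\sigma})|_{\mathscr{Z}}\big)$. But the perturbation has \emph{two} scales, $\varepsilon\mu$ and $\varepsilon^2 f(\underline{\sigma})$, and the critical points of $S_\theta^\varepsilon$ move \emph{off} $\mathscr{M}^\ast$ in the normal direction by an amount of order $\varepsilon$ (namely $\lambda$ satisfying $\mathscr{D}_{[\mathbb{A}]}\lambda=-\mathrm{grad}\,\mu$). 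This normal displacement feeds back into the tangential equation at order $\varepsilon^2$, the same order as $f(\underline{\sigma})$. The correct reduced function on $\mathscr{Z}$ is therefore
\[
\mathfrak{f}=\tfrac{1}{2}(d\mu)(\lambda)+(f\circ\underline{\sigma})|_{\mathscr{Z}},
\]
with $\lambda$ a section of the affine bundle of Lagrange multipliers (the paper develops this in Lemmas~\ref{lemma:reductionstep}--\ref{lemma:leading} and Theorem~\ref{theorem:perturbation1}). Your genericity condition ``$(f\circ\underline{\sigma})|_{\mathscr{Z}}$ is Morse'' does not imply $\mathfrak{f}$ is Morse, so non-degeneracy in part~(1) is not established. (For part~(2) the distinction happens to be harmless, since any Morse function on $\mathscr{Z}$ yields $\chi(\mathscr{Z})$.)

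Two secondary gaps. First, you assume without argument that $\mathscr{M}^\ast\subset\mathscr{B}^\ast$ is Morse--Bott, i.e.\ that $\mathrm{Ker}\,\mathscr{D}_\theta=T\mathscr{M}^\ast$ pointwise. This is nontrivial and is precisely what the dimensional reduction and infinitesimal vanishing results of \S\ref{section:dimensionalreduction} (Proposition~\ref{prop:vanishing_inf}, Theorem~\ref{theorem:dimensionalreduction}) are for. Without it, your tangential/normal splitting argument and your Lyapunov--Schmidt reduction do not get off the ground. Second, the $S^1$-action is not ``fiberwise rotation on $\mathscr{B}^\ast$'': it is the Hitchin action $(B,\Psi)\mapsto(B,\cos\theta\,\Psi+\sin\theta\,\ast_C\Psi)$ on the Higgs side, Hamiltonian for $\omega_I$ (not $\omega_J$), transported to $\mathscr{M}^\ast(Y)$ via dimensional reduction; it does not extend as a K\"ahler action to $\mathscr{B}^\ast$.
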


Thus, as $\varepsilon \neq 0$ becomes small, the points on $\mathscr{M}_{\theta }^{\ast , \varepsilon}$ either tend to \textit{localise} around the compact set $\mathscr{Z}$ or else lie `far away' as witnessed by the norm $\| \Phi \|_{L^\infty}$. Perturbed solutions of the second kind are known to exist and have been recently studied by Taubes \cite{taubes24a,taubes24b} (without the $\varepsilon^2$ perturbation in Definition \ref{perturbation:intro}, i.e. set $f = 0$).

Theorem \ref{theorem:localisationseifert} should be compared with Taubes' Theorem \cite{taubes-casson} describing the $SU(2)$ \textit{Casson invariant} of a homology sphere as a count of critical points of a holonomy perturbation of the $SU(2)$ Chern--Simons functional. 
Taubes's Theorem uses instead the spectral flow to the trivial connection. In contrast, a special feature of our situation is that the mod $2$ spectral flow of the `small' path $\mathscr{D}_{[\mathbb{A}] , \theta}^t$ is well-defined: because at $t = \varepsilon$ the operator is invertible, and at $t = 0$ the operator has \textit{complex-linear} kernel since $S_\theta$ is the real part of the holomorphic functional $CS$.

\subsection*{The exact case} When $\theta =  \pm\pi/2 $ modulo $2\pi$ then $S_\theta = \pm \mathrm{Im}(CS)$ is an \textit{exact} functional: $S_\theta$ has no periods under the action of gauge transformations. Then $S_\theta$ defines an honest real-valued smooth function on $\mathscr{B}^\ast$ (as opposed to just a closed $1$-form), and admits a simpler expression:
\[
S_{ \pi/2} (A + i \Phi ) = (\mathrm{Im} CS)(A + i \Phi ) = \int_Y \mathrm{Tr} \Big( \Phi \wedge ( F_A - \frac{1}{3} \Phi \wedge \Phi  )\Big) .
\]
In the exact setting, the family of Hessians $\mathscr{D}_{\theta}^\varepsilon$ has vanishing spectral flow along loops in $\mathscr{B}^\ast$, and we can assign an \textit{integral} `Morse index' to critical points to obtain a refined count. Below, we use $\mathcal{P}_T (M)$ (resp. $\mathcal{P}^{c}_T(M)$) to denote the Poincaré polynomial (resp. compactly-supported Poincaré polynomial) of a given smooth manifold $M$. 

\begin{thm}\label{theorem:localisationseifert_exact}
Suppose that $\theta =  \pm\pi/2$ modulo $2\pi$. Let $(Y,\pi , g)$ be a Seifert-fibered homology $3$-sphere equipped with a Seifert metric $g$. Then there exists a compact subset $\mathscr{Z} = \mathscr{Z}(g) \subset \mathscr{M}^\ast$, a collection $q_1 , \ldots , q_N : S^1 \times D^2 \hookrightarrow Y$ of disjoint smooth embeddings, and a smooth function $f : \mathbb{C}^N \rightarrow \mathbb{R}$ such that the following holds.
Let $\mathscr{Z} = \cup_i \mathscr{Z}_i$ denote the decomposition into connected components. Given an open subset $\mathscr{U} \subset \mathscr{B}^\ast$ which is a disjoint union of open sets $\mathscr{U} = \sqcup_{i} \mathscr{U}_i$ and a constant $C>0$, such that for every $i$ we have that
\begin{itemize}
    \item $\mathscr{Z}_i \subset \mathscr{U}_i \subset \{  \| \Phi \|_{L^\infty}  < C \}  $
    \item the closure of $\mathscr{U}_i$ does not contain the trivial connection,
\end{itemize}
there exists a constant $\varepsilon_0 = \varepsilon_0 ( g , q_i , f, \mathscr{U} , C ) > 0$ such that for $0 < |\varepsilon | \leq \varepsilon_0$ we have:
\begin{enumerate}
\item If $[\mathbb{A}] \in \mathscr{M}_{\theta}^{\ast ,\varepsilon}$ then either $[(A, \Phi )] \in \mathscr{U}$, or else $\| \Phi \|_{L^\infty} > C$. Furthermore, points in $\mathscr{M}_{\theta}^{\ast , \varepsilon} \cap \mathscr{U}$ are non-degenerate critical points of $S_{\theta}^\varepsilon$ and comprise a finite set.

\item For each $[\mathbb{A}] \in \mathscr{M}_{\theta}^{\ast , \varepsilon} \cap \mathscr{U}_i$ there exists a path $[\mathbb{A}_t]$ with $t$ going from $t = \varepsilon$ to $t = 0$, such that $[\mathbb{A}_\varepsilon] = [\mathbb{A}]$, $[\mathbb{A}_0 ] \in \mathscr{Z}_i$, and $[\mathbb{A}_t]\in \mathscr{U}_i$ for all $t$; fix one such path for each $[\mathbb{A}] \in \mathscr{M}_{\theta}^{\ast , \varepsilon} \cap \mathscr{U}$. Let $\delta = \delta ( \varepsilon )>0$ be a constant small enough so that the Hessian $\mathscr{D}_{\theta}$ at every point of $\mathscr{Z}$ has spectral gap $> \delta$, and so does the perturbed Hessian $\mathscr{D}_{\theta}^\varepsilon$ at every point in $ \mathscr{M}_{\theta}^{\ast , \varepsilon} \cap \mathscr{U}$. For each $[\mathbb{A}]\in \mathscr{M}_{\theta}^{\ast , \varepsilon} \cap \mathscr{U}$ let $\mathbf{sf}(\mathscr{D}_{[\mathbb{A}_t], \theta}^t ) \in \mathbb{Z}$ denote the spectral flow of the path of operators $\mathscr{D}_{[\mathbb{A}_t]}^t + \delta$ with $t$ going from $t = \varepsilon$ to $t = 0$. 
Then 
\[
\sum_{[\mathbb{A}] \in \mathrm{Crit}(S_\varepsilon ) \cap \mathscr{U}} T^{ \mathbf{sf}(\mathscr{D}_{[\mathbb{A}_t] , \theta}^t + \delta )} =  \begin{cases}
       \mathcal{P}_T ( \mathscr{M}^\ast (Y) ) &\quad\text{if } \varepsilon >0 \\
       \mathcal{P}^{c}_T ( \mathscr{M}^\ast (Y) ) &\quad\text{if } \varepsilon < 0 .\\
     \end{cases}
\]
\end{enumerate}
\end{thm}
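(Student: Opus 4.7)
The plan is a Lyapunov--Schmidt (Kuranishi) reduction near $\mathscr{M}^\ast$, followed by Morse--Bott theory for the function $h := \| \Phi \|_{L^2}^2$ on $\mathscr{M}^\ast$. The exactness of $S_{\pm\pi/2}$ is essential: it makes $S_\theta^\varepsilon$ a genuine $\mathbb{R}$-valued function on $\mathscr{B}^\ast$ (rather than a closed $1$-form on a covering space) and renders the integer spectral flow meaningful. I would rely on structural input expected to be established earlier in the paper: for a Seifert metric $g$, the stable locus $\mathscr{M}^\ast$ is a smooth complex (hyperk\"ahler) manifold, $h$ is proper and non-negative on $\mathscr{M}^\ast$, and $h$ is Morse--Bott perfect on $\mathscr{M}^\ast$ (as in Hitchin's analysis of Higgs-bundle moduli, to which the Seifert case reduces).

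First, I would fix the perturbation data. Take $\mathscr{Z} \subset \mathscr{M}^\ast$ to be (a neighborhood of) the critical locus of $h$ and choose holonomy data $(q_i,f)$ so that $h + \varepsilon f$ restricted to $\mathscr{M}^\ast$ is Morse in a neighborhood of $\mathscr{Z}$ for all small $\varepsilon$; this is possible by the $C^\infty$-density of holonomy perturbations among smooth functions on the character variety. Then perform the Kuranishi reduction. Near any $[\mathbb{A}_0] \in \mathscr{M}^\ast$, the tangent space to $\mathscr{B}^\ast$ splits $L^2$-orthogonally as $T_{[\mathbb{A}_0]}\mathscr{M}^\ast \oplus V$, with $\mathscr{D}_\theta$ invertible on $V$ and enjoying a uniform spectral gap across the compact set $\mathscr{Z}$. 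The implicit function theorem applied to the $V$-component of $dS_\theta^\varepsilon = 0$ yields a smooth slice, and the reduced equation on $\mathscr{M}^\ast$ reads $d(\varepsilon h + \varepsilon^2 f)|_{\mathscr{M}^\ast} + O(\varepsilon^3) = 0$. For small $\varepsilon$ this is a higher-order perturbation of $d(h + \varepsilon f)|_{\mathscr{M}^\ast} = 0$, so critical points of $S_\theta^\varepsilon$ near $\mathscr{Z}_i$ correspond bijectively and non-degenerately to critical points of $h + \varepsilon f$ near $\mathscr{Z}_i$, hence are finite in number. Conversely, for any sequence $[\mathbb{A}^n] \in \mathscr{M}_\theta^{\ast,\varepsilon_n}\cap \mathscr{U}$ with $\varepsilon_n \to 0$, the standing $L^\infty$-bound on $\Phi$, gauge-fixing away from the trivial connection, and elliptic bootstrap yield $L^2_k$-compactness modulo gauge; the limit is a critical point of $h$ in $\mathscr{M}^\ast$, hence lies in $\mathscr{Z}$. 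This proves part (1) and produces the path $[\mathbb{A}_t] \subset \mathscr{U}_i$ required by part (2) by deforming along the Kuranishi slice to the limiting critical point.

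Finally, I would compare the spectral flow with the Morse index. Along the path $[\mathbb{A}_t]$, the Hessian $\mathscr{D}_\theta^t$ at $t=0$ has kernel exactly $T\mathscr{M}^\ast$, and the $\varepsilon$-perturbation opens these eigenvalues according to the Hessian of $h + \varepsilon f$ on $\mathscr{M}^\ast$. The spectral-gap condition on $\delta$ guarantees that the transverse spectrum contributes no crossings, so $\mathbf{sf}(\mathscr{D}^{t}_{[\mathbb{A}_t],\theta} + \delta)$ equals the Morse index of $h+\varepsilon f$ at the limit point when $\varepsilon > 0$, and equals $\dim_\mathbb{R}\mathscr{M}^\ast - \mathrm{ind}$ when $\varepsilon < 0$ (flipping the sign of $\varepsilon$ flips every small eigenvalue). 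The assumed Morse--Bott perfection of $h$ on $\mathscr{M}^\ast$ then gives $\sum T^{\mathrm{ind}} = \mathcal{P}_T(\mathscr{M}^\ast)$ for $\varepsilon > 0$; for $\varepsilon < 0$, the identity $\sum T^{\dim_\mathbb{R}\mathscr{M}^\ast - \mathrm{ind}} = T^{\dim_\mathbb{R}\mathscr{M}^\ast}\mathcal{P}_{T^{-1}}(\mathscr{M}^\ast) = \mathcal{P}^c_T(\mathscr{M}^\ast)$ follows from Poincar\'e duality on the oriented manifold $\mathscr{M}^\ast$. The principal technical obstacle I foresee is making the Kuranishi reduction uniform across all of $\mathscr{Z}$ and ruling out stray critical points that drift along non-compact directions of $\mathscr{M}^\ast$ while remaining in $\mathscr{U}$: here the properness of $h$ on $\mathscr{M}^\ast$ (not merely the $L^\infty$-bound on $\Phi$) is what pins the reduced critical points to $\mathscr{Z}$.
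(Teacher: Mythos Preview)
Your overall architecture matches the paper's: Kuranishi reduction from $\mathscr{B}^\ast$ to $\mathscr{M}^\ast$, then Morse--Bott analysis of $h = \|\Phi\|_{L^2}^2$ on $\mathscr{M}^\ast$, with properness of $h$ confining the reduced critical points to $\mathscr{Z}$. The compactness and spectral flow arguments are essentially those the paper uses.

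However, there is a genuine gap which causes your argument to yield only the \emph{Morse inequality} $\sum T^{\mathbf{sf}} \geq \mathcal{P}_T(\mathscr{M}^\ast)$, not the \emph{equality} the theorem claims. You choose $f$ generically so that the reduced function is Morse, and then invoke Morse--Bott perfection of $h$. But perfection of $h$ only says $\sum_i T^{\mathrm{ind}(h,\mathscr{Z}_i)}\,\mathcal{P}_T(\mathscr{Z}_i) = \mathcal{P}_T(\mathscr{M}^\ast)$; it does \emph{not} guarantee that an arbitrary Morse perturbation $h + \varepsilon f$ has Morse polynomial equal to $\mathcal{P}_T(\mathscr{M}^\ast)$. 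After the second-level reduction (from $\mathscr{M}^\ast$ to $\mathscr{Z}$, which you collapse into one step), the critical points on each $\mathscr{Z}_i$ are governed by a function $\mathfrak{f}$ on $\mathscr{Z}_i$---and $\mathfrak{f}$ must itself be a \emph{perfect} Morse function on each $\mathscr{Z}_i$ for the two equalities to concatenate. A generic choice of $f$ makes $\mathfrak{f}$ Morse but not perfect.

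The paper closes this gap by using the explicit description of $\mathscr{Z}$ for a Seifert-fibered homology sphere (genus zero base): each $\mathscr{Z}_i$ is either a component of the $SU(2)$ flat moduli space $\mathscr{N}^\ast(Y)$ or a complex projective space $\mathbb{C}P^e$. Both classes are known to admit perfect Morse functions (for $\mathscr{N}^\ast(Y)$ this is a nontrivial input, cf.\ Kirk--Klassen, Bauer--Okonek, Furuta--Steer). One then chooses $f$ \emph{specifically}---not generically---so that $\mathfrak{f}$ realises such a perfect Morse function on each $\mathscr{Z}_i$; this is possible because $\underline{\sigma}$ embeds $\mathscr{Z}$ in $\mathbb{C}^N$. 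Two smaller points: the reduced function on $\mathscr{M}^\ast$ is not literally $h + \varepsilon f$ (there is a Lagrange-multiplier correction, so the relevant function on $\mathscr{Z}$ is $\mathfrak{f} = \tfrac{1}{2}(d\|\Phi\|_{L^2}^2)(\lambda) + (f\circ\underline{\sigma})|_{\mathscr{Z}}$, though this does not obstruct the choice of $f$); and for $\varepsilon < 0$ the Poincar\'e duality argument needs care since the components of $\mathscr{M}^\ast$ have varying dimensions---the paper instead uses directly that $-h$ is also perfect (Frankel) and proper from above, giving $\mathcal{P}^c_T$.
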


The sheaf-theoretic $SL(2, \mathbb{C})$ Floer cohomology $HP^\ast (Y)$ of Abouzaid--Manolescu \cite{abouzaid-manolescu} in an invariant of a $3$-manifold taking the form of a finitely-generated abelian group. The \textit{sheaf-theoretic} $SL(2, \mathbb{C})$ \textit{Casson invariant} is defined as its Euler characteristic 
\[
\lambda^P ( Y ) = \sum_{n \in \mathbb{Z}} (-1)^n \mathrm{rank}_{\mathbb{Z}}HP^n (Y) \in \mathbb{Z}.
\]
For a Seifert-fibered homology sphere $Y$ one has $\lambda^P (Y) = \chi ( \mathscr{M}^\ast (Y) )$ \cite[Formula 35]{abouzaid-manolescu}, and thus Theorem \ref{theorem:localisationseifert} provides a gauge-theoretic interpretation of $\lambda^P (Y)$ in this case. By a variant of Theorem \ref{theorem:localisationseifert_exact}, we can also interpret the Poincaré polynomial of $HP^n (Y)$ using gauge theory (see Corollary \ref{corollary:sheaf}).

The set $\mathscr{Z}$ appearing in Theorems \ref{theorem:localisationseifert}-\ref{theorem:localisationseifert_exact} has the following geometric significance. For a Seifert-fibered $3$-manifold, the moduli space $\mathscr{M}^\ast (Y) = \mathrm{Crit}(S_\theta )$ is a smooth manifold: more precisely, it is Morse--Bott non-degenerate (Theorem \ref{theorem:dimensionalreduction}). Then $\mathscr{Z}$ is given by the critical locus of the restricted function $\| \Phi \|_{L^2}^2: \mathscr{M}^\ast (Y) \to \mathbb{R}$, and it is also Morse--Bott non-degenerate. A crucial fact that enables the above localisation phenomenon is that $\mathscr{Z}$ is compact (Proposition \ref{proposition:Zmorsebott}). This is verified by a direct calculation, based on an argument due to Hitchin \cite[Proposition 7.1]{hitchin}. 

In order to achieve non-degeneracy in Theorem \ref{theorem:localisationseifert}(1) we will arrange our choices as follows. First, we show that the $q_i$ and $\mu_i$ can be chosen so that the function $\underline{\sigma} : \mathscr{B}^\ast \to \mathbb{C}^N$ induces a smooth \textit{embedding} of $\mathscr{Z}$ into $\mathbb{C}^N$ (Proposition \ref{proposition:embedding}). The function $f : \mathbb{C}^N \to \mathbb{R}$ is then chosen so as to restrict `generically' onto $\mathscr{Z} \subset \mathbb{C}^N$ (see Theorem \ref{theorem:perturbation2general}). In Theorem \ref{theorem:localisationseifert_exact}(2) we must choose perturbations with additional care in order to achieve equality (in general, only the inequality $\geq$ holds). For this, we shall make use of the fact that $\| \Phi \|_{L^2}^2: \mathscr{M}^\ast \to \mathbb{R}$ is a \textit{perfect} Morse--Bott function and that $\mathscr{Z}$ admits a \textit{perfect} Morse function.



\subsection{Dimensional reduction results}

Our next result describes $\mathscr{M}^\ast (Y)$ for a Seifert-fibered $3$-manifold $Y \to C$ in terms of algebro-geometric data on the base $2$-orbifold $C$.

\begin{thm}\label{theorem:reductionHiggs}
Let $(Y,\pi , g)$ be a Seifert-fibered $3$-manifold equipped with a Seifert metric $g = \eta\otimes \eta + \pi^\ast g_C$ (in particular, the $2$-orbifold $C$ and the orbifold line bundle $N := Y\times_{S^1} \mathbb{C}$ are equipped with holomorphic structures induced by $g_C$ and $i\eta$, respectively). Then the moduli space of stable flat $SL(2, \mathbb{C})$ connections $\mathscr{M}^\ast (Y)$ 
is a smooth manifold, and there is a natural diffeomorphism
\[
\mathscr{M}^\ast (Y ) \cong \Big( \bigsqcup_{E_0 \, , \,  \mathrm{det}E_0 = \mathscr{O}} \mathscr{M}_{E_0}^{\ast , \mathrm{Higgs}}(C) \Big) \sqcup \Big( \bigsqcup_{E_0 \, , \,  \mathrm{det}E_0 = N} \mathscr{M}_{E_0}^{\ast , \mathrm{Higgs}}(C) \Big)\]
where $\mathscr{M}_{E_0}^{\ast , \mathrm{Higgs}} (C)$ denotes the moduli space of stable Higgs bundles on the orbifold Riemann surface $C$ in the (topological) rank two orbifold vector bundle $E_0\to C$ with fixed determinant (given by either the trivial line bundle $\mathscr{O}$ or by $N$).
\end{thm}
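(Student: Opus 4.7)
The plan is to exhibit the diffeomorphism via $S^1$-equivariant dimensional reduction along the Seifert fibers, followed by non-abelian Hodge theory on the base $2$-orbifold $C$. Let $\gamma \in \pi_1(Y)$ denote the class of a generic fiber; this element is central in $\pi_1(Y)$, so for any stable (irreducible) representation $\rho : \pi_1(Y) \to SL(2,\mathbb{C})$, Schur's lemma forces $\rho(\gamma) \in Z(SL(2,\mathbb{C})) = \{\pm I\}$. This dichotomy produces a natural decomposition $\mathscr{M}^\ast(Y) = \mathscr{M}^\ast_+ \sqcup \mathscr{M}^\ast_-$, which will ultimately be identified with the two halves of the stated disjoint union: $\rho(\gamma) = +I$ will correspond to $\det E_0 = \mathscr{O}$ and $\rho(\gamma) = -I$ to $\det E_0 = N$.

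Gauge-theoretically, I would first show that every stable flat $SL(2,\mathbb{C})$ connection $\mathbb{A} = A + i\Phi$ on $Y$ is gauge-equivalent to one that is $S^1$-invariant with respect to a chosen lift of the Seifert $S^1$-action to the $SU(2)$-bundle $E$ (the two inequivalent lifts encoding the sign of $\rho(\gamma)$). Using the Seifert metric $g = \eta \otimes \eta + \pi^\ast g_C$ with $d\eta = c \cdot \pi^\ast \omega_C$, split $A$ and $\Phi$ into horizontal and vertical parts relative to the Seifert fibration. Imposing $S^1$-invariance kills all non-zero Fourier modes along the fibers, and a direct calculation converts the flat/Coulomb-gauge system $F_A - \Phi\wedge\Phi = 0$, $d_A\Phi = 0$, $d_A^\ast\Phi = 0$ on $Y$ into Hitchin's self-duality equations
\begin{align*}
F_{\mathbf{a}} + [\varphi, \varphi^\ast] = 0, \qquad \bar\partial_{\mathbf{a}} \varphi = 0
\end{align*}
on $C$, for a unitary connection $\mathbf{a}$ on a rank-$2$ orbifold bundle $E_0 \to C$ and a Higgs field $\varphi \in \Omega^{1,0}(C, \mathrm{End}(E_0))$ assembled from the vertical components of $(A, \Phi)$.

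The orbifold versions of the Hitchin--Kobayashi and non-abelian Hodge correspondences (Simpson; Nasatyr--Steer; Biquard) then identify the moduli space of irreducible solutions modulo unitary gauge with the moduli $\mathscr{M}^{\ast, \mathrm{Higgs}}_{E_0}(C)$ of stable orbifold Higgs bundles on $C$. The $SL(2, \mathbb{C})$ constraint requires $\pi^\ast(\det E_0)$ to be the equivariantly trivial bundle on $Y$; since $\pi^\ast N \cong \mathscr{O}_Y$ as ordinary bundles but with the non-trivial $S^1$-equivariant structure, one obtains exactly the two families $\det E_0 \in \{\mathscr{O}, N\}$, matching the $\rho(\gamma) = \pm I$ dichotomy of the first paragraph. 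Summing over all topological types of $E_0$ with the two admissible determinants yields the stated disjoint union.

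The main obstacle is making the $S^1$-equivariant descent rigorous at the orbifold points of $C$ (i.e.\ the singular fibers of $\pi$), where the $S^1$-action on $Y$ has finite cyclic isotropy $\mathbb{Z}_m$. This forces the use of orbifold vector bundles on $C$ and a careful analysis in the local model $(D^2 \times S^1)/\mathbb{Z}_m$ around each exceptional fiber to correctly recover the isotropy data of $E_0$ at each orbifold point. A secondary task is promoting the set-theoretic bijection into a diffeomorphism; this is accomplished by comparing Kuranishi models and observing that the Fourier decomposition along the fibers identifies the $3$-dimensional deformation complex of flat $SL(2,\mathbb{C})$ connections at an irreducible solution with the $2$-dimensional Hitchin deformation complex on $C$. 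Smoothness of $\mathscr{M}^\ast(Y)$ as a manifold then follows from the known smoothness of the stable Higgs bundle moduli in the orbifold setting.
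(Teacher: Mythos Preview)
Your overall architecture matches the paper's: the Schur-lemma dichotomy $\rho(\gamma) = \pm I$ producing the two determinant classes, dimensional reduction of the 3D equations to Hitchin's equations on $C$, the orbifold non-abelian Hodge correspondence, and a comparison of deformation complexes for smoothness. But there is a real gap at the step you phrase as ``show that every stable flat $SL(2,\mathbb{C})$ connection is gauge-equivalent to one that is $S^1$-invariant''---you give no mechanism, and this is the entire analytic content of the result. Note a subtlety your Schur-lemma paragraph hides: centrality of $\rho(\gamma)$ says the \emph{complex} connection $\mathbb{A}=A+i\Phi$ has fiber holonomy $\pm I$, but to descend the pair $(A,\Phi)$ you need the \emph{unitary} connection $A$ to have central fiber holonomy, and these coincide only once the vertical component $\alpha$ of $\Phi = \eta\cdot\alpha + \Psi$ is known to vanish. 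That vanishing is not a gauge choice; it is an analytic consequence of the equations $F_{\mathbb{A}}=0$ and $d_A^\ast\Phi=0$ together with the Seifert form of the metric.

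The paper's mechanism is a Bochner--Weitzenb\"ock identity (Proposition~\ref{proposition:bochner}) for $\langle (d_A d_A^\ast + d_A^\ast d_A)\Phi,\Phi\rangle$ adapted to the vertical/horizontal splitting: after inserting the equations and integrating by parts, the right-hand side becomes a sum of non-negative $L^2$ norms, forcing $\alpha=0$, $\nabla^A_\zeta\Psi=0$, $(d_A^H)^\ast\Psi=0$, $d_A^H\Psi=0$, etc.\ (Corollary~\ref{corollary:vanishing}). Only then does the descent lemma (Lemma~\ref{lemma:pullback}) apply to push $(A,\Phi)$ down to a solution of Hitchin's equations on $C$. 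The same Bochner machinery applied to the linearized equations (Proposition~\ref{prop:vanishing_inf}) identifies the 3D Zariski tangent space with the 2D one and gives the Morse--Bott property---this is what your ``Fourier decomposition of the deformation complex'' is gesturing at, but the actual mechanism is integration by parts, not mode-by-mode analysis. Two minor corrections: the Higgs field arises from the \emph{horizontal} part $\Psi$ of $\Phi$ (via $\theta=-i\Psi^{1,0}$), not from vertical components; and the orbifold points are not where the difficulty lies---working with orbifold bundles throughout, the Bochner argument is insensitive to them.
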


For a more precise version of this result see Theorem \ref{theorem:dimensionalreduction}. The analogue of Theorem \ref{theorem:reductionHiggs} for the moduli space of irreducible $SU(2)$ flat connections on $Y$ was proved by Furuta--Steer \cite[Theorem 4.1]{furutasteer}. The moduli spaces $\mathscr{M}_{E_0}^{\ast , \mathrm{Higgs}}(C)$ have been studied in \cite{hitchin,nasatyr-steer,boden-yokogawa,BGM}. 
(In the literature, the more common term for a Higgs bundle over an orbifold Riemann surface $C$ is that of a \textit{parabolic} Higgs bundle). When $Y$ is an integral homology sphere, the moduli spaces $\mathscr{M}_{E_0}^{\ast , \mathrm{Higgs}}(C)$ appearing in Theorem \ref{theorem:reductionHiggs} are connected, simply-connected, complete hyperKähler manifolds (of varying dimensions). 



\subsection*{Formulae for Poincaré polynomial and Euler characteristic}

For a Seifert-fibered homology sphere $Y$, the description of the moduli space $\mathscr{M}^\ast (Y)$ provided by Theorem \ref{theorem:reductionHiggs} plays a key role in establishing that $\| \Phi \|_{L^2}^2 : \mathscr{M}^\ast (Y) \to \mathbb{R}$ is a perfect Morse--Bott function  with compact critical locus $\mathscr{Z}$, and in the explicit description of $\mathscr{Z}$ (see Proposition \ref{proposition:Zmorsebott}).
In turn, this yields a relationship between the Poincaré polynomials of $\mathscr{M}^\ast (Y)$ and $\mathscr{Z}$. This is most simply expressed at the level of Euler characteristics: $\chi ( \mathscr{M}^\ast (Y) ) = \chi (\mathscr{Z} )$, because the Morse--Bott indices happen to be even integers. Thus, we can use this to derive new formulae for the Euler characteristic and Poincaré polynomial of $\mathscr{M}^\ast (Y)$. 


To describe these formulae, fix a Seifert-fibration $Y \to C$ where $C$ is a $2$-sphere with $n$ orbifold points with isotropy orders $\alpha_1 , \ldots , \alpha_n$ (where $\alpha_i \in \mathbb{Z}_{\geq 2}$ for each $i$). We introduce the following notation: given an integer vector $\underline{e} = ( e , \beta_1 , \ldots , \beta_n )$ with $0 \leq \beta_i < \alpha_i$ define its \textit{orbifold degree} by
\[
\mathrm{deg}(\underline{e}) = e + \sum_{i =1}^n \frac{\beta_i}{\alpha_i} \in \mathbb{Q} .
\]
In particular, the \textit{orbifold Euler characteristic} $\chi(C) \in \mathbb{Q}$ is given by minus the orbifold degree of the `canonical' vector $K = ( -2 , \alpha_1 - 1 , \ldots , \alpha_n -1 )$, i.e. $\chi (C) = - \mathrm{deg}(K)$.

\begin{cor}\label{corollary:poincare} Let $Y$ be a Seifert-fibered integral homology $3$-sphere over the orbifold $C = S^2 ( \alpha_1 , \ldots , \alpha_n )$. Then:
\begin{enumerate}
\item The Poincaré polynomials of the moduli space of stable flat $SL(2, \mathbb{C})$ connections $\mathscr{M}^\ast (Y)$ and the moduli space of irreducible $SU(2)$ flat connections $\mathscr{N}^\ast (Y)$ are related by the following formula (where $\{ x\} \in [0,1)$ denotes the fractional part of a non-negative number $x$):
\[
\mathcal{P}_T ( \mathscr{M}^\ast (Y) ) = \mathcal{P}_T (\mathscr{N}^\ast (Y) )  + \sum_{\underline{e}} (T^2)^{ -\chi(C) - \mathrm{deg} (\underline{e}) -1  +\sum_{i=1}^n   \big\{ \frac{\beta_i +1}{\alpha_i }\big\}}  \cdot \mathcal{P}_{T} ( \mathbb{C}P^e ) . 
\]
where $\underline{e} = ( e , \beta_1 , \ldots , \beta_n )$ ranges over all vectors of non-negative integers such that 
\[
\beta_i < \alpha_i \quad \text{and} \quad \mathrm{deg}(\underline{e}) < -\chi (C) .
\]
\item In particular (noting that the exponents in $T^2$ are non-negative integers), the Euler characteristics are related by:
\[
\chi ( \mathscr{M}^\ast (Y) ) = \chi (\mathscr{N}^\ast (Y) )  + \sum_{\underline{e}}  \chi( \mathbb{C}P^e ) \]
\end{enumerate}
\end{cor}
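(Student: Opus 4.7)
The strategy is to run Morse--Bott theory on $\mathscr{M}^\ast (Y)$ using the function $f = \| \Phi \|_{L^2}^2$, whose critical locus $\mathscr{Z}$ is compact and Morse--Bott non-degenerate by \propref{Zmorsebott}. By the dimensional reduction \thmref{reductionHiggs}, each component of $\mathscr{M}^\ast (Y)$ is a moduli space of stable rank-two parabolic Higgs bundles on the orbifold $C$ with fixed determinant, and on each such component the function $\| \Phi \|_{L^2}^2$ coincides (up to a constant factor) with Hitchin's moment-map function for the natural $S^1$-action $[\Phi] \mapsto [e^{i\theta}\Phi]$. Classical Hitchin--Simpson theory then identifies the components of $\mathscr{Z}$ with:
\begin{enumerate}
\item the zero-locus $\{ \Phi = 0\}$, which under the Kobayashi--Hitchin/Corlette correspondence is precisely $\mathscr{N}^\ast (Y)$, and
\item for each choice of a rank-one parabolic holomorphic sub-line bundle $L \subset E$ with orbifold-degree data $\underline{e} = (e , \beta_1 , \ldots , \beta_n )$, a critical manifold corresponding to Higgs pairs $( E , \Phi )$ with $\Phi$ nilpotent and non-zero, taking $L$ to $L^{-1}\otimes \det E$ tensored with the parabolic canonical $K_C^{\mathrm{par}}$.
\end{enumerate}

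First I would quote (or reprove) that these non-minimal critical manifolds are precisely the projective spaces $\mathbb{C}P^e$ arising as projectivisations of spaces of parabolic sections of $L^{-2}\otimes \det E \otimes K_C^{\mathrm{par}}$; a parabolic-Riemann--Roch calculation (for instance along the lines of Nasatyr--Steer or Boden--Yokogawa) shows that for a homology sphere, where fixed-determinant rigidity holds, the dimension of this space is exactly $e + 1$, and the inequality $\mathrm{deg}(\underline{e}) < - \chi (C)$ is precisely the non-vanishing condition for this space of sections. The orbifold-degree shift $\sum_i \{ (\beta_i + 1 )/\alpha_i \}$ arises from the parabolic-structure contribution to Riemann--Roch on $C$.

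Next I compute the Morse--Bott index at each of these $\mathbb{C}P^e$. The weight-space decomposition of the Hessian under the $S^1$-action gives the downward directions as $H^1 ( C , \mathrm{End}_0 (E)^{+}\otimes K_C^{\mathrm{par}})$ (parabolic extensions), whose complex dimension, again by parabolic Riemann--Roch on $C$, is exactly
\[
- \chi ( C) - \mathrm{deg} ( \underline{e}) - 1 + \sum_{i=1}^n \Big\{ \frac{\beta_i + 1}{\alpha_i}\Big\},
\]
matching the exponent in the stated formula. Since all Morse--Bott indices are even, the Morse--Bott function is automatically perfect, so its Poincaré polynomial identity reads
\[
\mathcal{P}_T ( \mathscr{M}^\ast (Y) ) = \mathcal{P}_T ( \mathscr{N}^\ast (Y) ) + \sum_{\underline{e}} T^{2 \mu ( \underline{e})} \mathcal{P}_T ( \mathbb{C}P^e ),
\]
with $\mu(\underline{e})$ the index above, giving part (1). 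Setting $T = -1$ and using $\chi ( \mathbb{C}P^e ) = e + 1$ gives part (2), noting that the contributions from $\mathscr{N}^\ast (Y)$ and the $\mathbb{C}P^e$ strata are disjoint and all the Morse indices being even makes all signs positive.

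The hard part is the bookkeeping with orbifold parabolic weights: making sure that the range condition on $\underline{e}$, the degree shift $\sum \{ (\beta_i +1)/\alpha_i \}$, and the splitting into the two determinant sectors $\det E_0 = \mathscr{O}$ and $\det E_0 = N$ from \thmref{reductionHiggs} are all consistent and together give exactly the sum over $\underline{e}$ with no overcounting. The cleanest way to handle this is to treat both sectors uniformly by allowing $\underline{e}$ to range over all vectors of the given form, absorbing the two-sector decomposition into the parity of $2\mathrm{deg}(\underline{e})$ modulo $1$, and verifying that the parabolic-Riemann--Roch indices agree with the exponent stated in the corollary.
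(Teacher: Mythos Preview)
Your approach is essentially the paper's: use the perfect Morse--Bott function $\|\Phi\|_{L^2}^2$ on $\mathscr{M}^\ast(Y)$, identify the critical manifolds via the Higgs-bundle description, and compute their indices. Two points deserve comment.

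First, your perfection argument (``all Morse--Bott indices are even, so the function is automatically perfect'') is incomplete as stated: the lacunary principle also requires that each critical manifold has cohomology concentrated in even degrees. This holds for $\mathbb{C}P^e$, and for $\mathscr{N}^\ast(Y)$ it is a non-trivial fact about Seifert homology spheres (Kirk--Klassen, Bauer--Okonek, Furuta--Steer). The paper sidesteps this by invoking Frankel's theorem: $\|\Phi\|_{L^2}^2$ is the moment map for a Hamiltonian $S^1$-action on the K\"ahler manifold $(\mathscr{M}^\ast, I, \omega_I)$, which forces perfection directly.

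Second, the ``hard part'' bookkeeping you flag---merging the two determinant sectors $\det E_0 = \mathscr{O}$ and $\det E_0 = N$ into a single sum over $\underline{e}$---is handled in the paper by an intermediate consolidation step (Corollary~\ref{corollary:ZZHS3}): one checks case-by-case (according to the parity of the integer $d$ with $K \cong N^{-d}$) that as $L_0$ ranges over the two sectors, the line bundles $L_0^{-2}K$ and $L_0^{-2}NK$ together hit each orbifold line bundle of degree in $[0,\deg K)$ exactly once, giving $\mathscr{Z}\cap\{\Phi\neq 0\} \cong \mathcal{D}_{<\deg K}(C)$. The Morse index is then expressed as $2h^0(C, L_{\underline{e}}^{-1}K^2)$, and the paper reduces this to the stated closed form via Serre duality and orbifold Riemann--Roch, computing $\deg|L_{\underline{e}}K^{-1}|$ from the isotropy data. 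Your $H^1(\mathrm{End}_0(E)^+\otimes K_C^{\mathrm{par}})$ is Serre-dual to this, so the computations are equivalent, but the paper's route avoids having to unwind the weight-space decomposition explicitly.
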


For a Seifert-fibered integral homology $3$-sphere, the Poincaré polynomial of the moduli space of irreducible $SU(2)$ flat connections $\mathscr{N}^\ast (Y)$ on a Seifert-fibered homology $3$-sphere was calculated by Bauer \cite{bauer} and Furuta--Steer \cite{furutasteer}. 
Orienting $Y$ as the boundary of an isolated surface singularity then Fintushel--Stern proved that the $SU(2)$ \textit{Casson invariant} $\lambda(Y)$ (with its standard normalization: $\lambda (\Sigma(2,3,5) ) = -1$) is given by $ - (1/2) \chi ( \mathscr{N}^\ast (Y) )$. 
Several different formulas for $\lambda(Y)$ of are given in \cite{fintushel-stern,neumann-wahl,fukuhara,kirk-klassen,saveliev-book}. 

We also note that using the results in \cite{boden-yokogawa,boden-curtis} one can obtain other formulae for $\chi ( \mathscr{M}^\ast (Y) )$ which are different from the ones that we obtain in Corollary \ref{corollary:poincare}.

\subsection{The Milnor number and $SL(2, \mathbb{C})$ representations}\label{subsection:milnor}

The formula from Corollary \ref{corollary:poincare}(2) can be used to give a singularity-theoretic interpretation of the Euler characteristic $\chi (\mathscr{M}^\ast (Y))$ of the stable locus of the $SL(2, \mathbb{C})$ character variety of a Seifert-fibered integral homology $3$-sphere. 

Let $(X, 0 ) \subset \mathbb{C}^{2+k}$ be an \textit{isolated complete intersection singularity} (ICIS) of complex dimension $2$, where $0 \in X$ is the singular point. The singularity $(X,0)$ is given by the vanishing locus of a polynomial map $f = ( f_1 , \ldots , f_k ): (\mathbb{C}^{2+k}, 0 ) \to ( \mathbb{C}^k , 0 )$. The \textit{Milnor fiber} of $(X, 0 )$ \cite{milnor,hamm,le} is the compact $4$-manifold-with-boundary given as
\[
M = f^{-1}(z) \cap B_\varepsilon (\mathbb{C}^{2+k}, 0 )
\]
where $0 < \varepsilon \ll 1$, and $z$ is a `generic' point in $B_\delta (\mathbb{C}^k , 0 )$ with $0 < \delta \ll\varepsilon  $. The boundary of $M$ is the \textit{link} of the singularity $(X, 0 )$. A fundamental analytic invariant of the singularity $(X, 0 )$ is its \textit{Milnor number} $\mu (X, 0 )$, which is defined as the second Betti number of the Milnor fiber: $\mu (X, 0 ) = b_2 (M)$. 

An important class of singularities are the \textit{weighted-homogeneous} ICIS, where each $f_i$ is a weighted-homogeneous polynomial \cite{milnor} (for example, these include the Brieskorn--Pham complete intersections). For such singularities, it is easily seen that the link $Y$ is a Seifert-fibered $3$-manifold \cite{neumann-raymond}. By our next result, the analytic invariant $\mu (X, 0 )$ can be interpreted as a `count' of $SL(2, \mathbb{C} )$ representations of $\pi_1 (Y)$:

\begin{cor}\label{corollary:milnornumber} Let $(X,0) $ be a $2$-dimensional weighted-homogeneous ICIS, whose link $Y$ is an integral homology $3$-sphere. Then the Euler characteristic of the stable locus $\mathscr{M}^\ast (Y)$ in the $SL(2, \mathbb{C})$ character variety of $Y$ is one-fourth the Milnor number of $(X, 0 )$:
\[\chi (\mathscr{M}^\ast (Y)) = \frac{\mu (X, 0 )}{4}.\]
\end{cor}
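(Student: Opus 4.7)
The plan is to combine the combinatorial formula of Corollary~\ref{corollary:poincare}(2) with two classical inputs: the Fintushel--Stern identity $\chi(\mathscr{N}^\ast(Y)) = -2\lambda(Y)$ (where $\lambda$ is the $SU(2)$ Casson invariant), and the Casson Invariant Conjecture of Neumann--Wahl asserting $8\lambda(Y) = \sigma(M)$, which is known when $Y$ is the link of a weighted-homogeneous ICIS (in particular for Brieskorn complete intersections). Plugging these into Corollary~\ref{corollary:poincare}(2) gives
\[
\chi(\mathscr{M}^\ast(Y)) \;=\; -\tfrac{1}{4}\sigma(M) \;+\; \sum_{\underline{e}}(e+1),
\]
so that the stated identity $\chi(\mathscr{M}^\ast(Y)) = \mu(X,0)/4$ becomes equivalent to the purely combinatorial identity
\[
\sum_{\underline{e}}(e+1) \;=\; \frac{\mu(X,0) + \sigma(M)}{4} \;=\; \frac{b_2^+(M)}{2}.
\]

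To verify this remaining identity I would express both sides in terms of the Seifert data $(\alpha_1,\ldots,\alpha_n)$ and the Seifert Euler number of $Y\to C$. The Milnor number $\mu(X,0)$ is given by the classical Milnor--Brieskorn--Hamm formula for weighted-homogeneous complete intersections, and $\sigma(M)$ (equivalently $b_2^+(M)$) admits a Seifert-invariant expression via Brieskorn's signature count, or via Steenbrink's description of the mixed Hodge structure on $H^2(M;\mathbb{C})$, which is pure of weight $2$ in the weighted-homogeneous case. Under the standard identification of $H^{2,0}(M)$ with sections of orbifold line bundles on $C$ satisfying appropriate degree bounds -- the weighted-homogeneous $\mathbb{C}^\ast$-grading on cohomology corresponding precisely to the orbifold degree function $\mathrm{deg}(\underline{e})$ of Corollary~\ref{corollary:poincare} -- the vectors $\underline{e}$ with $\mathrm{deg}(\underline{e}) < -\chi(C)$ should index a basis of $H^{2,0}(M)$, each contributing a subspace of dimension $e+1$. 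Since the Hodge structure is pure with $h^{2,0} = h^{0,2}$, this yields $b_2^+(M) = 2h^{2,0}(M) = 2\sum_{\underline{e}}(e+1)$, closing the argument. As sanity checks, for $\Sigma(2,3,5)$ the sum is empty and $\mu/4 = 2 = \chi(\mathscr{N}^\ast)$, while for $\Sigma(2,3,7)$ only $(e,\beta_1,\beta_2,\beta_3)=(0,0,0,0)$ contributes and $\mu/4 = 3 = \chi(\mathscr{N}^\ast) + 1$.

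The main obstacle is the combinatorial matching in the last step: although each ingredient (Milnor's formula, Brieskorn's signature count, Steenbrink's Hodge theory) is classical, aligning the Seifert parametrisation of $\underline{e}$ with the weighted-homogeneous data entering the singularity-theoretic formulas, and identifying the $e+1$ multiplicity with a dimension in the Hodge decomposition, requires careful bookkeeping. A possible alternative that avoids invoking the Casson Invariant Conjecture entirely is to express $\chi(\mathscr{M}^\ast(Y))$ directly as a sum of Euler characteristics of parabolic Higgs-bundle moduli spaces via Theorem~\ref{theorem:reductionHiggs}, compute each via $\mathbb{C}^\ast$-localisation on the Hitchin system, and then match with $\mu(X,0)/4$ via the monodromy/Hodge theory of the Milnor fiber.
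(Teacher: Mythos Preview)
Your overall strategy matches the paper's: start from Corollary~\ref{corollary:poincare}(2), rewrite $\chi(\mathscr{N}^\ast(Y)) = -2\lambda(Y) = -\sigma(M)/4$ via Fintushel--Stern and the Neumann--Wahl result for weighted-homogeneous ICIS, and then identify the remaining sum $\sum_{\underline{e}}(e+1)$ with $b^+(M)/2$.

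The difference lies in how the last identification is executed. You propose to compute $h^{2,0}(M)$ via Steenbrink's Hodge theory and match the combinatorics by hand, and you correctly flag this as the main obstacle. The paper sidesteps the Hodge-theoretic bookkeeping by naming and using the \emph{geometric genus} $p_g(X,0)$ as the intermediary: the Pinkham--Dolgachev formula (in N\'emethi's formulation) expresses $p_g(X,0)$ for a weighted-homogeneous normal surface singularity as $\sum_{\ell\geq 0}\max\{0,-N(\ell)-1\}$, and a short computation identifies $-N(\ell)-1$ with $\mathrm{deg}|K\otimes N^\ell|+1 = h^0(C,K\otimes N^\ell)$. Summing over $\ell$ (equivalently over orbifold line bundles $L$ with $0\leq\mathrm{deg}L<\mathrm{deg}K$) gives exactly $\chi(\mathcal{D}_{<\mathrm{deg}K}(C)) = \sum_{\underline{e}}(e+1)$. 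The Durfee--Steenbrink identity $b^+(M)=2p_g(X,0)$ then closes the argument. So the ``standard identification of $H^{2,0}(M)$ with sections of orbifold line bundles'' you allude to is precisely Pinkham--Dolgachev, and invoking it directly removes the bookkeeping you were worried about.
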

For the Brieskorn--Pham isolated hypersurface singularity $X = \{ x^p + y^q + z^r = 0 \} $, where $p,q,r$ are pairwise coprime positive integers, the above identity was established by Boden--Curtis \cite{boden-curtis}. For this singularity, the link $Y$ is the Brieskorn homology sphere $Y = \Sigma ( p,q,r)$, and it is shown in \cite{boden-curtis} that the character variety $\mathscr{M}^\ast (Y)$ is finite and comprises $(p-1)(q-1)(r-1)/4$ elements, which gives the Milnor number of $(X, 0 )$.

We obtain Corollary \ref{corollary:milnornumber} as follows. First, we recall the well-known \textit{Casson Invariant Conjecture}:


\begin{conjecture*}[Neumann--Wahl \cite{neumann-wahl}]
Let $(X,0) $ be a $2$-dimensional ICIS, whose link $Y$ is an integral homology $3$-sphere. Then the $SU(2)$ Casson invariant $\lambda (Y)$ is one-eight the signature of the Milnor fiber:
\[
\lambda (Y) = \frac{\sigma (M)}{8}.
\]
\end{conjecture*}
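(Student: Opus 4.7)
The plan is to derive the Milnor number identity by combining Corollary \ref{corollary:poincare}(2) with two inputs from outside the gauge-theoretic machinery of this paper: the weighted-homogeneous case of the Neumann--Wahl Casson Invariant Conjecture, and Pinkham's formula for the geometric genus of a weighted-homogeneous normal surface singularity.

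I begin by applying Corollary \ref{corollary:poincare}(2) together with Fintushel--Stern's identity $\chi(\mathscr{N}^\ast(Y))= -2\lambda(Y)$ to obtain
\[
\chi(\mathscr{M}^\ast(Y)) \;=\; -2\lambda(Y) \;+\; \sum_{\underline{e}}(e+1).
\]
The Casson Invariant Conjecture is known to hold for weighted-homogeneous ICIS (this was settled by Neumann and Wahl in this class), so $\lambda(Y)= \sigma(M)/8$. Substituting and using $\mu(X,0) = b_2(M)$ together with $\sigma(M) = b_2^+(M) - b_2^-(M)$, the target identity $\chi(\mathscr{M}^\ast(Y)) = \mu(X,0)/4$ becomes equivalent to the combinatorial statement
\[
\sum_{\underline{e}}(e+1) \;=\; \frac{b_2(M)+\sigma(M)}{4} \;=\; \frac{b_2^+(M)}{2}.
\]

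For the Milnor fiber of a normal surface singularity with rational homology sphere link, Steenbrink's mixed Hodge structure on $H^2(M)$ gives $b_2^+(M) = 2p_g(X,0)$, so it suffices to prove
\[
\sum_{\underline{e}}(e+1) \;=\; p_g(X,0).
\]
For a weighted-homogeneous surface singularity with Seifert-fibered link $Y\to C = S^2(\alpha_1,\ldots,\alpha_n)$, Pinkham's formula expresses $p_g$ as a lattice-point count on the base orbifold: a sum of $\dim H^0(C,\mathcal{O}(L))$ over orbifold line bundles $L\to C$ whose degrees lie in a specific range determined by the weights of the $\mathbb{C}^\ast$-action. Using the bijection between orbifold line bundles on $C$ (whose total space is a homology sphere, so $\mathrm{Pic}(C)$ injects into $\mathbb{Q}$ via orbifold degree) and tuples $(\beta_1,\ldots,\beta_n)$ with $0\le\beta_i<\alpha_i$, orbifold Riemann--Roch identifies $\dim H^0$ with the integer $e+1$, while Pinkham's degree condition becomes exactly $\mathrm{deg}(\underline{e}) < -\chi(C)$; the fractional shift $\sum\{(\beta_i+1)/\alpha_i\}$ in Corollary \ref{corollary:poincare}(2) records the contribution of the orbifold canonical divisor. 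Assembling the pieces yields $\chi(\mathscr{M}^\ast(Y)) = \mu(X,0)/4$, as required.

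The main obstacle is the explicit combinatorial matching in the last step: identifying the gauge-theoretic index set of Corollary \ref{corollary:poincare}(2), which parametrises stable rank-two orbifold Higgs bundles of fixed determinant on $C$, with the Dolgachev--Pinkham lattice-point description of $p_g$. The matching is elementary once set up correctly, but requires careful bookkeeping for the orbifold Picard group, orbifold Riemann--Roch, and the canonical-discrepancy corrections coming from the $\{(\beta_i+1)/\alpha_i\}$ terms.
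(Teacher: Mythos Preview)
There is a fundamental mismatch between the statement you were asked to prove and the argument you wrote. The statement in question is the Neumann--Wahl Casson Invariant Conjecture $\lambda(Y)=\sigma(M)/8$, which the paper does \emph{not} prove: it is displayed as a conjecture, with the remark that it is known for weighted-homogeneous ICIS by work of Fintushel--Stern and Neumann--Wahl, and remains open in general. There is no proof of this statement in the paper to compare against.

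Your proposal is in fact a proof sketch for a \emph{different} statement, namely Corollary~\ref{corollary:milnornumber} (the Milnor number identity $\chi(\mathscr{M}^\ast(Y))=\mu(X,0)/4$). You explicitly invoke the Neumann--Wahl identity $\lambda(Y)=\sigma(M)/8$ as an input in your second paragraph; this makes the argument circular as a proof of the Neumann--Wahl conjecture itself. Incidentally, as a sketch of Corollary~\ref{corollary:milnornumber} your outline does follow the paper's strategy closely (combine Corollary~\ref{corollary:poincare}(2), the weighted-homogeneous Neumann--Wahl theorem, the Durfee--Steenbrink identity $b^+(M)=2p_g$, and the Pinkham--Dolgachev formula for $p_g$), and the combinatorial matching you describe is precisely what the paper carries out in \S\ref{section:calculations}. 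But that is not the statement you were given.
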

The Casson Invariant Conjecture is known to hold for weighted-homogeneous ICIS \cite{fintushel-stern,neumann-wahl} and singularities of splice-type \cite{nemethi-okuma}, but remains open in general.  
 
Corollary \ref{corollary:milnornumber} can be deduced from Corollary \ref{corollary:poincare}(2). Namely, the first term $\chi ( \mathscr{N}^\ast (Y) )$ can be rewritten as $\chi ( \mathscr{N}^\ast (Y) ) = -2 \lambda (Y) = -\sigma (M) /4$ \cite{fintushel-stern,neumann-wahl}. The key point (which we explain later) is to identify the second term $\sum_{\underline{e}} \chi ( \mathbb{C}P^e )$ as the \textit{geometric genus} $p_g (X, 0 )$ of the singularity $(X, 0 )$, which is the content of a formula due to Pinkham \cite{pinkham} and Dolgachev \cite{dolgachev} (also attributed to Demazure). In turn, there is the identity $b^{+}(M)  = 2 p_g (X, 0 ) $ due to Durfee and Steenbrink \cite{durfee,steenbrink}. Thus, Corollary \ref{corollary:poincare}(2) yields the required identity:\\
\begin{align*}
\chi (\mathscr{M}^\ast (Y) ) = -\frac{\sigma (M)}{4} + \frac{b^{+}(M)}{2} = \frac{\mu (X, 0 )}{4}.\\
\end{align*}

The Euler characteristic $\chi (\mathscr{M}^\ast (Y) )$ agrees with the sheaf-theoretic $SL(2, \mathbb{C})$ Casson invariant $\lambda^P (Y)$ for a Seifert-fibered homology sphere \cite[Formula 35]{abouzaid-manolescu}. Thus, Corollary \ref{corollary:milnornumber} provides evidence towards the following $SL(2, \mathbb{C})$ analogue of the Casson Invariant Conjecture:
\begin{conjecture}\label{conjecture:milnornumber}
Let $(X,0) $ be a $2$-dimensional ICIS, whose link $Y$ is an integral homology $3$-sphere. Then the sheaf-theoretic $SL(2, \mathbb{C})$ Casson invariant of $Y$ is a multiple of the Milnor number of $(X, 0 )$:
\[\lambda^P (Y) = \frac{\mu (X, 0 )}{4}.\]
\end{conjecture}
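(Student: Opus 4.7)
The plan is to reduce the conjectural identity $\lambda^P(Y) = \mu(X,0)/4$ to the already-established Corollary \ref{corollary:milnornumber} by deformation and surgery arguments, in parallel with the strategy used for the classical Casson Invariant Conjecture in \cite{fintushel-stern,neumann-wahl,nemethi-okuma}. The starting point is the arithmetic identity
\[
\frac{\mu(X,0)}{4} \;=\; -\frac{\sigma(M)}{4} + \frac{b^+(M)}{2},
\]
which, together with the classical Casson Invariant Conjecture $\lambda(Y) = \sigma(M)/8$, suggests decomposing any gauge-theoretic count of $SL(2, \mathbb{C})$ representations of $\pi_1(Y)$ into an $SU(2)$ piece contributing $-2\lambda(Y) = -\sigma(M)/4$ and a complementary piece of total dimension $p_g(X,0) = b^+(M)/2$ coming from genuinely non-unitary representations.

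The first step would be to extend the identity $\lambda^P(Y) = \chi(\mathscr{M}^\ast(Y))$ from the Seifert case \cite[Formula 35]{abouzaid-manolescu} to the link of an arbitrary $2$-dimensional ICIS; this is plausible because the stable $SL(2, \mathbb{C})$ character variety of $\pi_1(Y)$ tends to be smooth for such $Y$ (inheriting a quasi-K\"ahler structure from the singularity), in which case the perverse sheaf underlying $HP^\ast(Y)$ should be a shift of the constant sheaf on $\mathscr{M}^\ast(Y)$. The second step would be to establish a splicing formula for $\chi(\mathscr{M}^\ast(Y))$ along essential tori, compatible with the known splicing formulae for $\lambda$ and for $\mu/4$; combined with the decomposition of a general ICIS link via JSJ/splicing into Seifert pieces (as exploited for splice-type singularities in \cite{nemethi-okuma}), this would reduce Conjecture \ref{conjecture:milnornumber} to the weighted-homogeneous case handled by Corollary \ref{corollary:milnornumber}.

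The main obstacle is that outside the Seifert setting the localisation picture of Theorems \ref{theorem:localisationseifert}--\ref{theorem:localisationseifert_exact} breaks down: the key input that $\|\Phi\|_{L^2}^2 : \mathscr{M}^\ast(Y) \to \mathbb{R}$ has compact critical locus rests on Hitchin's circle-action argument \cite[Proposition 7.1]{hitchin}, which in turn requires the Seifert $S^1$-symmetry. Consequently, $\mathscr{M}^\ast(Y)$ may be non-compact and even singular for generic ICIS links, and $\lambda^P(Y)$ loses the clean interpretation as a signed count of non-degenerate critical points of a perturbed Chern--Simons functional. Overcoming this will likely require a new compactness or algebro-geometric input --- for instance, a $4$-dimensional gauge-theoretic counterpart on the Milnor fiber $M$ relating $\lambda^P(Y)$ to a Vafa--Witten-type count on $M$ in the spirit of \cite{tanaka-thomas}, from which $\mu(X,0)/4$ could be extracted directly via virtual localisation.
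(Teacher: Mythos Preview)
The statement you are addressing is a \emph{conjecture} in the paper, not a theorem: the paper does not prove it, and explicitly presents Corollary~\ref{corollary:milnornumber} (the weighted-homogeneous case) only as \emph{evidence} toward it. So there is no ``paper's own proof'' to compare your proposal against.

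Your write-up is accordingly not a proof but a proposed research program, and you yourself identify the gaps: the identity $\lambda^P(Y) = \chi(\mathscr{M}^\ast(Y))$ is only known in the Seifert case; no splicing formula for $\chi(\mathscr{M}^\ast(Y))$ or $\lambda^P(Y)$ along essential tori has been established; and the compactness/Morse--Bott inputs underlying the localisation theorems genuinely fail outside the Seifert setting. Each of these is a substantial open problem, not a routine step. In particular, even for splice-quotient links (where the classical Casson Invariant Conjecture is known \cite{nemethi-okuma}) there is currently no control over the smoothness or structure of $\mathscr{M}^\ast(Y)$, so the first step of your program is already conjectural. Your proposal should be read as an outline of what one would \emph{like} to do, consistent with the paper's own framing of Conjecture~\ref{conjecture:milnornumber} as open.
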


Remarkably, Conjecture \ref{conjecture:milnornumber} implies that the Milnor number $\mu (X,0)$ of a $2$-dimensional ICIS (which is an \textit{analytic} invariant of the singularity) should \textit{only depend on its link} $Y$ (or equivalently, the \textit{homeomorphism} type of the germ $(X, 0 )$). This is consistent with the prediction from the Casson Invariant Conjecture that $\sigma (M)$ only depends on $Y$, thanks to formulae of Laufer \cite{laufer-mu} and Durfee--Steenbrink \cite{durfee,steenbrink} which combined show that the quantity $2\mu(X,0) + 3\sigma (M) $ only depends on $Y$. 

\subsection{Outline and Comments}

\S \ref{section:connections} introduces the Hilbert manifold $\mathscr{B}^\ast$ of orbits of stable $SL(2, \mathbb{C})$ connections and discusses the deformations of flat stable $SL(2, \mathbb{C})$ connections on a (closed, oriented and connected) $3$-manifold based on this model of the configuration space. The main result we establish is the Fredholm property of the Hessians (Theorem \ref{theorem:fredholm}).

\S \ref{section:dimensionalreduction} establishes the Dimensional Reduction Theorem \ref{theorem:dimensionalreduction} (which implies Theorem \ref{theorem:reductionHiggs}). The crucial results here are the vanishing results from Corollary \ref{corollary:vanishing} and Proposition \ref{prop:vanishing_inf}, which are obtained by establishing suitable Bochner--Weitzenböck type formulae. In particular, from Theorem \ref{theorem:dimensionalreduction} we also deduce that $\mathscr{M}^\ast (Y) = \mathrm{Crit}S_\theta \subset \mathscr{B}^\ast $ is a Morse--Bott critical locus.

\S \ref{section:perturbations} studies the perturbed critical loci $\mathscr{M}_{\theta}^{\ast, \varepsilon}(Y)$ for a Seifert-fibered $3$-manifold and proves Theorems \ref{theorem:localisationseifert}-\ref{theorem:localisationseifert_exact}. 

The proof is modeled on a finite-dimensional perturbation argument for Morse--Bott functions (Theorem \ref{theorem:perturbation2}) which should be of independent interest. In order to apply this argument in our `infinite-dimensional' context several additional ingredients are needed: the existence of finite-dimensional `Kuranishi' models for $S_\theta$ (Corollary \ref{corollary:kuranishi}); the Morse--Bott non-degeneracy of the critical manifolds of $S_\theta$ and of $\| \Phi \|_{L^2}^2 : \mathscr{M}^\ast (Y) \to \mathbb{R}$ (Theorem \ref{theorem:dimensionalreduction}, Proposition \ref{proposition:Zmorsebott}); a method for achieving non-degeneracy of the perturbed critical points, for which we employ the $SL(2, \mathbb{C}$ holonomy perturbations (the crucial transversality property is Proposition \ref{proposition:embedding}); and finally, we need to ensure that basic linear analysis and compactness properties of the unperturbed moduli spaces go through in the perturbed setting, for which we need to establish various bounds on the $SL(2, \mathbb{C})$ holonomy perturbations and their derivatives (Proposition \ref{proposition:boundsholonomy}).

With these ingredients in place, we establish Theorems \ref{theorem:localisationseifert}--\ref{theorem:localisationseifert_exact} by an extension of the finite-dimensional argument with several modifications. 

Finally, \S \ref{section:calculations} contains the proofs of the formulae in Corollaries \ref{corollary:poincare}--\ref{corollary:milnornumber}, and the gauge-theoretic interpretation of the sheaf-theoretic $SL(2, \mathbb{C})$ Floer cohomology $HP^\ast (Y)$ for Seifert-fibered homology spheres.

\subsection*{Acknowledgements} The author thanks Simon Donaldson and Aleksander Doan for their encouragement and many discussions on the topic of this article. This work has also benefited from conversations with Gora Bera, Óscar García-Prada, Francesco Lin and Cliff Taubes. The author was partially supported by NSF grant DMS-2203498.

\section{Stable flat connections on $3$-manifolds}\label{section:connections}

This section sets up a framework for studying the deformations of flat $SL(2,\mathbb{C})$ connections over a closed oriented $3$-manifold. The flat connections are the critical points of the Chern--Simons functional on the space of all connections and for our purposes--counting flat connections with gauge-theoretic techniques--we must make sense of the Hessian as a formally self-adjoint Fredholm operator. (This is well-understood for compact structure groups, such as $SU(2)$, see e.g. \cite{taubes-casson,floer,donaldson-floer}). In order to achieve this we work with a suitable space of connections obtained as an infinite-dimensional Kähler (symplectic) quotient of the space of connections. 




\subsection{Setup}\label{subsection:setup}
We describe the basic setup that we use throughout article. We fix the following data:
\begin{enumerate}
    \item a closed, oriented, Riemannian $3$-manifold $(Y,g)$. (In fact, the `$3$-dimensional' hypothesis won't play a role until \S \ref{subsection:CS}).
    \item a $U(2)$ bundle $E \rightarrow Y$, equipped with a $U(1)$ connection $\lambda$ on its determinant line bundle $\Lambda = \Lambda^2 E$. (Informally, we shall refer to $E$ as a `bundle with fixed determinant').
\end{enumerate}

We will often incur in abuse of notation, by sometimes treating $E$ as a hermitian vector bundle of rank two, and other times as a principal bundle with structure group $U(2)$. It should be clear from context which viewpoint we are using.

\subsubsection{Spaces of connections and Sobolev completions}

We let $\mathscr{A}_E$ denote the space of $U(2)$ connections $A$ on $E$ which induce the fixed connection $\lambda$ on its determinant $\Lambda$. Informally, we refer to $A$ as a `unitary' or $U(2)$ connection with `fixed determinant'. 

What we are really interested in is the space $\mathscr{A}_{E}^c$ of $GL(2,\mathbb{C})$ connections $\mathbb{A}$ on $E$ inducing the fixed connection $\lambda$ on its determinant $\Lambda$. Using the unitary structure on $E$, such a connection $\mathbb{A}$ can be expressed decomposed as 
\[
\mathbb{A} = A + i \Phi\]
where $A \in \mathscr{A}_E$ is a unitary connection with fixed determinant and $\Phi \in \Omega^1 (Y , \mathfrak{g}_E )$ is a $\mathfrak{g}_E$-valued $1$-form on $Y$. Here $\mathfrak{g}_E = \mathfrak{su}(E)$ is the \textit{adjoint bundle} consisting of trace-less, skew-adjoint endomorphisms of $E$. Thus, the space of $GL(2,\mathbb{C})$ connections with fixed determinant is $\mathscr{A}_{E}^c = \mathscr{A}_E \times \Omega^1 (Y,\mathfrak{g}_E )$, and we will typically refer to the connection $\mathbb{A}$ by its corresponding pair $(A, \Phi )$. Informally, we shall refer to $\mathbb{A}$ as a `complex' or $GL(2,\mathbb{C})$ connection on $E$ with `fixed determinant'.

Let $\mathscr{G}_E$ be the group of $U(2)$ bundle automorphisms of $E$ which induce the identity on its determinant $\Lambda$, and $\mathscr{G}_{E}^c$ be the group of $GL(2,\mathbb{C})$ bundle automorphisms of $E$ which induce the identity on $\Lambda$ (again, we refer to these as `unitary' or `complex' gauge transformations with `fixed determinant'). The groups $\mathscr{G}_{E}$ and $\mathscr{G}_{E}^c$ act on $\mathscr{A}_E$ and $\mathscr{A}_{E}^c$, respectively, in a natural way (our convention is that they act on the left): for example, a unitary gauge transformation $u$ with fixed determinant acts on the complex connection $(A, \Phi )$ by
\begin{align*}
u\cdot (A, \Phi ) = (A - d_{A}u \cdot u^{-1} , u \Phi u^{-1} ) .
\end{align*}

For later purposes, it will be convenient to make $\mathscr{A}_{E},\mathscr{A}_{E}^c$ and $\mathscr{G}_E , \mathscr{G}_{E}^c$ into Hilbert manifolds and Hilbert Lie groups, respectively, by considering their completions with respect to suitable Sobolev norms. Namely, from now on we will only consider unitary or complex connections with regularity in $L^{2}_{k}$, and unitary or complex gauge transformations with regularity in $L^{2}_{k+1}$. For our purposes, choosing any $k \geq 2$ will suffice. 
%
We will denote Sobolev-completed spaces simply as $\mathscr{A}_E , \mathscr{A}_{E}^c , \mathscr{G}_E , \mathscr{G}_{E}^c$ unless we need to make reference to the Sobolev structure, in which case we shall use the notation $\mathscr{A}_{E,k}, \mathscr{A}_{E,k}^c$, $\mathscr{G}_{E,k+1}, \mathscr{G}_{E,k+1}^c$.

When the determinant $(\Lambda, \lambda )$ is trivial as a bundle with connection, then $E$ has structure group $SU(2)$ and $\mathscr{A}_E, \mathscr{A}_{E}^c$ consist of the $SU(2)$ or $SL(2,\mathbb{C})$ connections. We allow for non-trivial $(\Lambda, \lambda )$ in order to also study also the $PSU(2) = SO(3)$ connections and $PSL(2,\mathbb{C})$ connections, via the following standard construction.

\subsubsection{The ` $U(2)$--model ' for $SO(3)$ and $PSL(2,\mathbb{C})$ connections}\label{U2model}
The $U(2)$ connections on $E$ can be regarded as $PSU(2) = SO(3)$ connections by the following procedure (see \cite[Part II]{donaldson-braam} and \cite[\S 5.6]{donaldson-floer} for further details). 

Associated to a $U(2)$ bundle $E$ there is a $PSU(2) = SO(3)$ bundle $P = E \times_{U(2)} SO(3)$, whose corresponding rank $3$ real vector bundle over $Y$ is the adjoint bundle $\mathfrak{g}_E = \mathfrak{su}(E)$. Conversely, any $SO(3)$ bundle $P$ lifts to a $U(2)$ bundle $E$ which is unique up to replacing $E$ by its tensor product $E \otimes L$ with a $U(1)$ bundle $L$. (Indeed, on a $3$-manifold the $U(2)$ bundles are classified by $c_1 (E) = c_1 (\Lambda ) \in H^2 (Y  , \mathbb{Z} )$ and the $SO(3)$ bundles by $w_2 (P) \in H^2 (Y , \mathbb{Z}/2)$; a choice of lift of $P$ corresponds to an integral lift of $w_2 (P)$, and this is unique modulo even classes in $H^2(Y, \mathbb{Z})$). Of course, a similar relationship holds if we replace $U(2)$ and $SO(3)$ by their complexifications $GL(2,\mathbb{C})$ and $PSL(2,\mathbb{C})$. 

A $U(2)$ connection with fixed determinant on $E$ induces an $SO(3)$ connection on $P$, and conversely an $SO(3)$ connection on $P$ lifts to a $U(2)$ connection on $E$ with fixed determinant which is unique up to tensoring $(E,A)$ by a flat real line bundle. This gives a bijection between $\mathscr{A}_P / \mathscr{G}_P$ and the orbits of the action of $H^1(Y, \mathbb{Z}/2)$ on $\mathscr{A}_E /\mathscr{G}_E$. The same relationship holds between the $GL(2,\mathbb{C})$ connections with fixed determinant and the $PSL(2,\mathbb{C})$ connections.

This gives a fairly concrete model for studying the spaces of $SO(3)$ (or $PSL(2,\mathbb{C})$) connections; effectively translating the complexity in the $SO(3)$ stabilizers to that of the $SU(2)$ stabilizers and the stabilizers of the action of $H^1 (Y, \mathbb{Z}/2 )$ on $\mathscr{A}_E / \mathscr{G}_E$.

\subsection{Orbit spaces}\label{subsection:orbitspaces}

We now discuss the Kähler quotient of the space $\mathscr{A}_{E}^c$ of complex connections.

\subsubsection{The Kähler structure of $\mathscr{A}_{E}^c$}

The space $\mathscr{A}_{E}^c$ carries a canonical complex structure 
\begin{align}
J (\dot{A}, \dot{\Phi} ) = (-\dot{\Phi } ,\dot{A} )  \label{J} 
\end{align}
and a canonical symplectic form $\omega_J$ 
\begin{align}
\omega_J ((\dot{A}_1 , \dot{\Phi}_1) , (\dot{A}_2, \dot{\Phi}_2) ) = - \int_Y \mathrm{Tr}\big(  \dot{A}_1 \wedge \ast \dot{\Phi}_2  -\dot{\Phi_1}\wedge \ast \dot{A}_2\big). \label{omegaJ}
\end{align}
The pair $(\omega_J , J )$ is compatible, making $\mathscr{A}_{E}^c$ into a Hilbert \textit{Kähler manifold}. The complex gauge group $\mathscr{G}_{E}^c$ is a Hilbert \textit{complex Lie group} which acts \textit{holomorphically} on $(\mathscr{A}_{E}^c , J )$. In turn, the unitary gauge group $\mathscr{G}_E$ preserves the whole Kähler structure. 

\subsubsection{Moment maps and stability}\label{subsubsection:moment}

Furthermore, the unitary gauge group $\mathscr{G}_E$ acts on $(\mathscr{A}_{E}^c , \omega_J )$ in a \textit{Hamiltonian} fashion, with moment map given by 
\[
\mathscr{A}_{E}^c \rightarrow \mathrm{Lie}\mathscr{G}_E = \Omega^0 (Y, \mathfrak{g}_E ) \quad , \quad (A, \Phi ) \mapsto d_{A}^\ast \Phi .
\]
Indeed, the vector field $X_\xi$ given by the infinitesimal action of $\xi \in \Omega^0 (Y, \mathfrak{g}_E)$ is $X_\xi = (-d_{A}\xi , [\xi, \Phi])$ and we have
\begin{align*}
\omega_J (   X_\xi   , (\dot{A}, \dot{\Phi})) 
= -\langle \dot{\Phi}, d_{A}\xi \rangle_{L^2} - \langle \dot{A},[\xi,\Phi]\rangle_{L^2} = \langle \ast [ \dot{A} , \ast \Phi ] - d_{A}^\ast \dot{\Phi} , \xi \rangle_{L^2}
\end{align*}
where $   \ast [ \dot{A} , \ast \Phi ] - d_{A}^\ast \dot{\Phi}$ is the linearisation of $-d_{A}^\ast \Phi$ at $(A, \Phi)$. We may thus attempt to carry out the \textit{Kähler quotient} construction, 
albeit in infinite dimensions, of the Hilbert Kähler manifold $(\mathscr{A}_E , \omega_J , J )$ by the $\mathscr{G}_E$--action.

The zeros of the moment map (i.e. $d_{A}^\ast \Phi = 0$) have the following interpretation:
\begin{proposition}\label{proposition:invariant}
Suppose $\mathbb{A} = A + i \Phi \in \mathscr{A}_{E}^c$ satisfies $d_{A}^\ast \Phi = 0$, then:
\begin{enumerate}
\item Every $\mathbb{A}$--invariant complex line subbundle $L \subset E$ admits an $\mathbb{A}$--invariant complement.
\item If the $\mathscr{G}_{E}$--stabilizer of $\mathbb{A}$ is $\{ \pm I\} $, then $E$ admits no $\mathbb{A}$--invariant complex line subbundles.
\end{enumerate}
\end{proposition}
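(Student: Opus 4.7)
The strategy is to test the moment-map equation $d_A^*\Phi=0$ against an infinitesimal gauge transformation naturally associated with the $\mathbb{A}$-invariant line $L\subset E$. Let $L^\perp$ denote the orthogonal complement with respect to the hermitian metric on $E$, let $P_L\in\mathrm{End}(E)$ be the orthogonal projection onto $L$, and set $\tau:=P_L-\tfrac{1}{2}\mathrm{id}_E$ (a traceless, self-adjoint section of $\mathrm{End}(E)$). Then $\xi:=i\tau\in\Omega^0(Y,\mathfrak{g}_E)$. Conceptually, $\xi$ is the infinitesimal unitary gauge transformation corresponding to rotating $L$ relative to $L^\perp$, and the moment-map condition should constrain its $L^2$-pairing with $\Phi$.

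To carry this out, I would work in a local orthonormal frame of $E$ adapted to the splitting $L\oplus L^\perp$ and write the off-diagonal parts of $A$ and $\Phi$ as local $1$-forms $\alpha$ and $b$, so that in block form $A=\bigl(\begin{smallmatrix}A_L&\alpha\\-\bar\alpha&A_{L^\perp}\end{smallmatrix}\bigr)$ and $\Phi=\bigl(\begin{smallmatrix}\phi&b\\-\bar b&-\phi\end{smallmatrix}\bigr)$. A direct computation of $\nabla_\mathbb{A}$ on sections of $L$ shows that $\mathbb{A}$-invariance of $L$ is equivalent to the pointwise identity $\alpha = ib$. Under this identity, a further block-by-block computation of $d_A\tau=[A,\tau]$ and $[\Phi,\tau]$ gives
\[
\nabla_\mathbb{A}\tau \;=\; d_A\tau + i[\Phi,\tau] \;=\; \begin{pmatrix} 0 & -2ib\\ 0 & 0\end{pmatrix},
\]
so that $L^\perp$ is $\mathbb{A}$-invariant if and only if $b\equiv 0$.

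The key step is then to pair $\xi=i\tau$ against the moment-map equation and integrate by parts:
\[
0 \;=\; \langle \xi,\, d_A^*\Phi\rangle_{L^2} \;=\; \langle d_A\xi,\, \Phi\rangle_{L^2} \;=\; \int_Y \langle i\,d_A\tau,\, \Phi\rangle.
\]
A short calculation of the pointwise integrand using the local expression for $d_A\tau$ and the relation $\alpha = ib$ shows it equals $2|b|^2$, which forces $b\equiv 0$ and hence also $\alpha=0$. Thus both $A$ and $\Phi$ preserve the orthogonal splitting $E=L\oplus L^\perp$, so $L^\perp$ is the desired $\mathbb{A}$-invariant complement, proving (1).

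For part (2), suppose $\mathbb{A}$ preserves a complex line $L\subset E$. By (1) there is an $\mathbb{A}$-invariant orthogonal decomposition $E=L\oplus L^\perp$, with $\nabla_A P_L = 0$ and $[\Phi,P_L]=0$. For every $\theta\in\mathbb{R}$ the bundle automorphism $u_\theta := e^{i\theta}P_L + e^{-i\theta}P_{L^\perp}$ is unitary with determinant one (hence lies in $\mathscr{G}_E$), is $\nabla_A$-parallel, and commutes pointwise with $\Phi$; therefore $u_\theta$ stabilizes $\mathbb{A}$ for every $\theta$, contradicting $\mathscr{G}_\mathbb{A}=\{\pm I\}$. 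The main obstacle I anticipate is bookkeeping the signs and conjugations that force the pointwise identity $\langle i\,d_A\tau,\, \Phi\rangle = 2|b|^2$ rather than $-2|b|^2$; once that is in hand, the rest is routine linear algebra and integration by parts.
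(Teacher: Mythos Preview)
Your proof is correct and follows essentially the same route as the paper. The paper itself defers part (1) to the references (Corlette, Wu--Zhang), merely recording the conclusion that $L^\perp$ is $\mathbb{A}$--invariant; you have supplied the underlying moment-map computation explicitly by testing $d_A^\ast\Phi=0$ against the infinitesimal generator $\xi=i(P_L-\tfrac12 I)$ of the obvious $U(1)$ and verifying $\langle d_A\xi,\Phi\rangle=2|b|^2$. Your argument for part (2) --- exhibiting the $U(1)$ family $u_\theta=e^{i\theta}P_L+e^{-i\theta}P_{L^\perp}$ in the stabilizer --- is exactly what the paper does.
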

\begin{proof}
(1) is proved in \cite[Proposition 3.2]{corlette-harmonic}. Also in \cite[\S 2.3]{wu-zhang} where the following is established: if $L\subset E$ is an $\mathbb{A}$--invariant line subbundle and $L^\perp$ is denotes its orthogonal complement with respect to the unitary structure on $E$, then the vanishing of $d_{A}^\ast \Phi = 0 $ implies that $L^\perp$ is also $\mathbb{A}$--invariant. 

In this case, the $U(2)$ bundle-with-connection $(E, \mathbb{A})$ splits as a sum $(L,a) \oplus (L^\perp , a^\perp )$ of $U(1)$ bundles-with-connection (such that $(\Lambda,\lambda ) = (L , a ) \otimes (L^\perp , a^\perp )$), and $\Phi$ acts diagonally with respect to this decomposition. Hence the $\mathscr{G}_E$--stabilizer of $\mathbb{A} = A + i \Phi$ contains a $U(1)$ subgroup. Assertion (2) follows.
 \end{proof}
A complex connection $\mathbb{A} $ satisfying the conclusion of Proposition \ref{proposition:invariant}(1), resp. (2), is said to be \textit{polystable} (or completely-reducible, or reductive), resp. \textit{stable} (or irreducible). By a recent result by Wu--Zhang \cite{wu-zhang}, the converse to Proposition \ref{proposition:invariant} is known (but we shall not need this result): $\mathbb{A}$ is polystable if and only if there exists a complex gauge transformation $u \in \mathscr{G}_{E}^c$ such that $u \cdot \mathbb{A} = A_u + i \Phi_u$ solves $d_{A_u}^\ast \Phi_u = 0$; and $\mathbb{A}$ and is stable if and only if, in addition, the $\mathscr{G}_E$--stabilizer of $u \cdot \mathbb{A}$ is $\{\pm I\}$. (In the case when $\mathbb{A}$ is \textit{flat}, this is a fundamental result by Donaldson \cite{donaldson-harmonic} and Corlette \cite{corlette-harmonic}). In light of this, and abusing terminology, we make the following
\begin{definition}\label{definition:stable}
The space of \textit{polystable} configurations $\mathscr{C}_E \subset \mathscr{A}_{E}^c = \mathscr{A}_E \times \Omega^1 (Y , \mathfrak{g}_E )$ is the subspace of pairs $(A, \Phi )$ satisfying $d_{A}^\ast \Phi = 0$. The unitary gauge group $\mathscr{G}_E$ acts on $\mathscr{C}_E$, and the space of \textit{stable} configurations is defined as the open subset $\mathscr{C}_{E}^\ast\subset \mathscr{C}_{E}$ consisting of configurations with smallest possible stabilizer under the action of $\mathscr{G}_E$, namely $\{\pm I\} = Z (SU(2))$. When reference to the Sobolev structure is needed, we shall use the notation $\mathscr{C}_{E,k}^\ast, \mathscr{C}_{E,k}$.
\end{definition}


The first step towards carrying out the infinite-dimensional Kähler quotient construction is the following

\begin{proposition}\label{proposition:stableconf}
The space of stable configurations $\mathscr{C}_{E,k}^\ast \subset \mathscr{A}_{E,k}^c$ is a Hilbert submanifold. Its tangent space is 
\[
T_{(A, \Phi )}\mathscr{C}_{E, k}^\ast = \Big\{ (\dot{A}, \dot{\Phi} ) \in (\Omega^1 \oplus \Omega^1)(Y, \mathfrak{g}_E )_k \, | \, \ast [ \dot{A} , \ast \Phi ] - d_{A}^\ast \dot{\Phi} = 0 \Big\} .
\]
\end{proposition}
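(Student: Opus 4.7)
The plan is to exhibit $\mathscr{C}_{E,k}^{\ast}$ as a regular level set of the moment map $\mu : \mathscr{A}_{E,k}^c \to \Omega^0(Y,\mathfrak{g}_E)_{k-1}$ defined by $\mu(A,\Phi) = d_{A}^{\ast}\Phi$, and then invoke the implicit function theorem for Hilbert manifolds on the open subset where configurations are stable. The description of the tangent space in the statement is nothing but $\ker d\mu_{(A,\Phi)}$, so everything reduces to proving that $d\mu_{(A,\Phi)}$ is a split surjection whenever the stabilizer of $(A,\Phi)$ is $\{\pm I\}$.

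The differential was already essentially computed in \S\ref{subsubsection:moment}:
$$ L := d\mu_{(A,\Phi)} \colon (\Omega^1 \oplus \Omega^1)(Y,\mathfrak{g}_E)_k \longrightarrow \Omega^0(Y,\mathfrak{g}_E)_{k-1}, \qquad L(\dot A, \dot\Phi) = d_{A}^{\ast}\dot\Phi - \ast[\dot A, \ast\Phi].$$
Its formal $L^2$--adjoint is
$$ L^{\ast} \colon \Omega^0(Y,\mathfrak{g}_E)_{k+1} \longrightarrow (\Omega^1\oplus\Omega^1)(Y,\mathfrak{g}_E)_k, \qquad L^{\ast}\xi = (-d_{A}\xi, [\xi,\Phi]),$$
which is (up to sign) exactly the infinitesimal $\mathscr{G}_E$--action. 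Hence $L^{\ast}\xi = 0$ if and only if $\xi$ lies in the Lie algebra of the $\mathscr{G}_E$--stabilizer of $(A,\Phi)$; at a stable configuration this stabilizer is the discrete group $\{\pm I\}$, so $\ker L^{\ast} = 0$.

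The central step is then to analyse the composition $P := LL^{\ast} \colon \Omega^0(Y,\mathfrak{g}_E)_{k+1} \to \Omega^0(Y,\mathfrak{g}_E)_{k-1}$. A direct calculation shows $P = d_{A}^{\ast}d_{A} + R_{\Phi}$ where $R_{\Phi}$ is a zeroth-order operator built algebraically from $\Phi$ using the bracket, so $P$ has the same principal symbol as $d_{A}^{\ast}d_{A}$ and is therefore a second-order, formally self-adjoint elliptic operator. It is Fredholm of index zero, its kernel consists of smooth sections by elliptic regularity, and the identity $\langle P\xi, \xi\rangle_{L^2} = \|L^{\ast}\xi\|_{L^2}^2$ shows $\ker P = \ker L^{\ast} = 0$. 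Consequently $P$ is an isomorphism; in particular $L$ is surjective and the splitting $(\Omega^1\oplus\Omega^1)_k = \ker L \oplus L^{\ast}(\Omega^0_{k+1})$ provides a closed topological complement.

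Finally, the open subset $\mathscr{C}_{E,k}^{\ast} \subset \mathscr{C}_{E,k}$ is indeed open because non-triviality of the stabilizer is a closed condition (stabilizers can only jump up under limits). Applying the implicit function theorem on this open subset yields the Hilbert submanifold structure on $\mathscr{C}_{E,k}^{\ast}$, with tangent space $\ker L$ at $(A,\Phi)$, exactly as claimed. The main technical point is controlling the $\Phi$--dependent zeroth-order term $R_{\Phi}$ so that $P$ is genuinely a bounded operator between the stated Sobolev spaces; this is straightforward for $k\geq 2$ thanks to the Sobolev multiplication $L^2_k \cdot L^2_k \hookrightarrow L^2_k$ in dimension three, but is the one calculation one has to do carefully to justify ellipticity on the nose.
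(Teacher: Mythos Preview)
Your approach is essentially the same as the paper's: both show that the linearization of the moment map is surjective at stable configurations by reducing to the vanishing of the stabilizer Lie algebra, then invoke the implicit function theorem. The paper phrases this via the Hodge-type decomposition (\ref{hodge0}) established just before the proof (surjectivity of $(\delta^2)^\ast$ is equivalent to $\mathrm{Ker}\,\delta^2 = 0$), while you recap the ellipticity argument inline by analysing $LL^\ast$ directly.

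There is, however, a computational slip in your formula for $L^\ast$. What you have written, $L^\ast\xi = (-d_A\xi,[\xi,\Phi])$, is the paper's $\delta^1$ --- the infinitesimal $\mathscr{G}_E$--action --- but this is \emph{not} the adjoint of $L$. Since $L = -(\delta^2)^\ast$ in the paper's notation, the correct adjoint is $L^\ast = -\delta^2$, i.e.\ $L^\ast\xi = ([\xi,\Phi],\, d_A\xi)$; your formula differs from the correct one by the complex structure $J$ (recall $\delta^2 = J\delta^1$). With your displayed $L^\ast$ the composition $LL^\ast$ would be a first-order operator, not $d_A^\ast d_A + R_\Phi$ as you claim. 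Fortunately the stated form for $P$ and everything downstream are exactly right once the correct $L^\ast$ is used, and the kernel condition $\mathrm{Ker}\,L^\ast = \{ \xi : d_A\xi = 0,\ [\xi,\Phi]=0\}$ is unaffected by the swap. So the argument is sound modulo this one correction.
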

(Above, and likewise subsequently, the subscript $k$ stands for the Sobolev $L^{2}_k$ completion of the space of sections shown). 

Before proving this result we make some general observations. Consider, for each $i = 1, 2$ and $0 \leq j \leq k$ the pair of bounded linear operators 
\begin{align*}
 \delta_{(A, \Phi)}^i : & \Omega^0 (Y, \mathfrak{g}_E )_{j+1} \rightarrow (\Omega^1 \oplus \Omega^1 )(Y, \mathfrak{g}_E )_{j}\\
 (\delta_{(A, \Phi)}^i)^\ast : & (\Omega^1 \oplus \Omega^1 )(Y, \mathfrak{g}_E )_{j} \rightarrow \Omega^0 (Y, \mathfrak{g}_E )_{j-1}
\end{align*}
given by
\begin{align*}
 \delta_{(A, \Phi)}^1 (\dot{\xi}) & = (- d_A \dot{\xi} ,   [\dot{\xi} , \Phi ] ) \\
 (\delta_{(A, \Phi)}^1)^\ast (\dot{A} , \dot{\Phi}) & = - d_{A}^\ast \dot{A} - \ast [ \dot{\Phi} , \ast \Phi ]\\
  \delta_{(A, \Phi)}^2 (\dot{\xi}) & = ( -[\dot{\xi} , \Phi ] , - d_A \dot{\xi}) \\
  (\delta_{(A, \Phi)}^2)^\ast (\dot{A} , \dot{\Phi}) &=  \ast [ \dot{A} , \ast \Phi ] - d_{A}^\ast \dot{\Phi}.
\end{align*}
One easily sees that $(\delta_{(A, \Phi )}^i)^\ast$ is the formal $L^2$ adjoint of $\delta_{(A, \Phi )}^i$. Let us also introduce the following notation: 
\begin{align}
\mathscr{T}_{(A, \Phi), j}^i & := \mathrm{Im} \delta_{(A, \Phi )}^i \subset (\Omega^1 \oplus \Omega^1) (\mathfrak{g}_E )_j \label{T}\\
\mathscr{K}_{(A, \Phi ) , j}^i &:= \mathrm{Ker} (\delta_{(A, \Phi )}^i )^\ast \subset (\Omega^1 \oplus \Omega^1) (\mathfrak{g}_E )_j \label{K}
\end{align}

The significance of these operators is the following. Identifying $(\Omega^0 \oplus \Omega^0 )(Y, \mathfrak{g}_E)_{k+1} \cong \mathrm{Lie} \mathscr{G}_{E,k+1}^c$, the map sending 
\begin{align}
(\dot{\xi}_1 , \dot{\xi}_2 ) \in (\Omega^0 \oplus \Omega^0 )(Y, \mathfrak{g}_E)_{k+1} \mapsto \delta_{(A, \Phi )}^1 (\dot{\xi}_1) +  \delta_{(A, \Phi)}^2 (\dot{\xi}_2) = \delta_{(A, \Phi )}^1 (\dot{\xi}_1) +  J\delta_{(A, \Phi)}^1 (\dot{\xi}_2) \label{delta}
\end{align}

\noindent can be identified with the infinitesimal action of the complex gauge group $\mathscr{G}_{E,k+1}^c$ on the space of complex connections. 

The bounded linear operator $\delta_{(A, \Phi )}^i$ is associated to a first order differential operator with injective symbol, and as a matter of Hodge Theory we thus have that for $0 \leq j \leq k$ the subspace $\mathrm{Ker}\delta_{(A, \Phi ) }^i \subset  \Omega^0 (Y, \mathfrak{g}_E )_{j+1}$ is finite dimensional (and independent of $j$), and there are $L^2$--orthogonal decompositions 
\begin{align}
\Omega^0 (Y, \mathfrak{g}_E )_{j+1} & = \mathrm{Ker}\delta_{(A, \Phi)}^i \oplus \mathrm{Im}(\delta_{(A, \Phi)}^i )^\ast  \label{hodge0}\\
(\Omega^1 \oplus \Omega^1 )(Y, \mathfrak{g}_E )_j & =   \mathrm{Im}\delta_{(A, \Phi)}^i  \oplus \mathrm{Ker}(\delta_{(A, \Phi)}^i)^\ast = \mathscr{T}_{(A, \Phi ) , j }^i \oplus \mathscr{K}_{(A, \Phi ) , j }^i.\label{hodge1}
\end{align}
In particular, Proposition \ref{proposition:stableconf} asserts that, at a stable configuration
\[
T_{(A, \Phi )}\mathscr{C}_{E , k}^\ast = \mathscr{K}_{(A, \Phi ) , k}^2 .
\]

Observe also that the composition $(\delta_{(A, \Phi )}^1)^\ast \circ \delta_{(A,\Phi)}^2 $ is trivial at a polystable configuration $(A, \Phi )$: 
\[
(\delta_{(A, \Phi )}^1)^\ast \circ \delta_{(A,\Phi)}^2 (\dot{\xi})  = - \ast d_A [\dot{\xi}, \ast \Phi] + \ast [d_{A}\dot{\xi}, \Phi] = [\dot{\xi}, d_{A}^\ast \Phi ] = 0 
\]
where the last equality used $d_{A}^\ast \Phi = 0$. In particular, it follows from this that the decomposition (\ref{hodge1}) restricts to $L^2$--orthogonal decompositions for $0\leq j \leq k$
\begin{align}
\mathscr{K}_{(A, \Phi ) , j}^2 = \mathscr{T}_{(A, \Phi ) , j}^1 \oplus ( \mathscr{K}_{(A, \Phi ) , j}^1 \cap \mathscr{K}_{(A, \Phi ) , j}^2 ) \label{K2} \\
\mathscr{K}_{(A, \Phi ) , j}^1 = \mathscr{T}_{(A, \Phi ) , j}^2 \oplus ( \mathscr{K}_{(A, \Phi ) , j}^1 \cap \mathscr{K}_{(A, \Phi ) , j}^2 ) \label{K1}
\end{align}


\begin{proof}[Proof of Proposition \ref{proposition:stableconf}]
As noted above, the linearization of the equation $d_{A}^\ast \Phi = 0$ is $-\delta_{2}^\ast (\dot{A}, \dot{\Phi} ) = 0$. By the Implicit Function Theorem for Hilbert spaces, it suffices to establish the surjectivity of $\delta_{2}^\ast : (\Omega^1 \oplus \Omega^1 )(Y, \mathfrak{g}_E )_{k} \rightarrow \Omega^0 (Y, \mathfrak{g}_E )_{k-1}$ whenever $(A, \Phi) \in \mathscr{C}_{E,k}^\ast$. By the decomposition (\ref{hodge0}) with $i = 2$ on the $L^{2}_{k-1}$ spaces, this is equivalent to $\mathrm{Ker} \delta_{(A, \Phi )}^2 = 0$. Indeed $\mathrm{Ker} \delta_{(A, \Phi )}^2$ is identified as the (rotation by $J$) of the tangent space to the $\mathscr{G}_{E, k}$--stabilizer of $(A, \Phi )$, which vanishes when $(A, \Phi )$ is stable. 

The latter assertion can be verified directly as follows. Suppose for a contradiction that $[\dot{\xi} , \Phi] $ and $d_{A} \dot{\xi}$ vanish identically, where $0 \neq \dot{\xi} \in \Omega^0 (Y, \mathfrak{g}_E )_{k-1}$. Elliptic regularity ensures that $\dot{\xi}$ has regularity $L^{2}_{k+1}$ (and in particular, $\dot{\xi}$ is $C^0$). Since $\dot{\xi}$ is $A$-parallel (i.e. $d_A \dot{\xi} = 0$) and non-trivial, then the eigenspaces of $\dot{\xi}$ induce a bundle decomposition $E = L \oplus L^{-1}\Lambda$ which is preserved by the connection $A$, for some $U(1)$ bundle $L$. The equation $[\dot{\xi} , \Phi] = 0$ says that $\Phi$ is diagonal with respect to this decomposition. Altogether, it follows that the $\mathscr{G}_{E, k+1}$--stabilizer of $(A, \Phi )$ contains a $U(1)$ subgroup, which is a contradiction.
\end{proof}

\subsubsection{Local structure of the stable orbit space}

Formally, the Kähler quotient construction yields the space of \textit{stable orbits} $\mathscr{B}_{E,k}^\ast = \mathscr{C}_{E,k}^\ast / \mathscr{G}_{E,k+1}$. For the moment, we regard it as a topological space with the quotient topology. We now describe how to equip $\mathscr{B}_{E,k}^\ast$ with a Hilbert manifold structure by constructing slices for the action of $\mathscr{G}_{E,k+1}$ on $\mathscr{C}_{E,k}^\ast$.


\begin{definition} Let $(A, \Phi ) \in \mathscr{C}_{E,k}^\ast$ be a stable configuration. The \textit{stable Coulomb slice at} $(A, \Phi )$ is the subspace $\mathcal{S}_{(A, \Phi ),k} \subset \mathscr{C}^{\ast}_{E,k}$ given by
\[
\mathcal{S}_{(A, \Phi ),k}  = \Big( (A, \Phi ) + \mathscr{K}_{(A, \Phi ) , k}^1 \Big) \cap \mathscr{C}^{\ast}_{E,k} .
\]
\end{definition}
\begin{proposition}[Local Slices] \label{proposition:slices}
Let $(A, \Phi ) \in \mathscr{C}_{E,k}^\ast$ be a stable configuration. Then:
\begin{enumerate}
    \item $\mathcal{S}_{(A, \Phi),k}$ is a Hilbert submanifold of $\mathscr{C}_{E,k}^\ast$ in a neighborhood of $(A, \Phi )$,
    \item $\mathcal{S}_{(A, \Phi ),k}$ meets the $\mathscr{G}_{E,k+1}$--orbit of $(A, \Phi )$ transversely and $L^2$--orthogonally at $(A, \Phi )$,
    \item There exists a neighborhood $\mathscr{U} \subset \mathscr{C}_{E,k}^\ast$ of $(A, \Phi )$ such for any $(A^\prime,\Phi^\prime) \in \mathscr{U}$ there exists a unitary gauge transformation $u \in \mathscr{G}_{E,k+1}$ such that $u\cdot (A^\prime , \Phi^\prime ) \in \mathcal{S}_{(A, \Phi ), k}$. (The gauge transformation $u$ is necessarily unique up to multiplication by $\pm I$, by stability).
\end{enumerate} 
\end{proposition}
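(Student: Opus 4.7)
The proposition is a local slice theorem in the style of gauge theory, so I would model the argument on the standard slice construction (cf.\ Donaldson--Kronheimer) adapted to the Kähler-quotient setting already prepared by the Hodge decompositions \eqref{hodge0}--\eqref{K1}. The central object is the smooth map
\[
\phi \co \mathscr{G}_{E,k+1} \times_{\{\pm I\}} \mathscr{K}^1_{(A,\Phi),k} \longrightarrow \mathscr{A}^c_{E,k},\qquad (u,\alpha)\mapsto u\cdot\big((A,\Phi)+\alpha\big),
\]
well-defined because the stable stabilizer is $\{\pm I\}$. The derivative of $\phi$ at $(\mathrm{id},0)$ sends $(\dot\xi,(\dot a,\dot\varphi))$ to $\delta^1_{(A,\Phi)}(\dot\xi) + (\dot a,\dot\varphi)$, and by the $L^2$ Hodge splitting $(\Omega^1\oplus\Omega^1)_k=\mathscr{T}^1\oplus\mathscr{K}^1$ (equation \eqref{hodge1} with $i=1$) this is a Banach-space isomorphism modulo the finite-dimensional stabilizer direction (which is killed since $\ker\delta^1$ is trivial at a stable point, by the argument in Proposition \ref{proposition:stableconf}). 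The inverse function theorem on Hilbert manifolds then gives that $\phi$ is a local diffeomorphism onto an open neighborhood of $(A,\Phi)$ in $\mathscr{A}^c_{E,k}$.

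For part (1), I would first show that $\mathcal{S}_{(A,\Phi),k}$ is cut out in the affine space $(A,\Phi)+\mathscr{K}^1_k$ by the equation $G(\alpha):=(d_A^{\ast}\Phi)|_{(A,\Phi)+\alpha}=0$. Its differential at $\alpha=0$ is the restriction $(\delta^2_{(A,\Phi)})^{\ast}|_{\mathscr{K}^1_k}$. Using the decomposition \eqref{K1}, $\mathscr{K}^1 = \mathscr{T}^2\oplus(\mathscr{K}^1\cap\mathscr{K}^2)$, one has $\ker(\delta^2)^{\ast}|_{\mathscr{K}^1} = \mathscr{K}^1\cap\mathscr{K}^2$. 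To see $(\delta^2)^{\ast}|_{\mathscr{K}^1}$ is surjective onto $\Omega^0(Y,\mathfrak{g}_E)_{k-1}$, observe that $(\delta^2)^{\ast}\delta^2\co\Omega^0(Y,\mathfrak{g}_E)_{k+1}\to\Omega^0(Y,\mathfrak{g}_E)_{k-1}$ is self-adjoint Fredholm with kernel $\ker\delta^2=0$ (stability), hence an isomorphism, so its image, which equals the image of $(\delta^2)^{\ast}|_{\mathscr{T}^2}\subset(\delta^2)^{\ast}|_{\mathscr{K}^1}$, is all of $\Omega^0(Y,\mathfrak{g}_E)_{k-1}$. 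The implicit function theorem then yields $\mathcal{S}_{(A,\Phi),k}$ as a Hilbert submanifold near $(A,\Phi)$ with tangent space $\mathscr{K}^1\cap\mathscr{K}^2$.

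Part (2) is now immediate from the decomposition \eqref{K2}: the tangent space to the orbit is $\mathscr{T}^1$, the tangent space to the slice is $\mathscr{K}^1\cap\mathscr{K}^2$, these are mutually $L^2$-orthogonal (since $\mathscr{K}^1\cap\mathscr{K}^2\subset\mathscr{K}^1=(\mathscr{T}^1)^{\perp}$), and their sum equals $\mathscr{K}^2 = T_{(A,\Phi)}\mathscr{C}^{\ast}_{E,k}$. For part (3), I would restrict the local diffeomorphism $\phi$ to the preimage of $\mathscr{C}^{\ast}_{E,k}$: since $\mathscr{G}_{E,k+1}$ preserves $\mathscr{C}^{\ast}_{E,k}$, this preimage is precisely $\mathscr{G}_{E,k+1}\times_{\{\pm I\}}\mathcal{S}_{(A,\Phi),k}$, and the restriction is a local diffeomorphism onto an open neighborhood in $\mathscr{C}^{\ast}_{E,k}$. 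Existence (and uniqueness up to $\pm I$) of the gauge transformation $u$ follows directly.

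\textbf{Main obstacle.} The genuinely substantive step is verifying that $(\delta^2_{(A,\Phi)})^{\ast}|_{\mathscr{K}^1_k}$ is surjective; everything else is linear algebra of the Hodge splittings plus the inverse function theorem. This surjectivity ultimately rests on the stability hypothesis (through $\ker\delta^2=0$) combined with the polystability identity $(\delta^1)^{\ast}\delta^2=0$ used in deriving \eqref{K1}, so it is essential that the slice is taken at a stable polystable configuration rather than a merely polystable one.
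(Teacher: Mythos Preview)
Your argument is correct and essentially parallel to the paper's, with a minor but genuine duality in how you organize parts (1) and (3).

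For (1), the paper views $\mathcal{S}_{(A,\Phi),k}$ as a submanifold of $\mathscr{C}^{\ast}_{E,k}$ cut out by the \emph{linear} condition $(\delta^1)^{\ast}(\,\cdot - (A,\Phi)\,)=0$, and verifies the surjectivity of $(\delta^1)^{\ast}|_{\mathscr{K}^2_k}$ (using $\ker\delta^1=0$ and the Hodge decomposition for $i=2$). You instead view $\mathcal{S}_{(A,\Phi),k}$ inside the affine space $(A,\Phi)+\mathscr{K}^1_k$ cut out by the \emph{nonlinear} polystability constraint, and verify the surjectivity of $(\delta^2)^{\ast}|_{\mathscr{K}^1_k}$ (using $\ker\delta^2=0$ and $\mathscr{T}^2\subset\mathscr{K}^1$). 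These two surjectivity statements are interchanged by the complex structure $J$ (since $\delta^2=J\circ\delta^1$ and $\mathscr{K}^1=J\mathscr{K}^2$), so the content is identical; the paper's version lands directly on the statement ``submanifold of $\mathscr{C}^{\ast}_{E,k}$'' whereas yours gives ``submanifold of the affine slice'' and requires the extra (routine) remark that this implies the former. For (3), the paper bypasses the global orbit map $\phi$ and applies the implicit function theorem directly to the gauge-fixing equation $(\delta^1)^{\ast}(e^{\xi}\cdot(A',\Phi')-(A,\Phi))=0$ in the variable $\xi$, checking that the linearization $(\delta^1)^{\ast}\delta^1$ is an isomorphism; your orbit-map approach via the inverse function theorem is the standard equivalent packaging and arrives at the same conclusion with the same analytic input.
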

\begin{proof}

To prove (1) it suffices, by the Implicit Function Theorem for Hilbert spaces, to show that the operator
\begin{align*}
T_{(A, \Phi )}\mathscr{C}_{E,k}^\ast = \mathscr{K}_{(A, \Phi ) , k }^2 \xrightarrow{(\delta^{1}_{(A, \Phi)})^\ast} \Omega^0 (Y, \mathfrak{g}_E )_{k-1}
\end{align*}
is surjective. But the operator $(\delta_{(A,\Phi )}^1)^\ast : (\Omega^1 \oplus \Omega^1 )(Y, \mathfrak{g}_E)_{k} \rightarrow  \Omega^0 (Y, \mathfrak{g}_E )_{k-1}$ is surjective (this follows by the same argument as for $(\delta_{(A,\Phi )}^2)^\ast$ from the proof of Proposition \ref{proposition:stableconf}, which uses the fact that $(A,\Phi )$ is stable). Thus, the assertion follows from this, the vanishing of $(\delta^{1}_{(A, \Phi )})^\ast \delta_{(A, \Phi )}^2$ (using $d_{A}^\ast \Phi = 0$) and the decomposition (\ref{hodge1}) with $i = 2$.

(2) follows now from the decomposition (\ref{K2}), noting that the tangent spaces at $(A, \Phi )$ to $\mathscr{C}_{E,k}^\ast$, to the $\mathscr{G}_{E,k+1}$--orbit of $(A, \Phi )$ and to $\mathcal{S}_{(A, \Phi ),k}$ are given by $\mathscr{K}_{(A, \Phi ) , k}^2$, $\mathscr{T}_{(A, \Phi ),k}^1$ and $\mathscr{K}_{(A, \Phi ) , k}^1 \cap \mathscr{K}_{(A, \Phi ) , k}^2$, respectively. 

We now show (3). In fact, we will see that for an arbitrary pair $(A^\prime, \Phi^\prime) \in \mathscr{A}_{E,k}^c$, not ncessarily in $\mathscr{C}_{E,k}$, there exists a unitary gauge transformation $u$ such that 
\begin{align}
(\delta^{1}_{(A, \Phi )})^\ast ( u\cdot (A^\prime , \Phi^\prime ) - (A, \Phi ) ) = 0 \label{gaugefix}.
\end{align}

Writing $(A^\prime, \Phi^\prime ) = (A, \Phi ) + (a, \phi )  $ we seek, for sufficiently small $(a, \phi)$, a unitary gauge transformation of the form $u = \mathrm{exp}\xi$ where $\xi \in \Omega^0 (Y, \mathfrak{g}_E )_{k+1}$, such that (\ref{gaugefix}) holds. Note that 
\[u \cdot (A^\prime , \Phi^\prime ) - (A, \Phi ) = (-d_A \xi + (\mathrm{exp} \xi)a (\mathrm{exp}\xi)^{-1} ,(\mathrm{exp} \xi)\Phi (\mathrm{exp}\xi)^{-1} +(\mathrm{exp} \xi)\phi (\mathrm{exp}\xi)^{-1} )
\]
By the Implicit Function Theorem for Hilbert spaces, it suffices to show that the linearisation of the equation (\ref{gaugefix}) in the $\xi$ direction is surjective at $(\xi , a , b ) = (0,0,0)$, and this is the operator given by
\begin{align}
\dot{\xi} \in \Omega^0 (Y, \mathfrak{g}_E )_{k+1} \mapsto (\delta_{(A,\Phi)}^1 )^\ast \delta_{(A,\Phi)}^1 \dot{\xi} \in \Omega^0 (Y, \mathfrak{g}_E )_{k-1}. \label{laplacedelta1}
\end{align}
By (\ref{hodge0}-\ref{hodge1}) 
the operator (\ref{laplacedelta1}) surjects onto the complement of $\mathrm{Ker}\delta_{(A,\Phi )}^1$. But $\mathrm{Ker}\delta_{(A,\Phi )}^1 = 0$ by stability of $(A, \Phi )$ (this follows by the same argument we used for $\delta_{(A,\Phi)}^2$ in the proof of Proposition \ref{proposition:stableconf}), and hence the assertion follows.
\end{proof}

By Proposition \ref{proposition:slices}, the map $\mathcal{S}_{(A, \Phi )} \rightarrow \mathscr{B}_{E,k}^\ast = \mathscr{C}_{E,k}^\ast/\mathscr{G}_{E,k+1}$ is a local homeomorphism at $(A, \Phi )$, and the Hilbert manifold structure on the neighborhood of $(A, \Phi ) \in \mathcal{S}_{(A, \Phi ),k}$ provides Hilbert manifold charts for the stable orbit space $\mathscr{B}_{E,k}^\ast$. The tangent spaces of $\mathscr{B}_{E,k}^\ast$ are thus given by
\begin{align}
T_{[(A, \Phi )]} \mathscr{B}_{E,k}^\ast & = T_{(A, \Phi )}\mathcal{S}_{(A, \Phi ),k} =\mathscr{K}_{(A, \Phi ) , k}^1  \cap \mathscr{K}_{(A, \Phi ) , k}^2\nonumber \\
& = \Big\{(\dot{A}, \dot{\Phi}) \in (\Omega^1 \oplus \Omega^1 )(Y, \mathfrak{g}_E)_{k} \, | \, -d_{A}^\ast \dot{A} - \ast [\dot{\Phi},\ast \Phi] = \ast[\dot{A}, \ast \Phi] - d_{A}^\ast \dot{\Phi} = 0  \Big\}.\label{tangentB}
\end{align}

The projection $\mathscr{C}_{E,k}^\ast \rightarrow \mathscr{B}_{E,k}^\ast$ becomes then a submersion of Hilbert manifolds. Noting that the $\mathscr{G}_{E,k+1}$--action on $\mathscr{A}_{E,k}^c$ is by isometries of the flat $L^2$ metric, there is an induced \textit{$L^2$ metric} on $\mathscr{B}_{E,k}^\ast$ making this projection a Riemannian submersion. By Proposition \ref{proposition:slices}(2), in the description (\ref{tangentB}) of $T_{(A, \Phi ) }\mathscr{B}_{E,k}^\ast$ the $L^2$ metric coincides with the one induced from the standard $L^2$ inner product.

Furthermore, recall that $\mathscr{G}_{E,k+1}$ acts on the Hilbert Kähler manifold $(\mathscr{A}_{E,k}^c , \omega_J , J ) $ preserving the Kähler structure and in a Hamiltonian fashion. By the infinite-dimensional version of the Kähler quotient Theorem, the complex structure $J$ and the symplectic form $\omega_J$ from (\ref{J}-\ref{omegaJ}) thus descend to give an (integrable) complex structure and symplectic form on the stable orbit space $\mathscr{B}_{E,k}^\ast$, compatible with the $L^2$ metric on $\mathscr{B}_{E,k}^\ast$. 
In the description (\ref{tangentB}) of $T_{[(A,\Phi)]}\mathscr{B}_{E,k}^\ast$, these two structures are given by the same expressions as (\ref{J}-\ref{omegaJ})
Summing up, we have:

\begin{corollary}\label{corollary:Bhilbert}
The stable orbit space $\mathscr{B}_{E,k}^\ast$ is a Hilbert Kähler manifold when given the $L^2$ Riemannian metric, the complex structure $J$ and the symplectic form $\omega_J$. \end{corollary}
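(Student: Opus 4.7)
The plan is to assemble the three structures on $\mathscr{B}_{E,k}^\ast$ from the local slice description in Proposition \ref{proposition:slices}, using that all the ambient data on $\mathscr{A}_{E,k}^c$ is $\mathscr{G}_{E,k+1}$--invariant, and then to verify that the descended tensors form a compatible Kähler triple.

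\textbf{Hilbert manifold structure and descent of tensors.} By Proposition \ref{proposition:slices}, for each $[(A,\Phi)]$ the slice $\mathcal{S}_{(A,\Phi),k}$ is a Hilbert submanifold of $\mathscr{C}_{E,k}^\ast$ near $(A,\Phi)$ that maps homeomorphically onto a neighborhood of $[(A,\Phi)]$ in $\mathscr{B}_{E,k}^\ast$; smoothness of the transition maps follows by applying the Implicit Function Theorem to the gauge-fixing equation in (3) of that proposition. Since $\mathscr{G}_{E,k+1}$ acts on $(\mathscr{A}_{E,k}^c,\omega_J,J)$ by holomorphic isometries, the three tensors restrict coherently across slices, and it only remains to check that they restrict well to the horizontal distribution $T_{[(A,\Phi)]}\mathscr{B}_{E,k}^\ast = \mathscr{K}_{(A,\Phi),k}^1 \cap \mathscr{K}_{(A,\Phi),k}^2$ of (\ref{tangentB}).

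\textbf{$J$--invariance of the horizontal distribution.} The central algebraic observation is that $\delta_{(A,\Phi)}^2 = J\delta_{(A,\Phi)}^1$ by formula (\ref{delta}), so $\mathscr{T}^2 = J\mathscr{T}^1$. Since $J$ is an $L^2$--isometry satisfying $J^2 = -I$, taking $L^2$--orthogonal complements yields $\mathscr{K}^2 = J\mathscr{K}^1$, and hence
\[
\mathscr{K}_{(A,\Phi),k}^1 \cap \mathscr{K}_{(A,\Phi),k}^2 = \mathscr{K}_{(A,\Phi),k}^1 \cap J\mathscr{K}_{(A,\Phi),k}^1
\]
is manifestly $J$--invariant. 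Compatibility of $(\omega_J,J)$ and the induced metric is a pointwise algebraic property inherited from $\mathscr{A}_{E,k}^c$, and closedness of the descended $\omega_J$ follows because $\omega_J$ is a constant (hence closed) $2$--form on the affine Hilbert space $\mathscr{A}_{E,k}^c$ and restricts to a closed form on the slice.

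\textbf{Integrability of $J$.} This is the step I expect to require the most care, since the slice is not itself a complex submanifold of $\mathscr{A}_{E,k}^c$. I would handle it by the standard Kähler-quotient identification: near a stable orbit, the unitary slice chart $\mathcal{S}_{(A,\Phi),k}\to\mathscr{B}_{E,k}^\ast$ can be identified, via the complex gauge action, with an open neighborhood in the quotient of $\mathscr{A}_{E,k}^c$ by a free holomorphic action of the complex gauge group $\mathscr{G}_{E,k+1}^c$. Freeness modulo $\{\pm I\}$ holds because at a stable $(A,\Phi)$ the complex infinitesimal action (\ref{delta}) has trivial kernel (the argument in Proposition \ref{proposition:stableconf} applied to both $\delta^1$ and $\delta^2$), and existence of the slice-to-complex-orbit identification is established by an Implicit Function Theorem argument parallel to the proof of Proposition \ref{proposition:slices}(3), but using the complex gauge direction. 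Since $\mathscr{A}_{E,k}^c$ is a complex Hilbert manifold and $\mathscr{G}_{E,k+1}^c$ acts freely and holomorphically, the quotient inherits an integrable complex structure, which agrees with the $J$ defined above on the slice by construction. This completes the proof.
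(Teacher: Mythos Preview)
Your proof is correct and follows essentially the same approach as the paper, which simply invokes the infinite-dimensional K\"ahler quotient theorem after establishing the slice charts from Proposition~\ref{proposition:slices}. You have in fact supplied more detail than the paper does---in particular the explicit verification that $\mathscr{K}^1 \cap \mathscr{K}^2$ is $J$--invariant via $\delta^2 = J\delta^1$, and the sketch of integrability through the complex-gauge quotient---whereas the paper treats the corollary as an immediate summary of the preceding discussion.
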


\subsubsection{The variational viewpoint}

We have obtained the stable orbit space $\mathscr{B}_{E}^\ast$ as the Káhler quotient of $(\mathscr{A}_{E}^c , \omega_J , J )$ by the $\mathscr{G}_E$--action. There is an alternative viewpoint in which the `polystability' condition $d_{A}^\ast \Phi = 0$ arises from a minimization problem. We only mention this briefly, referring to \cite{corlette-harmonic} for the details. 

Namely, consider a complex orbit $\mathscr{G}_{E}^c \cdot (A_0 , \Phi_0 )\subset \mathscr{A}_{E}^c$ and the restriction of the \textit{squared $L^2$--norm functional} $\| \Phi \|_{L^2}^2$ onto the orbit, i.e. the $\mathscr{G}_E$--invariant functional
\begin{align}
\mathscr{G}_{E}^c \cdot (A_0 , \Phi_0 ) \ni (A,\Phi ) \mapsto \| \Phi \|_{L^2}^2 . \label{L2normorbit}
\end{align}
Then $(A, \Phi) \in \mathscr{G}_{E}^c \cdot (A_0 , \Phi_0 )$ satisfies $d_{A}^\ast \Phi = 0$ iff $(A, \Phi )$ is a critical point of (\ref{L2normorbit}). Furthermore, when the $\mathscr{G}_E$--stabilizer of $(A,\Phi )$ is smallest possible, then the function (\ref{L2normorbit}) induces a \textit{convex} function on $\mathscr{G}_{E}^c / \mathscr{G}_E$ and hence if a critical point on the orbit exists then it is unique and it attains the minimum of $\| \Phi \|_{L^2}^2$ on that orbit.

\subsection{Fredholm theory of complex flat connections}\label{subsection:CS}\label{othergroups}

We now use our framework from \S \ref{subsection:setup} to discuss deformation-theoretic aspects of complex flat connections on $3$-manifolds.

\subsubsection{The complex Chern--Simons functional}

Throughout, we make a choice of a `basepoint' in $ \mathscr{A}_{E , k }^c$, i.e. fix a $GL(2,\mathbb{C} )$ connection on $E$ with fixed determinant $(\Lambda, \lambda )$, which we denote $\mathbb{A}_0$.

\begin{definition}\label{definition:complexCS}
The \textit{(complex) Chern--Simons functional} is the function $CS : \mathscr{A}_{E , k }^c \rightarrow \mathbb{C}$ defined at a connection $\mathbb{A} = \mathbb{A}_0 + \alpha$ by
\begin{align*}
CS (\mathbb{A}  ) = \int_Y \mathrm{Tr}\Big( \alpha \wedge \big( (F_{\mathbb{A}} + F_{\mathbb{A}_0}) - \frac{1}{6}[\alpha , \alpha ]\big) \Big).
\end{align*}
\end{definition}

The Chern--Simons functional has the following property. Suppose $M$ is an oriented and compact $4$-manifold with boundary $Y$ equipped with an extension of $E$ over to $M$ (as a bundle with fixed determinant) and extensions of the connection $\mathbb{A}$ and the basepoint connection $\mathbb{A}_0$ (as connections with fixed determinant). Then, using Stokes' Theorem, one can show:
\begin{align}
CS( \mathbb{A} ) = \int_M \mathrm{Tr}(F_{\mathbb{A}}\wedge F_{\mathbb{A}}) - \mathrm{Tr}(F_{\mathbb{A}_0} \wedge F_{\mathbb{A}_0}) . \label{CS-M}
\end{align}

We can use (\ref{CS-M}) to understand the periods of $CS(\mathbb{A})$ under the action of complex gauge transformations with fixed determinant. Namely, for a given $u \in \mathscr{G}_{E , k+1}^c$ choose any path of connections $\mathbb{A}(t) \in \mathscr{A}_{E,k}^c$ from $\mathbb{A}(0) = \mathbb{A}$ to $\mathbb{A}(1) = u \cdot \mathbb{A}$. This path can be regarded as a connection $\underline{\mathbb{A}}$ on the bundle over $[0,1]\times Y$ obtained as the trivial extension of $E \rightarrow Y$. We extend the basepoint connection $\mathbb{A}_0$ trivially over to $[0,1]\times Y$. Denote by $E_u \rightarrow S^1 \times Y$ the bundle obtained by clutching with $u$. By (\ref{CS-M}) and Chern--Weil theory we have

\begin{align}
CS (u \cdot \mathbb{A} ) - CS (\mathbb{A}) & = \int_{[0,1]\times Y} \mathrm{Tr}(F_{\underline{\mathbb{A}}}^2) = 8 \pi^2 ( c_2 (E_u ) - \frac{1}{2}c_1 (E_u )^2 ) \cdot [S^1 \times Y] \nonumber\\
& = 8 \pi^2 c_2 (E_u )\cdot [S^1 \times Y] \quad \in \quad  8 \pi^2 \mathbb{Z} \label{CS-periods}
\end{align}
where we note that $c_{1}(E_u )^2 = 0$ because $u$ acts as the identity on $\Lambda = \Lambda^2 E$. 

In particular, $\mathrm{Re}CS$ has periods in the lattice $8 \pi^2 \mathbb{Z} \subset \mathbb{R}$. In turn $\mathrm{Im}CS$ has no periods; furthermore it is independent of the chosen basepoint connection $\mathbb{A}_0$ and is given by the following formula: in terms of the decomposition $\mathbb{A} = A + i \Phi$ 
\begin{align}
\mathrm{Im}CS (A + i \Phi ) = 2 \int_{Y} \mathrm{Tr}\Big( \Phi \wedge \big( F_A - \frac{1}{6}[\Phi , \Phi] \big)  \Big) . \label{ImCS}
\end{align}

\subsubsection{Stable flat connections}

The variation of the Chern--Simons functional $CS : \mathscr{A}_{E, k}^c \rightarrow \mathbb{C}$ is given by
\begin{align}
CS (\mathbb{A} + \dot{\mathbb{A}} )  =  CS(\mathbb{A} ) = 2 \int_Y \mathrm{Tr}\big( \dot{\mathbb{A}} \wedge F_{\mathbb{A}}\big) + \int_Y \mathrm{Tr}\big( \dot{\mathbb{A}} \wedge d_{\mathbb{A}}\dot{\mathbb{A}}\big) + \frac{1}{3}\int_{Y}\mathrm{Tr}\big( \dot{\mathbb{A}}\wedge [\dot{\mathbb{A}}, \dot{\mathbb{A}}]\big). \label{variation}
\end{align}
From (\ref{variation}) one sees that $CS$ is a smooth (in fact, holomorphic) function on the $L^{2}_k$ connections (as long as $k \geq 1$), and its critical points are the connections $\mathbb{A}$ whose curvature has vanishing trace-less part (since the `Killing form' $(X,Y) \mapsto - \mathrm{Tr}(XY)$ on the Lie algebra $\mathfrak{gl}(2, \mathbb{C} )$ has kernel $Z( \mathfrak{gl}(2,\mathbb{C})) = \mathbb{C}\cdot I $): 
\[
F_{\mathbb{A}} - \frac{I}{2} F_{\lambda} = 0.
\]
Said differently, these are the $GL(2,\mathbb{C})$ connections $\mathbb{A}$ on $E$ with fixed determinant $(\Lambda , \lambda )$ which are \textit{projectively flat}, in the sense that the connection that $\mathbb{A}$ induces on the $PSL(2,\mathbb{C})$ bundle $P^c = E \times_{U(2)} PSL(2,\mathbb{C} )$ is flat. 
Informally, we might often refer to such connections plainly as `flat'. In terms of the decomposition $\mathbb{A} = A + i \Phi$, this equation reads
\begin{align}
F_A - \frac{I}{2} F_{\lambda} -  \frac{1}{2} [\Phi , \Phi ]  & = 0 \label{eq1}\\
d_{A}\Phi &= 0 \label{eq2}.
\end{align}

\begin{definition}
A \textit{polystable} (projectively) flat $GL(2,\mathbb{C})$ connection with fixed determinant is a pair $(A, \Phi ) \in \mathscr{A}_{E,k}^c$ which solves the equations (\ref{eq1}-\ref{eq2}) and
\begin{align}
d_{A}^\ast \Phi & =  0 .\label{eq3}
\end{align}
That is, $A + i \Phi$ is flat and $(A, \Phi ) $ lies in the polystable locus $\mathscr{C}_{E,k}$. The group $\mathscr{G}_{E,k+1}$ of unitary gauge transformations fixing the determinant preserves the solutions to (\ref{eq1}-\ref{eq3}). If, in addition, $(A, \Phi )$ has stabilizer $\pm I$ under the $\mathscr{G}_E$--action then $(A, \Phi )$ is \textit{stable}. 

We will denote by $\mathscr{M}_{E,k} \subset \mathscr{B}_{E,k}$ (resp. $\mathscr{M}_{E,k}^\ast \subset \mathscr{B}_{E,k}^\ast$) the topological spaces of $\mathscr{G}_{E,k+1}$--orbits of solutions of (\ref{eq1}-\ref{eq3}) (resp. those which are stable). It follows from Remark \ref{remark:ellipticity} below that every such $\mathscr{G}_{E,k+1}$--orbit contains a \textit{smooth} solution and the topological spaces $\mathscr{M}_{E,k} , \mathscr{M}_{E,k}^\ast$ do not depend on the chosen $k$. Thus, we will subsequently drop the subscript $k$ from our notation.

\end{definition}

\begin{remark}[Ellipticity] \label{remark:ellipticity} The equations (\ref{eq1}-\ref{eq3}) do not form an elliptic system, even modulo $\mathscr{G}_E $. However, this can be achieved if we `augment' the equations by introducing two additional fields $\Psi_1 , \Psi_2 \in \Omega^0 (Y , \mathfrak{g}_E )_{k}$ and considering the equations
\begin{align*}
F_A - \frac{I}{2} F_\lambda - \frac{1}{2} [\Phi , \Phi] & = \ast d_{A} \Psi_1 + \ast [\Psi_2 , \Phi]\\
d_{A} \Phi & =  \pm  \ast ( [ \Phi , \Psi_1 ] + d_{A}\Psi_2 )\\
d_{A}^\ast \Phi & =  [\Psi_1 , \Psi_2]
\end{align*}
on whose space of solutions $\mathscr{G}_{E, k+1}$ acts (by $u \Psi_i u^{-1}$ on the additional fields $\Psi_i$). Above, we mean that there are two possible systems of equations according to the choice of sign $\pm$. (Both have been studied in \cite{oliveira-nagy}; for the $-$ sign and $\Psi _2 = 0$ the equations are known as the \textit{extended Bogomolny equations} \cite{he-mazzeo1,he-mazzeo2}). Both of these augmented systems are elliptic modulo $\mathscr{G}_E$, in the usual sense. 
\end{remark}

\begin{remark}[Invariance under tensoring by line bundles] (See also \S \ref{U2model}) \label{remark:tensoring} For a given $U(1)$ bundle with connection $(L , A_L )$, we can consider the $U(2)$ bundle as $E \otimes L$ and give its determinant $\Lambda \otimes L^2$ the connection $\lambda \otimes A_{L}^2$. This naturally induces biholomorphisms $\mathscr{A}_{E, k}^c \cong \mathscr{A}_{E \otimes L , k }^c$, $ \mathscr{C}_{E, k}^\ast \cong \mathscr{C}_{E \otimes L , k }$, $\mathscr{B}_{E, k}^\ast \cong \mathscr{B}_{E \otimes L , k }^\ast$ which restrict to homeomorphisms $\mathscr{M}_{E} \cong \mathscr{M}_{E \otimes L  } $ and $\mathscr{M}_{E}^\ast \cong \mathscr{M}_{E \otimes L  }^\ast $. In particular, it follows that, up to homeomorphism, the spaces $\mathscr{M}_E , \mathscr{M}_{E}^\ast$ are independent of the fixed connection $\lambda$ on $\Lambda = \Lambda^2 E$; and furthermore they only depend on the $SO(3)$ bundle $P $ underlying $E$.
\end{remark}


\subsubsection{Hessians}

From now on we fix an angle $\theta \in [0 , 2 \pi)$, and consider the functional $S = \mathrm{Re}(e^{- i \theta} CS ) : \mathscr{A}_{E , k }^c \rightarrow \mathbb{R}$. For simplicity, when performing calculations and displaying formulas we shall assume $\theta = \pi/2$ (i.e. use the imaginary part of the Chern--Simons functional (\ref{ImCS}), which has the advantage of having no periods under the action of $\mathscr{G}_{ E , k+1}^c$). However, the discussion that follows goes through for all $\theta$ with little modification.

The formal gradient of $S : \mathscr{A}_{E , k }^c \rightarrow \mathbb{R}$ with respect to the flat $L^2$ metric makes sense as a smooth section, denoted $\mathrm{grad}S$, of the $L^{2}_{k-1}$ completion of the tangent bundle $T \mathscr{A}_{E, k-1}^c \rightarrow \mathscr{A}_{E,k}^c$. It is given by
\begin{align}
\mathrm{grad}S = 2 \cdot \big( \ast d_{A}\Phi , \ast (F_A  - \frac{I}{2} F_{\lambda} - \frac{1}{2}[\Phi , \Phi]) \big) . \label{gradS}
\end{align}

Since $S$ is the imaginary part of the holomorphic function $CS$, the zeros of $\mathrm{grad}S$ agree with the critical points of $CS$, i.e. the flat connections. By the following, the critical points of the restriction $S|_{\mathscr{C}_{E , k }^\ast }$ of $S$ to the stable locus are the stable flat connections:

\begin{proposition}\label{proposition:tangent}
The section $\mathrm{grad}S$ of $T \mathscr{A}_{E, k-1}^c \rightarrow \mathscr{A}_{E,k}^c$ takes values on $T \mathscr{C}_{E , k-1}^\ast \subset T \mathscr{A}_{E , k-1}^c$ along $\mathscr{C}_{E , k}^\ast \subset \mathscr{A}_{E , k}^c$. (That is, $\mathrm{grad}S$ is `tangent' to the stable locus.)
\end{proposition}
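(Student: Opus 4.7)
The statement is equivalent to showing that, at any polystable configuration $(A, \Phi) \in \mathscr{C}_{E,k}^\ast$, the vector $\mathrm{grad}S$ at $(A,\Phi)$ lies in $\ker(\delta^2_{(A,\Phi)})^\ast$, since by Proposition \ref{proposition:stableconf} (applied to $L^2_{k-1}$ regularity) this kernel coincides with $T_{(A,\Phi)}\mathscr{C}_{E,k-1}^\ast$. I would pursue a conceptual proof based on the gauge-invariance properties of the Chern--Simons functional, which avoids grinding out differential-form identities.

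First, I would observe that although $CS$ has periods under the action of $\mathscr{G}_{E,k+1}^c$, these periods lie in $8\pi^2 \mathbb{Z}$ by the calculation (\ref{CS-periods}) and depend only on the topological type (i.e.\ the connected component) of the gauge transformation. Equivalently, $CS$ is strictly invariant under the identity component of $\mathscr{G}_{E,k+1}^c$. Consequently, the holomorphic $1$-form $dCS$ is $\mathscr{G}_{E,k+1}^c$-invariant as a $1$-form on $\mathscr{A}_{E,k}^c$; the same is then true of $dS = \mathrm{Re}(e^{-i\theta} dCS)$.

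Second, recall from (\ref{delta}) that the infinitesimal action of $\mathrm{Lie}\,\mathscr{G}_{E,k+1}^c$ at the point $(A,\Phi)$ takes the form $(\dot\xi_1, \dot\xi_2) \mapsto \delta^1_{(A,\Phi)}\dot\xi_1 + \delta^2_{(A,\Phi)}\dot\xi_2$. The invariance of $dS$ thus yields
\[
(dS)_{(A,\Phi)}\bigl(\delta^1_{(A,\Phi)}\dot\xi_1 + \delta^2_{(A,\Phi)}\dot\xi_2\bigr) = 0 \qquad \text{for all }\dot\xi_1, \dot\xi_2 \in \Omega^0(Y, \mathfrak{g}_E).
\]
Setting $\dot\xi_1 = 0$ and recalling that $\mathrm{grad}S$ is the $L^2$ gradient of $S$, the identity $\langle \mathrm{grad}S,\, \delta^2_{(A,\Phi)} \dot\xi_2\rangle_{L^2} = 0$ for every $\dot\xi_2$ is precisely the statement $(\delta^2_{(A,\Phi)})^\ast\,\mathrm{grad}S = 0$, which is what we needed.

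The only subtlety is regularity: since $\mathrm{grad}S$ involves one derivative of $(A, \Phi)$, it is only a section of $T\mathscr{A}_{E,k-1}^c$. However, Proposition \ref{proposition:stableconf} and its tangent space description apply equally well at the $L^2_{k-1}$ level (indeed the operators $\delta^i_{(A,\Phi)}$ act between any consecutive Sobolev spaces), so the argument goes through without modification. As a sanity check, a direct verification is also available: writing $\dot A = 2\ast d_A\Phi$ and $\dot\Phi = 2\ast B$ with $B = F_A - \tfrac{I}{2}F_\lambda - \tfrac{1}{2}[\Phi,\Phi]$, the Bianchi identity for the complex connection $\mathbb{A} = A+i\Phi$ gives $d_A B = [\Phi, d_A\Phi]$, and the pointwise $3$-dimensional identity $\ast[\ast\gamma, \ast\eta] = \ast[\gamma,\eta]$ (for a $\mathfrak{g}_E$-valued $2$-form $\gamma$ and $1$-form $\eta$) yields $\ast[\dot A, \ast\Phi] = -2\ast[\Phi, d_A\Phi] = d_A^\ast \dot\Phi$, so that $(\delta^2_{(A,\Phi)})^\ast\mathrm{grad}S = \ast[\dot A, \ast\Phi] - d_A^\ast\dot\Phi = 0$. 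The main "obstacle" is really just bookkeeping of conventions in this alternative computation; the conceptual proof sidesteps this entirely.
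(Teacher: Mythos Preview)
Your proof is correct and essentially matches the paper. The paper's main proof is precisely your ``sanity check'' direct computation of $(\delta^2_{(A,\Phi)})^\ast\,\mathrm{grad}S = 0$ via the Bianchi identity, and immediately after the proof the paper adds a remark giving exactly your conceptual argument: ``More generally, the conclusion of Proposition~\ref{proposition:tangent} follows by the invariance of $S$ under the complex gauge-group action $\mathscr{G}_{E}^c$. (More precisely, all that is needed is that the derivative $(dS)_{(A,\Phi)}$ vanishes in the directions tangent to the complex orbit $\mathscr{G}_{E}^c\cdot(A,\Phi)$).'' So you have reproduced both arguments, just with the emphasis reversed.

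One small logical slip to tighten: the sentence ``The invariance of $dS$ thus yields $(dS)_{(A,\Phi)}(\delta^1\dot\xi_1 + \delta^2\dot\xi_2)=0$'' does not quite follow as stated. Invariance of a $1$-form $\alpha$ under a group action gives $\mathcal{L}_{X_\xi}\alpha=0$, not $\iota_{X_\xi}\alpha=0$ (e.g.\ $dx$ is translation-invariant on $\mathbb{R}$ but does not annihilate $\partial_x$). What you actually need---and what you already established one sentence earlier---is the invariance of $S$ itself under the identity component of $\mathscr{G}_{E,k+1}^c$; differentiating $t\mapsto S(\exp(t\xi)\cdot(A,\Phi))$ at $t=0$ then gives $(dS)(X_\xi)=0$ directly. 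Just drop the intermediate step about invariance of $dCS$ as a $1$-form, or rephrase it as ``Consequently $dS$ annihilates the infinitesimal orbit directions.''
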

\begin{proof}
Using the Bianchi identity $d_A F_A = 0$, we compute that $(\delta_{(A, \Phi )}^2)^\ast (\mathrm{grad}S )$ is given by
\begin{align*}
\ast [\ast d_{A}\Phi , \ast \Phi] - d_{A}^\ast (\ast (F_A - \frac{I}{2}F_{\lambda} - \frac{1}{2}[\Phi , \Phi])) = \ast [d_{A}\Phi , \Phi ] - \ast [d_A \Phi , \Phi] = 0. 
\end{align*} \end{proof}
More generally, the conclusion of Proposition \ref{proposition:tangent} follows by the invariance of $S$ under the complex gauge-group action $\mathscr{G}_{E}^c$. (More precisely, all that is needed is that the derivative $(dS)_{(A, \Phi )}$ vanishes in the directions tangent to the complex orbit $\mathscr{G}_{E}^c \cdot (A, \Phi ) $).

Thus, $\mathrm{grad}S$ gives a section of $T \mathscr{C}_{E , k-1}^\ast \rightarrow \mathscr{C}_{E , k }^\ast$. This section is $\mathscr{G}_{E , k+1}$--equivariant; hence it descends to a well-defined `vector field' on the stable orbit space $\mathscr{B}_{E , k }^\ast$. More precisely, we obtain a smooth section $[\mathrm{grad}S]$ of the Hilbert vector bundle 
\begin{align}
\begin{tikzcd}
T\mathscr{B}_{E , k-1}^\ast \arrow{d}\\
\mathscr{B}_{E , k}^\ast \arrow[bend left=80]{u}{[\mathrm{grad}S]}
\end{tikzcd}\label{[gradS]}
\end{align}
Equivalently, the section $[\mathrm{grad}]$ can be regarded as the formal gradient of $S : \mathscr{B}_{E, k}^\ast \to \mathbb{R}$ with respect to the $L^2$ metric. In what follows we will show that the section $(\ref{[gradS]})$ is Fredholm. 

We first introduce the notation $\mathscr{K}_{(A, \Phi), j } = \mathscr{K}_{(A, \Phi ) , j }^1 \cap \mathscr{K}_{(A, \Phi ) , j }^2$ for $0 \leq j \leq k$ (cf. Definitions (\ref{T}-\ref{K}) and the $L^2$--orthogonal decompositions (\ref{hodge0}-\ref{K1})). As $(A, \Phi )$ varies in the stable configuration space, these subspaces form $\mathscr{G}$--equivariant Hilbert bundles $\mathscr{K}_{j}\rightarrow \mathscr{C}_{E , k }^\ast$ (and similarly for $\mathscr{K}_{j}^1$ and $\mathscr{K}_{j}^2$). In particular, $\mathscr{K}_{j}$ is the pullback of the tangent bundle of $\mathscr{B}_{E, j }^\ast$ along the projection $\mathscr{C}_{E, j}^\ast \rightarrow \mathscr{B}_{E , j }^\ast$ and $\mathscr{K}_{j}^2$ is the tangent bundle of $\mathscr{C}_{E, j}^\ast$.

Now, the $L^2$ Riemannian metric on $\mathscr{B}_{E, k}^\ast$ provides the Levi--Civita connection on the Hilbert vector bundle (\ref{[gradS]}), and we use this connection to define the vertical derivative of the section $[\mathrm{grad}S]$. This is a vector bundle map
\[
\mathscr{D} := \mathcal{D}[\mathrm{grad}S] : \mathscr{K}_k \rightarrow \mathscr{K}_{k-1},
\]
Formally, $\mathscr{D}_{(A, \Phi )}$ is the covariant Hessian of $S$ at $[(A, \Phi )]$ with respect to the $L^2$ metric. The following gives a more explicit description:

\begin{proposition}\label{proposition:hessian_description}
We have 
\begin{align}
\mathscr{D} = (\Pi_{\mathscr{K}_{k-1}} \circ \mathrm{Hess}S)|_{\mathscr{K}_k} : \mathscr{K}_{k}\rightarrow \mathscr{K}_{ k-1} , \label{D}
\end{align}
where
\begin{itemize}
\item $\Pi_{\mathscr{K}_{j}} : (\Omega^1 \oplus \Omega^1 )(Y, \mathfrak{g}_E )_j \rightarrow \mathscr{K}_{j}$ is the $L^2$--orthogonal projection (for $0\leq j \leq k$). Here we are using the following $L^2$--orthogonal decomposition obtained from (\ref{hodge1}-\ref{K2}) at a stable configuration $(A, \Phi ) \in \mathscr{C}_{(A, \Phi) , k }^\ast$
\begin{align}
(\Omega^1 \oplus \Omega^1)(Y, \mathfrak{g}_E )_j = \mathscr{T}_{(A, \Phi ), j}^1 \oplus \mathscr{T}_{(A, \Phi ) , j}^2 \oplus \mathscr{K}_{(A, \Phi ) , j} .\label{T1T2K}
\end{align}
\item $\mathrm{Hess}S : (\Omega^1 \oplus \Omega^1 )(Y, \mathfrak{g}_E )_k  \rightarrow (\Omega^1 \oplus \Omega^1 )(Y, \mathfrak{g}_E )_{k-1}$ is the Hessian of $S : \mathscr{A}_{E, k }^c \rightarrow \mathbb{R}$ for the flat $L^2$ metric. This is given by
\begin{align}
\mathrm{Hess}S = 2 \cdot \begin{pmatrix} \ast [\Phi , \cdot ] & \ast d_A \\
\ast d_A & - \ast [\Phi , \cdot ]
\end{pmatrix} . \label{flatHess}
\end{align}

\end{itemize}
\end{proposition}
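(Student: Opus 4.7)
The proposition has two claims: the explicit formula (\ref{flatHess}) for the flat Hessian on $\mathscr{A}^c_E$, and the projection formula (\ref{D}) identifying $\mathscr{D}$ with $\Pi_{\mathscr{K}_{k-1}}\circ\mathrm{Hess}S|_{\mathscr{K}_k}$. My plan is to establish the first by direct differentiation of the gradient expression (\ref{gradS}), and the second by combining the standard submanifold Levi-Civita formula on $\mathscr{C}^\ast \subset \mathscr{A}^c_E$ with the O'Neill formula for the Riemannian submersion $\mathscr{C}^\ast \to \mathscr{B}^\ast$.

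For (\ref{flatHess}), I start from the formula $\mathrm{grad}S = 2(\ast d_A\Phi, \ast(F_A - (I/2)F_\lambda - (1/2)[\Phi,\Phi]))$ and differentiate componentwise in the direction $(\dot A, \dot \Phi)$. The first component linearises to $2\ast(d_A\dot\Phi + [\dot A, \Phi])$ and the second to $2\ast(d_A\dot A - [\Phi, \dot\Phi])$. Using the symmetry $[\alpha,\beta]=[\beta,\alpha]$ for $\mathfrak{g}_E$-valued $1$-forms, these assemble into the stated matrix.

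For (\ref{D}), the key preliminary observation is that $\mathrm{grad}S$ is not merely tangent to $\mathscr{C}^\ast$ (Proposition \ref{proposition:tangent}) but in fact \emph{horizontal} for the submersion $\mathscr{C}^\ast \to \mathscr{B}^\ast$. The $\mathscr{G}_E$-invariance of $S$ forces $\mathrm{grad}S$ to be $L^2$-orthogonal to every infinitesimal gauge orbit $\mathscr{T}^1_{(A,\Phi)}$, placing it in $\mathscr{K}^1$; together with $\mathrm{grad}S \in \mathscr{K}^2$, this gives $\mathrm{grad}S \in \mathscr{K} = \mathscr{K}^1 \cap \mathscr{K}^2$ at every stable configuration. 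Moreover, $\mathrm{grad}S$ is $\mathscr{G}_E$-equivariant, so it is a basic horizontal vector field on $\mathscr{C}^\ast$ descending to $[\mathrm{grad}S]$ on $\mathscr{B}^\ast$.

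With this in place, I apply the O'Neill formula: for $v \in \mathscr{K}_{(A,\Phi),k}$ lifted to a basic horizontal vector field $V$ near $(A,\Phi)$,
\[
\nabla^{\mathscr{B}^\ast}_v[\mathrm{grad}S]\bigm|_{[(A,\Phi)]} = \Pi_{\mathscr{K}_{(A,\Phi),k-1}}\bigl(\nabla^{\mathscr{C}^\ast}_V \mathrm{grad}S\bigr)\bigm|_{(A,\Phi)}.
\]
The submanifold Levi-Civita on $\mathscr{C}^\ast \subset \mathscr{A}^c_E$ gives $\nabla^{\mathscr{C}^\ast}_V \mathrm{grad}S = \Pi_{\mathscr{K}^2_{k-1}}\bigl(d(\mathrm{grad}S)(V)\bigr)$, and because $\mathscr{A}^c_E$ is affine, $d(\mathrm{grad}S)(v)|_{(A,\Phi)} = \mathrm{Hess}S\cdot v$ is the flat Hessian. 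Since $\mathscr{K} \subset \mathscr{K}^2$, the composition of the two projections collapses to $\Pi_{\mathscr{K}_{(A,\Phi),k-1}}$, and (\ref{D}) follows. The only real technical point is justifying O'Neill's formula in this Hilbert-manifold setting, which is routine given the explicit slice model from Proposition \ref{proposition:slices}: the horizontal subspace at $(A,\Phi)$ is $\mathscr{K}_{(A,\Phi)}$ and coincides with $T_{(A,\Phi)}\mathcal{S}_{(A,\Phi)}$, so that horizontal lifts of vector fields and basic projections can be constructed explicitly via the slice.
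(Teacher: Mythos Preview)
Your proof is correct and follows essentially the same two-step strategy as the paper: first relate the Hessian on $\mathscr{B}^\ast$ to that on $\mathscr{C}^\ast$ via the Riemannian submersion, then relate the Hessian on $\mathscr{C}^\ast$ to the flat Hessian on $\mathscr{A}^c_E$ via the submanifold (Gauss) formula, and finally collapse the two projections using $\mathscr{K}\subset\mathscr{K}^2$. The paper phrases both steps as ``general principles'' without naming O'Neill explicitly, and only invokes tangency of $\mathrm{grad}S$ to $\mathscr{C}^\ast$ (Proposition~\ref{proposition:tangent}) rather than your stronger (and correct) observation that $\mathrm{grad}S$ is in fact horizontal; your version is slightly more explicit but substantively the same.
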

\begin{proof}
By definition, the projection $\mathscr{C}_{E , k }^\ast \rightarrow \mathscr{B}_{E, k}^\ast$ is a Riemannian submersion with respect to the $L^2$ metric on $\mathscr{B}_{E, k }^\ast$ and the restriction to $\mathscr{C}_{E , k}^\ast \subset \mathscr{A}_{E , k }^c$ of the flat $L^2$ metric. As a matter of general principles, the covariant Hessian $\mathscr{D}$ of $S$ on $\mathscr{B}_{E}^\ast$ is then given by
\begin{align}
\mathscr{D} = (\Pi_{\mathscr{K}_{k-1}}^{\mathscr{K}_{k-1}^2} \circ \mathrm{\mathscr{D}}^\prime )|_{\mathscr{K}_k} : \mathscr{K}_{k} \rightarrow \mathscr{K}_{k-1} \label{QQprime}
\end{align}
where 
\begin{itemize}
    \item $\mathscr{D}^\prime : \mathscr{K}^{2}_k \to \mathscr{K}^{2}_{k-1}$ denotes the covariant Hessian of $S|_{\mathscr{C}_{E , k }^\ast}$ with respect to the restriction of the flat $L^2$ metric, and
    \item $\Pi_{\mathscr{K}_{k-1}}^{\mathscr{K}_{k-1}^2} :\mathscr{K}_{k-1}^2 \to \mathscr{K}_{k-1}$ is the $L^2$--orthogonal projection in (\ref{K2})).
\end{itemize}

Again by general principles, because $\mathrm{grad}S$ is tangent to the stable locus $\mathscr{C}_{E , k}^\ast \subset \mathscr{A}_{E , k}^c$ (Proposition \ref{proposition:tangent}) then the operator $\mathscr{D}^\prime$ is related to the flat Hessian by 
\begin{align}
    \mathscr{D}^\prime = (\Pi_{\mathscr{K}_{k-1}^2} \circ \mathrm{Hess}S )|_{\mathscr{K}_{k}^2} : \mathscr{K}_{k}^2 \rightarrow \mathscr{K}_{k-1}^2 \label{QprimeHessS} \end{align}
where $\Pi_{\mathscr{K}_{k-1}^2} :(\Omega^1 \oplus \Omega^1 )(Y, \mathfrak{g}_E )_{k-1} \rightarrow \mathscr{K}_{k-1}^2$ is the $L^2$--orthogonal projection in (\ref{hodge1}). Putting together (\ref{QQprime}-\ref{QprimeHessS}) gives the required result. (Recall that $(\delta^1 )^\ast \circ \delta^2 = 0$ at stable flat connections, which implies that $\Pi_{\mathscr{K}_{k-1}}$ agrees with $\Pi_{\mathscr{K}_{k-1}}^{\mathscr{K}^{2}_{k-1}} \circ \Pi_{\mathscr{K}^{2}_{k-1}}$).
\end{proof}

\begin{theorem}\label{theorem:fredholm}
The smooth section $[\mathrm{grad}S]$ from (\ref{[gradS]}) is Fredholm, i.e. at a stable connection $(A, \Phi )$ the Hessian $\mathscr{D}_{(A, \Phi )} : \mathscr{K}_{(A, \Phi ), k}\rightarrow \mathscr{K}_{(A, \Phi ) , k-1}$ is Fredholm. In addition:
\begin{enumerate}
\item $\mathscr{D}_{(A, \Phi )}$ is formally $L^2$-self-adjoint and hence of index zero. Furthermore, there exists a complete $L^2$--orthonormal system of eigenvectors $\mathscr{D}_{(A, \Phi )}$ contained in $L^{2}_{k+1}$, and its eigenvalues form a discrete and doubly-infinite subset of $\mathbb{R}$.
\item At a stable (projectively) flat connection $(A, \Phi )$, $\mathscr{D}_{(A, \Phi )}$ is given plainly by (\ref{flatHess}).
\end{enumerate}
\end{theorem}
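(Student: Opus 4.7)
The strategy is to augment $\mathscr{D}_{(A,\Phi)}$ to a self-adjoint first-order elliptic operator $\mathcal{L}$ on the ambient space of pairs of $0$- and $1$-forms, and then deduce the Fredholm and spectral properties of $\mathscr{D}$ from those of $\mathcal{L}$ via the Hodge decomposition $(\ref{T1T2K})$. At a stable configuration $(A,\Phi)$, define
\[\mathcal{L}(\xi_1,\xi_2,a,\phi)=\bigl((\delta^1_{(A,\Phi)})^\ast(a,\phi),\,(\delta^2_{(A,\Phi)})^\ast(a,\phi),\,\mathrm{Hess}\,S(a,\phi)+\delta^1_{(A,\Phi)}\xi_1+\delta^2_{(A,\Phi)}\xi_2\bigr)\]
on $(\Omega^0\oplus\Omega^0\oplus\Omega^1\oplus\Omega^1)(Y,\mathfrak{g}_E)_k$. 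Formal self-adjointness is built in: the $\delta^i$- and $(\delta^i)^\ast$-blocks are formal $L^2$-adjoints of each other, and $\mathrm{Hess}\,S$ from $(\ref{flatHess})$ is visibly symmetric. Computing the principal symbol of $\mathcal{L}$ at a nonzero cotangent vector $\eta$ reduces the vanishing system to $\iota_\eta a=\iota_\eta\phi=0$ together with $\eta\otimes\xi_1=2\ast(\eta\wedge\phi)$ and $\eta\otimes\xi_2=2\ast(\eta\wedge a)$; splitting each $1$-form component into parts parallel and perpendicular to $\eta$, and noting $\ast(\eta\wedge\cdot)\perp\eta$, one forces $\xi_1=\xi_2=0$ and then $a=\phi=0$. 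Thus $\mathcal{L}$ is first-order, formally self-adjoint, elliptic, and standard theory yields Fredholmness $L^2_k\to L^2_{k-1}$ of index zero, closed range, discrete real spectrum unbounded above and below, and a complete $L^2$-orthonormal eigenbasis of smooth sections.

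\textbf{Descent to $\mathscr{D}$.} At stable $(A,\Phi)$, the polystability relation $(\delta^1)^\ast\delta^2=0$ forces $\mathscr{T}^1\perp\mathscr{T}^2$ and $\mathscr{T}^1\cap\mathscr{T}^2=0$ by Hodge, while stability gives $\ker\delta^i=0$. The forgetful map $(\xi_1,\xi_2,a,\phi)\mapsto(a,\phi)$ then restricts to an isomorphism $\ker\mathcal{L}\xrightarrow{\sim}\ker\mathscr{D}$: surjectivity writes the $\mathscr{T}^1\oplus\mathscr{T}^2$-component of $\mathrm{Hess}\,S(a,\phi)$ uniquely as $-\delta^1\xi_1-\delta^2\xi_2$, and injectivity follows from $\mathscr{T}^1\cap\mathscr{T}^2=0$ together with $\ker\delta^i=0$. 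Thus $\dim\ker\mathscr{D}<\infty$; formal self-adjointness of $\mathscr{D}$ (preserved by orthogonal projection onto $\mathscr{K}$) combined with closed range inherited from $\mathcal{L}$ through $(\ref{T1T2K})$ yields Fredholmness of index zero. The spectral and regularity statements for $\mathscr{D}$ transfer from $\mathcal{L}$ via the same Hodge reduction, because the $L^2$-closure of $\mathscr{D}$ on $L^2(\mathscr{K})$ inherits compact resolvent from that of $\mathcal{L}$, producing discrete real spectrum and an orthonormal eigenbasis contained in $L^2_{k+1}$.

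\textbf{Formula at flat connections.} Differentiating the identity $(\delta^2)^\ast\mathrm{grad}\,S\equiv 0$ of Proposition~\ref{proposition:tangent} along a curve in $\mathscr{C}_E^\ast$ based at a flat stable $(A,\Phi)$, and using that $\mathrm{grad}\,S(A,\Phi)=0$, yields $(\delta^2)^\ast\mathrm{Hess}\,S(\alpha)=0$ for all $\alpha\in\mathscr{K}^2$. A block computation on $(\ref{flatHess})$ gives $\mathrm{Hess}\,S\circ J=-J\circ\mathrm{Hess}\,S$, and from $\delta^2=J\delta^1$ together with $J^\ast=-J$ one obtains $(\delta^1)^\ast=(\delta^2)^\ast J$. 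For $\alpha\in\mathscr{K}$ we therefore compute
\[(\delta^1)^\ast\mathrm{Hess}\,S(\alpha)=(\delta^2)^\ast J\,\mathrm{Hess}\,S(\alpha)=-(\delta^2)^\ast\mathrm{Hess}\,S(J\alpha)=0,\]
since $(\delta^2)^\ast(J\alpha)=(\delta^1)^\ast\alpha=0$ places $J\alpha$ back in $\mathscr{K}^2$. Hence $\mathrm{Hess}\,S$ preserves $\mathscr{K}$ at flat stable connections, the projection in $(\ref{D})$ is redundant there, and $\mathscr{D}_{(A,\Phi)}$ is given directly by $(\ref{flatHess})$.

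\textbf{Main obstacle.} The principal-symbol computation for $\mathcal{L}$ is the central technical input and must track how the $\ast(\eta\wedge\cdot)$ pieces coming from $\mathrm{Hess}\,S$ interact with the $\eta\wedge$ and $\iota_\eta$ pieces of the gauge-fixing blocks. Thereafter, the spectral transfer from $\mathcal{L}$ to $\mathscr{D}$ needs care: nonzero eigenvectors of $\mathcal{L}$ do not restrict one-to-one onto those of $\mathscr{D}$, so the transfer has to proceed through the Hodge-decomposed compact-resolvent picture rather than by direct eigenvector matching.
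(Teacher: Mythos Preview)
Your augmented operator $\mathcal{L}$ is exactly the paper's extended Hessian $\widehat{\mathrm{Hess}}\,S$, and the kernel isomorphism $\ker\mathcal{L}\cong\ker\mathscr{D}$ you give is correct. Your argument for part~(2) via the $J$-anticommutation is also correct and amounts to the paper's observation that the cross-terms vanish at flat connections.

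The gap is in your descent step. You assert that closed range and compact resolvent are ``inherited from $\mathcal{L}$ through (\ref{T1T2K})'', but this is not a proof: $\mathscr{D}$ is \emph{not} a direct summand of $\mathcal{L}$ at a general stable configuration, precisely because $\mathrm{Hess}\,S$ does not preserve $\mathscr{K}$. Your kernel-matching argument can in fact be upgraded to give Fredholmness directly: for $b\in\mathscr{K}_{k-1}$ with $b\perp_{L^2}\ker\mathscr{D}$, one has $(0,0,b)\perp_{L^2}\ker\mathcal{L}$, hence $(0,0,b)=\mathcal{L}(\eta_1,\eta_2,\gamma)$ by self-adjoint elliptic theory; the first two components force $\gamma\in\mathscr{K}_k$, and projecting the third onto $\mathscr{K}$ gives $\mathscr{D}\gamma=b$. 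This shows $R(\mathscr{D})=(\ker\mathscr{D})^{\perp}\cap\mathscr{K}_{k-1}$, which is closed of finite codimension. You should spell this out rather than assert inheritance.

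The paper takes a different and somewhat cleaner route: it shows the cross-terms $x_i=(1-\Pi_{\mathscr{K}^i})\circ\mathrm{Hess}\,S|_{\mathscr{K}}$ are \emph{compact} operators $L^2_k\to L^2_{k-1}$. The key computation is that $(\delta^i)^\ast\circ\mathrm{Hess}\,S$ is a \emph{first-order} (indeed, zeroth-order) operator rather than second-order---there is a Leibniz cancellation, and one finds e.g.\ $(\delta^2)^\ast\mathrm{Hess}\,S(\dot A,\dot\Phi)=2[\ast(F_A-\tfrac{I}{2}F_\lambda-\tfrac12[\Phi,\Phi]),\dot A]-2\ast[d_A\Phi,\dot\Phi]$. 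Combined with the smoothing factor $\delta^i((\delta^i)^\ast\delta^i)^{-1}$ of order $-1$, this makes $x_i$ factor through $L^2_k\hookrightarrow L^2_{k-1}$ compactly. After dropping the compact $x_i,x_i^\ast$, the extended Hessian contains $\mathscr{D}$ as a genuine summand, so Fredholmness, the complete eigenbasis, and the doubly-infinite spectrum all transfer at once. Your approach would need a separate argument for the doubly-infinite spectrum (e.g.\ via the pluriharmonic identity $\langle\mathscr{D}v,v\rangle=-\langle\mathscr{D}Jv,Jv\rangle$ of Proposition~\ref{proposition:pluriharmonic}).
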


\begin{proof}
We use an argument similar to \cite[Proposition 12.3.1]{KM} to establish the Fredholm property of $\mathscr{D}$. For a configuration $(A, \Phi )$ we consider the \textit{extended Hessian} operator 

\begin{align*}
( \widehat{\mathrm{Hess}}S )_{(A, \Phi ) , k } : (\Omega^1 \oplus \Omega^1 \oplus \Omega^0 \oplus \Omega^0 )(Y, \mathfrak{g}_E )_{k} \rightarrow (\Omega^1 \oplus \Omega^1 \oplus \Omega^0 \oplus \Omega^0 )(Y, \mathfrak{g}_E )_{k-1} \\
\end{align*}

\noindent given in block form by 
\begin{align}
( \widehat{\mathrm{Hess}}S )_{(A, \Phi ) , k }  =   \begin{pmatrix} 2 \ast [\Phi , \cdot ] & 2 \ast d_A &  -d_{A} & - [\cdot , \Phi] \\
2 \ast d_A & - 2 \ast [\Phi , \cdot ] & [\cdot , \Phi ]  & -d_{A} \\
-d_{A}^\ast  & - \ast [ \cdot , \ast \Phi] & 0 &  0\\
\ast [ \cdot , \ast \Phi] & -d_{A}^\ast & 0 & 0
\end{pmatrix} . \label{extendedHess}
\end{align}
\newline

The extended Hessian is a Fredholm operator of index zero, since it is a first-order self-adjoint elliptic operator. The $L^2$--orthogonal decomposition (\ref{T1T2K}) induces a further decomposition when $(A, \Phi ) \in \mathscr{C}_{E,k}^\ast$ is stable: for $0\leq j \leq k$
\begin{align}
(\Omega^1 \oplus \Omega^1 \oplus \Omega^0 \oplus \Omega^0 )(Y, \mathfrak{g}_E )_{j} = \mathscr{T}_{j}^1 \oplus \mathscr{T}_{j}^2 \oplus \mathscr{K}_{j} \oplus \Omega^0 (Y, \mathfrak{g}_E )_j \oplus \Omega^0 (Y, \mathfrak{g}_E )_j .
\end{align}
With respect to the latter decomposition the extended Hessian has the block form
\begin{align*}
    ( \widehat{\mathrm{Hess}}S )_{(A, \Phi ) , k } = \begin{pmatrix}
        0 & 0 & x_1 & \delta^1 & 0 \\
        0 & 0 & x_2 & 0 & \delta^2 \\
        x_{1}^\ast & x_{2}^\ast & \mathscr{D} & 0 & 0 \\
        (\delta^1)^\ast & 0 & 0 & 0 & 0 \\
        0 & (\delta^2)^\ast & 0 & 0 & 0 
    \end{pmatrix} 
\end{align*}
where
\begin{align*}
x_i = ((1 - \Pi_{\mathscr{K}_{k-1}^i} )\circ \mathrm{Hess}S )|_{\mathscr{K}_k} .
\end{align*}
We claim that $x_i$ is a compact operator (and hence its adjoint $x_{i}^\ast$ too). Indeed, the operator $ (1 - \Pi_{\mathscr{K}_{k-1}^i} )$ (equivalently, the projection to $\mathscr{T}_{k-1}^i$ in (\ref{hodge1})) is given by 
\[
(1 - \Pi_{\mathscr{K}_{k-1}^i} ) = \delta^i \circ  ((\delta^i)^\ast \delta^i )^{-1} \circ (\delta^i )^{\ast}
\]
and then observe that:
\begin{itemize}
\item $\delta^i \circ  ((\delta^i)^\ast \delta^i )^{-1}$ is a pseudo-differential operator of order $-1$ (i.e. a smoothing operator of order $1$)
\item $\delta_{i}^\ast \circ \mathrm{Hess}S $ is a \textit{first} order differential operator; in particular, it defines a bounded linear operator $L^{2}_k \rightarrow L^{2}_{k-1}$
\end{itemize}
The second item is verified by direct calculation, e.g. as a differential operator $(\Omega^1 \oplus \Omega^1 )(Y, \mathfrak{g}_E ) \rightarrow \Omega^0 ( Y , \mathfrak{g}_E )$ we have
\[
(\delta^{2})^\ast \mathrm{Hess}S = 2 \cdot \big(  [\ast (F_A - \frac{I}{2}F_\lambda - \frac{1}{2}[\Phi , \Phi]), \cdot ] , - \ast [d_{A}\Phi , \cdot ] \big) .
\]

At this point, we have that the operator obtained from $\widehat{\mathrm{Hess}}S$ by dropping the entries $x_{i} , x_{i}^\ast$ is also Fredholm. Furthermore, this operator contains $\mathscr{D}$ as a summand, hence the Fredholm property of $\mathscr{D}$ follows.

Assertion (1) holds for the extended Hessian, which is a self-adjoint elliptic operator of order one. Hence it also does for its compact perturbation obtained by dropping the entries $x_{i}, x_{i}^\ast$. Since the latter operator contains $\mathscr{D}$ as a summand, then first (1) follows. Assertion (2) follows from the fact that $x_i$ vanishes at a critical point.
\end{proof}



The fact that the complex Chern--Simons functional $CS$ is holomorphic manifests itself in the symmetries of the Hessian of $S = \mathrm{Im}CS $. To give some context, we recall that on a finite-dimensional complex manifold $(M,J)$ the real part $f = \mathrm{Re}F$ of a holomorphic function $F : M \to \mathbb{C}$ is \textit{pluriharmonic}: $d d^c f = 0$. If $(M, J)$ admits a Kähler metric $g$, then we obtain from $g$ the covariant Hessian $\mathrm{Hess}f : TM \to TM$, and pluriharmonicity of $f$ becomes equivalent to the vanishing of the `complex average' of its Hessian: for all $x \in M$, $v,w \in T_x M$
 \[
\frac{1}{2} ( \langle (\mathrm{Hess} f )_x v , w \rangle + \langle (\mathrm{Hess}f)_x Jc , Jw\rangle ) = 0 .
\]
\begin{proposition}\label{proposition:pluriharmonic}
For any stable connection $(A, \Phi )$ (not necessarily flat), the Hessian $\mathscr{D}_{(A, \Phi ) }: \mathscr{K}_{(A, \Phi ) , k }\to \mathscr{K}_{(A, \Phi ) , k-1}$ satisfies the following identity: for all $v\in \mathscr{K}_{(A, \Phi ) , k }$ and $w \in \mathscr{K}_{(A, \Phi ) , k-1}$ 
\[
 \langle \mathscr{D} v , w \rangle_{L^2} + \langle \mathscr{D}(J v) , J w \rangle_{L^2} = 0 .
\]
\end{proposition}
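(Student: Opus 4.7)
The strategy is to reduce the required identity on $\mathscr{K}_{(A,\Phi)}$ to a stronger identity on the ambient flat space $\mathscr{A}_{E,k}^c$, namely that the flat Hessian anticommutes with $J$. The main observation is that $S = \mathrm{Re}(e^{-i\theta}CS)$ is the real part of the holomorphic function $e^{-i\theta}CS$ on the Kähler manifold $(\mathscr{A}_{E,k}^c,\omega_J,J)$. Just as the real Hessian of the real part of a holomorphic function on a finite-dimensional Kähler manifold anticommutes with $J$, one expects
\[
\mathrm{Hess}S \cdot J + J \cdot \mathrm{Hess}S = 0
\]
on $(\Omega^1\oplus\Omega^1)(Y,\mathfrak{g}_E)_k$. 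In the case $\theta=\pi/2$, this can be checked by a direct block-matrix computation from the explicit formula (\ref{flatHess}) for $\mathrm{Hess}S$ and the formula (\ref{J}) for $J$; for general $\theta$ the corresponding formula for $\mathrm{Hess}S$ is a linear combination of the $\theta=0$ and $\theta=\pi/2$ cases and the anticommutation continues to hold. (Alternatively, one runs the finite-dimensional pluriharmonicity argument formally; everything involved is first-order and local, so it passes without change to this Hilbert setting.)

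Next I record two auxiliary facts. First, $J$ is an $L^2$-isometry on $(\Omega^1\oplus\Omega^1)(Y,\mathfrak{g}_E)_{j}$, which is immediate from $J(\dot A,\dot\Phi)=(-\dot\Phi,\dot A)$. Second, the subspaces $\mathscr{K}_{(A,\Phi),j}$ are $J$-invariant. To see the second point, I use the relation $\delta^2_{(A,\Phi)}=J\delta^1_{(A,\Phi)}$ noted in (\ref{delta}), which yields $(\delta^2_{(A,\Phi)})^\ast=-(\delta^1_{(A,\Phi)})^\ast J$ (since $J^\ast=-J$). Hence $\mathscr{K}^2=\ker(\delta^2)^\ast = J\cdot\ker(\delta^1)^\ast = J\mathscr{K}^1$, and therefore $\mathscr{K}=\mathscr{K}^1\cap\mathscr{K}^2$ is preserved by $J$ (which of course just reflects the fact that $\mathscr{B}_{E,k}^\ast$ is Kähler).

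With these ingredients I combine everything using Proposition \ref{proposition:hessian_description}. Since both $w$ and $Jw$ lie in $\mathscr{K}_{(A,\Phi),k-1}$ (by $J$-invariance), and $\Pi_{\mathscr{K}_{k-1}}$ is the $L^2$-orthogonal projection onto $\mathscr{K}_{(A,\Phi),k-1}$, the projection becomes invisible in the pairings:
\[
\langle \mathscr{D} v, w\rangle_{L^2}=\langle\mathrm{Hess}S\cdot v,w\rangle_{L^2}, \qquad \langle \mathscr{D}(Jv),Jw\rangle_{L^2}=\langle\mathrm{Hess}S\cdot(Jv),Jw\rangle_{L^2}.
\]
Applying the anticommutation $\mathrm{Hess}S\cdot J=-J\cdot\mathrm{Hess}S$ and then the $L^2$-isometry property of $J$ gives
\[
\langle\mathrm{Hess}S\cdot(Jv),Jw\rangle_{L^2}=-\langle J\,\mathrm{Hess}S\cdot v,Jw\rangle_{L^2}=-\langle\mathrm{Hess}S\cdot v,w\rangle_{L^2},
\]
and summing the two pairings yields zero, as required.

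The whole proof is essentially a formal consequence of the Kähler structure and holomorphicity of $CS$; there is no serious analytic obstacle. The one place that requires genuine care is verifying that the projection $\Pi_{\mathscr{K}_{k-1}}$ can indeed be dropped on both sides of the identity, which is exactly what forces us to use the $J$-invariance of $\mathscr{K}$ rather than, say, of $\mathscr{K}^2$ alone; this in turn is what makes the argument work cleanly on the Kähler quotient $\mathscr{B}_{E,k}^\ast$ and not just on the intermediate space $\mathscr{C}_{E,k}^\ast$.
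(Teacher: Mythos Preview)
Your proof is correct and follows essentially the same approach as the paper's. The paper's argument is terser---it simply asserts the identity $\langle (\mathrm{Hess}S)v,w\rangle_{L^2}+\langle(\mathrm{Hess}S)(Jv),Jw\rangle_{L^2}=0$ for the flat Hessian and then says the claim follows since $\mathscr{K}_k$ is $J$-invariant---but your version spells out the same logic in more detail (the anticommutation $\mathrm{Hess}S\cdot J=-J\cdot\mathrm{Hess}S$, the $J$-invariance of $\mathscr{K}$ via $\delta^2=J\delta^1$, and why the projection $\Pi_{\mathscr{K}_{k-1}}$ drops out of the pairings).
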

\begin{proof}
The required identity holds for the flat Hessian defined in (\ref{flatHess}):
\[
 \langle (\mathrm{Hess}S )_{(A, \Phi )}  v , w \rangle_{L^2}  + \langle (\mathrm{Hess}S )_{(A, \Phi )} J v , Jw \rangle_{L^2}  = 0 . 
 \]
The claim now follows since $\mathscr{K}_{k}$ is a $J$-linear subspace of $(\Omega^1 \oplus \Omega^1 )(Y, \mathfrak{g}_E )$.
\end{proof}

In particular, we have that the kernel of $\mathscr{D}_{(A, \Phi )}$ is a complex-linear subspace of $\mathscr{K}_{(A, \Phi ) , k}$.

\subsubsection{Kuranishi models}

An important point that will be used later on is the existence of finite-dimensional models for $S$, which readily follows from Theorem \ref{theorem:fredholm}.

\begin{definition}
At a stable flat connection $[(A, \Phi )]\in \mathscr{M}_{E}^\ast$, the \textit{Zariski tangent space} to $\mathscr{M}_{E}^\ast$ is the finite-dimensional vector space $\mathscr{H}_{(A, \Phi)}^1 = \mathrm{Ker}\mathscr{D}_{(A, \Phi )}$.
\end{definition}

Let $(A, \Phi )$ be a stable flat connection. Recall that the Hilbert space $\mathscr{K}_{(A, \Phi ) , k }$ is identified as the tangent space to the stable configuration space $\mathscr{B}_{E, k}^\ast$. Let $U \subset \mathscr{K}_{(A, \Phi ) , k}$ (resp. $U^\prime \subset \mathscr{K}_{(A, \Phi ), k-1}$) be the closed subspace of $\mathscr{K}_{(A, \Phi ) , k}$ (resp. $\mathscr{K}_{(A, \Phi ) , k-1}$) given as the $L^2$--orthogonal complement of $\mathscr{H}_{(A, \Phi)}^1 $. Then the Hessian $\mathscr{D}_{(A, \Phi )}$ restricts to a bounded linear isomorphism from $U$ to $U^\prime$, since $\mathscr{D}_{(A, \Phi )}$ is a self-adjoint operator.

\begin{corollary}\label{corollary:kuranishi}
Let $(A, \Phi )$ be a stable flat connection. Then there exists 
\begin{itemize}
    \item a diffeomorphism $G$ from an open neighborhood of $[(A, \Phi )] \in \mathscr{B}_{E, k }^\ast$ to an open neighborhood of $(0,0)$ in $U \oplus \mathscr{H}_{A, \Phi}^1$, such that $G ([(A, \Phi )]) = (0,0)$, and
    \item a smooth function $S^{\prime} $ on a neighbourhood of $0 $ in $  \mathscr{H}_{(A, \Phi )}^1$ with $(dS^{\prime})_0 = 0$ and $(\mathrm{Hess}S^{\prime})_0 = 0$,
\end{itemize} 
such that on the codomain of $G$ we have
\[
S(G^{-1}(u,h)) = \frac{1}{2}\langle \mathscr{D}_{(A, \Phi )} u , u \rangle_{L^2} + S^{\prime}(h).
\]
In particular, a neighborhood of $[(A, \Phi)]$ in the moduli space of stable flat connections $\mathscr{M}_{E}^\ast$ is homeomorphic to a neighborhood of $0$ in the critical locus of $S^\prime : \mathscr{H}^{1}_{(A, \Phi ) } \rightarrow \mathbb{R}$.
\end{corollary}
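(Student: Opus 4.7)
The plan is to use the local slice theorem (Proposition \ref{proposition:slices}) to bring the problem into a single Hilbert space, and then apply a parametrized Morse--Palais splitting lemma. The slice theorem identifies a neighborhood of $[(A,\Phi)]$ in $\mathscr{B}^\ast_{E,k}$ diffeomorphically with a neighborhood of the origin in the tangent Hilbert space $\mathscr{K}_{(A,\Phi),k}$, via the slice $\mathcal{S}_{(A,\Phi),k}$. Under this chart, $S$ becomes a smooth function $\widetilde{S}$ on an open neighborhood of $0 \in \mathscr{K}_{(A,\Phi),k}$ with $d\widetilde{S}(0) = 0$ (since $(A,\Phi)$ is a critical point) and covariant Hessian at $0$ equal to $\mathscr{D}_{(A,\Phi)}$ by Proposition \ref{proposition:hessian_description} and Theorem \ref{theorem:fredholm}(2).

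Next, using the $L^2$-orthogonal decomposition $\mathscr{K}_{(A,\Phi),k} = U \oplus \mathscr{H}^1_{(A,\Phi)}$ supplied by Theorem \ref{theorem:fredholm}, I would write points as $(u,h)$ and carry out the standard splitting argument. The key input is that $\mathscr{D}_{(A,\Phi)}|_U : U \to U'$ is a self-adjoint Banach isomorphism. First, applying the implicit function theorem to the equation $\Pi_{U'}(\mathrm{grad}\,\widetilde{S})(u,h) = 0$ in the unknown $u$ — whose $u$-derivative at the origin is precisely $\mathscr{D}_{(A,\Phi)}|_U$ — produces a unique smooth map $h \mapsto u(h)$, defined near $0 \in \mathscr{H}^1_{(A,\Phi)}$, with $u(0) = 0$. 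Define $S^{\prime}(h) := \widetilde{S}(u(h),h)$. By the chain rule, the identity $\Pi_{U'}\mathrm{grad}\,\widetilde{S}(u(h),h)=0$, and the fact that $u$ takes values in $U \perp \mathscr{H}^1_{(A,\Phi)}$, one verifies $dS^{\prime}(0) = 0$ and $(\mathrm{Hess}\,S^{\prime})_0 = 0$. Moreover, the critical locus of $\widetilde{S}$ near $(0,0)$ is contained in the graph of $u$, so it corresponds bijectively to $\mathrm{Crit}(S^{\prime})$ near $0$.

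Finally, to obtain the exact quadratic-plus-lower-order splitting, I would apply a parametrized Morse lemma along $U$. After the translation that sends the graph of $u$ to $\{u=0\}$, the function $F(u,h) := \widetilde{S}(u + u(h), h) - S^{\prime}(h)$ satisfies $F(0,h)=0$, $\partial_u F(0,h)=0$ for all $h$, and $\partial^{2}_u F(0,0) = \mathscr{D}_{(A,\Phi)}|_U$ is a self-adjoint isomorphism; this invertibility persists on a neighborhood by openness of invertibles in $\mathcal{L}(U,U')$. The parametrized Morse--Palais--Tromba lemma then produces a smooth $h$-dependent diffeomorphism $u \mapsto \varphi(u,h)$ (fixing the origin) with $F(\varphi(u,h),h) = \tfrac{1}{2}\langle \mathscr{D}_{(A,\Phi)}|_U\,u, u\rangle_{L^2}$; it is proved by Moser's path method, interpolating between $F(\cdot,h)$ and its quadratic approximation along $t \mapsto tF + (1-t)\tfrac12\langle \mathscr{D}|_U\cdot,\cdot\rangle$ and integrating a time-dependent vector field obtained by inverting the fiberwise Hessian. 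Composing the slice chart with the translation and with $\varphi$ yields the desired $G$. The main technical obstacle is the infinite-dimensional Morse lemma with parameters, but in this form it is classical since the Hessian along the normal direction $U$ is a bounded self-adjoint Banach isomorphism; no elliptic or compactness subtleties enter beyond those already encoded in Theorem \ref{theorem:fredholm}.
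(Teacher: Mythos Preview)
Your proposal is correct and follows essentially the same approach as the paper: pass to the Coulomb slice, use the Fredholm decomposition $U \oplus \mathscr{H}^1_{(A,\Phi)}$, apply the implicit function theorem to the $U'$-component of the gradient, and finish with a parametrized Morse--Palais lemma. The only cosmetic difference is that the paper first linearizes the $U'$-component of the gradient (so that the graph $u(h)$ is identically zero in the new coordinates) and then applies the parametric Morse lemma followed by a further $h$-dependent linear change to fix the quadratic form, whereas you first locate the graph $u(h)$, translate it to zero, and invoke a version of Morse--Palais--Tromba that already yields the $h$-independent quadratic form $\tfrac{1}{2}\langle \mathscr{D}_{(A,\Phi)} u, u\rangle_{L^2}$ directly.
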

\begin{proof}
The argument closely follows \cite[Proposition 2.5]{donaldson-floer}. At a connection $(A+a , \Phi + \phi )$ the gradient $\mathrm{grad}S$ vector field is tangent to the subspace $\mathscr{K}_{(A+a , \Phi +\phi ),k-1}$. The $L^2$-orthogonal projection $\Pi_{\mathscr{K}_{(A, \Phi ) , k-1}}$ from $(\Omega^1 \oplus \Omega^1 )(Y, \mathfrak{g}_E )_{k-1}$ onto $\mathscr{K}_{(A, \Phi ) , k-1}$ thus restricts to an isomorphism from $\mathscr{K}_{(A+a , \Phi +\phi ),k-1}$ to $\mathscr{K}_{(A, \Phi ) , k-1}$ for $(a,\phi )$ in a small neighborhood of $(0,0)$. Consider the following smooth map defined along the stable Coulomb slice (cf. Proposition \ref{proposition:slices})

\[
f : (A+ a, \Phi + \phi ) \in \mathcal{S}_{(A, \Phi ) , k} \mapsto  \Pi_{\mathscr{K}_{(A, \Phi ) , k-1}}( (\mathrm{grad}S)_{(A+a , \Phi + \phi )}) \in \mathscr{K}_{(A, \Phi ) , k-1} .
\]

Now, the function $\Pi_{U^\prime} \circ f : \mathcal{S}_{(A, \Phi ),k} \to U^\prime$ has the property that its derivative at $(a, \phi ) = (0,0)$ is surjective  when restricted to directions in $U \subset \mathscr{K}_{(A, \Phi ) , k} = T_{(A,\Phi )}\mathcal{S}_{(A, \Phi)}$. Thus, a change of coordinates linearises $\Pi_U \circ f$. Namely, using the Inverse Function Theorem for Hilbert spaces one obtains a diffeomorphism $F$ from an open neighborhood of $(A, \Phi )$ in $ \mathcal{S}_{(A, \Phi ) , k}$ onto an open neighborhood of $(0,0)$ in $U \oplus \mathscr{H}^{1}_{(A, \Phi )}$, sending $(A, \Phi )$ to $(0,0)$, together with a smooth function $f^\prime $ from an open neighborhood of $(0,0)$ in $ U \oplus \mathscr{H}^{1}_{(A, \Phi )}$ to an open neighborhood of $0$ in  $\mathscr{H}^{1}_{(A, \Phi )}$, $f^\prime (0,0) = 0$, and such that 
\[
f( F^{-1}(u,h) ) = \big(  \mathscr{D}_{(A, \Phi )} u \, , \, f^\prime (u,h) \big) .
\]

Consider now the family of smooth functions $S_h : U \rightarrow \mathbb{R}$ obtained by restricting $S\circ F^{-1}$ onto $U \times \{ h \}$. By the above, it follows that each of these functions has a critical point at $0 \in U$ and this is a non-degenerate critical point for $h = 0$. Thus for all $h$ sufficiently close to $0$ the function $f_h$ will have a non-degenerate critical point at $0$. Then by the parametric version of the Morse--Palais Lemma \cite{lang}, a further coordinate change ensures that each $f_h$ is given by a quadratic function on $U$ plus a constant. A further change of coordinates by a family of linear isomorphisms of $U$ ensures that the quadratic functions are independent of $h$, and the constants then give the required function $S^\prime (h)$.
\end{proof}

For future reference, we include here the following:

\begin{definition}\label{definition:morse-bott}
    The moduli space $\mathscr{M}_{E}^\ast$ is \textit{Morse--Bott} if it is a smooth submanifold of $\mathscr{B}_{E}^\ast $ (hence a complex submanifold under $J$) and its dimension at a point $[(A, \Phi)] \in \mathscr{M}_{E}^\ast$ is given by $\mathrm{dim}\mathrm{Ker}\mathscr{D}_{(A, \Phi )}$. By Corollary \ref{corollary:kuranishi}, this is equivalent to being able to choose the function $S^\prime : \mathscr{H}^{1}_{(A, \Phi )} \to \mathbb{R}$ constant.
\end{definition}

\subsection{Other structure groups}

Throughout the discussion we have stuck to the structure group $G = SU(2)$ and its complexification $G^c = SL(2,\mathbb{C})$ for the sake of concreteness, and because this is the main case of interest. However, the discussion from this section can be easily adapted to the case when $G^c$ is a (real or complex) semi-simple Lie group. 

For this we fix a global Cartan involution $\Theta $ of $G^c$, whose fixed subgroup gives a maximal compact subgroup $G \subset G^c$. Let $\mathfrak{g}^c = \mathfrak{g} \oplus \mathfrak{m}$ be the induced `Cartan decomposition' at the Lie algebra level (in terms of the eigenspace decomposition of the involution $\theta$ of $\mathfrak{g}^c$ induced from $\Theta$). We obtain a $G$--invariant inner product on $\mathfrak{g}_c$ by using minus the Killing form on $\mathfrak{g}_c$ with $\theta$ inserted in one entry (for $\mathfrak{g}^c = \mathfrak{sl}(2, \mathbb{C})$ this gave $\mathrm{Tr}(AB^\ast )$).


Our basic setup is then modified as follows. We fix a principal $G$--bundle $E$. Similarly to the case we discussed, a $G^c$--connection on $E \times_G G^c$ is decomposed uniquely as $A  + \Phi $ where $A$ is a $G$-connection and $\Phi \in \Omega^1 (Y, E \times_G \mathfrak{m} )$. 

When $G^c$ is real, then the Kähler quotient viewpoint on $\mathscr{B}_{E}^\ast = \{ (A, \Phi ) \, | \, d_{A}^\ast \Phi = 0 \}/\mathscr{G}_E$ doesn't make sense anymore; however the same definition and the variational viewpoint still do. Much as in the case we presented, one can show that the stable configuration space thus obtained will again be a Hilbert Riemannian manifold equipped with the natural $L^2$ metric (albeit not Kähler, when $G^c$ is real).

Of course, the Chern--Simons functional (which uses the Killing form of $\mathfrak{g}^c$ in place of $\mathrm{Tr}$) is just real-valued when $G^c$ is real. All the results discussed above, such as the Fredholm property of the Hessian, carry through in the more general setup.


\section{Seifert-fibered $3$-manifolds and dimensional reduction}\label{section:dimensionalreduction}

In this section we study the moduli spaces of stable (projectively) flat $GL(2,\mathbb{C})$-connections when $(Y,g)$ is a closed, oriented, \textit{Seifert-fibered} $3$-manifold equipped with a Riemannian metric $g$ belonging in a suitable class, by establishing dimensional reduction results.  Dimensional reduction for Seifert-fibered $3$-manifolds is a familiar theme in gauge theory: for flat $SU(2)$-connections, this was studied by Furuta--Steer \cite{furutasteer} and for the $U(1)$ Seiberg--Witten equations by Mrowka--Ozsváth--Yu \cite{MOY} (see also \cite{MST,munoz-wang,doan}).

We recall that a closed, oriented $3$-manifold $Y$ is \textit{Seifert-fibered} if it admits a \textit{Seifert-fibration} over a closed, oriented, orbifold surface $C$: there exists an orbifold complex line bundle $N \rightarrow C$, an (orbifold) hermitian metric on $N$ such that the unit sphere $S(N)$ is a smooth $3$-manifold (i.e. a $3$-orbifold with empty orbifold locus), and a diffeomorphism of $Y$ with $S(N)$. The projection $\pi : Y \rightarrow C$ makes $Y$ the total space of an orbifold principal $U(1)$-bundle. Throughout this section $Y$ is a Seifert-fibered $3$-manifold equipped with a fixed Seifert-fibration $\pi: Y \to C$.


\subsection{Seifert metrics and adiabatic connection}


We now introduce the class of Riemannian metrics on $Y$ that we consider. Let $g_C$ be an orbifold Riemannian metric on the base orbifold $C$, with associated volume form denoted $\omega_C$. In particular, $C$ becomes an orbifold Riemann surface. Let $\mu$ 
be a unitary connection on $N \rightarrow C$ with \textit{harmonic curvature}: i.e. the curvature of $\mu$ is a constant multiple of the volume form,
\begin{align}
F_{\mu} = 2 i \xi \cdot \omega_C \quad , \quad \xi := - \frac{\pi \mathrm{deg}N}{\mathrm{Vol}(C,g_C)}. 
\label{harmoniccurvature}
\end{align}
The connection $\mu$ can be regarded as an imaginary $1$-form on $Y$, and when using this viewpoint we will denote it as $\mu = i \eta $ where $\eta \in \Omega^1 (Y,  \mathbb{R} )$. (The harmonic curvature condition then says $d\eta = 2 \xi \cdot \pi^\ast \omega_C$). We obtain a splitting of $TY$
\begin{align}
TY = \mathbb{R} \oplus \pi^\ast (TC) \label{splittingTY}
\end{align}
where $\mathbb{R} = \mathrm{Ker}(d\pi)$ is the vertical subbundle, and $\pi^\ast (TC) = \mathrm{Ker}(\eta)$ is the horizontal subbundles associated to the connection $\mu$. 

\begin{definition}\label{metric}
A \textit{Seifert metric} on $Y$ is a Riemannian metric obtained from the data $(g_C, \mu = i \eta )$ by the formula
\begin{align*}
g = \eta\otimes \eta + \pi^\ast (g_C). 
\end{align*}
\end{definition}

We fix throughout this section such a Seifert metric $g$ on $Y$. Denote by $\zeta$ the vector field on $Y$ that generates the Seifert $S^1$--action on $Y$. Then $\zeta$ has unit length and is metric dual to the $1$-form $ \eta$.

A Seifert metric $g$ makes the splitting (\ref{splittingTY}) orthogonal. However, the Levi-Civita connection of $g$, which we denote $\widehat{\nabla}$, is \textit{not} compatible with this splitting in general. Instead, we consider a different orthogonal connection on $TY$, namely the \textit{adiabatic connection} $\nabla$ defined in terms of the splitting (\ref{splittingTY}) and the Levi-Civita connection $\nabla_C$ of $C$ by
\begin{align*}
\nabla = d \oplus \pi^\ast (\nabla_C ) .
\end{align*}
This is the natural connection to consider for the purpose dimensional reduction. (E.g. a $1$-form $\alpha$ on $Y$ is the pullback of a $1$-form on $C$ precisely when $\nabla_{\zeta}\alpha = 0$). We will need to compare the Levi-Civita and adiabatic connections: the following calculation can be found in \cite[\S 1]{nicolaescu}
\begin{lemma}\label{lemma:nicolaescu}
Let $\zeta_0 , \zeta_1 , \zeta_2 $ be a local orthonormal frame of $(TY,g)$ with $\zeta_0 = \zeta$, and $\zeta_2 , \zeta_3$ given by the horizontal lifts of a local orthonormal frame of $(TC, g_C )$. Denote the dual local orthonormal frame of $T^\ast Y$ by $\eta^0 (= \eta) , \eta^1 , \eta^2$. Then the difference between the Levi-Civita and the adiabatic connections on $T^\ast Y$, regarded as an endomorphism-valued $1$-form $\widehat{\nabla} - \nabla \in \Omega^1 (Y, \mathfrak{so}(T^\ast Y) )$, is given by
\[
\widehat{\nabla} - \nabla = \begin{pmatrix}  0 & \xi \eta^2 & -\xi \eta^1 \\
-\xi \eta^2 & 0 & -\xi \eta \\
\xi \eta^1 & \xi \eta & 0 
\end{pmatrix}  .
\]
\end{lemma}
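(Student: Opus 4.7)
The plan is to exploit the fact that both connections are metric on $(TY, g)$, so that the difference $D := \widehat{\nabla} - \nabla$ is a $1$-form valued in $\mathfrak{so}(TY)$ (equivalently, in $\mathfrak{so}(T^\ast Y)$ via the musical isomorphism, which is the form used in the statement). Since $\widehat{\nabla}$ is torsion-free while $\nabla$ has some torsion $T^\nabla$, the two connections are related by the standard Koszul-type identity
\begin{equation*}
2 g( D_X Y , Z ) \; = \; g( T^\nabla ( X , Y ) , Z ) \; - \; g( T^\nabla ( Y , Z ) , X ) \; + \; g( T^\nabla ( Z , X ) , Y ).
\end{equation*}
Thus the entire problem reduces to computing the torsion of the adiabatic connection on the given orthonormal frame.

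Next, I would observe that by the very definition $\nabla = d \oplus \pi^\ast \nabla_C$, one has $\nabla_X \zeta_0 = 0$ for every $X$ (the unit vertical field is the constant section of the line bundle $\mathbb{R}\zeta$), while on horizontal lifts $\nabla_X \zeta_i = \pi^\ast ( \nabla_{C , \pi_\ast X} e_i )$ for $i=1,2$. For the Lie brackets of the frame: each $\zeta_i$ ($i=1,2$) is the horizontal lift of an $S^1$-invariant vector field on $C$, so it is itself invariant under the Seifert $S^1$-action, giving $[\zeta_0, \zeta_i] = \mathcal{L}_{\zeta_0} \zeta_i = 0$ and hence $T^\nabla ( \zeta_0 , \zeta_i ) = 0$. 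The only nontrivial bracket is $[\zeta_1, \zeta_2]$: in a local trivialisation in which $\zeta_0 = \partial_t$ and $\eta = dt + A$ with $A$ a $1$-form on $C$ satisfying $dA = 2\xi\, \omega_C$, a direct computation with $\zeta_i = e_i - A(e_i) \partial_t$ gives
\begin{equation*}
[ \zeta_1 , \zeta_2 ] \; = \; \widetilde{[e_1, e_2]_C} \; - \; 2 \xi \, \zeta_0 ,
\end{equation*}
where the tilde denotes horizontal lift. Combined with the torsion-freeness of $\nabla_C$, this yields $T^\nabla ( \zeta_1 , \zeta_2 ) = 2 \xi \, \zeta_0$.

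Finally I would substitute this torsion into the Koszul identity. Because $T^\nabla$ is supported in the single triple $(\zeta_1, \zeta_2; \zeta_0)$, there are effectively only three independent scalars to compute, for instance
\begin{equation*}
g( D_{\zeta_0} \zeta_1 , \zeta_2 ) = -\xi , \qquad g( D_{\zeta_1} \zeta_2 , \zeta_0 ) = \xi , \qquad g( D_{\zeta_2} \zeta_0 , \zeta_1 ) = \xi ,
\end{equation*}
with the remaining coefficients forced by the skew-symmetry of $D_X$. Assembling these into an $\mathfrak{so}(TY)$-valued $1$-form and dualising to $T^\ast Y$ via $\langle D_X \eta^i , \zeta_j \rangle = - \langle \eta^i , D_X \zeta_j \rangle$ (which for skew-symmetric $D$ amounts to a transpose, so produces the same matrix under the statement's index convention) gives exactly the matrix in the lemma. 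The computation is essentially routine; the only delicate point is the sign bookkeeping when passing between the action on $TY$ and on $T^\ast Y$ and in applying the Koszul identity, which one should verify against the skew-symmetry of the final answer.
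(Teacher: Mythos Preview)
The paper does not actually prove this lemma; it simply cites \cite[\S 1]{nicolaescu} for the computation. Your approach---computing the torsion $T^\nabla$ of the adiabatic connection and then recovering $D=\widehat\nabla-\nabla$ from it via a Koszul-type identity---is a perfectly standard and correct way to carry this out, and your computation of $T^\nabla(\zeta_1,\zeta_2)=2\xi\,\zeta_0$ (with all other torsion components vanishing) is right.

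There is, however, a global sign error in your Koszul-type identity. Since $D_XY-D_YX=-T^\nabla(X,Y)$ (not $+T^\nabla$), the correct formula is
\[
2g(D_XY,Z)=-g(T^\nabla(X,Y),Z)+g(T^\nabla(Y,Z),X)-g(T^\nabla(Z,X),Y),
\]
i.e.\ the negative of what you wrote. Consequently all three of your scalar values have the wrong sign: one should get
\[
g(D_{\zeta_0}\zeta_1,\zeta_2)=\xi,\qquad g(D_{\zeta_1}\zeta_2,\zeta_0)=-\xi,\qquad g(D_{\zeta_2}\zeta_0,\zeta_1)=-\xi.
\]
(One can confirm the first directly from the ordinary Koszul formula: $2g(\widehat\nabla_{\zeta_0}\zeta_1,\zeta_2)=-g([\zeta_1,\zeta_2],\zeta_0)=2\xi$, while $\nabla_{\zeta_0}\zeta_1=0$.) With this correction the dualisation step produces exactly the displayed matrix. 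You yourself flagged the sign bookkeeping as the delicate point, and indeed that is where the slip occurred; the underlying argument is fine.
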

(Lemma \ref{lemma:nicolaescu} shows, in particular, that the adiabatic connection $\nabla$ is obtained as the $C^\infty$ limit as $t \to \infty$ of the Levi--Civita connection associated to the metric $g_t = \eta \otimes \eta + t \pi^\ast g_C $.)

\subsection{Vanishing results}

We now discuss the vanishing results that will enable the dimensional reduction. We fix throughout a $U(2)$--bundle $E \rightarrow Y$ with fixed determinant $(\Lambda = \Lambda^2 E , \lambda )$. We fix throughout a $GL(2,\mathbb{C})$--connection $\mathbb{A} = A + i \Phi$ on $E$ with fixed determinant.

\subsubsection{Covariant derivatives}

We first describe how the exterior covariant derivatives $d^A : \Omega^n (Y, \mathfrak{g}_E ) \to \Omega^{n+1}(Y, \mathfrak{g}_E )$, $n\geq 0$, interact with the Seifert fibration $\pi: Y \rightarrow C$.

By pullback, we have the subbundle $\pi^\ast (T^\ast C)  \subset T^\ast Y$. (This is the annihilator of the vertical subbundle $\mathbb{R} = \mathrm{Ker}(d\pi )  \subset TY$). Denote its $n^{th}$ exterior power by $\Lambda^{n}_H := \Lambda^n (\pi^\ast (T^\ast C) ) $, and the space of sections of $\Lambda^{n}_H$ by $\Omega_{H}^n (Y, \mathfrak{g}_E )$. Using the connection $\mu = i \eta $ on $N$, we obtain the splitting $T^\ast Y = \mathbb{R} \oplus \pi^\ast (T^\ast C)$ dual to (\ref{splittingTY}), which induces a splitting for all $n\geq 0$
\begin{align}
\Omega^{n}(Y, \mathfrak{g}_E ) =& \, \, \Omega_{H}^{n-1}(Y, \mathfrak{g}_E ) \oplus \Omega^{n}_H (Y , \mathfrak{g}_E ) \label{splittingOmega}\\
\eta \cdot \alpha + \beta \mapsfrom & (\alpha , \beta ) . \nonumber
\end{align}

We will consider the following differential operators:
\begin{itemize}
\item The adiabatic covariant derivative $\nabla_{\zeta}^A : \Omega^n (Y, \mathfrak{g}_E ) \to \Omega^n (Y, \mathfrak{g}_E ) $ along the \textit{vertical} vector field $\zeta$ that generates the Seifert $S^1$--action,
\item The \textit{horizontal} exterior covariant derivative $d_{A}^H : \Omega_{H}^n (Y , \mathfrak{g}_E ) \rightarrow \Omega_{H}^{n + 1} (Y, \mathfrak{g}_E )$ defined by $(d_{A}^H \gamma )(V_1 , \ldots , V_{n+1}) = (d_A \gamma )(V_{1}^{H} , \ldots , V_{n +1}^{H})$, where $V^H$ stands for the projection of a vector field $V$ on $Y$ onto the horizontal subspace of (\ref{splittingTY}). 

\item The formal $L^2$--adjoint of $d_{A}^H$, given by  $(d_{A}^H)^\ast = - \ast_H d_{A}^H \ast_H  : \Omega^n (Y, \mathfrak{g}_E ) \to \Omega^{n-1}(Y, \mathfrak{g}_E )$, where $\ast_H : \Lambda^{\ast}_H \xrightarrow{\cong} \Lambda^{2-\ast}_H$ is the bundle isomorphism induced by the Hodge star operator $\ast_C $ associated to the orbifold surface $(C,g_C)$
\end{itemize}

We will need the following result:

\begin{lemma}\label{lemma:d_B}
With respect to the splitting (\ref{splittingOmega}), the exterior covariant derivative $d_{A} : \Omega^{n}(Y, \mathfrak{g}_E ) \rightarrow \Omega^{n+1}(Y, \mathfrak{g}_E )$ is given in block form by
\renewcommand\arraystretch{1.4}
\begin{equation*}
d_A = \begin{pmatrix}
- d_{A}^H & \nabla_{\zeta}^A \\
2 \xi (\pi^\ast \omega_C ) \wedge \cdot & d_{A}^H 
\end{pmatrix}
\end{equation*}
and its formal $L^2$--adjoint $d_{A}^\ast : \Omega^n (Y, \mathfrak{g}_E ) \rightarrow \Omega^{n -1} (Y, \mathfrak{g}_E )$ by 
\begin{equation*}
d_{A}^\ast = \begin{pmatrix}- (d_{A}^H)^\ast &  2\xi (\pi^\ast \omega_C \wedge \cdot)^\ast \\
- \nabla_{\zeta}^A & (d_{A}^H)^\ast .
\end{pmatrix}
\end{equation*}
\end{lemma}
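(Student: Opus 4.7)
The plan is to prove the formula for $d_A$ by a direct computation on a decomposed form $\omega = \eta \wedge \alpha + \beta$ with $\alpha \in \Omega_H^{n-1}(Y, \mathfrak{g}_E)$ and $\beta \in \Omega_H^n(Y, \mathfrak{g}_E)$, using the Leibniz rule, the harmonic curvature identity $d\eta = 2\xi \, \pi^\ast \omega_C$, and a decomposition of $d_A\gamma$ for horizontal $\gamma$ into vertical and horizontal parts. The formula for $d_A^\ast$ will then be obtained by taking the block-wise formal $L^2$ adjoint of the matrix for $d_A$, after observing that the splitting in (\ref{splittingOmega}) is orthogonal and isometric (since $|\eta|=1$).

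The key subroutine is the following identity, valid for any $\gamma \in \Omega_H^k(Y, \mathfrak{g}_E)$:
\[
d_A \gamma \;=\; \eta \wedge \nabla^A_\zeta \gamma \;+\; d_A^H \gamma.
\]
To verify this, decompose $d_A\gamma = \eta \wedge p + q$ uniquely with $p, q$ horizontal. Applying $\iota_\zeta$ and using $\iota_\zeta \gamma = 0$ together with Cartan's magic formula $\iota_\zeta d_A + d_A \iota_\zeta = \mathcal{L}^A_\zeta$ (the covariant Lie derivative along $\zeta$), one obtains $p = \iota_\zeta d_A \gamma = \mathcal{L}^A_\zeta \gamma$. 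Since $\zeta$ is parallel with respect to the adiabatic connection $\nabla = d \oplus \pi^\ast \nabla_C$ on $TY$, one has $\mathcal{L}^A_\zeta = \nabla^A_\zeta$ when acting on bundle-valued forms, giving $p = \nabla^A_\zeta\gamma$. The horizontal piece $q = d_A^H\gamma$ follows directly from the definition of $d_A^H$, after a brief check that the vertical component of $[\tilde V_1,\tilde V_2]$ (where $\tilde V_i$ are horizontal lifts) pairs against horizontal $\gamma$ to zero.

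With this in hand, apply the Leibniz rule:
\[
d_A(\eta \wedge \alpha + \beta) = d\eta \wedge \alpha - \eta \wedge d_A\alpha + d_A\beta.
\]
Substitute $d\eta = 2\xi\, \pi^\ast \omega_C$ and the key identity for $d_A\alpha$ and $d_A\beta$; the term $\eta\wedge(\eta\wedge \nabla^A_\zeta\alpha)$ vanishes, leaving
\[
d_A(\eta\wedge\alpha+\beta) = \eta\wedge\bigl(-d_A^H\alpha + \nabla^A_\zeta\beta\bigr) + \bigl(2\xi\,\pi^\ast\omega_C\wedge\alpha + d_A^H\beta\bigr),
\]
which reads off as the stated block matrix for $d_A$.

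For $d_A^\ast$, the orthogonality and isometric character of the splitting $\Omega^n = \eta\wedge\Omega_H^{n-1}\oplus \Omega_H^n$ reduce the computation to taking the block transpose and formal $L^2$ adjoint of each entry. The entries $\pm d_A^H$ contribute $\mp (d_A^H)^\ast$, and $2\xi\,\pi^\ast\omega_C\wedge\cdot$ contributes its adjoint $2\xi(\pi^\ast\omega_C\wedge\cdot)^\ast$. The main remaining point is $(\nabla^A_\zeta)^\ast = -\nabla^A_\zeta$. This follows because $\zeta$ is a Killing vector field for $g$ (hence divergence-free), and $\zeta$ is parallel for the adiabatic connection on $TY$, so integration by parts on $\langle \nabla^A_\zeta\alpha,\beta\rangle_{L^2} + \langle\alpha,\nabla^A_\zeta\beta\rangle_{L^2} = \int_Y \zeta\langle\alpha,\beta\rangle\,\mathrm{vol}$ gives zero. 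Assembling yields the stated matrix for $d_A^\ast$.

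The only real subtlety is the identification $\mathcal{L}^A_\zeta = \nabla^A_\zeta$ on horizontal forms, i.e.\ making sure the adiabatic connection (rather than Levi-Civita) is the correct connection to use; this is precisely why the statement is framed in terms of the adiabatic $\nabla$ and is essentially automatic from $\nabla^{\mathrm{ad}}\zeta = 0$. Once this is in place, everything reduces to bookkeeping.
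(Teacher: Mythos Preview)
Your argument is correct and reaches the same key identity as the paper, namely
\[
d_A\gamma = \eta\wedge\nabla^A_\zeta\gamma + d_A^H\gamma \quad\text{for }\gamma\in\Omega_H^k,
\]
followed by the Leibniz rule and $d\eta = 2\xi\,\pi^\ast\omega_C$. The difference lies in how this identity is justified. The paper works in a local orthonormal coframe $\eta,\eta^1,\eta^2$, writes $d_A = \sum \eta^i\wedge\widehat{\nabla}^A_{\zeta_i}$ via the Levi--Civita connection, and then invokes Lemma~\ref{lemma:nicolaescu} (the explicit comparison of Levi--Civita and adiabatic connections) to replace $\widehat{\nabla}$ by $\nabla$ on horizontal forms. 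Your route is more intrinsic: you use the covariant Cartan formula $\iota_\zeta d_A + d_A\iota_\zeta = \mathcal{L}^A_\zeta$ and the identification $\mathcal{L}^A_\zeta = \nabla^A_\zeta$ on horizontal forms, which follows because the horizontal coframe $\eta^1,\eta^2$ can locally be taken as pullbacks from $C$ (so $\mathcal{L}_\zeta\eta^i = 0 = \nabla_\zeta\eta^i$). This sidesteps Lemma~\ref{lemma:nicolaescu} entirely and is arguably cleaner, though the paper's frame computation makes the subsequent Bochner--Weitzenb\"ock calculation in Proposition~\ref{proposition:bochner} more transparent since the same frame is reused there. For $d_A^\ast$ both arguments agree; you supply slightly more detail on $(\nabla^A_\zeta)^\ast = -\nabla^A_\zeta$ via the Killing/divergence-free property of $\zeta$, which the paper leaves implicit.
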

\begin{proof}
Fix a local orthonormal frame and coframe as in Lemma \ref{lemma:nicolaescu}. We recall the standard formula expressing the exterior derivative $d_A$ in terms of the Levi-Civita connection
\[
d_A  = \eta \wedge \widehat{\nabla}_{\zeta}^A  + \eta^1 \wedge \widehat{\nabla}_{\zeta_1}^A  + \eta^2 \wedge \widehat{\nabla}_{\zeta_2}^A  .
\]

Let $\Psi \in \Omega^{n}_H (Y, \mathfrak{g}_E )$. Then the horizontal exterior covariant derivative is given by
\begin{align}
d_{A}^H \Psi = \eta^1 \wedge \nabla_{\zeta_1}^A \Psi  + \eta^2 \wedge \nabla_{\zeta_2}^A \Psi \label{dH}.
\end{align}

By Lemma \ref{lemma:nicolaescu} when $n = 1$, and hence by the Leibniz rule for general $n$, we have
\\
\begin{align*}
\eta \wedge \widehat{\nabla}_{\zeta}^B \beta  + \eta^1 \wedge \widehat{\nabla}_{\zeta_1}^B \Psi + \eta^2 \wedge \widehat{\nabla}_{\zeta_2}^B \Psi = \eta \wedge \nabla_{\zeta}^B \Psi + \eta^1 \wedge \nabla_{\zeta_1}^B \Psi + \eta^2 \wedge \nabla_{\zeta_2}^B \Psi
\end{align*}
\\
Putting everything together we have $d_A \Psi = \eta \wedge \nabla_{\zeta}^A \Psi + d_{A}^H \Psi $, which gives the second column in the block matrix for $d_A$. By this formula and the Leibniz rule we also have
\begin{align*}
d_A (\eta \wedge \Psi )  = d\eta \wedge \Psi - \eta \wedge d_{A}\Psi  = 2 \xi \pi^\ast (\omega_C ) \wedge \Psi - \eta \wedge d_{A}^H \Psi 
\end{align*}
which gives the second column of the block matrix for $d_A$. The formula for $d_{A}^\ast$ follows by taking the adjoint of the block matrix for $d_A$.
\end{proof}

\subsubsection{A Bochner--Weitzenböck formula}

We decompose the imaginary part $\Phi \in \Omega^1 (Y, \mathfrak{g}_E )$ of our $GL(2, \mathbb{C})$ connection according to (\ref{splittingOmega}) as $\phi = \eta \cdot \alpha + \Psi$, where
\[
\alpha \in \Omega^0 (Y, \mathfrak{g}_E ) \quad , \quad \Psi \in \Omega_{H}^1 (Y, \mathfrak{g}_E ) .
\]

\begin{proposition}\label{proposition:bochner}
Let $A + i \Phi$ be a $GL(2,\mathbb{C})$-connection on $E$ with fixed determinant. Then the following pointwise identity holds 
\begin{align*}
\langle ( d_{A}d_{A}^\ast + d_{A}^\ast d_{A})\Phi \, , \, \Phi \rangle & = \langle -(\nabla_{\zeta}^A )^2 \alpha \, , \, \alpha \rangle  + \langle (\ast_H \nabla_{\zeta}^B)^2 \Psi \, , \, \Psi \rangle \\
& + \langle (d_{A}^H)^\ast d_{A}^H \alpha \, , \, \alpha \rangle + \langle d_{A}^H(d_{A}^H)^\ast \Psi \, , \, \Psi \rangle + \langle [\iota_{\zeta}(F_A ) , \alpha] \, , \, \Psi \rangle \\
& +  \langle (d_{A}^H)^\ast d_{A}^H \Psi \, , \, \Psi  \rangle +  4\xi \langle \ast_H d_{A}^H \Psi \, , \, \alpha   \rangle + 4\xi^2 \cdot | \alpha|^2 
\end{align*}
(Here $\iota_\zeta F_A = ( F_A )(\zeta , \cdot )$ denotes contraction of $F_A$ with the vertical vector field $\zeta$.)
\end{proposition}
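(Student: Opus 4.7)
The plan is to prove the identity by direct pointwise computation using the block matrices for $d_A$ and $d_A^*$ supplied by Lemma \ref{lemma:d_B}. Writing $\Phi = \eta\cdot\alpha + \Psi$ in the splitting \eqref{splittingOmega}, the block forms immediately give
\[
d_A\Phi = \eta\wedge(\nabla_\zeta^A\Psi - d_A^H\alpha) + (2\xi\,\alpha\,\pi^*\omega_C + d_A^H\Psi), \qquad d_A^*\Phi = -\nabla_\zeta^A\alpha + (d_A^H)^*\Psi.
\]
I would then apply the block matrices a second time (now in degrees $n=0$ and $n=2$) to compute $d_A(d_A^*\Phi)$ and $d_A^*(d_A\Phi)$ as $\Omega^1$-valued expressions, again decomposed into an $\eta$-component and a horizontal component, and finally take the pointwise pairing with $\Phi = \eta\alpha + \Psi$, which separates cleanly into $\langle\cdot,\alpha\rangle + \langle\cdot,\Psi\rangle$.

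Most terms in the resulting expression are \emph{diagonal} in $\alpha$ and $\Psi$ and yield directly the summands $\langle -(\nabla_\zeta^A)^2\alpha,\alpha\rangle$, $\langle (d_A^H)^*d_A^H\alpha,\alpha\rangle$, $\langle d_A^H(d_A^H)^*\Psi,\Psi\rangle$, $\langle(d_A^H)^*d_A^H\Psi,\Psi\rangle$, and $4\xi^2|\alpha|^2$, the last of these arising from the composition of the off-diagonal entry $2\xi\,\pi^*\omega_C\wedge(\cdot)$ with its adjoint $\bigl(2\xi\,\pi^*\omega_C\wedge(\cdot)\bigr)^* = 2\xi\ast_H$. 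The term $\langle(\ast_H\nabla_\zeta^A)^2\Psi,\Psi\rangle$ is obtained by rewriting $\langle -(\nabla_\zeta^A)^2\Psi,\Psi\rangle$ using $\ast_H^2 = -1$ on $\Omega^1_H$ together with the fact that $\nabla_\zeta^A$ commutes with $\ast_H$ (since $\zeta$ is a Killing vector field preserving the horizontal distribution).

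The remaining \emph{cross} terms, mixing $\alpha$ and $\Psi$, arise from the expressions $\nabla_\zeta^A(d_A^H)^*\Psi - (d_A^H)^*\nabla_\zeta^A\Psi$ and $-d_A^H\nabla_\zeta^A\alpha + \nabla_\zeta^A d_A^H\alpha$, together with the $\xi$-linear contributions from the off-diagonal entries of the block matrices. The central algebraic input is the commutator identity $[\nabla_\zeta^A,d_A^H]\gamma = [\iota_\zeta F_A,\gamma]$ on horizontal $\mathfrak{g}_E$-valued forms $\gamma$, which is obtained by comparing $d_A^2\gamma = [F_A\wedge\gamma]$ with the iterated block form of $d_A$ and using the decomposition $F_A = \eta\wedge\iota_\zeta F_A + F_A^H$. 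Applied to $\alpha$, this identity yields the curvature term $\langle[\iota_\zeta F_A,\alpha],\Psi\rangle$ on the right-hand side.

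The main obstacle will be the careful bookkeeping of the remaining cross contributions. Specifically, I must verify that the term $\langle[\nabla_\zeta^A,(d_A^H)^*]\Psi,\alpha\rangle$ (handled by conjugating $[\nabla_\zeta^A,d_A^H]$ with $\ast_H$ and invoking invariance of the trace pairing under the $\mathfrak{g}_E$-bracket), combined with the $\xi$-linear cross terms coming from the off-diagonal entries $2\xi\,\pi^*\omega_C\wedge(\cdot)$ in the block matrix for $d_A$ and $2\xi(\pi^*\omega_C\wedge(\cdot))^*$ in that for $d_A^*$ at $n=2$, collapses precisely to the single coefficient $4\xi\langle\ast_H d_A^H\Psi,\alpha\rangle$, with no residual curvature or lower-order terms surviving. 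This is where the 2-dimensional pointwise identity $\omega\wedge\ast_H\eta = \langle\omega,\eta\rangle\,\pi^*\omega_C$ for $\omega,\eta\in\Omega^1_H$ plays a decisive role.
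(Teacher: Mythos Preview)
Your approach is sound and will work, but it is organizationally different from the paper's proof. Rather than computing $d_A d_A^\ast\Phi$ and $d_A^\ast d_A\Phi$ separately via the block matrices, the paper introduces the \emph{odd signature operator}
\[
D_A=\begin{pmatrix}0 & -d_A^\ast\\ -d_A & \ast d_A\end{pmatrix}
\quad\text{on }(\Omega^0\oplus\Omega^1)(Y,\mathfrak g_E),
\]
squares it once to get the block $(d_Ad_A^\ast+d_A^\ast d_A)$ in the $\Omega^1$--$\Omega^1$ corner, and then squares it a second time using the vertical/horizontal splitting $D_A=D^A_\zeta+D^A_H$ (each of $D^A_\zeta$ and $D^A_H$ a $3\times3$ matrix in the decomposition $\Omega^0\oplus\Omega^0\oplus\Omega^1_H$). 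The identity then drops out by comparing the two expressions for $D_A^2$, applying to $(0,\alpha,\Psi)$ and pairing with the same vector.

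The practical difference is in how the cross terms are packaged: in the paper all of the $\alpha$--$\Psi$ mixing sits in a single anticommutator $D^A_\zeta D^A_H+D^A_H D^A_\zeta$, and the curvature contribution comes out of the two off-diagonal entries of this $3\times3$ matrix simultaneously (the $\xi$-linear entries $\pm2\xi\ast_H d_A^H$ appearing alongside them). Your direct route instead distributes these cross terms between $d_Ad_A^\ast$ and $d_A^\ast d_A$, so the commutator $[\nabla^A_\zeta,d_A^H]$ and its $(d_A^H)^\ast$-analogue appear as separate contributions that you must then recombine. Both routes rely on exactly the same commutator identity $[\nabla_\zeta^A,d_A^H]=[\iota_\zeta F_A,\,\cdot\,]$ (which the paper derives by a local-frame computation rather than via $d_A^2=[F_A,\cdot]$ as you suggest), and both must track the same $\xi$-linear pieces; the $D_A^2$ formalism just gives a cleaner ledger. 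Your approach has the virtue of being more elementary---no auxiliary operator---at the cost of the bookkeeping you already flagged as the main obstacle.
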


\begin{proof}
Consider the self-adjoint differential operator $D_A : (\Omega^0 \oplus \Omega^1)(Y, \mathfrak{g}_E ) \rightarrow (\Omega^0 \oplus \Omega^1)(Y, \mathfrak{g}_E )$ given by the \textit{odd signature operator} coupled with the connection $A$ on $\mathfrak{g}_E$: 
\begin{align*}
D_A & = \begin{pmatrix} 0 &  - d_{A}^\ast \\ - d_A & \ast d_A \end{pmatrix} .
\end{align*}
This operator squares to 
\begin{align}
(D_A)^2 &= \begin{pmatrix} d_A d_{A}^\ast + d_{A}^\ast d_A & \ast [F_A , \cdot ] \\ -\ast [F_A , \cdot ] & d_A d_{A}^\ast + d_{A}^\ast d_A \end{pmatrix}. \label{squareD}
\end{align}

We further decompose $(\Omega^0 \oplus \Omega^1 )(Y, \mathfrak{g}_E ) = (\Omega^0 \oplus \Omega^0 \oplus \Omega_{H}^1 )(Y , \mathfrak{g}_E )$ using the splitting (\ref{splittingOmega}) for $n = 1$. By Lemma \ref{lemma:d_B} the operator $D_A$ is given by $D_A = D_{\zeta}^A + D_{H}^A$ where

\begin{align*}
D_{\zeta}^A = \begin{pmatrix} 0 & \nabla_{\zeta}^A & 0 \\ 
- \nabla_{\zeta}^A & 2\xi & 0 \\
0 & 0 & \ast_H \nabla_{\zeta}^A \end{pmatrix} \quad &, \quad D_{H}^A = \begin{pmatrix} 0 & 0 & - (d_{A}^H)^\ast \\ 0 & 0 & \ast_H d_{A}^H \\
- d_{A}^H & - \ast_H d_{A}^H   & 0 \end{pmatrix}.
\end{align*}


We have the identity
\begin{align}
(D_A)^2 = (D_{\zeta}^A)^2 + (D_{H}^A)^2 + (D_{\zeta}^{A}D_{H}^A + D_{H}^A D_{\zeta}^A) \label{squareD2}
\end{align}
and in what follows now we compute the three operators on the right-hand side of (\ref{squareD2}) one by one.

\begin{align}
(D_{\zeta}^A)^2 & = \begin{pmatrix} - (\nabla_{\zeta}^A)^2 & 2\xi \nabla_{\zeta}^A & 0 \\
-2\xi \nabla_{\zeta}^A & - (\nabla_{\zeta}^A)^2 + 4\xi^2 & 0 \\
0 & 0 & (\ast_H \nabla_{\zeta}^{A} )^2\end{pmatrix} \label{squareDzeta}\\
(D_{H}^A)^2 & = \begin{pmatrix}  (d_{A}^H)^\ast d_{A}^H & \ast_H (d_{A}^H)^2 & 0 \\
 -\ast_H (d_{A}^H)^2 & (d_{A}^H)^\ast d_{A}^H & 0   \\
  0 & 0  & d_{A}^H (d_{A}^H)^\ast + (d_{A}^H)^\ast d_{A}^H
\end{pmatrix} \label{squareDH}
\end{align}
and
$( D_{\zeta}^{B}D_{H}^B + D_{H}^B D_{\zeta}^B) $ is given by
\\
\begin{align}
\begin{pmatrix} 0 & 0 &  \nabla^{A}_{\zeta} \ast_H d_{A}^H - \ast_H d_{A}^H \nabla_{\zeta}^A\\
0 & 0 & \nabla_{\zeta}^A (d_{A}^H)^\ast - (d_{A}^H)^\ast \nabla_{\zeta}^A + 2 \xi  \ast_H d_{A}^H \\
- \ast_H \nabla_{\zeta}^A d_{A}^H + \ast_H d_{A}^H \nabla_{\zeta}^A & - \ast_H \nabla_{\zeta}^A \ast_H d_{A}^H - d_{A}^H \nabla_{\zeta}^A - 2 \xi \ast_H d_{A}^H & 0 \end{pmatrix} . \nonumber
\end{align}
\\

The entries in the last matrix can be described more succinctly. For this, we fix a local orthonormal frame $\zeta_0 (= \zeta ) , \zeta_2 , \zeta_2$ and dual coframe $\eta^0 (= \eta ), \eta^1 , \eta^2$ as in Lemma \ref{lemma:nicolaescu}. By definition of the adiabatic connection, we have $\nabla_{\zeta}\eta^1 = \nabla_{\zeta}\eta^2 = 0$. We then compute as follows, for an auxiliary $s \in \Omega^0 (Y, \mathfrak{g}_E )$:
\begin{align*}
(- \ast_H \nabla_{\zeta}^A \ast_H d_{A}^H - d_{A}^H \nabla_{\zeta}^A )s  & = - \ast_H \nabla_{\zeta}^A  (\eta^2 \wedge \nabla_{\zeta_1}^A s - \eta^1 \wedge \nabla_{\zeta_2}^A s ) - ( \eta^1 \wedge \nabla_{\zeta_1}^A \nabla_{\zeta}^A s + \eta^2 \wedge \nabla_{\zeta_2}^A \nabla_{\zeta}^A s )\\
& = - \ast_H ( \eta^2 \wedge \nabla_{\zeta}^A \nabla_{\zeta_1}^A s - \eta^1 \wedge \nabla_{\zeta}^A \nabla_{\zeta_2}^A s ) - ( \eta^1 \wedge \nabla_{\zeta_1}^A \nabla_{\zeta}^A s + \eta^2 \wedge \nabla_{\zeta_2}^A \nabla_{\zeta}^A s )\\
& = e_1 [\nabla_{\zeta}^A , \nabla_{e_1}^A ] + e_2 [\nabla_{\zeta}^A , \nabla_{e_2}^A] )s  = \frac{1}{2} [ \iota_{\zeta} F_B ,s ] 
\end{align*}

\begin{align*}
(- \ast_H \nabla_{\zeta}^A d_{A}^H + \ast_H d_{A}^H \nabla_{\zeta}^A )s & = -\ast_H \nabla_{\zeta}^A (\eta^1 \wedge \nabla_{\zeta_1}^A s  + \eta^2 \wedge \nabla_{\zeta_2}^A s  ) + \ast_H d_{A}^H \nabla_{\zeta}^A s \\
& =  -\ast_H ( \eta^1 \wedge \nabla_{\zeta}^A \nabla_{\zeta_1}^A s + \eta_2 \wedge \nabla_{\zeta}^A  \nabla_{\zeta_2}^A s ) + \ast_H (  \eta_1 \wedge \nabla_{\zeta_1}^A \nabla_{\zeta}^A s + \eta_2 \wedge \nabla_{\zeta_2}^A \nabla_{\zeta}^A s  ) \\
& = -\ast_H ( \eta_1 \wedge [\nabla_{\zeta}^B , \nabla_{\zeta_1}^B] + \eta_2 \wedge [\nabla_{\zeta}^B , \nabla_{\zeta_2}^B])s  = -\ast_H \frac{1}{2} [\iota_\zeta F_B ,s ] .
\end{align*}
By the self-adjointness of $(D_{\zeta}^{A}D_{H}^A + D_{H}^A D_{\zeta}^A)$, the remaining entries in the matrix can be determined from the previous two. Putting everything together we obtain
\\
\begin{align}
(D_{\zeta}^{A}D_{H}^A + D_{H}^A D_{\zeta}^A) & = \begin{pmatrix} 0 & 0 &  -\ast_H \frac{1}{2} [\iota_\zeta F_A , \cdot ]\\
0 & 0 & \ast_H \frac{1}{2} [\iota_\zeta F_A , \ast_H \cdot ] + 2 \xi \ast_H d_{A}^H \\
-\ast_H \frac{1}{2} [\iota_\zeta F_A , \cdot ] & \frac{1}{2} [ \iota_{\zeta} F_A , \cdot ] - 2\xi \ast_H d_{A}^H & 0 \end{pmatrix} . \label{commutator}
\end{align}
\\

Finally, the required identity follows by applying both sides of (\ref{squareD2}) to $(0 , \alpha , \Psi )$, inserting the expressions (\ref{squareD}),(\ref{squareDzeta}), (\ref{squareDH}), (\ref{commutator}), and then taking the pointwise inner product against $(0 , \alpha , \Psi )$.
\end{proof}

We have now arrived at the crucial vanishing result:

\begin{corollary}\label{corollary:vanishing}
Let $A + i \Phi$ be a $GL(2,\mathbb{C})$-connection on $E$ with fixed determinant. 
Suppose that $A+i\Phi$ is projectively flat (cf. (\ref{eq1}-\ref{eq2})) and polystable (cf. (\ref{eq3})). Then the following quantities vanish identically:
\begin{align*}
\nabla_{\zeta}^A \alpha = 0 \quad , \quad   \nabla_{\zeta}^A \Psi = 0\quad , \quad 
 d_{A}^H \alpha = 0 \quad , \quad [\alpha , \Psi] = 0
\end{align*}
and we have
\begin{align*}
F_A - \frac{I}{2} F_\lambda - \frac{1}{2}[\Psi , \Psi] & = 0\\
d_{A}^H \Psi & = 0\\
(d_{A}^H )^\ast \Psi & = 0.
\end{align*}
Furthermore, if either the orbifold degree $\mathrm{deg}N$ is non-zero or $A + i \Phi$ is stable, then $\alpha = 0$ also.
\end{corollary}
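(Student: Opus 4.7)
The plan is to integrate the pointwise Bochner--Weitzenb\"{o}ck identity of Proposition \ref{proposition:bochner} over $Y$ and show that both sides reorganize into a sum of $L^2$-squares. Projective flatness gives $d_A \Phi = 0$ and polystability gives $d_A^\ast \Phi = 0$, so after integration by parts the integrated left-hand side becomes $\|d_A \Phi\|_{L^2}^2 + \|d_A^\ast \Phi\|_{L^2}^2 = 0$.

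On the right-hand side, integration by parts --- justified because $\zeta$ is Killing (so $(\nabla_\zeta^A)^\ast = -\nabla_\zeta^A$), $A$ is unitary, and $\ast_H$ is a fibrewise isometry commuting with $\nabla_\zeta^A$ (as the Seifert $S^1$-action is isometric) --- converts the four ``Laplacian-type'' terms into $\|\nabla_\zeta^A \alpha\|^2$, $\|\nabla_\zeta^A \Psi\|^2$, $\|d_A^H \alpha\|^2$ and $\|(d_A^H)^\ast \Psi\|^2$ respectively. The three remaining contributions $\|d_A^H \Psi\|^2$, $4\xi \langle \ast_H d_A^H \Psi, \alpha\rangle$ and $4\xi^2 \|\alpha\|^2$ assemble into the single perfect square $\|\ast_H d_A^H \Psi + 2\xi \alpha\|_{L^2}^2$.

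The step I expect to be the most delicate is the mixed commutator contribution $\int_Y \langle [\iota_\zeta F_A, \alpha], \Psi\rangle$. Using the flatness equation and the decomposition $[\Phi, \Phi] = 2\eta \wedge [\alpha, \Psi] + [\Psi, \Psi]$, one finds $\iota_\zeta F_A = \tfrac{I}{2}\iota_\zeta F_\lambda + [\alpha, \Psi]$; the scalar piece is killed by bracketing with $\alpha$, and ad-invariance of $\mathrm{Tr}$ then rewrites the term as $\|[\alpha, \Psi]\|_{L^2}^2$. Combining everything,
\[
0 = \|\nabla_\zeta^A \alpha\|^2 + \|\nabla_\zeta^A \Psi\|^2 + \|d_A^H \alpha\|^2 + \|(d_A^H)^\ast \Psi\|^2 + \|[\alpha, \Psi]\|^2 + \|\ast_H d_A^H \Psi + 2\xi\alpha\|^2,
\]
so every summand vanishes. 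This produces the four identities of the first group and $[\alpha, \Psi] = 0$. The flatness equation then collapses to $F_A - \tfrac{I}{2}F_\lambda = \tfrac{1}{2}[\Psi, \Psi]$; the second component of $d_A \Phi = 0$ from Lemma \ref{lemma:d_B} reads $d_A^H \Psi = -2\xi\, \pi^\ast \omega_C \cdot \alpha$, so $d_A^H \Psi = 0$ as soon as $\xi \alpha = 0$; and $(d_A^H)^\ast \Psi = 0$ already appears above.

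For the last assertion, $\nabla_\zeta^A \alpha = d_A^H \alpha = 0$ yield $d_A \alpha = 0$, so $\alpha$ is $A$-parallel. Suppose $\alpha \neq 0$: its eigenspace decomposition produces an $A$-invariant splitting $E = L \oplus L^\perp$, and $[\alpha, \Psi] = 0$ forces $\Phi$ to be block-diagonal, so $\mathbb{A}$ preserves $L$. Under stability this contradicts Proposition \ref{proposition:invariant}(2). Otherwise, working in a local block trivialization I write $\alpha = s\,e$ and $\Psi = \psi\,e$ for $e = \mathrm{diag}(i,-i)$, with $s \in \mathbb{R}$ constant and $\psi$ an $S^1$-invariant real horizontal $1$-form on $Y$. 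The relation $d_A^H \Psi = -2\xi\, \pi^\ast \omega_C \cdot \alpha$ descends to $d\psi = -2\xi s\, \omega_C$ on the closed orbifold $C$, and Stokes yields $\xi s \,\mathrm{Vol}(C, g_C) = 0$; so $\deg N \neq 0$ forces $s = 0$, i.e., $\alpha = 0$. Combined with the identity $d_A^H \Psi = -2\xi\, \pi^\ast \omega_C \cdot \alpha$, this also guarantees $d_A^H \Psi = 0$ in every case covered by the corollary.
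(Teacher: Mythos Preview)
Your argument is correct and follows the paper's approach closely for the main step: integrating the Bochner identity of Proposition~\ref{proposition:bochner} and reorganizing the right-hand side into a sum of $L^2$-squares, with the commutator term handled exactly as the paper does.

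The one place where you diverge is the derivation of $d_A^H\Psi = 0$. The paper extracts this directly from the perfect square: having $\ast_H d_A^H\Psi + 2\xi\alpha = 0$, it applies $d_A^H$ and uses $d_A^H\alpha = 0$ to get $d_A^H\ast_H d_A^H\Psi = 0$, then integrates by parts on $Y$ to obtain $\|d_A^H\Psi\|_{L^2}^2 = 0$; this immediately yields $\xi\alpha = 0$ as well, with no case analysis. Your route instead reads off $d_A^H\Psi = -2\xi\,\pi^\ast\omega_C\cdot\alpha$ from the decomposition of $d_A\Phi = 0$, and then for $\xi\neq 0$ passes through the eigenbundle splitting, descent to $C$, and Stokes. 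This works, but is more elaborate, and your final sentence (``in every case covered by the corollary'') leaves a small gap: you have only argued $d_A^H\Psi = 0$ under the ``Furthermore'' hypotheses, whereas the corollary asserts it unconditionally for polystable projectively flat connections. The missing case---$\deg N = 0$ and not stable---is trivial (then $\xi = 0$ and the identity gives $d_A^H\Psi = 0$ instantly), but you should say so. The paper's two-line integration-by-parts argument avoids the case split entirely.
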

\begin{proof}
By (\ref{eq1}) we have $\iota_\zeta F_A = [\alpha , \Psi ] + \frac{I}{2} \iota_\zeta F_\lambda$, and thus
\[
\langle [\iota_\zeta F_A , \alpha ] , \Psi \rangle = \langle [[\alpha , \Psi],\alpha],\Psi \rangle = | [\alpha , \Psi]|^2 .
\]
By (\ref{eq2}-\ref{eq3}) the left-hand side of the identity in Proposition \ref{proposition:bochner} vanishes identically. Integrating that identity over $Y$ and integrating by parts yields the following identity:
\begin{align*}
0   = & \| \nabla_{\zeta}^A \alpha \|_{L^2 (Y)}^2  + \| \nabla_{\zeta}^A \Psi \|_{L^2(Y)}^2   + \| d_{A}^H \alpha \|_{L^2 (Y)}^2 + \| (d_{A}^H )^\ast \Psi \|_{L^2 (Y)}^2+ \| [\alpha , \Psi ]\|_{L^2 (Y)}^2\\
& + \|\ast_H d_{A}^H \Psi  + 2 \xi \alpha \|_{L^2 (Y)}^2 .
\end{align*}
Thus
\begin{align*}
\nabla_{\zeta}^A \alpha = 0 \quad , \quad \nabla_{\zeta}^A \Psi = 0 \quad , \quad d_{A}^H \alpha = 0\\
 (d_{A}^H )^\ast \Psi = 0 \quad , \quad [\alpha , \Psi ] = 0 \quad , \quad \ast_H d_{A}^H \Psi  + 2 \xi \alpha = 0 .
 \end{align*}
Applying $d_{A}^H$ to the last identity above and using $d_{A}^H \alpha = 0$ gives $d_{A}^H \ast_H d_{A}^H \Psi  = 0$. Integration by parts then gives $d_{A}^H \Psi = 0$ and hence also $\xi \alpha = 0$. 

Thus if $\xi \neq0$ (equivalently, if $\mathrm{deg}N\neq 0$) then $\alpha = 0$. Finally, suppose that $\xi = 0$ but $A + i \Phi$ is stable. Then from $\nabla_{\zeta}^A \alpha = 0$, $d_{A}^H \alpha = 0$ and Lemma \ref{lemma:d_B} we have $d_A \alpha =0$. This, combined with $[\alpha , \Phi ]= [\alpha , \Psi ] = 0$, implies that $\alpha$ vanishes by stability. With this, the proof is complete.
\end{proof}


We will also need an `infinitesimal' analogue of Corollary \ref{corollary:vanishing}. Let $A + i \Phi$ be a stable projectively flat $GL(2,\mathbb{C})$ connection with fixed determinant. Then the Zariski tangent space $\mathscr{H}^{1}_{(A, \Phi )} = \mathrm{Ker}\mathscr{D}_{(A, \Phi )}$ to the moduli space $\mathscr{M}_{E}^\ast$ at $[(A, \Phi )]$ is given by the space of $(\dot{A}, \dot{\Phi} ) \in (\Omega^1 \oplus \Omega^1 )(Y, \mathfrak{g}_E )$ such that
\begin{align}
- d_{A}^\ast \dot{A} - \ast [\dot{\Phi}, \ast \Phi ] & = 0 \label{Zeq}\\
\ast [\dot{A} , \ast \Phi] - d_{A}^\ast \Phi & = 0 \nonumber\\
\ast [\dot{A} , \Phi ] + \ast d_{A}\dot{\Phi} & = 0 \nonumber\\
\ast d_{A}\dot{A} - \ast [\dot{\Phi} , \Phi] & = 0 \nonumber.
\end{align}
Decompose $\dot{A} = \eta \cdot \dot{a} + \dot{B}$ and $\dot{\Phi} = \eta \cdot \dot{\alpha} + \dot{\Psi}$ according to the splitting \ref{splittingOmega}. 

\begin{proposition}\label{prop:vanishing_inf}
Suppose that $A + i \Phi$ is a polystable projectively flat $GL(2, \mathbb{C} )$ connection on $E$ with fixed determinant. Suppose that $(\dot{A} , \dot{\Phi} ) \in (\Omega^1 \oplus \Omega^1 )(Y, \mathfrak{g}_E )$ solves the equations (\ref{Zeq}). Then the following quantities vanish identically:
\begin{align*}
\nabla_{\zeta} \dot{a} = 0 \, , \, \nabla_{\zeta}^A \dot{B} = 0  \, , \, [\dot{a}, \alpha]= 0 \, , \, [\dot{a}, \Psi] = 0 \, , \, [\dot{B},\alpha] = 0
\\
\nabla_{\zeta}\dot{\alpha} =0  \, , \, \, , \, \nabla_{\zeta}^A \dot{\Psi} = 0 \, , \, [\dot{\alpha},\alpha] = 0 \, , \, [\dot{\alpha} , \Psi] = 0 \, , \, [\dot{\Psi},\alpha] = 0
\end{align*}
and we have
\begin{align*}
- (d_{A}^H )^\ast \dot{B} - \ast_H [ \dot{\Psi} , \ast_H \Psi ] & = 0\\
 \ast_H [ \dot{B}, \ast_H \Psi ] - (d_{A}^H )^\ast \dot{\Psi}& = 0\\
\ast_H [ \dot{B}, \Psi ] + \ast_H d_{A}^H \dot{\Psi} & = 0\\
\ast_H d_{A}^H \dot{B} - \ast_H [\dot{\Psi}, \Psi] &= 0.
\end{align*}
Furthermore, if either the orbifold degree $\mathrm{deg}N$ is non-zero or $A+ i \Phi$ is stable, then $\dot{a} = 0$ and $\dot{\alpha} = 0$ also.
\end{proposition}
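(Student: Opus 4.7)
The strategy is to linearize the argument for Corollary \ref{corollary:vanishing}. Observe that the four equations (\ref{Zeq}) together say precisely that the quadruple $(\dot A, \dot \Phi, 0, 0)$ lies in the kernel of the extended Hessian operator $\widehat{\text{Hess}}S_{(A, \Phi)}$ from (\ref{extendedHess}) at the polystable projectively-flat connection $(A, \Phi)$. Since $\widehat{\text{Hess}}S$ is a formally self-adjoint first-order elliptic operator, its kernel can be studied via a Weitzenböck identity obtained by squaring it and adapting the square to the Seifert structure.

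Concretely, the plan is to decompose $\dot A = \eta \cdot \dot a + \dot B$ and $\dot \Phi = \eta \cdot \dot \alpha + \dot \Psi$ per (\ref{splittingOmega}), and derive a pointwise identity for $\langle (\widehat{\text{Hess}}S)^2 (\dot A, \dot \Phi, 0, 0), (\dot A, \dot \Phi, 0, 0) \rangle$ analogous to the one in Proposition \ref{proposition:bochner}, by splitting the elliptic operator into vertical and horizontal pieces via Lemma \ref{lemma:d_B}, computing their squares, and evaluating their anticommutator using Lemma \ref{lemma:nicolaescu}. The new feature compared with Proposition \ref{proposition:bochner} is the presence of cross terms mixing the deformation variables $(\dot a, \dot B, \dot \alpha, \dot \Psi)$ with the background Higgs components $(\alpha, \Psi)$; these are controlled by exploiting the unperturbed identities $\nabla_\zeta^A \alpha = \nabla_\zeta^A \Psi = d_A^H \alpha = [\alpha, \Psi] = 0$ from Corollary \ref{corollary:vanishing} together with the relation $\iota_\zeta F_A = [\alpha, \Psi] + \tfrac{I}{2}\iota_\zeta F_\lambda$ coming from the background projective flatness.

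Integrating the resulting pointwise Bochner identity over $Y$ and using that the left-hand side vanishes (because $\widehat{\text{Hess}}S(\dot A, \dot \Phi, 0, 0) = 0$), one obtains an $L^2$ identity of the form $0 = \sum (\text{non-negative terms})$. Each summand is then the $L^2$ norm squared of one of the quantities appearing in the statement: the vertical derivatives $\nabla_\zeta \dot a$, $\nabla_\zeta^A \dot B$, $\nabla_\zeta \dot \alpha$, $\nabla_\zeta^A \dot \Psi$; the Higgs-field commutators $[\dot a, \alpha], [\dot a, \Psi], [\dot B, \alpha], [\dot \alpha, \alpha], [\dot \alpha, \Psi], [\dot \Psi, \alpha]$; and the four horizontal residuals $-(d_A^H)^\ast \dot B - \ast_H[\dot \Psi, \ast_H \Psi]$, $\ast_H[\dot B, \ast_H \Psi] - (d_A^H)^\ast \dot \Psi$, $\ast_H[\dot B, \Psi] + \ast_H d_A^H \dot \Psi$, and $\ast_H d_A^H \dot B - \ast_H[\dot \Psi, \Psi]$. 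The vanishing of each non-negative summand then gives the listed identities.

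For the last assertion, the argument parallels the conclusion of Corollary \ref{corollary:vanishing}: the Bochner sum of squares will include terms of the form $\|\ast_H d_A^H \dot \Psi + 2\xi \dot \alpha\|^2$ and an analogous one involving $\dot B, \dot a$, so when $\xi \neq 0$ applying $d_A^H$ and using $d_A^H \dot \alpha = 0$ forces $\dot \alpha = 0$, and similarly $\dot a = 0$; when $\xi = 0$ but $\mathbb{A}$ is stable, the vanishings $\nabla_\zeta \dot a = d_A^H \dot a = [\dot a, \alpha] = [\dot a, \Psi] = 0$ combine via Lemma \ref{lemma:d_B} to yield $d_A \dot a = 0$ and $[\dot a, \Phi] = 0$, placing $\dot a$ in the infinitesimal $\mathscr{G}_E$-stabilizer of $(A, \Phi)$ and hence forcing $\dot a = 0$ by stability (and analogously for $\dot \alpha$). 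The principal difficulty I anticipate is the bookkeeping in the Weitzenböck expansion itself: the extended Hessian has substantially more entries than the operator in Proposition \ref{proposition:bochner}, and reorganizing the pointwise expression into a manifestly non-negative sum of squares — in particular, completing the square involving the $4\xi^2$ contributions and the $2\xi \ast_H d_A^H$ cross terms simultaneously for both pairs $(\dot a, \dot \Psi)$ and $(\dot \alpha, \dot B)$, while correctly absorbing the background cross terms via Corollary \ref{corollary:vanishing} — requires careful tracking of signs.
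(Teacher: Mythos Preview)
Your observation that the four equations (\ref{Zeq}) express $(\dot A, \dot\Phi, 0, 0) \in \mathrm{Ker}\,\widehat{\mathrm{Hess}}S$ is correct, and the overall idea of linearizing the Bochner argument from Corollary \ref{corollary:vanishing} is sound. However, the paper does not proceed via a single Weitzenb\"ock identity for the full extended Hessian; it takes a staged route that avoids most of the bookkeeping you anticipate.

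The paper first decomposes (\ref{Zeq}) directly into six vertical/horizontal equations using Lemma \ref{lemma:d_B}. It then runs a \emph{targeted} Bochner argument, recycling the operator $D_A$ and its already-computed square decomposition (\ref{squareDzeta})--(\ref{commutator}) from the proof of Proposition \ref{proposition:bochner}: the second and third equations of (\ref{Zeq}) read $D_A\binom{0}{\dot\Phi} + \binom{\ast[\dot A,\ast\Phi]}{\ast[\dot A,\Phi]} = 0$; applying $D_A$ once more and pairing only with $(0,\eta\dot\alpha)$, the unperturbed vanishings from Corollary \ref{corollary:vanishing} kill the cross terms and one integrates to $\|\nabla_\zeta^A\dot\alpha\|^2 + \|d_A^H\dot\alpha\|^2 + \|[\dot\alpha,\alpha]\|^2 + \|[\dot\alpha,\Psi]\|^2 = 0$ (and analogously for $\dot a$).

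The remaining vanishings do \emph{not} emerge from this Bochner step. With the $\dot\alpha,\dot a$ vanishings inserted, two of the six decomposed equations read $\nabla_\zeta^A\dot\Psi = [\dot B,\alpha]$ and $\nabla_\zeta^A\dot B = -[\dot\Psi,\alpha]$; applying $\nabla_\zeta^A$ to the first, substituting the second, and integrating by parts yields $\|\nabla_\zeta^A\dot\Psi\|^2 + \|[\dot\Psi,\alpha]\|^2 = 0$ directly --- no Weitzenb\"ock machinery required. The four horizontal identities in the statement are then simply what remains of the decomposed system once all vertical quantities are set to zero and once $\xi\dot\alpha = \xi\dot a = 0$ (obtained by pairing the third decomposed equation with $\dot\alpha$). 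They are not squared terms in any Bochner sum, and there is no term of the form $\|\ast_H d_A^H\dot\Psi + 2\xi\dot\alpha\|^2$ here; that completed square is specific to the non-linearized Corollary \ref{corollary:vanishing} and does not recur. Your one-shot squaring of $\widehat{\mathrm{Hess}}S$ may be workable, but the paper's staged approach is substantially lighter because it reuses the $D_A^2$ computation wholesale rather than decomposing the larger operator from scratch.
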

\begin{proof}
The equations (\ref{Zeq}) can be decomposed into vertical and horizontal parts using Lemma \ref{lemma:d_B}: 
\begin{align}
- (d_{A}^H )^\ast \dot{B} - \ast_H [ \dot{\Psi} , \ast_H \Psi ] & = -\nabla_{\zeta}^A \dot{a} + [\dot{\alpha} , \alpha ] \label{ZeqVH}\\
 \ast_H [ \dot{B}, \ast_H \Psi ] - (d_{A}^H )^\ast \dot{\Psi}& = -\nabla_{\zeta}^A \dot{\alpha} - [\dot{a} , \alpha ]\nonumber\\
\ast_H [ \dot{B}, \Psi ] + \ast_H d_{A}^H \dot{\Psi} & = -2\xi \dot{\alpha} \nonumber\\
- \ast_H d_{A}^H \dot{\alpha} + \ast_H \nabla_{\zeta}^A \dot{\Psi} + \ast_H [\dot{a}, \Psi] - \ast_H [\dot{B},\alpha] & = 0  \nonumber\\
\ast_H d_{A}^H \dot{B} - \ast_H [\dot{\Psi}, \Psi] &= -2\xi \dot{a}\nonumber\\
-\ast_H d_{A}^H \dot{a} + \ast_H \nabla_{\zeta}^A \dot{B} - \ast_H [\dot{\alpha}, \Psi] + \ast_H [\dot{\Psi},\alpha] & = 0 \nonumber.
\end{align}

We first establish the vanishing of the quantities $\nabla_{\zeta}\dot{\alpha}, d_{A}^H \dot{\alpha},[\dot{\alpha},\alpha] , [\dot{\alpha} , \Psi] $. For this, we express the second and third equations in (\ref{Zeq}) jointly in terms of the operator $D_A$ introduced in the proof of Proposition \ref{proposition:bochner}, as
\begin{align*}
D_A \begin{pmatrix} 0 \\ \dot{\Phi}\end{pmatrix} + \begin{pmatrix} \ast [\dot{A}, \ast \Phi ] \\ \ast [\dot{A} , \Phi] \end{pmatrix} = 0 .
\end{align*}
We then apply $D_A$ to this identity and take the inner product against $(0 , \eta \dot{\alpha} )$: 
\begin{align}
    \langle D_{A}^2 \begin{pmatrix} 0 \\ \dot{\Phi}\end{pmatrix} , \begin{pmatrix} 0 \\ \eta \dot{\alpha} \end{pmatrix}\rangle + \langle D_A \begin{pmatrix} \ast [\dot{A}, \ast \Phi ] \\ \ast [\dot{A} , \Phi] \end{pmatrix}, \begin{pmatrix} 0 \\ \eta \dot{\alpha }\end{pmatrix}\rangle  = 0.\label{D2Z}
\end{align}

We now compute each term in (\ref{D2Z}). For the first term, by (\ref{squareD}), (\ref{squareDzeta}), (\ref{squareDH}), and the vanishing of the terms involving $\iota_\zeta F_A $ in (\ref{commutator}) (since $[\alpha , \Psi] = 0$ by Corollary \ref{corollary:vanishing}) we have
\begin{align}
\langle D_{A}^2 \begin{pmatrix} 0 \\ \dot{\Phi}\end{pmatrix} , \begin{pmatrix} 0 \\ \eta \dot{\alpha} \end{pmatrix}\rangle & = \langle - \nabla_{\zeta}^A \dot{\alpha} , \dot{\alpha}\rangle + 4\xi^2 |\dot{\alpha}|^2 + \langle (d_{A}^H )^\ast d_{A}^H \dot{\alpha}, \dot{\alpha}\rangle + 2 \xi \ast \langle \ast_H d_{A}^H \dot{\Psi}, \dot{\alpha}\rangle \nonumber \\
& = \langle - \nabla_{\zeta}^A \dot{\alpha} , \dot{\alpha}\rangle  + \langle (d_{A}^H )^\ast d_{A}^H \dot{\alpha}, \dot{\alpha}\rangle  - 2\xi \langle \ast_H [\dot{B}, \Psi], \dot{\alpha}\rangle  \label{term1}
\end{align}
where in the last line we have used the third identity in (\ref{ZeqVH}). For the second term in (\ref{D2Z}), we use Lemma \ref{lemma:d_B} and the vanishing of $\nabla_{\zeta}^A \alpha , \nabla_{\zeta}^A \Psi, d_{A}^H \alpha $ and $(d_{A}^H )^\ast \Psi $ to obtain
\begin{align}
 \langle D_A &\begin{pmatrix} \ast [\dot{A}, \ast \Phi ] \\ \ast [\dot{A} , \Phi] \end{pmatrix}, \begin{pmatrix} 0 \\ \eta \dot{\alpha }\end{pmatrix}\rangle  \nonumber \\
& = \langle \, - [\nabla_{\zeta}^A \dot{a} , \alpha] - \ast_H [\nabla_{\zeta}^A \dot{B}, \ast_H \Psi] + \ast_H [d_{A}^H \dot{a} , \ast_H \Psi] + [ (d_{A}^H )^\ast \dot{B} , \alpha] + 2 \xi \ast_H [\dot{B},\Psi] \, , \, \dot{\alpha} \, \rangle \nonumber \\
& = \langle \, - [\ast_H [\dot{\Psi} , \ast_H \Psi],\alpha] - [[\dot{\alpha},\alpha],\alpha] - \ast_H [[\dot{\alpha},\Psi],\ast_H \Psi] + \ast_H [[\dot{\Psi},\alpha],\ast_H \Psi] + 2\xi \ast_H [\dot{B},\Psi] \, , \, \dot{\alpha} \, \rangle \nonumber \\
& = |[\dot{\alpha},\alpha]|^2 + |[\dot{\alpha}, \Psi]|^2 + 2\xi \langle \ast_H [\dot{B},\Psi],\dot{\alpha}\rangle \label{term2}
\end{align}
where to get the second to last line we used the first and last identities in (\ref{ZeqVH}), and to get the last line we took adjoints and used the vanishing of the terms
\[
 - [\ast_H [\dot{\Psi} , \ast_H \Psi],\alpha]   + \ast_H [[\dot{\Psi},\alpha],\ast_H \Psi]  =  \ast_H [[\Psi , \alpha], \ast_H \dot{\Psi}] = 0
\]
due to $[\Psi , \alpha] = 0$ (by Corollary \ref{corollary:vanishing}).

Altogether, combining (\ref{D2Z}-\ref{term2}), cancelling terms and integrating by parts gives the identity
\begin{align*}
0 = \| \nabla_{\zeta}^A \dot{\alpha}\|_{L^2 (Y)}^2 + \| d_{A}^H \dot{\alpha}\|_{L^2 (Y)}^2 + \| [\dot{\alpha}, \alpha]\|_{L^2 (Y)}^2 + \| [\dot{\alpha}, \Psi]\|_{L^2 (Y)}^2 ,
\end{align*}
and hence the vanishing of $\nabla_{\zeta}\dot{\alpha}, d_{A}^H \dot{\alpha},[\dot{\alpha},\alpha] , [\dot{\alpha} , \Psi]$. 

An analogous argument gives the identity
\begin{align*}
0 = \| \nabla_{\zeta}^A \dot{a}\|_{L^2 (Y)}^2 + \| d_{A}^H \dot{a}\|_{L^2 (Y)}^2 + \| [\dot{a}, \alpha]\|_{L^2 (Y)}^2 + \| [\dot{a}, \Psi]\|_{L^2 (Y)}^2 ,
\end{align*}
and hence the vanishing of $\nabla_{\zeta}\dot{a}, d_{A}^H \dot{a},[\dot{a},\alpha] , [\dot{a} , \Psi]$.

By the vanishing of all these quantities, the equations (\ref{Zeq}) now read:
\begin{align}
- (d_{A}^H )^\ast \dot{B} - \ast_H [ \dot{\Psi} , \ast_H \Psi ] & = 0 \label{ZeqVH2}\\
 \ast_H [ \dot{B}, \ast_H \Psi ] - (d_{A}^H )^\ast \dot{\Psi}& = 0 \nonumber\\
\ast_H [ \dot{B}, \Psi ] + \ast_H d_{A}^H \dot{\Psi} & = -2\xi \dot{\alpha} \nonumber\\
 \ast_H \nabla_{\zeta}^A \dot{\Psi}  - \ast_H [\dot{B},\alpha] & = 0  \nonumber\\
\ast_H d_{A}^H \dot{B} - \ast_H [\dot{\Psi}, \Psi] &= -2\xi \dot{a}\nonumber\\
 \ast_H \nabla_{\zeta}^A \dot{B}  + \ast_H [\dot{\Psi},\alpha] & = 0 \nonumber.
\end{align}

Taking $\nabla_{\zeta}^A$ of the fourth equation of (\ref{ZeqVH2}), using $\nabla_{\zeta}^A \alpha = 0$ (cf. Corollary \ref{corollary:vanishing}), then inserting the last equation of (\ref{ZeqVH2}), and finally integrating by parts yields
\[
\| \nabla_{\zeta}^A \dot{\Psi} \|_{L^2 (Y)}^2 + \| [\dot{\Psi }, \alpha ]\|_{L^2 (Y)}^2 = 0 .
\]
This says the quantities $\nabla_{\zeta}^A \dot{\Psi}, [\dot{\Psi} , \alpha ]$ vanish. An analogous argument gives the vanishing of $\nabla_{\zeta}^A \dot{B} , [\dot{B} , \alpha]$. Finally, observe that $\xi \dot{\alpha} = 0$, which follows easily by taking the inner product of the third equation in (\ref{ZeqVH2}) with $\dot{\alpha}$, integrating by parts and using the vanishing of $[\Psi , \dot{\alpha}] , d_{A}^H \dot{\alpha}$. Likewise, $\xi \dot{a} = 0$. 

At this point, we have established all but the final assertion in Proposition \ref{prop:vanishing_inf}. The required conclusion for $\dot{\alpha}$ readily follows from the vanishing of $\xi \dot{\alpha} , [\Psi , \dot{\alpha}] , [\alpha , \dot{\alpha}], d_{A}\dot{\alpha}$, and similarly for $\dot{a}$. This concludes the proof.
\end{proof}

\subsection{Dimensional reduction}

We now use the vanishing results from the previous section (Corollary \ref{corollary:vanishing} and Proposition \ref{prop:vanishing_inf}) to describe the moduli spaces of stable flat connections on $Y$ in terms of stable Higgs pairs on the orbifold base $C$.

\subsubsection{Stable flat connections and Higgs pairs on $C$}

Let $E_0 \to C$ be an orbifold $U(2)$-bundle with fixed determinant $(\Lambda_0 = \Lambda^2 E_0 , \lambda_0 )$ over the closed oriented Riemannian $2$-orbifold $(C, g_C )$, where $\lambda_0$ is a fixed $U(1)$ connection on $\Lambda_0$. We now gather some basic results about the moduli space of stable projectively flat connections  in the bundle $E_0$.

The discussion from \S \ref{subsection:orbitspaces} applies also to the Riemannian $2$-orbifold $(C, g_C )$, and we obtain the spaces $\mathscr{B}_{E_0 , k }(C), \mathscr{B}_{E_0 , k }^\ast (C)$ of polystable or stable orbits of $GL(2,\mathbb{C} )$ connections $B + i \Psi$ with fixed determinant $\lambda_0$ and regularity $L^{2}_{k}$ under the action of the group $\mathscr{G}_{E_0 , k+1}(C)$ of $L^{2}_{k+1}$ gauge transformations. As before, `polystable' here means $d_{B}^\ast \Psi = 0$, and `stable' means that, in addition, the stabilizer of $B + i \Psi$ under the $\mathscr{G}_{E_0 , k+1}$ action is $\pm I$. Choosing any $k\geq 2$ will suffice for our purposes, and $\mathscr{B}_{E_0 , k} (C)$ then becomes a Hilbert Kähler manifold when equipped with the complex structure $J$ and symplectic form $\omega_J$ given in (\ref{J}-\ref{omegaJ}).

The moduli space $\mathscr{M}_{E_0}^\ast (C,g_C )$ of polystable projectively flat $GL(2,\mathbb{C})$ orbifold connections on $E_0 \to C$ with fixed determinant $(\Lambda_0 , \lambda_0 )$ is given by the $\mathscr{G}_{E_0}$--orbits of configurations $\mathbb{B} = B + i \Phi $ solving 
\begin{align}
 F_B - \frac{I}{2}F_{\lambda_0} -  \frac{1}{2} [\Psi , \Psi ] &= 0 \label{eq1C}\\
 d_{B} \Psi & = 0 \label{eq2C}\\ 
d_{B}^\ast \Psi & =  0 \label{eq3C}.
\end{align}
The moduli space $\mathscr{M}_{E_0}^\ast (C) \subset \mathscr{B}_{E_0}^\ast (C) $ arises as zero locus of a holomorphic Fredholm section, denoted $s$, of a holomorphic Hilbert vector bundle, denoted $\mathscr{V}_{k-1} \rightarrow \mathscr{B}_{E_0}^\ast (C)$. Namely, $\mathscr{V}_{k-1}$ is the bundle induced from the $\mathscr{G}_{E_0 , k+1}$--equivariant vector bundle over the stable configuration space $\mathscr{C}_{E_0 , k}^\ast (C)$ with fiber $\Omega^2 (C, \mathfrak{g}_E ) \oplus \Omega^2 (C , \mathfrak{g}_E )$ at every point, and $s$ is induced from the equivariant section given the left-hand side of the equations (\ref{eq1C}) and (\ref{eq2C}).

As a matter of general principle, then the moduli space $\mathscr{M}_{E_0}^\ast$ is naturally a complex-analytic space with complex structure $J$, with local charts provided by finite-dimensional Kuranishi models. In fact, more is true in this case:

\begin{proposition}\label{proposition:transversalityC}
The moduli space $\mathscr{M}_{E_0 }^\ast (C) = s^{-1}(0)$ of stable projectively flat $GL(2, \mathbb{C} )$ connections on a closed oriented Riemannian $2$-orbifold $(C, g_C )$ is transversely cut-out; hence is naturally a complex manifold with the complex structure $J$.
\end{proposition}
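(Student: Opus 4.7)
The plan is to verify transversality of the holomorphic Fredholm section $s$ cutting out $\mathscr{M}_{E_0}^\ast(C) \subset \mathscr{B}_{E_0}^\ast(C)$ at every zero. Once this is established, the statement that $\mathscr{M}_{E_0}^\ast(C)$ is a complex submanifold is automatic: a transversely-cut-out zero locus of a holomorphic Fredholm section of a holomorphic Hilbert bundle over a complex Hilbert manifold (and $\mathscr{B}_{E_0}^\ast(C)$ is complex by the $2$-dimensional analogue of Corollary \ref{corollary:Bhilbert}) inherits a natural complex manifold structure, with complex structure induced by $J$.

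At a stable projectively flat $GL(2,\mathbb{C})$ connection $(B,\Psi)$ on $C$, the relevant deformation complex reads
\[
\Omega^0(C,\mathfrak{g}_{E_0}) \xrightarrow{\delta^1_{(B,\Psi)}} (\Omega^1\oplus \Omega^1)(C,\mathfrak{g}_{E_0}) \xrightarrow{L_{(B,\Psi)}} (\Omega^2\oplus\Omega^2)(C,\mathfrak{g}_{E_0}),
\]
where $\delta^1$ is the infinitesimal complex gauge action from (\ref{delta}) (restricted to $C$), and $L$ is the linearization of the equations (\ref{eq1C})-(\ref{eq2C}), namely $L(\dot B,\dot\Psi)=\bigl(d_B\dot B - [\dot\Psi\wedge\Psi]-[\Psi\wedge\dot\Psi],\ d_B\dot\Psi+[\dot B\wedge\Psi]\bigr)$. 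The Zariski tangent space to $\mathscr{M}_{E_0}^\ast(C)$ at $[(B,\Psi)]$ is the middle cohomology $H^1$, and transversality of $s$ is equivalent to the vanishing of the obstruction space $H^2:=\operatorname{coker} L$. By Hodge theory applied to the elliptic completion of the complex (appending the Coulomb-slice equation as in \S \ref{subsection:orbitspaces}), $H^2$ is identified with $\ker L^*$ on smooth sections.

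The main step is a Bochner-type vanishing argument that parallels, but is strictly simpler than, Corollary \ref{corollary:vanishing} and Proposition \ref{prop:vanishing_inf}: there is no vertical direction $\zeta$, so the identity reduces to its horizontal part. Writing $L^*$ out explicitly via integration by parts on the Riemann surface and invoking $\ast_C$, an element of $\ker L^*$ is represented by a pair $(\eta_1,\eta_2)\in\Omega^0(C,\mathfrak{g}_{E_0})^{\oplus 2}$ satisfying a coupled system in which the terms $d_B\eta_j$ and $[\eta_j,\Psi]$ occur squared. Pairing the system against $(\eta_1,\eta_2)$, using the hypotheses $F_B-\tfrac{I}{2}F_{\lambda_0}-\tfrac{1}{2}[\Psi,\Psi]=0$, $d_B\Psi=0$, $d_B^*\Psi=0$, and integrating by parts yields an identity of the shape
\[
\|d_B\eta_1\|_{L^2(C)}^2+\|d_B\eta_2\|_{L^2(C)}^2+\|[\eta_1,\Psi]\|_{L^2(C)}^2+\|[\eta_2,\Psi]\|_{L^2(C)}^2=0.
\]
Hence each $\eta_j$ is $B$-parallel and commutes with $\Psi$. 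Then the eigenspace decomposition of (a nonzero) $\eta_j$ would give a $B$-invariant and $\Psi$-invariant orbifold line subbundle of $E_0$, contradicting stability of $(B,\Psi)$. Therefore $\eta_1=\eta_2=0$, proving $H^2=0$.

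The main obstacle I anticipate is the precise bookkeeping of signs and cross-terms in the Bochner identity so that all unwanted contributions cancel after integration by parts; the structure of this calculation is dictated by the Kähler quotient formalism (which guarantees such cancellations via the pluriharmonic identity of Proposition \ref{proposition:pluriharmonic} adapted to dimension $2$). Handling the orbifold locus requires no extra effort, since the Bochner identity is pointwise and the integration is over smooth points of full measure; regularity of solutions across orbifold points is standard.
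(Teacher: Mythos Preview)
Your proposal is correct and takes essentially the same approach as the paper: identify the obstruction space with the kernel of the formal adjoint $L^\ast$, then use integration by parts together with the flatness and stability equations to derive a sum-of-squares identity forcing the obstruction to vanish. The paper's version differs only cosmetically---it works with $(\dot F,\dot G)\in(\Omega^2\oplus\Omega^2)$ directly rather than passing to $\Omega^0$ via $\ast_C$, and it first obtains $\|d_B^\ast\dot F\|^2+\|d_B^\ast\dot G\|^2=0$ and then reads off $[\Psi,\ast\dot F]=[\Psi,\ast\dot G]=0$ from the original $L^\ast$ equations, rather than getting all four terms in one identity---but the mechanism and conclusion are identical.
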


For the formula for the dimension of $\mathscr{M}_{E_0}^\ast (C)$ in terms of the isotropy data of $E_0$ we refer to \cite{nasatyr-steer,boden-yokogawa}. The proof of Proposition \ref{proposition:transversalityC} appears in \cite[\S 5]{hitchin} in a slightly different guise. We will briefly outline a version of this argument, for completeness. 

\begin{proof}[Proof of Proposition \ref{proposition:transversalityC}]
At a stable flat connection $(B, \Psi )$ the vertical derivative of $s$ is the operator
\begin{align*}
& (\mathcal{D} s)_{(B, \Psi )} :  \mathscr{K}_{(B, \Psi )} = \mathrm{Ker}(\delta^{1}_{(B, \Psi )})^\ast \cap \mathrm{Ker}(\delta^{2}_{(B, \Psi )})^\ast \subset (\Omega^1 \oplus \Omega^1 )(C, \mathfrak{g}_E ) \to (\Omega^2 \oplus \Omega^2 )(C, \mathfrak{g}_E ) \\
& (\mathcal{D} s)_{(B, \Psi )} (\dot{B} , \dot{\Psi} )= ( d_{B}\dot{B} - [\dot{\Psi},\Psi] , [\dot{B}, \Psi] + d_{B}\dot{\Psi})
\end{align*}
where $\delta^1$ and $\delta^2$ are the corresponding versions on $(C,g_C )$ of the operators defined in \S\ref{subsubsection:moment}. Thus, an element $(\dot{F} , \dot{G} )$ which is $L^2$--orthogonal to the image of $\mathcal{D}s$ satisfies the following identity coming from taking adjoints: for any $(\cdot{B}, \cdot{\Psi}) \in \mathscr{K}$ 
\begin{align}
0 = \langle \, ( d_{B}^\ast \dot{F} - \ast[\Psi , \ast \dot{G}] , \ast [\Psi , \ast \dot{F}] + d_{B}^\ast \dot{G} ) \, , \, (\dot{B} , \dot{\Psi} ) \, \rangle_{L^2 (C)} \label{L2perp}
\end{align}
One can then show that the element $( d_{B}^\ast \dot{F} - \ast[\Psi , \ast \dot{G}] , \ast [\Psi , \ast \dot{F}] + d_{B}^\ast \dot{G})$ lies in the kernel of $(\delta^1)^\ast$ and $(\delta^2 )^\ast$ when $(B, \Psi )$ is flat, and hence (\ref{hodge1}) and (\ref{L2perp}) imply that 
\[
d_{B}^\ast \dot{F} - \ast[\Psi , \ast \dot{G}] = 0 \quad , \quad \ast [\Psi , \ast \dot{F}] + d_{B}^\ast \dot{G}  = 0 .
\]
Taking $d_B$ on both of the two identities, then integrating the first against $\dot{F}$ and the second against $\dot{G}$, and finally adding everything up leads to $0 = \| d_{B}^\ast \dot{F} \|_{L^2 (C) } + \| d_{B}^\ast \dot{G} \|_{L^2 (C) }$, and hence $\dot{F} = \dot{G} = 0$ as required.
\end{proof}

It is well-known that when $\mathrm{dim}C = 2$ then $\mathscr{M}^{\ast}_{E_0}(C,g_C )$ is naturally a \textit{hyperKähler} manifold and admits a holomophic description as a moduli space of \textit{stable Higgs pairs}. We briefly recall this now (see \cite{hitchin,nasatyr-steer,boden-yokogawa} for details). 

The Hodge star operator of the metric $g_C$ takes $1$-forms to $1$-forms, hence gives an additional complex structure, denoted $I$, on $\mathscr{A}_{E_0 , k}^c $ by $I (\dot{B} , \dot{\Psi } ) = (\ast_C \dot{B} , \ast_C \dot{\Psi} )$. The complex structures $I , J$ anti-commute, and hence we have a third complex structure $K = IJ$ (altogether, $I,J,K$ satisfy the quaternion relations). All three complex structures are compatible with the flat Riemannian metric $G$ on $\mathscr{A}_{E_0 , k}^c$ obtained by the $L^2$ inner product, and we obtain three corresponding Kähler (symplectic) forms $\omega_I , \omega_J , \omega_K$. For example, $\omega_J$ was given in (\ref{omegaJ}) and $\omega_I$ is given by
\begin{align}
\omega_I \big( (\dot{B}_1 , \dot{\Psi}_1 ) , (\dot{B}_2 , \dot{\Psi}_2 ) \big) = G \big( I (\dot{B}_1 , \dot{\Psi}_1 ) , (\dot{B}_2 , \dot{\Psi}_2 ) \big)  =  - \int_C \mathrm{Tr}\big(  \dot{B}_1 \wedge \dot{B}_2 + \dot{\Psi}_1 \wedge  \dot{\Psi}_2 \big) . \label{omegaI}
\end{align}
Now, the $\mathscr{G}_{E_0 , k+1}$ action is Hamiltonian for each of the three symplectic forms. With respect to $\omega_J$, the moment map is given by the left-hand side of (\ref{eq3C}), as we saw also in the $3$-dimensional case (this holds in any dimension). The moment maps for $\omega_I$ and $\omega_K$ can be identified with the left-hand sides in (\ref{eq1C}) and (\ref{eq2C}), respectively. Thus, $\mathscr{M}_{E_0}^\ast (C)$ arises as an infinite-dimensional \textit{hyperKähler reduction} of $(\mathscr{A}_{E_0 , k}^c , G , I , J , K )$ by the $\mathscr{G}_{E_0 , k +1}$--action.

In turn, the complex gauge group $\mathscr{G}_{E_0 , k+1}^c$ preserves the structures $I$ and $J$. By infinite-dimensional versions of the Kempf--Ness Theorem due to Hitchin \cite{hitchin} and Donaldson \cite{donaldson-harmonic} (see \cite{nasatyr-steer,boden-yokogawa} for the orbifold case) the moduli space $\mathscr{M}_{E_0}^{\ast}(C,g_C )$ then admits a holomorphic interpretation in the complex structures $I$ and $J$, namely:
\begin{itemize}

\item With the complex structure $I$, $\mathscr{M}_{E_0}^{\ast}(C,g_C ) $ is identified with the moduli space of \textit{stable Higgs pairs} on the orbifold bundle $E_0 \to C$ with fixed determinant \cite{hitchin,nasatyr-steer,boden-yokogawa}:
 \[
 ( \mathscr{M}_{E_0}^{\ast}(C,g_C ) , I ) \cong \mathscr{M}_{E_0}^{\mathrm{Higgs}, \ast } (C).
 \] 
We recall the definition of $ \mathscr{M}_{E_0}^{\mathrm{Higgs}, \ast } (C)$. For this, we use the orbifold complex structure on $C$ defined by $g_C$, and the orbifold holomorphic structure on $\Lambda_0$ induced by $\lambda_0$. Then $ \mathscr{M}_{E_0}^{\mathrm{Higgs}, \ast } (C)$ is given by the isomorphism classes of pairs $(\mathscr{E} , \theta )$ where $\mathscr{E}$ is an orbifold holomorphic structure $\mathscr{E}$ on $E_0$ with fixed determinant and $\theta \in H^0 ( C , T^\ast C \otimes \mathfrak{sl}(\mathscr{E} ))$, subject to the requirement that $(\mathscr{E} , \theta )$ is \textit{stable}: if $\mathscr{L} \subset \mathscr{E}$ is a $\theta$--invariant proper non-trivial holomorphic subbundle, then 
\[
\mathrm{deg}\mathscr{L} < \frac{1}{2}\mathrm{deg}\mathscr{E}.
\] 
The moduli space $ \mathscr{M}_{E_0}^{\mathrm{Higgs}, \ast } (C)$ can be equivalently described as a moduli space of \textit{stable parabolic Higgs bundles} on the desingularised Riemann surface $|C|$ (i.e. the underlying coarse space of $C$, which is smooth) with parabolic weights given by the isotropy data of $E_0$ (this is the more common terminology in the literature, see \cite{nasatyr-steer,boden-yokogawa}). The correspondence between stable (projectively) flat connections and stable Higgs pairs is given by:
\[
\mathbb{B} = B + i  \Psi  \mapsto (\mathscr{E} , \theta ) = ( \overline{\partial}_B , - i \Psi^{1,0} ) .
\]

\item (For simplicity, assuming $(\Lambda_0 , \lambda_0 )$ is trivial) With the complex structure $J$, $\mathscr{M}_{E_0}^{\ast}(C,g_C ) $ is identified with the smooth quasi-affine variety consisting of conjugacy classes of irreducible representations of the orbifold fundamental group of $C$ in $SL(2, \mathbb{C})$, suitably compatible with the isotropy data of $E_0$, \cite{donaldson-harmonic,nasatyr-steer,boden-yokogawa}:
\[
( \mathscr{M}_{E_0}^{\ast}(C,g_C ) , J ) \cong \mathrm{Hom}_{E_0}^\ast ( \pi_{1}^{\mathrm{orb}} (C) , SL(2, \mathbb{C} ) )/SL(2, \mathbb{C} ) .
\] 
\end{itemize}



\subsubsection{From stable flat connections on $Y$ to stable flat connections on $C$}

Consider a $U(2)$ bundle $E $ over a Seifert-fibered $3$-manifold $ Y$ with fixed determinant $\Lambda$. We now describe the moduli space $\mathscr{M}_{E}^\ast (Y)$ in terms of the moduli spaces on $C$.

If $E$ satisfies any of the three equivalent conditions from the next Lemma, we will say that $E$ `pulls back from $C$ up to tensoring':
\begin{lemma}\label{lemma:pullbacksquares}
Let $E \to Y$ be a $U(2)$ bundle over a Seifert-fibered $3$-manifold $Y = S(N) \to C$, with determinant $\Lambda = \Lambda^2 E$. The following are equivalent:
\begin{enumerate}
\item $E$ is the pullback of an orbifold $U(2)$ bundle on $C$, up to tensoring by some $U(1)$ bundle $L$ on $Y$
\item $\Lambda$ is the pullback of an orbifold $U(1)$ bundle on $C$, up to tensoring by some $U(1)$ bundle squared $L^{\otimes 2}$ on $Y$
\item The associated $SO(3)$ bundle $P = E \times_{U(2)}SO(3)$ is the pullback of an orbifold $SO(3)$ bundle on $C$.
\end{enumerate}
\end{lemma}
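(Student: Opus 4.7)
The plan is to handle each implication by characteristic class computations, relying on the classification of $U(2)$ bundles on the closed $3$-manifold $Y$ by the first Chern class in $H^2(Y;\mathbb{Z})$, the parallel classification of $SO(3)$ bundles on $Y$ by $w_2 \in H^2(Y;\mathbb{Z}/2)$ (both of which are already noted in \S\ref{U2model}), and the orbifold analogues over $C$.

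For (1) $\Leftrightarrow$ (2) I would use the identities
\[
c_1(\pi^*E_0 \otimes L) = \pi^* c_1(E_0) + 2 c_1(L), \qquad c_1(\pi^*\Lambda_0 \otimes L^{\otimes 2}) = \pi^* c_1(\Lambda_0) + 2 c_1(L),
\]
which show both conditions translate into the single statement that $c_1(\Lambda) = c_1(E)$ lies in $\pi^* H^{2}_{\mathrm{orb}}(C;\mathbb{Z}) + 2 H^2(Y;\mathbb{Z})$. The forward direction $(1) \Rightarrow (2)$ is immediate by taking determinants, setting $\Lambda_0 := \Lambda^2 E_0$. For $(2) \Rightarrow (1)$, given a decomposition $c_1(\Lambda) = \pi^*\alpha + 2 \beta$, I would take $E_0 := \Lambda_0 \oplus \mathscr{O}$ and a line bundle $L$ on $Y$ with $c_1(L) = \beta$; then $\pi^* E_0 \otimes L$ and $E$ share $c_1$ and are therefore isomorphic by the $3$-manifold classification.

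For $(1) \Rightarrow (3)$, the key point is that the center $Z(U(2)) = U(1)$ acts trivially under $U(2) \to PU(2) = SO(3)$, so tensoring by a line bundle leaves the associated $SO(3)$ bundle unchanged; hence $P \cong \pi^*(E_0 \times_{U(2)} SO(3))$. The reverse $(3) \Rightarrow (1)$ is the interesting direction: I would first lift the orbifold $SO(3)$ bundle $P_0$ on $C$ to an orbifold $U(2)$ bundle $E_0$. Given such a lift, the bundles $E$ and $\pi^* E_0$ on $Y$ have associated $SO(3)$ bundles both isomorphic to $P$, and the fact (again recalled in \S\ref{U2model}) that $U(2)$ bundles on a $3$-manifold with isomorphic associated $SO(3)$ bundle differ by tensoring with a line bundle then yields (1).

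The main obstacle I expect is the orbifold lifting $P_0 \rightsquigarrow E_0$. Globally the obstruction lives in $H^3_{\mathrm{orb}}(C;\mathbb{Z})$, which vanishes on the closed oriented $2$-orbifold $C$ for dimension reasons; equivalently, one can always lift $w_2(P_0)$ to an integral orbifold class. Locally near an orbifold point with isotropy $\mathbb{Z}/\alpha_i$, one must lift a homomorphism $\mathbb{Z}/\alpha_i \to SO(3)$ along $U(2) \to SO(3)$; this is achieved by first choosing any preimage of a generator in $U(2)$ (the preimage being a coset of the central $U(1)$) and then correcting by an element of the center to arrange the lift to have the required order. Once these local and global obstructions are dispatched, the orbifold bundle $E_0$ exists and the argument closes.
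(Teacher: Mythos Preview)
Your proposal is correct and follows essentially the same route as the paper. The paper's argument for (1)$\Leftrightarrow$(2) is slightly more direct, invoking the isomorphism $E \cong \mathbb{C} \oplus \Lambda$ on a $3$-manifold rather than your characteristic-class translation, but these are equivalent; for (3)$\Rightarrow$(1) the paper simply cites Furuta--Steer for the orbifold lift $P_0 \rightsquigarrow E_0$ where you have sketched the obstruction-theoretic argument yourself.
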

\begin{proof}
Since $Y$ is $3$-dimensional, then $E \cong \mathbb{C} \oplus \Lambda$. From this the equivalence of (1) and (2) follows. Clearly, (1) implies (3). To show that (3) implies (1), suppose $P$ is the pullback of an orbifold $SO(3)$ bundle $P_0$. One can then find an orbifold $U(2)$ bundle $E_0$ on $C$ whose associated orbifold $SO(3)$ bundle is $P_0 \cong E_0 \times_{U(2)} SO(3)$ (see \cite[\S 1]{furutasteer}). It follows that $P = \pi^\ast P_0$ is the associated bundle of both $E$ and $\pi^\ast E_0$, so they differ by tensoring by a line bundle.
\end{proof}


We refer to \cite[Corollary 1.6, Proposition 1.8]{furutasteer} for the classification of orbifold $U(1)$ and $U(2)$ bundles over $C$, and to \cite[Theorem 2.3]{furutasteer} for the classification of the $U(1)$ and $U(2)$ bundles on $Y$ which pull back from $C$. Given an orbifold $U(2)$ bundle $E_0 \to C$ with fixed determinant, and setting $E = \pi^\ast E_0  $, we have a well-defined holomorphic map on stable orbit spaces given by pulling back configurations from $C$ onto $Y$: 
\[
\pi^\ast : \mathscr{B}_{E_0 , k}^\ast (C) \to \mathscr{B}_{E , k}^\ast (Y) .
\]
Furthermore, one easily sees that
\[
\pi^\ast \big( \mathscr{M}_{E_0 }^\ast (C) \big) \subset \mathscr{M}_{E }^\ast (Y) .
\]

\begin{theorem}[Dimensional Reduction for Seifert-fibered spaces]\label{theorem:dimensionalreduction}
Let $Y \rightarrow C$ be a closed, oriented Seifert-fibered $3$-manifold equipped with a Seifert metric $g = \eta^2 + g_C$ (Definition \ref{metric}), and let $E$ be a $U(2)$-bundle $E \rightarrow Y$ with fixed determinant $\Lambda = \Lambda^2 E$. Equip $C$ with the metric $g_C$.

 If $E$ does not pull back from $C$ up to tensoring, then the moduli space $\mathscr{M}_{E}^\ast (Y,g)$ is empty. If it does, then after tensoring $E$ by a $U(1)$ bundle (which leaves $\mathscr{M}_{E}^\ast (Y,g)$ unchanged, cf. Remark \ref{remark:tensoring}) we may fix an isomorphism $\Lambda \cong \pi^\ast \Lambda_0$ for some orbifold line bundle $\Lambda_0$ on $C$. Then $\mathscr{M}_{E}^\ast (Y,g)$ is Morse--Bott (cf. Definition \ref{definition:morse-bott}), and the pullback map $\pi^\ast$ induces a biholomorphism
 
\[
\Big( \bigsqcup_{E_0 \, , \, \Lambda^2 E_0 \cong \Lambda_0}  \mathscr{M}_{E_0}^\ast (C,g_C) \Big)  \sqcup \Big(  \bigsqcup_{E_0 \, , \, \Lambda^2 E_0 \cong \Lambda_0 \otimes N}\mathscr{M}_{E_0}^\ast (C  , g_C) \Big) \cong \mathscr{M}_{E}^\ast (Y,g) 
\]
where $E_0$ ranges over all isomorphism classes of orbifold $U(2)$ bundles on $C$ with fixed determinant $\Lambda_0$ and $\Lambda_0 \otimes N$, respectively. 

\end{theorem}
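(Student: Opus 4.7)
The plan is to use the vanishing results from Corollary \ref{corollary:vanishing} and Proposition \ref{prop:vanishing_inf} to reduce stable flat $GL(2, \mathbb{C})$ connections on $Y$ to pulled-back connections from $C$, and then to identify the Zariski tangent spaces on the two sides so as to obtain simultaneously the biholomorphism and the Morse--Bott statements.

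\textbf{Step 1 (Descent of configurations).} Given any stable flat $\mathbb{A} = A + i\Phi$ on $E$, I would write $\Phi = \eta \cdot \alpha + \Psi$ using the splitting (\ref{splittingOmega}). By Corollary \ref{corollary:vanishing} combined with stability, one has $\alpha = 0$, the horizontal equations $d_A^H \Psi = 0$, $(d_A^H)^\ast \Psi = 0$, and the reduced curvature equation $F_A - \frac{I}{2}F_\lambda - \frac{1}{2}[\Psi, \Psi] = 0$, together with the $\zeta$-parallelism $\nabla_\zeta^A \Psi = 0$. Lemma \ref{lemma:d_B} applied to the curvature equation then gives $\iota_\zeta F_A = \frac{I}{2}\iota_\zeta F_\lambda$; after tensoring $E$ by a suitable line bundle (Remark \ref{remark:tensoring}) so that $\Lambda = \pi^\ast \Lambda_0$, we may choose $\lambda = \pi^\ast \lambda_0$, whence $\iota_\zeta F_A = 0$. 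A gauge transformation puts $A$ into the form $\iota_\zeta A = 0$, so that $(A, \Psi)$ is literally $S^1$-invariant and horizontal. The data then descends along $\pi: Y \to C$ to an orbifold $U(2)$-bundle $E_0$ with connection $B_0$ (with fixed determinant $\lambda_0$) and a Higgs field $\Psi_0 \in \Omega^1(C, \mathfrak{g}_{E_0})$ solving the $2$-dimensional equations (\ref{eq1C})--(\ref{eq3C}), and satisfying $\pi^\ast(E_0, B_0, \Psi_0) = (E, A, \Psi)$. Stability of the descended triple follows from stability upstairs, since any $(B_0, \Psi_0)$-invariant subbundle pulls back to an $\mathbb{A}$-invariant subbundle. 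The dichotomy $\Lambda^2 E_0 \cong \Lambda_0$ versus $\Lambda^2 E_0 \cong \Lambda_0 \otimes N$ corresponds to the two non-equivalent parities for lifting the Seifert $S^1$-action to a genuine $U(2)$-structure (as opposed to merely the associated $SO(3)$-structure). If $E$ does not pull back from $C$ up to tensoring, the same descent argument applied to the underlying $SO(3)$-bundle would contradict Lemma \ref{lemma:pullbacksquares}(3), so $\mathscr{M}_E^\ast(Y, g)$ is empty in that case.

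\textbf{Step 2 (Tangent spaces, smoothness, biholomorphism).} To upgrade the set-theoretic bijection thus produced to a biholomorphism, and to verify Morse--Bott non-degeneracy, I would apply Proposition \ref{prop:vanishing_inf} to any $(\dot A, \dot\Phi) \in \mathscr{H}^1_{(A, \Phi)} = \mathrm{Ker}\mathscr{D}_{(A, \Phi)}$ at a stable flat $(A, \Phi) = \pi^\ast(B_0, \Psi_0)$. Decomposing $\dot A = \eta \dot a + \dot B$ and $\dot\Phi = \eta \dot\alpha + \dot\Psi$, stability forces $\dot a = \dot\alpha = 0$, while $(\dot B, \dot\Psi)$ is horizontal, $\zeta$-invariant, and solves the linearisation of (\ref{eq1C})--(\ref{eq3C}) modulo the linearised unitary gauge action on $C$. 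This identifies $\mathscr{H}^1_{(A, \Phi)}$ with the Zariski tangent space of $\mathscr{M}_{E_0}^\ast(C)$ at $[(B_0, \Psi_0)]$. Since the latter is already known to be a smooth complex manifold by Proposition \ref{proposition:transversalityC}, its real dimension equals $\dim_{\mathbb{R}} \mathscr{H}^1_{(A, \Phi)}$, which is exactly the Morse--Bott condition of Definition \ref{definition:morse-bott}. The complex structure on both moduli spaces is induced from $J$, and $\pi^\ast$ tautologically intertwines these, giving the biholomorphism.

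\textbf{Main obstacle.} The principal difficulty is the descent in Step 1: making rigorous the passage from an $S^1$-invariant pair $(A, \Psi)$ on $(E, \lambda)$ to an orbifold $U(2)$-bundle with connection on $C$, particularly over the singular Seifert fibres where the $S^1$-stabilizer is a non-trivial cyclic group and the descended bundle acquires its isotropy data. One must also verify carefully that exactly the two determinant classes $\Lambda_0$ and $\Lambda_0 \otimes N$ are needed to cover all components (the further shifts by $N^{2k}$ being absorbed by the tensoring equivalence of Remark \ref{remark:tensoring}). Both points can be handled by combining the orbifold $U(2)$-bundle classification of Furuta--Steer \cite{furutasteer} with the standard $S^1$-equivariant descent machinery; the Bochner--Weitzenböck calculations of \S \ref{section:dimensionalreduction} already supply everything else.
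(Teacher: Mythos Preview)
Your overall strategy and Step 2 match the paper's proof. The gap is in Step 1: from the vanishing results you correctly extract $\iota_\zeta F_A = 0$ (after arranging $\lambda = \pi^\ast\lambda_0$), but this is only one of the two hypotheses of Lemma~\ref{lemma:pullback}. The other hypothesis --- that the holonomy of $A$ along the Seifert fibres is trivial --- is not a consequence of $\iota_\zeta F_A = 0$, and your sentence ``a gauge transformation puts $A$ into the form $\iota_\zeta A = 0$'' is precisely the conclusion of Lemma~\ref{lemma:pullback}, so it presupposes what needs to be proved.

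The paper supplies the missing step as follows. Since $\alpha = 0$, the holonomy of the unitary connection $A$ along a fibre equals the holonomy of the \emph{flat} connection $A + i\Phi$ along that fibre. But the generic fibre represents a \emph{central} element of $\pi_1(Y)$ (Furuta--Steer \cite[Theorem 2.1]{furutasteer}), so the holonomy of any flat connection along it is central in $SL(2,\mathbb{C})$, i.e.\ $\pm I$. This $\pm 1$ is exactly the origin of the $\Lambda_0$ versus $\Lambda_0\otimes N$ dichotomy: one tensors $(E,A)$ by $(\mathbb{C}, A_{\mathrm{triv}} + \tfrac{1}{2}i\eta)^{\otimes k}$ with $k\in\{0,1\}$ to kill the sign, after which Lemma~\ref{lemma:pullback} applies and the connection descends to an orbifold bundle $E_0$ with $\Lambda^2 E_0 \cong \Lambda_0\otimes N^k$. (The paper in fact first runs this argument on the associated $SO(3)$ bundle, where the centre is trivial, to establish that $E$ pulls back up to tensoring; then reruns it at the $U(2)$ level.) Your ``Main obstacle'' paragraph flags the orbifold-point difficulties but not this holonomy issue, which is the actual crux even over the smooth locus.
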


To establish this result, we combine the vanishing results established above together with the following standard fact, which essentially appears in \cite[Proof of Theorem 4.1]{furutasteer}:

\begin{lemma}\label{lemma:pullback}
Let $P \rightarrow Y$ be a principal $G$-bundle with connection $A$, where $G$ is a Lie group with Lie algebra $\mathfrak{g}$. Let $X$ denote the vector field on the total space $P$ given by the horizontal lift with respect to $A$ of the vertical vector field $\zeta$ on $Y$ that generates the Seifert $S^1$-action on $Y$. Then the pair $(P , A )$ is the pullback of a corresponding pair $(P_0 , B )$ on $C$ if and only if the holonomy of $A$ is trivial along the fibers of $Y \rightarrow C$ and the contraction with $X$ of the curvature $2$-form $F_A \in \Omega^2 (P , \mathfrak{g})$ vanishes: $\iota_{X} F_A = 0$.
\end{lemma}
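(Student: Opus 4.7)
The plan is to reinterpret both conditions as saying that the Seifert $S^1$--action on $Y$ lifts to a free $S^1$--action on $P$ which preserves $A$, and then to identify $(P_0,B)$ with the quotient of $(P,A)$ by this lifted action.

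The forward direction is straightforward. If $(P,A) = (\pi^\ast P_0, \pi^\ast B)$, then the diagonal $S^1$--action on $P = P_0\times_C Y$ (trivial on the first factor, the Seifert action on the second) covers the Seifert action on $Y$ and preserves $A$. Its infinitesimal generator is a lift of $\zeta$; since $\pi^\ast B$ annihilates vectors tangent to the $Y$--factor, this generator is horizontal with respect to $A$, hence equal to $X$. Closedness of the orbits gives triviality of the fiberwise holonomy, and the invariance $L_X A = 0$ combined with $\iota_X A = 0$ yields $\iota_X F_A = L_X A - d(\iota_X A) = 0$ via Cartan's magic formula.

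For the reverse direction, the key identity is again
\[
\iota_X F_A = L_X A - d(\iota_X A) = L_X A,
\]
where the second equality uses that $X$ is horizontal by construction. Thus the hypothesis $\iota_X F_A = 0$ says that the flow of $X$ on $P$ preserves $A$. The hypothesis of trivial holonomy along fibers of $Y\to C$ says precisely that the integral curves of $X$ through any $p\in P$ close up after the same time as the corresponding $S^1$--orbit in $Y$. Consequently, the flow of $X$ defines a smooth $S^1$--action on $P$ which lifts the Seifert $S^1$--action on $Y$ and preserves $A$.

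To finish, set $P_0 := P/S^1$, so that the projection $\pi_P : P\to P_0$ covers $\pi : Y\to C$. The $G$--action on $P$ commutes with the $S^1$--action (the former acts on fibers of $P\to Y$ and the latter covers the action on $Y$), so $P_0$ inherits the structure of a principal $G$--bundle over $C$. Because $A$ is both $S^1$--invariant and kills the vertical direction of $\pi_P$ (namely $X$, since $\iota_X A = 0$), it descends to a connection $1$--form $B$ on $P_0$, and by construction $A = \pi_P^\ast B$ and $P = \pi^\ast P_0$. In the Seifert (orbifold) setting the $S^1$--action has finite isotropy at the singular fibers, which is precisely what produces the orbifold structure on $P_0\to C$; the main (and only) subtle point is checking that the $S^1$--isotropy on $P$ at such fibers matches that of $Y$, which follows from the trivial-holonomy hypothesis applied to the regular covers of singular fibers.
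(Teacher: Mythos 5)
Your proof is correct and follows essentially the same route as the paper: establish $\iota_X A = 0$ and $\mathscr{L}_X A = 0$ via Cartan's formula and the hypothesis $\iota_X F_A = 0$, use the trivial fiberwise holonomy to see that the flow of $X$ integrates to an $S^1$--action on $P$ lifting the Seifert action, and then descend $(P,A)$ to $(P_0 = P/S^1, B)$ using that the $S^1$-- and $G$--actions commute. Your extra remark about matching the $S^1$--isotropy at singular fibers is a reasonable elaboration of a point the paper leaves implicit.
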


We recall the proof, which is quite simple:

\begin{proof}
Only the `if' statement is non-trivial. The vector field $X$ on $P$ generates an $S^1$-action on $P$ because the holonomy of $A$ is trivial on the fibers. This action makes $P$ into the total space of an orbifold principal $S^1$-bundle $P \rightarrow P_0 := Q/S^1$. Since the $S^1$-action on $P$ commutes with the $G$-action then $P_0 = P/S^1$ inherits the structure of an orbifold principal $G$-bundle over $C$, and the projection $P \rightarrow P_0$ is $G$-equivariant.

To argue that $A$ descends to a connection on $P_0$, regard $A$ as a $1$-form (i.e. as an element in $\Omega^1 (P , \mathfrak{g})$). Then $\iota_X A = 0$ since $X$ is horizontal. Since $\iota_X F_A = 0$ then
\begin{align*}
\mathscr{L}_X A = d(\iota_X A ) + \iota_X (d A ) = \iota_X (F_B ) - \iota_X (\frac{1}{2}[ A ,  A] ) = 0.
\end{align*}
The conditions $\iota_X A = 0$ and $\mathscr{L}_X A = 0$ combined say that $A$ descends to a connection $B$ on the orbifold principal $G$-bundle $P_0 \rightarrow C$.
\end{proof}

\begin{proof}[Proof of Theorem \ref{theorem:dimensionalreduction}]
Let $(A,\Phi )$ be a projectively flat stable $GL(2,\mathbb{C})$-connection on $E \rightarrow Y$ with fixed determinant, and consider the induced flat stable $GL(2,\mathbb{C})$ connection $(A^\prime , \Phi^\prime )$ on the associated $SO(3)$ bundle $P \rightarrow Y$. By the vanishing of the $\alpha$ component of $\Phi$ (Corollary \ref{corollary:vanishing}), it follows that the holonomy of $A^\prime$ along a fiber of $\pi : Y \rightarrow C$ is the same as the holonomy of the flat connection $A^\prime + i \Phi^\prime$. Since the fibers represent central elements in the fundamental group of $Y$ \cite[Theorem 2.1]{furutasteer} and $A^\prime + i \Phi^\prime$ is flat, then it follows that the holonomy of $A^\prime$ lies in the center of $SO(3)$, i.e. it must be trivial. Also, because $[\alpha , \Psi ] = 0$ by Corollary \ref{corollary:vanishing}, then $\iota_\zeta F_{A^\prime} = 0$. Thus Lemma \ref{lemma:pullback} implies that the pair $(P , A^\prime )$ pulls back from a corresponding pair $(P_0 , B^\prime )$. 

By Lemma \ref{lemma:pullbacksquares} it follows that $\Lambda$ pulls back from $C$, up to tensoring $E$ by a line bundle. Without loss of generality, we may assume there is an orbifold line bundle $\Lambda_0$ on $C$, which is fixed once and for all, together with an identification $\Lambda = \pi^\ast \Lambda_0$. We also fix a connection $\lambda$ on $\Lambda = \pi^\ast \lambda_0$ of the form $\lambda = \pi^\ast \lambda_0$, for a connection $\lambda_0$ on $\Lambda_0$. Now, by running again the argument from the first paragraph but applied to $(A, \Phi )$ rather than $(A^\prime , \Phi^\prime )$ we now obtain that the holonomy of $A$ along the non-singular fibers lies in $\pm I$, the center of $SL(2, \mathbb{C} )$. Let $k \in \{ 0 ,1 \}$ be such that the holonomy of a non-singular fiber is $(-1)^k I$.
Consider the $U(2)$ bundle with connection given by $(E, A_k ) = (E, A) \otimes (\mathbb{C} , A_{trivial} + \frac{1}{2} i \eta )^{\otimes k}$ (this is identified with $E$ as a bundle). Then $(E , A_k )$ has trivial holonomy along a non-singular fibers and its induced connection on $\Lambda = \Lambda^2 E $ is $\lambda + i \eta$ instead now. For the same reason as in the first paragraph, we also have $\iota_\zeta F_{A_k} = 0$.
Thus, by Lemma \ref{lemma:pullback} we have that $(E,A_k )$ descends to a pair $(E_0 , B )$ on $C$ whose determinant is $(\Lambda_0 , \lambda_0 ) \otimes (N , i \eta )^{\otimes k}$. It follows from Corollary \ref{corollary:vanishing} that then $\Phi$ descends to a section $\Psi$ of $\Omega^1 (C , \mathfrak{su}(E_0 ) )$ and that $(B, \Psi ) $ is a polystable projectively flat $GL(2,\mathbb{C})$ connection on $E_0 \to C$, with fixed determinant $(\Lambda_0 , \lambda_0 ) \otimes (N , i \eta )^{\otimes k }$. Furthermore, since $(A, \Phi )$ is stable then it follows that so is $(B , \Psi )$.

Altogether, the construction thus described provides a well-defined map
\[
 \mathscr{M}_{E}^\ast (Y,g) \to  \Big( \bigsqcup_{E_0 \, , \, \Lambda^2 E_0 \cong \Lambda_0}  \mathscr{M}_{E_0}^\ast (C,g_C) \Big)  \sqcup \Big(  \bigsqcup_{E_0 \, , \, \Lambda^2 E_0 \cong \Lambda_0 \otimes N}\mathscr{M}_{E_0}^\ast (C  , g_C) \Big)  =:\mathscr{M}_{E}(C,g_C ) \]
which gives a topological inverse of the pullback map $\pi^\ast$.

Now, the map 
\begin{align}
\pi^\ast : \mathscr{B}_{E, k}^\ast (C , g_C ) := \Big( \bigsqcup_{E_0 \, , \, \Lambda^2 E_0 \cong \Lambda_0}  \mathscr{B}_{E_0 , k }^\ast (C,g_C) \Big)  \sqcup \Big(  \bigsqcup_{E_0 \, , \, \Lambda^2 E_0 \cong \Lambda_0 \otimes N}\mathscr{B}_{E_0 , k }^\ast (C  , g_C) \Big)   \to \mathscr{B}_{E, k}^\ast (Y, g)
\end{align}
is a holomorphic map of complex Hilbert manifolds, mapping the complex submanifold $\mathscr{M}^\ast_E (C , g_C)$ bijectively onto the subspace $\mathscr{M}_E (Y, g ) \subset \mathscr{B}_{E, k }^\ast (Y, g) $. By Proposition \ref{prop:vanishing_inf}, $\pi^\ast$ also induces an isomorphism from the tangent space of $\mathscr{M}_{E}^\ast (C , g_C )$ onto the Zariski tangent space of $\mathscr{M}_{E}^\ast (Y , g )$. This implies that the finite-dimensional Kuranishi models of $S$ from Corollary \ref{corollary:kuranishi} are given by constant functions--hence $\mathscr{M}^\ast_{E} (Y,g)$ is a smooth (in fact, complex) submanifold of $\mathscr{B}_{E}^\ast (Y,g)$, and $\pi^\ast$ gives a complex-analytic diffeomorphism from $\mathscr{M}_{E}^\ast (C , g_C )$ onto $\mathscr{M}_{E}^\ast (Y, g )$. 
\end{proof}

Theorem \ref{theorem:reductionHiggs} follows immediately from Theorem \ref{theorem:dimensionalreduction} and the holomorphic description of the moduli spaces $\mathscr{M}_{E_0}^\ast (C)$ in terms of stable Higgs pairs.

\begin{example}
The simplest case is to take $Y = S^1 \times C$ for a (smooth) genus $g$ closed surface $C$, and $E = \pi^\ast E_0$ given by the pullback of a rank $2$ bundle $E_0$ of \textit{odd} degree, where $Y$ is equipped with a product metric $g = dt^2 + g_C$. Then polystable projectively flat connections on $E$ and $E_0$ are automatically stable (which follows from equations (\ref{eq1}), (\ref{eq1C}) and the Chern--Weil formula), and in order to find solutions we need $g\geq 2$. The moduli space of polystable connections on $E_0$ is a complex manifold of complex dimension $6(g-1)$ \cite{hitchin}, and Theorem \ref{theorem:dimensionalreduction} gives:
\[
\mathscr{M}_E (Y,g) \cong \mathscr{M}_{E_0} (C , g_C) \sqcup \mathscr{M}_{E_0 } (C , g_C) .
\]

\end{example}

\section{Perturbation theory}\label{section:perturbations}

In this section we study the critical locus of the perturbation $S_\varepsilon$ given in Definition \ref{perturbation:intro} (the angle $\theta$ will be omitted from notation, for convenience). We first discuss a finite-dimensional version of the argument that we shall employ in the infinite-dimensional case, and establish a finite-dimensional analogue of Theorem \ref{theorem:localisationseifert}. We then discuss the $SL(2, \mathbb{C})$ holonomy perturbations that we will use to achieve transversality of our perturbation. Finally, after verifying that the linear analysis results from \S \ref{section:connections} and basic compactness properties carry through in the perturbed case, we establish Theorem \ref{theorem:localisationseifert}.

We shall see that when $(Y, g )$ is a Seifert-fibered $3$-manifold equipped with a Seifert metric, the effect of the perturbation $S_\varepsilon$ in this manner is the following, roughly speaking:
\begin{itemize}
\item The term $ \varepsilon \|\Phi\|_{L^2}^2$ plays the following role. As $\varepsilon \neq 0$ becomes small, the irreducible critical points of $S(A, \Phi ) + \varepsilon \|\Phi\|_{L^2}^2 $ will either tend to \textit{localise} at a certain \textit{compact} submanifold $\mathscr{Z} \subset \mathscr{M}_{E}^\ast = \mathrm{Crit}S$, or else the function $ \|\Phi\|_{L^\infty}$ becomes unbounded on them.
\item The term $\varepsilon^2 f (\underline{\sigma}(A, \Phi ))$ is then added in to ensure the \textit{non-degeneracy} of the critical points that localise at $\mathscr{Z}$. 
\end{itemize}
\subsection{Pertubations of Morse--Bott functions}

We consider the following setup. Let $B$ be a smooth manifold and consider a sequence of smooth functions $S_j : B \rightarrow \mathbb{R}$ for $j \geq 0 $. We regard $S_0$ as our `original' function, and we consider a \textit{perturbation} of $S_0$ of the form
\\
\begin{equation}
S_\varepsilon := S_0 + \varepsilon S_1  + \varepsilon^2 S_2  + \varepsilon^3 S_3  + \cdots  \label{powerseries}
\end{equation}
\\
Summation in (\ref{powerseries}) is understood in a formal sense, without making any convergence assumption. What we mean is simply that we have a smooth function $S : B \times \mathbb{R} \to \mathbb{R}$, which gives $S_\varepsilon = S (\cdot , \varepsilon )$ and the functions $S_j$ by
\[
(\frac{\partial}{\partial \varepsilon })^j  S|_{\varepsilon = 0} = \frac{1}{j!} S_j .
\]


We want to describe the \textit{change in the critical locus} $\mathrm{Crit}(S_\varepsilon )$ as $\varepsilon$ becomes non-zero. More precisely, for each critical point $x \in \mathrm{Crit}S_0$ we will obtain an open neighborhood $\mathscr{U}_x$ of $x$ in $B$, a constant $\varepsilon_x >0$ and a parametrisation of the critical locus of $S_\varepsilon$ on the neighborhood $\mathscr{U}_0$ for all $0 < |\varepsilon| < \varepsilon_x$. For this, we will consider the following restricted assumptions, which suffice for our purposes:
\begin{enumerate}
\item $S_0$ is a Morse--Bott function; and denote its critical locus $\mathscr{Z}_0 := \mathrm{Crit}(S_0 )$
\item The restriction of $S_1$ to $\mathscr{Z}_0$ is a Morse--Bott function; and denote its critical locus $\mathscr{Z}_1 := \mathrm{Crit}(S_{1}|_{\mathscr{Z}_0} )$.
\end{enumerate}

\subsubsection{Local theory}\label{subsubsection:localtheory}



We begin by noting that the critical points of $S_\varepsilon$ are constrained in the following way:

\begin{lemma}\label{lemma:convergence}
Let $\varepsilon_n \in \mathbb{R}$ be a sequence converging to zero, with $\varepsilon_n \neq 0$ for all $n$. Suppose that $x_n \in B$ is a sequence converging to $x \in B$ such that $b_n \in \mathrm{Crit}(S_{\varepsilon_n})$ for all $n$. (Clearly, then $x \in \mathscr{Z}_0$). Then $x$ is contained in $\mathscr{Z}_1$.
\end{lemma}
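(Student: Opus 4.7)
The plan is to work in local coordinates adapted to the Morse--Bott structure of $S_0$ near the limit point $x$, and then extract the vanishing of $d(S_1|_{\mathscr{Z}_0})$ at $x$ as the leading-order consequence of the critical point equation for $S_{\varepsilon_n}$.

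First, since $x_n \in \mathrm{Crit}(S_{\varepsilon_n})$ means $dS(x_n , \varepsilon_n ) = 0$ and $S: B\times \mathbb{R}\to \mathbb{R}$ is smooth, passing to the limit immediately yields $dS_0(x) = 0$, hence $x \in \mathscr{Z}_0$ (this is the parenthetical observation in the statement). To upgrade this to $x \in \mathscr{Z}_1$, I would apply the Morse--Bott Lemma to $S_0$ near $x$: there exist local coordinates $(u,v) \in \mathbb{R}^k \times \mathbb{R}^{n-k}$ centred at $x$ in which $\mathscr{Z}_0 = \{v = 0\}$ and
\[
S_0(u,v) = S_0(x) + \tfrac{1}{2} \langle A v , v \rangle
\]
for some symmetric non-degenerate matrix $A$. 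Write $x_n = (u_n, v_n)$; then $u_n \to 0$ and $v_n \to 0$.

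Next, by Taylor's theorem in the parameter $\varepsilon$, I can write $S_\varepsilon = S_0 + \varepsilon S_1 + \varepsilon^2 R(\cdot ,\varepsilon)$ where $R$ is jointly smooth. The critical point equation $dS_{\varepsilon_n}(x_n) = 0$ splits, in the $(u,v)$ coordinates, into tangential and normal components. The normal component reads
\[
A v_n + \varepsilon_n\, \partial_v S_1(u_n, v_n ) + \varepsilon_n^2\, \partial_v R(u_n , v_n , \varepsilon_n ) = 0,
\]
so inverting $A$ (possible for all large $n$ since $v_n \to 0$ and $A$ is non-degenerate) yields $v_n = O(\varepsilon_n)$. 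The tangential component, using that $\partial_u S_0 \equiv 0$ in these coordinates, reads
\[
\varepsilon_n\, \partial_u S_1(u_n, v_n) + \varepsilon_n^2\, \partial_u R(u_n, v_n, \varepsilon_n) = 0.
\]
Since $\varepsilon_n \neq 0$, I can divide by $\varepsilon_n$ and let $n \to \infty$ to obtain $\partial_u S_1(0,0) = 0$. Because $\{v=0\}$ is precisely $\mathscr{Z}_0$ near $x$, this vanishing is exactly the statement that $d(S_1|_{\mathscr{Z}_0})(x) = 0$, i.e.\ $x \in \mathscr{Z}_1$, as required.

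The principal technical input is the Morse--Bott normal form for $S_0$; once that is in place, the argument is essentially a first-order Taylor expansion in $\varepsilon$. I do not anticipate any real obstacle here: the interpretation is simply that the normal equation forces $v_n$ to be of the same order as the perturbation, while the tangential equation, already of order $\varepsilon_n$ by Morse--Bott, produces $d(S_1|_{\mathscr{Z}_0})$ after dividing through by $\varepsilon_n$. The same scheme will guide the more elaborate finite-dimensional perturbation theorem that comes next in \S \ref{section:perturbations}, where one solves the normal equation via the implicit function theorem to obtain a reduced problem on $\mathscr{Z}_0$ whose critical locus is $\mathscr{Z}_1$.
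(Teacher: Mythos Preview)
Your proof is correct and follows essentially the same approach as the paper: choose Morse--Bott coordinates for $S_0$ so that the tangential component of the critical-point equation has no $S_0$ contribution, divide that component by $\varepsilon_n$, and pass to the limit. The only differences are cosmetic (your normal/tangential labels $(v,u)$ are swapped relative to the paper's $(u,h)$), and your extra observation $v_n = O(\varepsilon_n)$, while correct, is not needed here---the paper omits it for this lemma.
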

\begin{proof}
Since $S_0$ is Morse--Bott, there is a neighborhood of $b$ in $B$ modelled on an open neighborhood of $(0,0)$ in the product $U \times H$ of two inner product vector spaces $U$ and $H$, such that in the coordinates $(u , h ) \in U \times H$ we have
\[
S_0 (u,h) = \frac{1}{2}\langle Lu , u \rangle + \mathrm{constant} 
\]
for a self-adjoint linear isomorphism $L : U \xrightarrow{\cong} U$ (Thus, in this local coordinates the critical locus $\mathscr{Z}_0$ is the linear subspace $u = 0$).

Writing $x_n = (u_n , h_n )$, the equation that $x_n$ is a critical point of $S_{\varepsilon_n}$ says:
\begin{align*}
0 &= Lu_n + \varepsilon_n \frac{\partial S_1}{\partial u} (u_n , h_n ) + \varepsilon_{n}^2   \frac{\partial S_2}{\partial u} (u_n , h_n ) + \cdots  \in U\\
0 &= \varepsilon_n \frac{\partial S_1}{\partial h} (u_n , h_n ) + \varepsilon_{n}^2   \frac{\partial S_2}{\partial h} (u_n , h_n ) + \cdots  \in H .
\end{align*}
Dividing out $\varepsilon_n$ from the second equation, and taking the limit $n \to \infty$ shows that $\partial S_1 / \partial h$ vanishes at $(0,0)$. This says that $x\in B$ is not only contained in $\mathscr{Z}_0$, but also in $\mathscr{Z}_1 \subset \mathscr{Z}_0$.
\end{proof}
(We note that the proof of Lemma \ref{lemma:convergence} only used the Morse--Bott hypothesis on $S_0$). 

We must thus describe the behavior of the critical locus of our perturbation near a point $x \in \mathscr{Z}_1$: in particular, determine $x$ appear as limits of a sequence as in Lemma \ref{lemma:convergence}. Here we make use of both the Morse--Bott hypothesis on $S_0$ and $S_{1}|_{\mathscr{Z}_0}$. This ensures that for any point $x\in \mathscr{Z}_1 = \mathrm{Crit}(S_{1}|_{\mathscr{Z}_0})$, there is a neighborhood $\mathscr{U}_x$ of $x$ in $B$ diffeomorphism to a neighborhood $\mathscr{U}_0$ of $(0,0,0)$ in the product $U \times V \times W$ of inner product vector spaces, such that in the $U \times V \times W$ coordinates we have:
\begin{itemize}
\item $S_0 (u,v,w) = \frac{1}{2}\langle L_0 u , u \rangle + \mathrm{constant}$, for a self-adjoint linear isomorphism $L_0 : U \xrightarrow{\cong} U$. In particular, the critical locus $\mathscr{Z}_0 = \mathrm{Crit}(S_0 )$ is given by $0 \times V \times W \subset U \times V \times W$
\item $S_1 (0,v,w) = \frac{1}{2}\langle L_1 v , v \rangle + \mathrm{constant}$, for a self-adjoint linear isomorphism $L_1 : V \xrightarrow{\cong} V$. In particular, the critical locus $\mathscr{Z}_1= \mathrm{Crit}(S_{1}|_{\mathscr{Z}_0})$ is given by $0\times 0 \times W \subset U \times V \times W$.\end{itemize}

Consider the equations for a point $(u,v,w)$ to be a critical point of $S_\varepsilon$:\\
\begin{align}
0 &= L_0 u + \varepsilon \frac{\partial S_1}{\partial u} (u , v , w ) + \varepsilon^2   \frac{\partial S_2}{\partial u} (u , v , w ) + \cdots  \in U \label{Du}\\
0 &= \varepsilon \frac{\partial S_1}{\partial v} (u , v , w ) + \varepsilon^2   \frac{\partial S_2}{\partial v} (u , v , w) + \cdots  \in V \label{Dv}\\
0 & = \varepsilon \frac{\partial S_1}{\partial w} (u , v , w ) + \varepsilon^2   \frac{\partial S_2}{\partial w} (u , v , w) + \cdots  \in W \label{Dw} .
\end{align}
\\

Putting together the right-hand side of (\ref{Du}) with the right-hand side of (\ref{Dv}) divided by $\varepsilon$, we obtain a smooth map $F : U \times V \times W \times \mathbb{R} \rightarrow U \times V$
\\
\begin{align*}
F(u,v,w,\varepsilon ) = \Big( L_0 u + \varepsilon \frac{\partial S_1}{\partial u} (u , v , w ) + \varepsilon^2   \frac{\partial S_2}{\partial u} (u , v , w ) + \cdots \, , \,  \frac{\partial S_1}{\partial v} (u , v , w ) + \varepsilon   \frac{\partial S_2}{\partial v} (u , v , w) + \cdots   \Big) 
\end{align*}
\\
with the property that its derivative at $(u,v,w,\varepsilon ) = (0,0,0,0)$ induces an isomorphism of $U \oplus V \oplus 0 \oplus 0 \subset T_{(0,0,0,0)}( U \times V \times W \times \mathbb{R})$ with $U \oplus V = T_{(0,0)} (U \times V )$. Indeed, the derivative of $F$ at $(0,0,0,0)$ has the shape
\begin{align*}
(dF)_{(0,0,0,0)} = \begin{pmatrix} L_0 & 0 &  0 & \ast \\
\ast  &  L_1 & \ast & \ast \end{pmatrix}.
\end{align*}

It follows, by the Implicit Function Theorem, that the equation $F(u,v,w,\varepsilon) = 0$ uniquely determines $u,v,w$ as functions of variables $(t, \varepsilon ) \in W \times \mathbb{R}$. More precisely, we find a neighborhood $\mathscr{U}_{0}^\prime$ of $0$ in $W$ and $\varepsilon_0 > 0$ and a unique smooth function 
\[
(t,\varepsilon ) \in \mathscr{U}_{0}^\prime \times (-\varepsilon_0 , \varepsilon_0 ) \subset W \times \mathbb{R} \mapsto (u (t, \varepsilon ) , v(t, \varepsilon ) , t ) \in \mathscr{U}_0 \subset U \times V \times W
\]
subject to $F(u(t,\varepsilon ) , v(t,\varepsilon ),  t ) = 0$ and $u (0,0) = 0$, $v(0,0) = 0$.  Furthermore, it also follows that for every $t$ we have $u(t, 0 ) = 0$ and $v(t , 0 ) = 0$ (since $\partial S_1 / \partial v$ vanishes when $u = 0$ and $v = 0$). 

Thus, for $0 < |\varepsilon|<\varepsilon_0$ the critical locus of $S_\varepsilon$ on the neighborhood $\mathscr{U}_0$ of $(0,0,0)$ in $U \times V \times W$ is parametrised by the set of $t \in \mathscr{U}_{0}^\prime \subset W$ such that the triple $(u,v,w) = (u(t,\varepsilon ) , v(t , \varepsilon ) , t )$ solves the remaining equation (\ref{Dw}). We describe this parametrisation more precisely now: as a critical locus. For this, introduce the smooth function on $\mathscr{U}_{0}^\prime \times (-\varepsilon_0 , \varepsilon_0 ) \setminus 0 \subset W \times \mathbb{R} $ given by
\begin{align*}
S^\prime (t , \varepsilon ) & :=  \sum_{j =0}^\infty S_j (u(t,\varepsilon ) , v (t , \varepsilon ) , t )\varepsilon^{j-2} - \varepsilon^{-2}S_0 (0,0,0) - \varepsilon^{-1} S_1 (0,0,0)\\
& = \varepsilon^{-2} S_\varepsilon (u(t, \varepsilon) , v(t , \varepsilon ) , t ) - \varepsilon^{-2}S_0 (0,0,0) - \varepsilon^{-1} S_1 (0,0,0) .
\end{align*}
and note that $S^\prime$ extends smoothly over to $\mathscr{U}_{0}^\prime \times (-\varepsilon_0 , \varepsilon_0 )$. Indeed, regarding $S_0 $ and $S_1$ as functions of $(t, \varepsilon )$ and recalling that $u(t , 0 ) = 0 $, $v(t , 0 ) = 0$, then we have
\begin{align*}
 & S_0 (t, 0 ) = S_0 ( u(t , 0 ) , v(t , 0 ) , t) = \mathrm{constant}\\
& S_1 (t, 0 ) = S_1 (u(t, 0 ) , v(t , 0 ) , t ) =  \mathrm{constant}\\
& \frac{\partial S_0}{\partial \varepsilon} ( t , 0 ) = \frac{\partial S_0}{\partial u} (u(t, 0 )  , v(t, 0 ) , t ) \cdot \frac{\partial u}{\partial \varepsilon} + \frac{\partial S_0}{\partial v}(u(t, 0 ) , v(t, 0 ) , t ) \cdot \frac{\partial v}{\partial \varepsilon} = 0  .
\end{align*}
Denote $S^{\prime}_\varepsilon (t) = S^\prime (t , \varepsilon )$. The next Lemma provides the promised local parametrisation of the critical locus of $S_\varepsilon$:

\begin{lemma}\label{lemma:reductionstep}
For $0 < |\varepsilon| \leq \varepsilon_0$, the smooth embedding
\[
t \in \mathscr{U}^{\prime}_0 \subset W   \mapsto (u(t, \varepsilon) , v(t , \varepsilon ) , t ) \in \mathscr{U}_0 \subset U \times V \times W
\]
takes the critical locus $\mathrm{Crit}(S^{\prime}_\varepsilon )$ on the neighborhood $\mathscr{U}^{\prime}_0 \subset W$ of $t = 0$, to the critical locus $\mathrm{Crit}(S_\varepsilon )$ on the neighborhood $\mathscr{U}_0 \subset U \times V \times W$ of $(u,v,w) = (0,0,0)$.
\end{lemma}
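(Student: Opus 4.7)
My plan is to reduce the claim to a short chain-rule computation, using the characterisation of $\mathrm{Crit}(S_\varepsilon)\cap \mathscr{U}_0$ already built into the implicit function construction, together with the specific form of the functions $S_0$ and the auxiliary function $S'$.

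First, I would observe that, because $S_0$ depends only on $u$, the equation $(\ref{Dw})$ is equivalent to $\partial_w S_\varepsilon = 0$, and similarly the second component of the map $F$ is simply $\varepsilon^{-1}\partial_v S_\varepsilon$ (using $\partial_v S_0 = 0$). Hence the defining relation $F(u(t,\varepsilon), v(t,\varepsilon), t, \varepsilon) = 0$ is exactly the pair of equations
\begin{equation*}
\frac{\partial S_\varepsilon}{\partial u}\bigl(u(t,\varepsilon),v(t,\varepsilon),t\bigr) = 0, \qquad \frac{\partial S_\varepsilon}{\partial v}\bigl(u(t,\varepsilon),v(t,\varepsilon),t\bigr) = 0,
\end{equation*}
valid for all $t \in \mathscr{U}_0'$ and all $|\varepsilon| < \varepsilon_0$. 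Consequently, for $\varepsilon \neq 0$ a point of the form $(u(t,\varepsilon),v(t,\varepsilon),t)$ lies in $\mathrm{Crit}(S_\varepsilon)$ precisely when the remaining equation $\partial_w S_\varepsilon = 0$ is also satisfied at that point.

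Next, I would compute $\partial_t S'_\varepsilon(t)$ via the chain rule. Since the last two terms in the definition of $S'(t,\varepsilon)$ are independent of $t$, we obtain
\begin{equation*}
\frac{\partial S'_\varepsilon}{\partial t}(t) = \varepsilon^{-2}\left[ \frac{\partial S_\varepsilon}{\partial u}\cdot \frac{\partial u}{\partial t} + \frac{\partial S_\varepsilon}{\partial v}\cdot \frac{\partial v}{\partial t} + \frac{\partial S_\varepsilon}{\partial w} \right]
\end{equation*}
with all partial derivatives of $S_\varepsilon$ evaluated at $(u(t,\varepsilon),v(t,\varepsilon),t)$. By the step above, the first two summands vanish identically on $\mathscr{U}_0'$, so the chain-rule expression collapses to
\begin{equation*}
\frac{\partial S'_\varepsilon}{\partial t}(t) = \varepsilon^{-2}\,\frac{\partial S_\varepsilon}{\partial w}\bigl(u(t,\varepsilon),v(t,\varepsilon),t\bigr).
\end{equation*}

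For $0 < |\varepsilon|\leq \varepsilon_0$, the factor $\varepsilon^{-2}$ is non-zero, so $\partial_t S'_\varepsilon(t) = 0$ if and only if $\partial_w S_\varepsilon = 0$ at $(u(t,\varepsilon),v(t,\varepsilon),t)$, which by the first paragraph is exactly the condition for $(u(t,\varepsilon),v(t,\varepsilon),t)$ to lie in $\mathrm{Crit}(S_\varepsilon)$. This gives the desired bijection. The one small point requiring care is that the smooth extension of $S'$ across $\varepsilon = 0$, together with the identities $S_0(u(t,0),v(t,0),t) = S_0(0,0,0)$ and $S_1(u(t,0),v(t,0),t) = S_1(0,0,0)$ established before the lemma, ensures that $S'_\varepsilon$ is a genuine smooth function and its derivative has the expected form; I do not anticipate any serious obstacle here, as the heart of the argument is just the chain rule and the fact that the first two critical-point equations are solved identically along the parametrisation.
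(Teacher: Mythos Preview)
Your proposal is correct and follows essentially the same approach as the paper: both arguments observe that the implicit-function construction already enforces $\partial_u S_\varepsilon = \partial_v S_\varepsilon = 0$ along the parametrisation, and then use the chain rule to identify $\partial_t S'_\varepsilon$ with $\varepsilon^{-2}\,\partial_w S_\varepsilon$, so that the remaining critical-point equation matches the critical-point equation for $S'_\varepsilon$. The paper's write-up is more compressed but the content is identical.
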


\begin{proof}
The remaining equation (\ref{Dw}), after setting $u = u(t,\varepsilon) , v = v(t , \varepsilon ) , w = t$ and dividing by $\epsilon^2 \neq 0$, is 
\[
\varepsilon^{-2} \frac{\partial S_\varepsilon }{\partial w }(u(t,\varepsilon ) , v (t , \varepsilon ) , t ) = 0 .
\]
But since $\partial S_\varepsilon / \partial u$ and $\partial S_\varepsilon /\partial v $ vanish at $(u,v,w ) = (u(t,\varepsilon ) , v (t , \varepsilon ) , t )$, then the left-hand side can be written as
\begin{align*}
 \varepsilon^{-2} \big( \frac{\partial S_\varepsilon }{\partial u} \frac{\partial u}{\partial t} + \frac{\partial S_\varepsilon }{\partial v}\frac{\partial v}{\partial t} + \frac{\partial S_\varepsilon }{\partial w } \big) = \frac{\partial S_{\varepsilon}^\prime }{\partial t } .
\end{align*}
\end{proof} 
Consider the sequence of smooth functions on $\mathscr{U}^{\prime}_0 \subset W$ given by
 \[
 S^{\prime}_j (t) := \frac{1}{j!}\frac{\partial^{(j)} S^\prime}{\partial \varepsilon^{{(j)}}} (t , 0 ) \quad , \quad j \geq 0 .
 \]
We note that the smooth function $S^\prime (t , \varepsilon ) $ need not be analytic in $\varepsilon$ anymore: we we may only `formally' write
\[
S^{\prime}_\varepsilon (t ) = S^{\prime}_0 (t ) + \varepsilon S^{\prime}_1 (t ) + \varepsilon^2 S^{\prime}_2 (t) + \cdots .
\]

At this point, it follows that the points $\mathscr{Z}_1 = \mathrm{Crit}(S_{1}|_{\mathscr{Z}_0} )$ which arise as limits of a sequence as in Lemma \ref{lemma:convergence} are further constrained to be critical points of the \textit{leading term} $S^{\prime}_0$ in this formal expansion. The next Lemma describes this leading term:

\begin{lemma} \label{lemma:leading}
Let $\lambda (t ) \in U$ be the function defined by the requirement
\[
L_0 ( \lambda (t ) )= - \frac{\partial S_1}{\partial u} (0,0,t) \text{ as elements of } U .
\]
Then we have 
\begin{align*}
S_{0}^\prime (t)  &= \frac{1}{2}   \frac{\partial S_1 }{\partial u} (0,0,t) \cdot  \lambda (t)  + S_2 (0 , 0 , t ) \\
\frac{\partial S_{0}^\prime}{\partial t}  &= \frac{\partial^2 S_1 }{\partial u \partial w} (0,0,t) \cdot \lambda (t) + \frac{\partial S_2}{\partial w}(0,0,t) \\
\frac{\partial^2 S_{0}^\prime}{\partial t^2 } & = \frac{\partial^3 S_1 }{\partial u \partial w^2 } (0,0,t) \cdot \lambda(t) + \frac{\partial^2 S_1 }{\partial u \partial w} (0,0,t) \cdot \frac{\partial \lambda }{\partial t } + \frac{\partial^2 S_2 }{\partial w^2 }(0,0,t) .
\end{align*}
\end{lemma}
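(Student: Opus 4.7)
The plan is to Taylor-expand $u(t,\varepsilon)$ and $v(t,\varepsilon)$ in $\varepsilon$ to leading order, substitute into $S_\varepsilon(u(t,\varepsilon), v(t,\varepsilon), t)$, and read off the $\varepsilon^2$ coefficient. Since $u(t,0) = v(t,0) = 0$ from the setup, we can write $u(t,\varepsilon) = \varepsilon u_1(t) + O(\varepsilon^2)$ and $v(t,\varepsilon) = \varepsilon v_1(t) + O(\varepsilon^2)$. Substituting into the $U$-component of $F(u,v,t,\varepsilon) = 0$, namely $L_0 u + \varepsilon \tfrac{\partial S_1}{\partial u}(u,v,t) + O(\varepsilon^2) = 0$, and extracting the $\varepsilon^1$ coefficient yields $L_0 u_1(t) + \tfrac{\partial S_1}{\partial u}(0,0,t) = 0$, so $u_1(t) = \lambda(t)$. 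We shall not need the explicit form of $v_1(t)$.

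Next I would expand $S_\varepsilon(u(t,\varepsilon), v(t,\varepsilon), t)$ through order $\varepsilon^2$. The $S_0$-contribution depends only on $u$ and is quadratic there, giving $\tfrac{1}{2}\varepsilon^2 \langle L_0 u_1, u_1 \rangle$. For $\varepsilon S_1$, Taylor-expand $S_1$ around $(0,0,t)$: the linear term in $v$ vanishes because $\tfrac{\partial S_1}{\partial v}(0,0,t) = L_1 \cdot 0 = 0$, so at order $\varepsilon^2$ one picks up only $\varepsilon^2 \tfrac{\partial S_1}{\partial u}(0,0,t) \cdot u_1$, with $v_1$ dropping out entirely. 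The $\varepsilon^2 S_2$ term contributes $\varepsilon^2 S_2(0,0,t)$ at this order. Subtracting the two counterterms in the definition of $S^\prime(t,\varepsilon)$, sending $\varepsilon \to 0$, and using $L_0 \lambda = -\tfrac{\partial S_1}{\partial u}$ to collapse the first two summands to $\tfrac{1}{2}\tfrac{\partial S_1}{\partial u}(0,0,t) \cdot \lambda(t)$ yields the stated formula for $S_0^\prime(t)$.

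For the $t$-derivatives I would differentiate the explicit formula for $S_0^\prime(t)$ directly. The first derivative a priori produces an extra term $\tfrac{1}{2}\tfrac{\partial S_1}{\partial u}(0,0,t) \cdot \tfrac{\partial \lambda}{\partial t}$. The key observation is that differentiating the defining relation $L_0 \lambda(t) = -\tfrac{\partial S_1}{\partial u}(0,0,t)$ in $t$ gives $L_0 \tfrac{\partial \lambda}{\partial t} = -\tfrac{\partial^2 S_1}{\partial u \partial w}(0,0,t)$, and by self-adjointness of $L_0$ one obtains
\begin{equation*}
\tfrac{\partial S_1}{\partial u}(0,0,t) \cdot \tfrac{\partial \lambda}{\partial t} = -\langle L_0 \lambda, \tfrac{\partial \lambda}{\partial t}\rangle = -\langle \lambda, L_0 \tfrac{\partial \lambda}{\partial t}\rangle = \tfrac{\partial^2 S_1}{\partial u \partial w}(0,0,t) \cdot \lambda(t),
\end{equation*}
which combines with the direct derivative of the first summand in $S_0^\prime$ to produce the claimed expression for $\partial S_0^\prime / \partial t$. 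The formula for $\partial^2 S_0^\prime / \partial t^2$ then follows by Leibniz applied to $\partial S_0^\prime / \partial t$, without further simplification.

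The main subtlety will be the self-adjoint manipulation that absorbs the $\tfrac{\partial \lambda}{\partial t}$ term into the final tidy formula; the rest is careful Taylor bookkeeping, together with the observation (crucial for dispensing with $v_1$) that $\tfrac{\partial S_1}{\partial v}(0,0,t) = 0$ by the Morse--Bott form of $S_1|_{\mathscr{Z}_0}$.
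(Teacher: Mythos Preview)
Your proposal is correct. The derivation of $S_0'(t)$ itself is essentially identical to the paper's: both identify $\partial u/\partial\varepsilon(t,0)=\lambda(t)$ from the order-$\varepsilon$ term of the $U$-equation and substitute, with $v_1$ dropping out because $\partial S_1/\partial v(0,0,t)=0$.

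Where you diverge is in obtaining the $t$-derivatives. The paper returns to the variational identity $\partial S_\varepsilon'/\partial t = \varepsilon^{-2}\,\partial S_\varepsilon/\partial w$ (from Lemma~\ref{lemma:reductionstep}) and expands that in $\varepsilon$, then differentiates once more using that $u(t,0)=v(t,0)\equiv 0$. You instead differentiate the explicit formula for $S_0'(t)$ directly in $t$ and use the self-adjointness of $L_0$ together with $L_0\,\partial\lambda/\partial t = -\partial^2 S_1/\partial u\partial w$ to merge the two ``half'' terms into the single term $\partial^2 S_1/\partial u\partial w \cdot \lambda$. This is a cleaner and more elementary route for the derivative formulas; the paper's route, on the other hand, ties the computation back to the critical-point structure, which is conceptually what gets used later in Proposition~\ref{prop:localpert}. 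Either way the second derivative is then immediate by Leibniz.
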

\begin{proof}
We have
\[
S^{\prime}_0 (t) = S^{\prime }(t, 0) =  \frac{1}{2}\frac{\partial^2 S_0}{\partial \varepsilon^2}(t, 0 ) + \frac{\partial S_1}{\partial \varepsilon}(t, 0 ) + S_2 (t,0)
\]
and we now compute each of the first two terms on the right-hand side separately. (In the following calculations we always evaluate at $\varepsilon = 0$.)

\begin{align*}
\frac{1}{2}\frac{\partial^2 S_0}{\partial \varepsilon^2}  = &  \frac{1}{2} \frac{\partial}{\partial \varepsilon} \Big( \frac{\partial S_0}{\partial u} \frac{\partial u}{\partial \varepsilon} + \frac{\partial S_0}{\partial v} \frac{\partial v}{\partial \varepsilon} \Big)\\
 = &  \frac{1}{2} \frac{\partial^2 S_0 }{\partial u^2} \big(\frac{\partial u}{\partial \varepsilon}\big)^2  + \frac{\partial^2 S_0}{\partial u \partial v} \big( \frac{\partial u}{\partial \varepsilon}\big)\big( \frac{\partial v}{\partial \varepsilon}\big) +  \frac{1}{2} \frac{\partial^2 S_0 }{\partial v^2} \big(\frac{\partial v}{\partial \varepsilon}\big)^2 \\
& + \frac{1}{2}  \frac{\partial S_0 }{\partial u} \frac{\partial^2 u}{\partial \varepsilon^2}
+ \frac{1}{2}  \frac{\partial S_0 }{\partial v} \frac{\partial^2 v}{\partial \varepsilon^2}\\
 = &  \frac{1}{2} \frac{\partial^2 S_0 }{\partial u^2} \big(\frac{\partial u}{\partial \varepsilon}\big)^2\\
\\
\frac{\partial S_1}{\partial \varepsilon}  = &\frac{\partial S_1}{\partial u} \frac{\partial u}{\partial \varepsilon} + \frac{\partial S_1}{\partial v} \frac{\partial v}{\partial \varepsilon}  = \frac{\partial S_1}{\partial u} \frac{\partial u}{\partial \varepsilon} .
\end{align*}
\\
Now, by differentiating equation (\ref{Du}) with respect to $\varepsilon$ and then setting $\varepsilon = 0$ we obtain the following relation in $U^\ast$

\[
\frac{\partial^2 S_0}{\partial u^2}\frac{\partial u}{\partial \varepsilon} = - \frac{\partial S_1}{\partial u}
\]
\\
i.e. $\frac{\partial u}{\partial \varepsilon}(t, 0 ) = \lambda (t)$. Inserting this into the above, we obtain the required formula for $S^{\prime}_0$
\begin{align*}
S^{\prime}_0 (t) & =  \frac{1}{2} \frac{\partial^2 S_0 }{\partial u^2} \big(\frac{\partial u}{\partial \varepsilon}\big)^2 + \frac{\partial S_1}{\partial u} \frac{\partial u}{\partial \varepsilon}  + S_2\\
& =- \frac{1}{2} \frac{\partial S_1}{\partial u} \cdot  \lambda   +   \frac{\partial S_1}{\partial u} \cdot  \lambda + S_2\\
& = \frac{1}{2}\frac{\partial S_1}{\partial u} \cdot  \lambda + S_2  .
\end{align*}

For $\partial S_{0}^\prime / \partial t$, recall that 
\[
\frac{\partial S_{\varepsilon}^\prime }{\partial t} = \varepsilon^{-2} \frac{\partial S_\varepsilon}{\partial w} = \varepsilon^{-1} \frac{\partial S_1}{\partial w} + \frac{\partial S_2}{\partial w} + \mathcal{O}(\varepsilon )
\]
Now, we have 
\begin{align*}
\frac{\partial }{\partial \varepsilon}\frac{\partial S_1}{\partial w} = \frac{ \partial^2 S_1}{\partial u \partial w} \frac{\partial u}{\partial \varepsilon } + \frac{\partial^2 S_1}{\partial v \partial w} \frac{\partial v }{\partial \varepsilon } = \frac{\partial^2 S_1}{\partial u \partial w} (0,0,t) \cdot \lambda + \mathcal{O}(\varepsilon ).
\end{align*}
Inserting this into the previous formula yields 
\begin{align}
\frac{\partial S_{\varepsilon}^\prime }{\partial t} = \frac{\partial^2 S_1}{\partial u \partial w} (0,0,t) \cdot \lambda + \frac{\partial S_2}{\partial w} (0,0,t)+ \mathcal{O}(\varepsilon )  \label{dtleading}
\end{align}
from which the formula for $\partial S_{0}^\prime / \partial t$ follows. 

Finally,
\begin{align*}
\frac{\partial^2 S_{0}^\prime}{\partial t^2 }  = &   \frac{\partial^3 S_1}{\partial u^2  \partial w}(0,0,t) \cdot \frac{\partial u }{\partial t}(0,t)  \cdot \lambda(t) + \frac{\partial^3 S_1}{\partial u \partial v  \partial w} (0,0,t) \cdot \frac{\partial v }{\partial t}(0,t)  \cdot \lambda(t) \\
& + \frac{\partial^3 S_1 }{\partial u \partial w^2} (0,0,t) \cdot  \lambda(t)  + 
\frac{\partial^2 S_1}{\partial u \partial w}(0,0,t) \cdot \frac{\partial \lambda}{\partial t}\\
& + \frac{\partial^2 S_2}{\partial u \partial w}(0,0,t) \cdot \frac{\partial u }{\partial t} (t,0)+ \frac{\partial^2 S_2 }{\partial v \partial w} (0,0,t) \cdot \frac{\partial v}{\partial t}(t,0) \\
 & + \frac{\partial^2 S_2}{\partial w^2 }(0,0,t) + \mathcal{O}(\varepsilon ).
\end{align*}
The required formula then follows by noting that $\partial u /\partial t $ and $\partial v /\partial t $ vanish identically at $(u,v) = (t,0)$ since $u(t,0) $ and $v(t,0)$ vanish identically.
\end{proof}

Of course, there may be further obstructions for a critical point of the leading term $S^{\prime}_0$ to appear as a limit of a sequence as in Lemma \ref{lemma:convergence}. However, in the situation that $t = 0$ is a \textit{non-degenerate} (i.e. Morse) critical point of the leading term $S^{\prime}_0$ then we obtain the following conclusion:
\begin{proposition}\label{prop:localpert}
If $t = 0$ is a non-degenerate critical point of the leading term $S^{\prime}_0 : \mathscr{U}^{\prime}_0 \subset W \rightarrow \mathbb{R}$, then there exist a neighborhood $\mathscr{U}_0 \subset U \times V \times W$ of $x_0  = (0,0,0)$ and $\epsilon_0 > 0$ such that $S_\varepsilon$ has a unique critical point $x_\varepsilon$ in $\mathscr{U}_{0}$ for each $0 < |\varepsilon|\leq \varepsilon_0$, and the function $\varepsilon \mapsto x_\varepsilon$ extends to a smooth function on $[-\varepsilon_0 , \varepsilon_0 ]$. Furthermore, this critical point is non-degenerate and when $\pm \varepsilon >0$ its index is
\[
\mathrm{ind}(S_\varepsilon , x_\varepsilon ) = \mathrm{ind}(S_0 , x_0 ) + \mathrm{ind}(\pm S_{1}|_{\mathscr{Z}_0} , x_0 ) + \mathrm{ind}(S_{0}^\prime , 0 ).
\]
(On the other hand, if $t = 0$ is not a critical point of $S_{0}^\prime : \mathscr{U}_{0}^\prime \subset W \rightarrow \mathbb{R}$, then there exist a neighborhood $\mathscr{U}_0 \subset U \times V \times W$ of $(0,0,0)$ and $\epsilon_0 > 0$ such that $S_\varepsilon$ has no critical points in $\mathscr{U}_0$ for each $0 < |\varepsilon|< \varepsilon_0$.)
\end{proposition}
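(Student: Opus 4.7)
The plan is to apply the Implicit Function Theorem once more, now to the gradient equation of the reduced function $S'_\varepsilon$ on $\mathscr{U}'_0 \subset W$, and then extract the index of $S_\varepsilon$ at the resulting critical point via a Schur-complement decomposition of its Hessian.

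For existence and smooth dependence, recall that $S'(t,\varepsilon)$ was shown (in the paragraph preceding Lemma \ref{lemma:reductionstep}) to extend smoothly to $\varepsilon = 0$, so the gradient map $\nabla S'_\varepsilon(t)$ is smooth on $\mathscr{U}'_0 \times [-\varepsilon_0,\varepsilon_0]$, and its derivative in $t$ at $(0,0)$ equals $\mathrm{Hess}(S'_0)(0)$, which is invertible by hypothesis. After shrinking $\varepsilon_0$, the Implicit Function Theorem produces a unique smooth map $t : [-\varepsilon_0,\varepsilon_0] \to \mathscr{U}'_0$ with $t(0)=0$ and $\nabla S'_\varepsilon(t(\varepsilon))=0$. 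Setting $x_\varepsilon := (u(t(\varepsilon),\varepsilon),\, v(t(\varepsilon),\varepsilon),\, t(\varepsilon))$ and invoking Lemma \ref{lemma:reductionstep} yields the unique critical point of $S_\varepsilon$ in a suitably shrunken $\mathscr{U}_0$ for $0<|\varepsilon|\le\varepsilon_0$, smoothly extended over the closed interval with $x_0=(0,0,0)$.

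For the index formula, introduce $\tilde S_\varepsilon(t) := S_\varepsilon(u(t,\varepsilon),v(t,\varepsilon),t) = \varepsilon^2 S'_\varepsilon(t) + c(\varepsilon)$ for a constant $c(\varepsilon)$. Because $(u(t,\varepsilon),v(t,\varepsilon))$ solves $\partial_u S_\varepsilon = 0 = \partial_v S_\varepsilon$ for each $t$, a standard Lyapunov--Schmidt / block-matrix computation identifies $\mathrm{Hess}(\tilde S_\varepsilon)(t(\varepsilon))$ with the Schur complement of the $(U\oplus V)$-block $A_\varepsilon$ in the full Hessian $H_\varepsilon$ of $S_\varepsilon$ at $x_\varepsilon$. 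Using $\partial_v^2 S_0 \equiv 0$ together with $\partial_v^2 S_1(0,0,0) = L_1$, a direct expansion of partial derivatives gives
\[
A_\varepsilon \;=\; \begin{pmatrix} L_0 + O(\varepsilon) & O(\varepsilon) \\ O(\varepsilon) & \varepsilon L_1 + O(\varepsilon^2) \end{pmatrix},
\]
and a second Schur reduction shows $A_\varepsilon$ is invertible for $0<|\varepsilon|\le\varepsilon_0$ with $\mathrm{ind}(A_\varepsilon) = \mathrm{ind}(L_0) + \mathrm{ind}(\varepsilon L_1)$. Signature additivity for the Schur complement then yields
\[
\mathrm{ind}(H_\varepsilon) \;=\; \mathrm{ind}(A_\varepsilon) + \mathrm{ind}\bigl(\mathrm{Hess}(\tilde S_\varepsilon)(t(\varepsilon))\bigr) \;=\; \mathrm{ind}(L_0) + \mathrm{ind}(\varepsilon L_1) + \mathrm{ind}(S'_0,0),
\]
where the last equality uses that the positive factor $\varepsilon^2$ preserves the index and $\mathrm{Hess}(S'_\varepsilon)(t(\varepsilon)) \to \mathrm{Hess}(S'_0)(0)$ as $\varepsilon\to 0$. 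Interpreting $\mathrm{ind}(L_0) = \mathrm{ind}(S_0, x_0)$ and $\mathrm{ind}(\varepsilon L_1) = \mathrm{ind}(\pm S_1|_{\mathscr{Z}_0},x_0)$ for $\pm\varepsilon > 0$ produces the claimed identity.

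For the parenthetical, if $t=0$ is not a critical point of $S'_0$, formula (\ref{dtleading}) gives $\partial_t S'_\varepsilon(0) = \partial_t S'_0(0) + O(\varepsilon) \neq 0$ for small $|\varepsilon|$; by continuity $\nabla S'_\varepsilon$ is nowhere zero on a small neighborhood of $0\in W$, and Lemma \ref{lemma:reductionstep} rules out critical points of $S_\varepsilon$ in the corresponding $\mathscr{U}_0$. The main technical point is the combined Schur-complement/signature-additivity identity together with the expansion of $A_\varepsilon$: both rely essentially on the Morse--Bott normal forms already imposed on $S_0$ along $\mathscr{Z}_0$ and on $S_1|_{\mathscr{Z}_0}$ along $\mathscr{Z}_1$, which are what force the diagonal of $A_\varepsilon$ to have the mixed orders $(L_0,\,\varepsilon L_1)$ needed to resolve the degeneracy of $\mathrm{Hess}(S_0)$ in the $V$-direction.
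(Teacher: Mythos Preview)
Your proof is correct and complete, but it follows a different path from the paper's. The paper establishes the index formula by an \emph{eigenvalue perturbation} argument: it considers smooth paths $(\dot{x}_\varepsilon,\mu_\varepsilon)$ solving $(\mathrm{Hess}\,S_\varepsilon)_{x_\varepsilon}(\dot{x}_\varepsilon)=\mu_\varepsilon\,\dot{x}_\varepsilon$, expands $\mu_\varepsilon=\mu_0+\varepsilon\mu_1+\varepsilon^2\mu_2+\cdots$, and shows case by case that (I) if $\mu_0=0$ then $\dot{x}_0\in 0\oplus V\oplus W$ and $\mu_1$ is an eigenvalue of $\mathrm{Hess}(S_1|_{\mathscr{Z}_0})_{x_0}$; (II) if additionally $\mu_1=0$ then $\dot{x}_0\in 0\oplus 0\oplus W$ and $\mu_2$ is a (necessarily nonzero) eigenvalue of $\mathrm{Hess}(S_0')_0$. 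This stratifies the eigenvalues of $H_\varepsilon$ into three groups whose signs are governed by $L_0$, $\varepsilon L_1$, and $\mathrm{Hess}(S_0')_0$ respectively, yielding the index sum.

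Your Schur-complement route is cleaner for the finite-dimensional statement as written: you reduce directly to the signature additivity identity $\mathrm{ind}(H_\varepsilon)=\mathrm{ind}(A_\varepsilon)+\mathrm{ind}(\text{Schur complement})$ and then iterate once more on $A_\varepsilon$. The paper's eigenvalue-tracking argument, however, is chosen with an eye toward the infinite-dimensional application (the analogue Theorem~\ref{theorem:perturbation1general}), where the Morse index is infinite and one instead computes a \emph{spectral flow}; there the eigenvalue-perturbation viewpoint transfers verbatim, while your Schur-complement decomposition would need additional care to be reformulated as a spectral-flow statement.
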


(We recall that the index $\mathrm{ind}(f,x) \in \mathbb{Z}_{\geq 0}$ of a critical point $x$ of a Morse or Morse--Bott function $f$ is defined as the dimension of the negative eigenspace of the Hessian of $f$ along the normal bundle to the critical locus of $f$ at $x$. This only depends on the connected component $C$ of the critical locus of $f$ that contains $x$, so we may also use the notation $\mathrm{ind}(f,C)$ instead).

\begin{proof}
Since $t = 0$ is a non-degenerate critical point of $S_{0}^\prime$ then it persists in the perturbation $S_{\varepsilon}^\prime$ to give a unique non-degenerate critical point $t(\varepsilon )$ of $S_{\varepsilon}^\prime$ for $\varepsilon$ sufficiently close to $\varepsilon = 0$. By Lemma \ref{lemma:reductionstep} we obtain a unique critical point $x_\varepsilon$ of the original perturbation $S_\varepsilon$ in a neighborhood of $(0,0,0)$ for sufficiently small $\varepsilon \neq 0$, given in the $U \times V \times W$ coordinates by $x_\varepsilon = (u_\varepsilon , v_\varepsilon , w_\varepsilon ) := ( u(t(\varepsilon) , \varepsilon ) , v(t(\varepsilon),\varepsilon ) , t(\varepsilon ) )$. (Also, clearly the path $x_\varepsilon$ extends to $\varepsilon = 0$ to give a smooth path.)

Consider smooth paths $\dot{x}_\varepsilon = (\dot{u}_\varepsilon , \dot{v}_\varepsilon , \dot{w}_\varepsilon ) \in U \oplus V \oplus W$  and $\mu_\varepsilon \in \mathbb{R}$, defined for $\varepsilon$ sufficiently close to $\varepsilon = 0$, such that $\dot{x}_\varepsilon \neq 0 $ and
\begin{align}
(\mathrm{Hess}S_\varepsilon )_{x_\varepsilon } (\dot{x}_\varepsilon ) = \mu_\epsilon \cdot \dot{x}_\varepsilon . \label{eigenvectors}
\end{align}
Formally, we write $\mu_\varepsilon = \mu_0 + \varepsilon  \mu_1  + \varepsilon^2  \mu_2 + \cdots $(i.e. $\mu_1$ is the first derivative of $\mu_\varepsilon$ at $\varepsilon = 0$, and so forth, but with no assumption of convergence), and likewise for other quantities (e.g. $\dot{u}_\varepsilon = \dot{u}_0 + \varepsilon \dot{u}_1 + \varepsilon^2 \dot{u}_2 + \cdots$). We consider the following scenarios:

\textbf{(I)} Suppose that the path $\mu_\varepsilon$ vanishes at $\varepsilon = 0$, i.e. $\mu_0 = 0$. Thus $(\mathrm{Hess}S_0 )_{x_0 } \dot{x}_0 = 0 $, and hence $\dot{x}_0 \in 0 \oplus V \oplus W$ (i.e. $\dot{u}_0 = 0$). Differentiating (\ref{eigenvectors}) in $\varepsilon $ yields the following identity:
\begin{align}
(\mathrm{Hess} \frac{\partial S_\varepsilon}{\partial \varepsilon } )_{x_\varepsilon } (\dot{x}_\varepsilon ) + (\nabla^3 S_\varepsilon )_{x_\varepsilon }(\frac{\partial x_\varepsilon}{\partial \varepsilon } , \dot{x}_\varepsilon ) + (\mathrm{Hess}S_\varepsilon )_{x_\varepsilon } ( \frac{\partial \dot{x}_\varepsilon }{\partial \varepsilon } ) = \frac{\partial \mu_\varepsilon }{\partial \varepsilon } \dot{x}_\varepsilon + \mu_\varepsilon \frac{\dot{x}_\varepsilon }{\partial \varepsilon } \label{diff1eigen}
\end{align}
and evaluation at $\varepsilon = 0$ gives the relation
\[
(\mathrm{Hess}S_1)_{x_0 } (\dot{x}_0 ) + (\nabla^3 S_0 )_{x_0} ( x_1 , \dot{x}_0 ) + (\mathrm{Hess}S_0 )_{x_0 }( \dot{x}_1 ) = \dot{\mu}_1 \dot{x}_0 .
\]
Noting that in our $U \times V \times W$ coordinates the function $S_0 $ is $\frac{1}{2}\langle L_0 u , u \rangle + \mathrm{constant}$ and hence $\nabla^3 S_0$ vanishes identically, the previous relation says:
\begin{align}
\mathrm{Hess}(S_{1}|_{0 \times V \times W} )_{x_0 } (\dot{x}_0 ) = \mu_1 \cdot \dot{x}_0 \label{diff11}\\
(\frac{\partial^2 S_1 }{\partial u \partial v} )_{x_0 }(\dot{v}_0 ) + (\frac{\partial^2 S_1 }{\partial u \partial w} )_{x_0 }(\dot{w}_0 ) + L_0 \dot{u}_1 = 0 .\label{diff12}
\end{align}

\textbf{(II)} Suppose further that $\mu_0 = \mu_1 = 0$. By (\ref{diff11}) we now have that $\dot{x}_0 \in 0 \times 0 \times W$ (i.e. $\dot{u}_0 = \dot{v}_0 = 0$). Differentiating (\ref{diff1eigen}) at $\varepsilon = 0$ (and noting the vanishing of $\nabla^3 S_0$ and $\nabla^4 S_0$ in the $U \times V \times W$ coordinates) yields the identity:

\begin{align*}
 (\mathrm{Hess}S_2 )_{x_0} (\dot{x}_0 ) + (\nabla^3 S_1 )_{x_0} (x_1 , \dot{x}_0 ) + (\mathrm{Hess}S_1 )_{x_0} (\dot{x}_1 ) + (\mathrm{Hess}S_0 )_{x_0 } (\dot{x}_2 ) = \mu_2 \cdot  \dot{x}_0 .
\end{align*}
Projecting this identity onto the $W$ factor (and using also that $S_1 (0,v,w) = \frac{1}{2}\langle L_1 v , v \rangle + \mathrm{constant}$ in our coordinates, and $\lambda (0) = u_1$) yields:
\begin{align*}
\mu_2 \cdot \dot{w}_0 & =  (\frac{\partial^2 S_2 }{\partial w^2} )_{x_0} (\dot{w}_0 ) + (\frac{\partial^3 S_1}{\partial u \partial w^2 })_{x_0} (\lambda ( 0) , \dot{w}_0 ) + (\frac{\partial^2 S_1}{\partial u \partial w})_{x_0} (\dot{u}_1 )     \\
& = (\frac{\partial^2 S_{0}^\prime}{\partial t^2 } )_{x_0 }(\dot{w}_0 ) - (\frac{\partial^2 S_1 }{\partial u \partial w} )_{x_0} \cdot (  ( \frac{\partial \lambda }{\partial t } ) (0 )  , \dot{w}_0 ) + (\frac{\partial^2 S_1}{\partial u \partial w})_{x_0} (\dot{u}_1 ) \quad \text{   by Lemma (\ref{lemma:leading})}\\
& = (\frac{\partial^2 S_{0}^\prime}{\partial t^2 } )_{x_0 }(\dot{w}_0 ) + \langle L_0 \dot{u}_1 , \frac{\partial \lambda }{\partial t}(0 ) \rangle  + (\frac{\partial^2 S_1}{\partial u \partial w})_{x_0} (\dot{u}_1 ) \quad \text{  by (\ref{diff12})}\\
& = (\frac{\partial^2 S_{0}^\prime}{\partial t^2 } )_{x_0 }(\dot{w}_0 ) + \langle \dot{u}_1 , L_0 \frac{\partial \lambda }{\partial t}(0 ) \rangle  + (\frac{\partial^2 S_1}{\partial u \partial w})_{x_0} (\dot{u}_1 ) \quad \text{  by (\ref{diff12})}.
\end{align*}
Differentiating in $t $ the identity defining $\lambda(t)$ in Lemma \ref{lemma:leading} gives $L_0 ( \partial \lambda / \partial t ) = - (\frac{\partial^2 S_1}{\partial u \partial w} )_{(0,0,t)}$, and inserting this in the previous identity gives the relation:
\begin{align}
(\mathrm{Hess} S_{0}^\prime )_{x_0 }(\dot{w}_0 ) = \mu_2 \cdot \dot{w}_0 \label{diff2}.
\end{align}
Furthermore, we must have $\mu_2 \neq 0$ because otherwise (\ref{diff2}) implies that $\dot{w}_0= 0$ and hence the contradiction $\dot{x}_0 = (0,0,0)$.

From these calculations all the required results follow at once.
\end{proof}


\subsubsection{Invariant formulation}
Our previous construction of parametrisations of the critical locus of $S_\varepsilon$ made use of local coordinates adapted to the functions $S_0$ and $S_1$. We now address the `intrinsic' aspects of the previous discussion. Namely, we will show that the leading term $S_{0}^\prime $ of the function $S^{\prime}_\varepsilon$ is, in fact, intrinsically defined as a smooth function on the whole of $\mathscr{Z}_1 = \mathrm{Crit}(S_{1}|_{\mathscr{Z}_0})$.

For this, we consider the `constrained optimization problem' of extremising the function $S_1$ restricted to the critical locus of $S_0$. The associated \textit{Lagrange multipliers functional} is given by the smooth function on the total space of the tangent bundle of $B$ given by
\begin{align*}
\mathscr{L} : TB \rightarrow \mathbb{R} \quad , \quad \mathscr{L}(x, \lambda ) = S_1 (x) + (dS_0 )_x (\lambda ) . 
\end{align*}
where $x \in B$, and $\lambda \in T_x B$ (the `Lagrange multiplier'). The critical points of $\mathscr{L}$ consist of pairs $(x , \lambda )$ such that 
\\
\begin{align}
x \in \mathscr{Z}_0 = \mathrm{Crit}S_0 \quad  \text{   and   } \quad (\mathrm{Hess}S_0 )_x (\lambda ) = - (dS_1)_x \quad \text{       ( in  } T_{x}^\ast B  \text{ )}\label{critL} .
\end{align}
\\
Here $(\mathrm{Hess}S_0 )_x : T_x B \otimes T_x B \rightarrow \mathbb{R}$ is the quadratic form given by the Hessian of $S_0$ at $x$ (which is intrinsically defined since $b$ is a critical point of $S_0$). Note that, since we are assuming $S_0$ is Morse--Bott, then $(\mathrm{Hess} S_0 )_x$ induces a non-degenerate quadratic form $(T_x B / T_x \mathscr{Z}_0 ) \otimes (T_x B / T_x\mathscr{Z}_0 ) \rightarrow \mathbb{R}$. Thus, the condition that a point $x \in B$ admits a $\lambda \in T_x B$ such that (\ref{critL}) holds says, precisely, that $x$ is simultaneously both a critical point of $S_0$ and of the restriction of $S_1$ to $\mathscr{Z}_0$. Given such $x \in B$, the collection of all $\lambda$ such that (\ref{critL}) holds forms an affine space over $T_x \mathscr{Z}_0 = \mathrm{Ker}(\mathrm{Hess} S_0 )_x$.

\begin{definition}
The \textit{bundle of Lagrange multipliers} associated to $\mathscr{L}$ is the affine bundle $\Lambda \rightarrow \mathscr{Z}_1 = \mathrm{Crit}(S_{1}|_{\mathscr{Z}_0} )$ with fibers
\[
\Lambda_x = \big\{ \lambda \in T_x B \, | \,(\mathrm{Hess}S_0 )_x (\lambda ) = - (dS_1)_x  \big\} \subset T_x B,
\]
whose associated vector bundle is $(T\mathscr{Z}_0)|_{\mathscr{Z}_1 } \rightarrow \mathscr{Z}_1$.
\end{definition}

Observe that the leading term function $S_{0}^{\prime} : \mathscr{U}_{0}^\prime \subset W \rightarrow \mathbb{R}$ from our previous discussion, which we calculated in Lemma \ref{lemma:leading}, agrees with the following intrinsic and globally-defined smooth function on $\mathscr{Z}_1$ given by
\begin{equation}
\mathfrak{f} := \frac{1}{2}(dS_1 )(\lambda ) + (S_2)|_{\mathscr{Z}_1}\label{intrinsic_obs}
\end{equation}
where $\lambda \in \Gamma (\mathscr{Z}^1 , \Lambda )$ is any smooth section of the bundle of Lagrange multipliers. (Note that the expression (\ref{intrinsic_obs}) is indeed independent of the choice of section $\lambda$, since $\Lambda$ is an affine bundle over $(T\mathscr{Z}_0 )|_{\mathscr{Z}_1}$ and $dS_1 $ vanishes on $(T\mathscr{Z}_0 )|_{\mathscr{Z}_1}$).

This observation allows to formulate the results of our coordinate-dependent discussion in an intrinsical manner:

\begin{theorem}[Perturbation Theorem, finite-dimensional version]\label{theorem:perturbation1}
Let $S_\varepsilon = S_0 + \varepsilon S_1 + \varepsilon^2 S_2 + \cdots : B \rightarrow \mathbb{R}$ be a perturbation, such that the following assumptions hold:
\begin{itemize}
\item $S_0$ is Morse--Bott; denote its critical locus $\mathscr{Z}_0 = \mathrm{Crit}(S_0 )$

\item The restriction of $S_1$ to $\mathscr{Z}_0$ is Morse--Bott; denote its critical locus $\mathscr{Z}_1 = \mathrm{Crit}(S_{1}|_{\mathscr{Z}_0 })$

\item $\mathscr{Z}_1$ is compact.
\end{itemize}
Then there is a canonical smooth function $\mathfrak{f} : \mathscr{Z}_1 \to \mathbb{R}$ (namely (\ref{intrinsic_obs})), an open neighborhood $\mathscr{U}_0$ of $\mathscr{Z}_1 \subset B$ and a constant $\varepsilon_0 >0$, such that the following holds when $0 < | \varepsilon| \leq \varepsilon_0$: if $\mathfrak{f}$ is a Morse function and $\pm\varepsilon >0$, then $S_\varepsilon$ is a Morse function in the neighborhood $\mathscr{U}_0$ and there is a bijection 
\[
F_\varepsilon : \mathrm{Crit}\mathfrak{f} \cong \mathrm{Crit}S_\varepsilon \cap \mathscr{U}_0   , 
\]
such that for each critical point $x\in \mathscr{Z}_1$ 
\[
\mathrm{ind}(S_\varepsilon , F(x) ) =  \mathrm{ind}(S_0 , x) + \mathrm{ind}(\pm S_{1}|_{\mathscr{Z}_0} , x )  + \mathrm{ind}(\mathfrak{f} , x)  .
\]
\end{theorem}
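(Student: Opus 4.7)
The plan is to globalize the local theory developed in \S\ref{subsubsection:localtheory}, and in particular Proposition \ref{prop:localpert}, by covering the compact critical locus $\mathscr{Z}_1$ by finitely many Morse--Bott charts and applying the local result in each, with the confinement of stray critical points handled by Lemma \ref{lemma:convergence} and compactness.

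The key preliminary observation is that on any Morse--Bott coordinate chart for $(S_0, S_1)$ of the type used in \S\ref{subsubsection:localtheory}, the local leading term $S_{0}^\prime$ from Lemma \ref{lemma:leading} agrees with the restriction of the intrinsic function $\mathfrak{f}$ of (\ref{intrinsic_obs}). Indeed, the formula $S_{0}^\prime(t) = \tfrac{1}{2}(\partial S_1/\partial u)(0,0,t)\cdot \lambda(t) + S_2(0,0,t)$ identifies $\lambda(t) \in U \subset T_{(0,0,t)}B$ as an element of the Lagrange-multiplier fiber $\Lambda_{(0,0,t)}$, so this is precisely the evaluation at $(0,0,t) \in \mathscr{Z}_1$ of $\mathfrak{f} = \tfrac{1}{2}(dS_1)(\lambda) + (S_2)|_{\mathscr{Z}_1}$. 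In particular, $\mathfrak{f}$ being Morse at a point $x \in \mathscr{Z}_1$ is equivalent to $x$ being a non-degenerate critical point (or a non-critical point) of $S_{0}^\prime$ in any Morse--Bott chart around $x$, with the same Hessian and Morse index.

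Since $\mathfrak{f}$ is Morse and $\mathscr{Z}_1$ is compact, the set $\mathrm{Crit}\mathfrak{f} = \{x_1,\dots,x_m\}$ is finite. Around each $x_i$ we choose a Morse--Bott chart; Proposition \ref{prop:localpert} then gives a neighborhood $\mathscr{U}_{x_i}$ and a threshold $\varepsilon_{x_i} > 0$ so that, for $0 < |\varepsilon| \leq \varepsilon_{x_i}$, $S_\varepsilon$ has a unique non-degenerate critical point $(x_i)_\varepsilon \in \mathscr{U}_{x_i}$ whose Morse index is the one in the statement (and is independent of chart by the previous paragraph). For each $y \in \mathscr{Z}_1 \setminus \mathrm{Crit}\mathfrak{f}$, the second half of Proposition \ref{prop:localpert} provides a neighborhood $\mathscr{U}_y$ and $\varepsilon_y > 0$ on which $S_\varepsilon$ has \emph{no} critical points for $0 < |\varepsilon| \leq \varepsilon_y$. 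Compactness of $\mathscr{Z}_1$ extracts a finite subcover $\mathscr{Z}_1 \subset \bigsqcup_i \mathscr{U}_{x_i} \cup \bigcup_j \mathscr{U}_{y_j}$ (with the $\mathscr{U}_{x_i}$ taken pairwise disjoint), a uniform threshold $\varepsilon_0 = \min(\varepsilon_{x_i},\varepsilon_{y_j}) > 0$, and an open neighborhood $\mathscr{U}_0$ of $\mathscr{Z}_1$ with compact closure contained in the subcover.

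To conclude it remains to verify that, after possibly shrinking $\varepsilon_0$ and $\mathscr{U}_0$, every critical point of $S_\varepsilon$ in $\mathscr{U}_0$ lies in the subcover. If this failed we would find $\varepsilon_n \to 0$, $\varepsilon_n \neq 0$, and $y_n \in \mathrm{Crit}(S_{\varepsilon_n}) \cap \mathscr{U}_0$ outside the subcover; compactness of $\overline{\mathscr{U}_0}$ extracts a subsequential limit $y$, and Lemma \ref{lemma:convergence} forces $y \in \mathscr{Z}_1$, so $y$ lies in some $\mathscr{U}_{x_i}$ or $\mathscr{U}_{y_j}$, placing $y_n$ in the subcover for large $n$, a contradiction. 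The assignment $F_\varepsilon: x_i \mapsto (x_i)_\varepsilon$ is then a well-defined bijection $\mathrm{Crit}\mathfrak{f} \cong \mathrm{Crit}S_\varepsilon \cap \mathscr{U}_0$, and the Morse index formula is inherited from the local one. The only real work beyond assembling Proposition \ref{prop:localpert} is the chart-independent identification of $S_{0}^\prime$ with $\mathfrak{f}$, which is the main conceptual step; the infinite-dimensional analog for Theorems \ref{theorem:localisationseifert}--\ref{theorem:localisationseifert_exact} is far more involved, requiring Kuranishi models, $SL(2,\mathbb{C})$ holonomy perturbations to achieve non-degeneracy, and perturbed compactness modulo gauge.
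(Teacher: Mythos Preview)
Your proposal is correct and matches the paper's approach. The paper does not give a separate proof of this theorem: it treats it as the immediate globalization of Proposition~\ref{prop:localpert} once one observes (in the paragraph just before the theorem statement) that the local leading term $S_0'$ coincides with the intrinsic function $\mathfrak{f}$ of~(\ref{intrinsic_obs}). Your argument spells out exactly this assembly---finite cover of the compact $\mathscr{Z}_1$ by Morse--Bott charts, application of Proposition~\ref{prop:localpert} in each, and a uniform $\varepsilon_0$---which is what the paper leaves implicit.

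One small redundancy: in your final paragraph the contradiction argument via Lemma~\ref{lemma:convergence} is unnecessary, since you already arranged $\mathscr{U}_0$ to have closure inside the subcover. What actually remains is only to check that each $(x_i)_\varepsilon$ lies in $\mathscr{U}_0$ for small $\varepsilon$, which follows from $(x_i)_\varepsilon \to x_i \in \mathscr{Z}_1 \subset \mathscr{U}_0$. (The Lemma~\ref{lemma:convergence} argument is the right tool one level up, in Theorem~\ref{theorem:perturbation2}, where the ambient set is not a priori contained in good charts.) This does not affect correctness.
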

(Above, $\mathrm{ind}(S_0 , x)$ denotes the Morse--Bott index of the critical manifold of $S_0$ passing through $x$, and likewise for $\mathrm{ind}(\pm S_{1}|_{\mathscr{Z}_0} , x)$.)\\

The following is the finite-dimensional prototype of the Localisation Theorem that we will establish for the Chern--Simons functional on a Seifert-fibered space:

\begin{theorem}[Localisation Theorem, finite-dimensional version]
\label{theorem:perturbation2}Let $S_\varepsilon = S_0 + \varepsilon S_1 + \varepsilon^2 S_2 + \cdots : B \rightarrow \mathbb{R}$ be a perturbation, such that the following assumptions hold:
\begin{itemize}
\item $S_0$ is Morse--Bott; denote its critical locus $\mathscr{Z}_0 = \mathrm{Crit}(S_0 )$

\item The restriction of $S_1$ to $\mathscr{Z}_0$ is Morse--Bott; denote its critical locus $\mathscr{Z}_1 = \mathrm{Crit}(S_{1}|_{\mathscr{Z}_0 })$

\item $\mathscr{Z}_1$ is compact

\item The function on $\mathscr{Z}_1$ given by $\frac{1}{2}(dS_1 )(\lambda ) + (S_2)|_{\mathscr{Z}_1}$ is Morse; here $\lambda$ stands for any section of the bundle $\Lambda \rightarrow \mathscr{Z}_1$ of Lagrange multipliers associated to $\mathscr{L} := S_1 + (d S_0 )(\lambda ) : TB \rightarrow \mathbb{R} $.
\item The restriction of $S_1$ to $\mathscr{Z}_0$ is a proper and below function.
\end{itemize}
Let $\psi : B \rightarrow \mathbb{R}_{\geq 0}$ be a given proper continuous function on $B$. Then, for any given open set $\mathscr{U}\subset B$ and a constant $C > 0$ such that
\[
\mathscr{Z}_1 \subset \mathscr{U} \subset \{ \psi < C\}
\]
there exists a constant $\varepsilon_0 = \varepsilon_0 (B , S_i , \psi ) > 0$ such that the following hold for $0 < |\varepsilon | \leq \varepsilon_0$:
\begin{enumerate}
\item For any critical point $x \in \mathrm{Crit}S_\varepsilon$ either $x\in \mathscr{U}$ or else $\psi (x) > C$. Furthermore, the critical points of $S_\varepsilon$ contained in $\mathscr{U}$ are non-degenerate and comprise a finite set.

\item Let $\mathscr{Z}_{0 } = \cup \mathscr{C}_i$ denote the decomposition of $\mathscr{Z}_0$ into connected components. For $\pm \varepsilon> 0$ the count of critical points of $S_\varepsilon$ in $\mathscr{U}$ with signs given by their indices agrees with a signed count of the Euler characteristics (resp. compactly-supported Euler characteristics) of the components $\mathscr{C}_i$:
\[
\sum_{x \in \mathrm{Crit}(S_\varepsilon ) \cap \mathscr{U}} (-1)^{\mathrm{ind}(S_\varepsilon  , x )} = \sum_i (-1)^{\mathrm{ind}(S_0 , \mathscr{C}_i  )} \cdot \chi ( \mathscr{C}_i ) \quad \text{   (resp.   }\sum_i (-1)^{\mathrm{ind}(S_0 , \mathscr{C}_i  )} \cdot \chi_c ( \mathscr{C}_i )  \text{  )}.
\]
\end{enumerate}
\end{theorem}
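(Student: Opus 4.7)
The plan is to reduce the assertion to the purely local Theorem \ref{theorem:perturbation1}, using the properness of $\psi$ to rule out perturbed critical points that drift away from $\mathscr{Z}_1$, and then to collect the index data via Morse--Bott theory on $S_1|_{\mathscr{Z}_0}$. First, Theorem \ref{theorem:perturbation1} applied to $S_\varepsilon$ supplies an open neighbourhood $\mathscr{U}_0$ of $\mathscr{Z}_1$---which we shrink so that $\mathscr{U}_0 \subset \mathscr{U}$---and a constant $\varepsilon_0' > 0$ such that, for $0 < |\varepsilon|\leq \varepsilon_0'$, $\mathrm{Crit}(S_\varepsilon)\cap \mathscr{U}_0$ is a finite set of non-degenerate critical points, in bijection $F_\varepsilon$ with $\mathrm{Crit}(\mathfrak{f})$, carrying index $\mathrm{ind}(S_\varepsilon, F_\varepsilon(x)) = \mathrm{ind}(S_0, x) + \mathrm{ind}(\pm S_1|_{\mathscr{Z}_0}, x) + \mathrm{ind}(\mathfrak{f}, x)$ for $\pm\varepsilon > 0$.

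The confinement step promotes this to the global statement (1): after possibly decreasing $\varepsilon_0'$ to some $\varepsilon_0$, every critical point of $S_\varepsilon$ with $\psi \leq C$ already lies in $\mathscr{U}_0$. Arguing by contradiction, if sequences $\varepsilon_n \to 0$ and $x_n \in \mathrm{Crit}(S_{\varepsilon_n}) \cap \{\psi \leq C\} \setminus \mathscr{U}_0$ existed, compactness of $\{\psi \leq C\}$ (by properness of $\psi$) would furnish a subsequential limit $x$, and Lemma \ref{lemma:convergence} would force $x \in \mathscr{Z}_1 \subset \mathscr{U}_0$, a contradiction. Since $\mathscr{U} \subset \{\psi < C\}$, every critical point in $\mathscr{U}$ lies in $\mathscr{U}_0$, so the local description from Step 1 applies to the whole count. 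This is the main obstacle: the local theorem on its own leaves open the existence of perturbed critical points inside $\{\psi \leq C\}$ sitting away from $\mathscr{Z}_1$, and only the interplay between the properness of $\psi$ and the limit statement of Lemma \ref{lemma:convergence} rules them out.

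For (2), substitute the index formula and regroup critical points of $\mathfrak{f}$ first by the component $\mathscr{Z}_1^j$ of $\mathscr{Z}_1$ containing them, then by the component $\mathscr{C}_i$ of $\mathscr{Z}_0$ containing $\mathscr{Z}_1^j$. Since $\mathscr{Z}_1^j$ is compact, Morse theory for $\mathfrak{f}|_{\mathscr{Z}_1^j}$ turns the innermost sum into $\chi(\mathscr{Z}_1^j)$, yielding
\[
\sum_{x \in \mathrm{Crit}(S_\varepsilon) \cap \mathscr{U}} (-1)^{\mathrm{ind}(S_\varepsilon, x)} = \sum_i (-1)^{\mathrm{ind}(S_0, \mathscr{C}_i)} \sum_{\mathscr{Z}_1^j \subset \mathscr{C}_i} (-1)^{\mathrm{ind}(\pm S_1|_{\mathscr{C}_i}, \mathscr{Z}_1^j)}\chi(\mathscr{Z}_1^j),
\]
where the $i$-sum is effectively finite because the compact manifold $\mathscr{Z}_1$ has finitely many components and each lies in a single $\mathscr{C}_i$. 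For $\varepsilon > 0$, the Morse--Bott identity applied to the proper, bounded-below function $S_1|_{\mathscr{C}_i}$ at $T = -1$ collapses the inner sum to $\chi(\mathscr{C}_i)$. For $\varepsilon < 0$, the indices flip to $\mathrm{ind}(-S_1, \mathscr{Z}_1^j) = \dim \mathscr{C}_i - \dim \mathscr{Z}_1^j - \mathrm{ind}(S_1, \mathscr{Z}_1^j)$; properness and lower boundedness ensure every $-\nabla S_1$-trajectory on $\mathscr{C}_i$ converges to a critical manifold, so $\mathscr{C}_i$ decomposes as the disjoint union of the stable manifolds $W^s(\mathscr{Z}_1^j)$, each a real vector bundle of rank $\mathrm{ind}(-S_1, \mathscr{Z}_1^j)$ over the compact base $\mathscr{Z}_1^j$. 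Additivity of compactly-supported Euler characteristic combined with the identity $\chi_c(E) = (-1)^r \chi(K)$ for a rank-$r$ real bundle $E$ over compact $K$ then delivers the inner sum $= \chi_c(\mathscr{C}_i)$.
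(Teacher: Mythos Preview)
Your proof is correct and follows essentially the same approach as the paper: the confinement step via properness of $\psi$ and Lemma~\ref{lemma:convergence}, then reduction to Theorem~\ref{theorem:perturbation1}, then Morse--Bott theory for $S_1|_{\mathscr{C}_i}$ to extract $\chi(\mathscr{C}_i)$ or $\chi_c(\mathscr{C}_i)$. Your treatment of the $\varepsilon<0$ case is in fact more explicit than the paper's, which simply asserts that the inner sum computes the compactly-supported Euler characteristic without spelling out the stable-manifold decomposition.
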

\begin{proof}
If the first assertion in (1) were false, then we can find a sequence $\varepsilon_n \neq 0$ converging to zero, and a sequence $x_n \in B$ such that for all $n$
\begin{align*}
& x_n \in \mathrm{Crit}(S_{\varepsilon_n})\\
& \psi (x_n ) \leq C \\
& x_n \notin \mathscr{U}
\end{align*}
Since $\psi$ is proper, then by the second condition we may assume that $x_n$ converges to a point $x \in B$ after passing to a subsequence. By the first condition and Lemma \ref{lemma:convergence} we have $x \in \mathscr{Z}_1$ and hence $x_n$ lies in $\mathscr{U}$ eventually, which contradicts the third condition.

Thus there is $\varepsilon_0 >0 $ such that the first assertion in (1) holds. For the second assertion in (1) 
we may shrink $\varepsilon_0 >0$ further so as to ensure that for $0 < |\varepsilon |\leq \varepsilon_0$ the set $\mathrm{Crit}_\varepsilon \cap \mathscr{U}$ is contained in a neighborhood $\mathscr{U}_0 $ of $\mathscr{Z}_1 \subset B$ such that Theorem \ref{theorem:perturbation1} holds for $\mathscr{U}_0$ and $\varepsilon_0$. In particular, all critical points of $S_\varepsilon$ in $\mathscr{U}$ are non-degenerate. Thus, (1) follows. 

For (2), let $\mathscr{Z}_{1 } \cap \mathscr{C}_i = \cup C_{ij}$ denote the decomposition into connected components. By Theorem \ref{theorem:perturbation1}, for $\pm \varepsilon> 0$ the count of critical points of $S_\varepsilon$ in $\mathscr{U}$ with signs given by their index is:
\[
\sum_{x \in \mathrm{Crit}(S_\varepsilon ) \cap \mathscr{U}} (-1)^{\mathrm{ind}(S_\varepsilon  , x )} = \sum_{i,j} (-1)^{\mathrm{ind}(S_0 , \mathscr{C}_i  ) + \mathrm{ind}(\pm S_{1}|_{\mathscr{C}_i } , C_{ij} )} \cdot  \chi (C_{ij} ).
\]
But $S_1$ is a proper and bounded below Morse--Bott function on $\mathscr{C}_i$, and hence for each $i$
\[
\sum_j (-1)^{ \mathrm{ind}(\pm S_{1}|_{\mathscr{C}_i} , C_{ij} )}\chi (C_{ij} )
\]
agrees with the Euler characteristic $\chi (\mathscr{C}_i )$ in the $+$ case, and the compactly-supported Euler characteristic $\chi_c (\mathscr{C}_i )$ in the $-$ case. The result now follows.
\end{proof}

\subsection{Holonomy perturbations}

We now discuss the properties of a suitable class of functions on the stable orbit space $\mathscr{B}_{E}^\ast$ given by an $SL(2, \mathbb{C} )$ analogue of the \textit{holonomy perturbations} which have been employed in several gauge-theoretic contexts to obtain transversality results \cite{donaldson-orientation,floer, taubes-casson,donaldson-braam,donaldson-floer,KM,KMinstanton}.

\subsubsection{Holonomy functions}
We fix a smooth embedding of a solid torus in $Y$, denoted $q :   S^1 \times D^2 \hookrightarrow Y$, which we regard as a family of parametrised embedded loops $q_z = q(- , z) : S^1 \hookrightarrow Y$ parametrised by $z \in D^2$. For the $U(1)$ connection $\lambda$ on the determinant line bundle $\Lambda = \Lambda^2 E$, taking its holonomy along the loop $q_z$ defines an element $\mathrm{Hol}_{\lambda } (q_z )\in U(1)$ for each $z$. We equip $q$ with a choice of square-root for the holonomy around the central loop: i.e. a fixed element $\mathrm{Hol}^{1/2}_{\lambda}(q_0) \in U(1)$ squaring to $\mathrm{Hol}_{\lambda}(0)$. In turn, this determines a unique smooth map $\mathrm{Hol}_{\lambda}^{1/2} : D^2 \rightarrow U(1)$ such that $\mathrm{Hol}^{1/2}_{\lambda}(q_z )$ squares to $\mathrm{Hol}_{\lambda}(q_z )$ for every $z \in D^2$. (From now one, we shall drop the square-root data from the notation.)

Now, given a $GL(2,\mathbb{C} )$ connection $\mathbb{A} $ on $E$ with fixed determinant $(\Lambda , \lambda )$ we can assign to each loop $q_z$ a well-defined `holonomy' in $SL(2, \mathbb{C} )$ (rather than just in $GL(2, \mathbb{C} )$): 
\begin{align}
\widetilde{\mathrm{Hol}_{\mathbb{A}}} (q_z ) := \mathrm{Hol}_{\mathbb{A}}(q_z ) \cdot ( (\mathrm{Hol}^{1/2}_{\lambda}(q_z ))^{-1} \in SL(E_{q(0,z)} ). \label{holtilde}
\end{align}
More precisely, (\ref{holtilde}) can be regarded as a smooth section, denoted $\widetilde{hol}_\mathbb{A} (q)$, of the pullback bundle $q^\ast SL(E)$ which is $\mathbb{A}$-covariant constant along the loops. (Here $SL(E) = E \times_{\mathrm{Ad}} SL(2, \mathbb{C} )$ is the bundle of Lie groups.) 

Finally, choose a $2$-form $\mu \in \Omega^2 (D^2 )$ on the unit disk $D^2$ which is non-negative, vanishes on a neighborhood of the boundary of $D^2$, and integrates to $1$. 

\begin{definition}
The $SL(2, \mathbb{C})$ \textit{holonomy function} associated to the data $q, \mu$ is the $\mathscr{G}_{E}^c$--invariant complex-valued function on the set $\mathscr{A}_{E}^c$ of complex connections given by 
\begin{align}
\sigma_{q , \mu} (\mathbb{A} ) = \int_{D^2} \mathrm{Tr}\Big( \widetilde{\mathrm{Hol}_{\mathbb{A}}}(q_z) \Big)  \mu (z)  \label{sigmaA} \in \mathbb{C}.
\end{align}
\end{definition}

Let $\pi_E : SL(E) \to \mathfrak{sl}(E) = \mathfrak{g}_E \otimes \mathbb{C} $ be the bundle projection induced by the conjugation-invariant map $\pi : SL(2, \mathbb{C} ) \rightarrow \mathfrak{sl}(2, \mathbb{C} )$ which sends a matrix $M$ to its trace-less part $M - \frac{I}{2} \mathrm{Tr}(M)$. We then define the function
\begin{align*}
T_{q} (\mathbb{A})  = \pi_E \circ \widetilde{\mathrm{Hol}}_\mathbb{A} (q ) \in \Omega^0 ( S^1 \times D^2 ,  q^\ast \mathfrak{sl}(E) ) 
\end{align*}
which again is $\mathbb{A}$-covariant constant over the loops. A short calculation (see e.g. \cite{donaldson-braam,donaldson-floer}) shows that the directional derivatives of $\sigma_q $ are given by 
\begin{align}
(d \sigma_{q, \mu} )_{\mathbb{A}}(\dot{\mathbb{A}}) = -\int_Y \mathrm{Tr}\big( \dot{\mathbb{A}} \wedge T_q (\mathbb{A})\mu ) .\label{dsigma}
\end{align}
We set
\begin{align}
V_{q , \mu} (\mathbb{A} ) = T_{q}(\mathbb{A}) \cdot (\ast \mu ) \in \Omega^1 (Y , \mathfrak{sl}(E) ). \label{V}
\end{align}

\subsubsection{Bounds on holonomy functions}

We now discuss the differentiability of the holonomy functions $\sigma_{q,\mu}$ in the $L^{2}_k$ Hilbert manifold structure of the stable orbit spaces, together with various bounds needed ensure that the basic linear analysis and compactness properties of the equations (\ref{eq1}-\ref{eq3}) remain valid after perturbation. The main result we need is:

\begin{proposition}\label{proposition:boundsholonomy}
Fix $q, \mu$ as above, and let $k \geq 2$. For each $n \geq 0$ there is a continuous increasing function $h_n : \mathbb{R} \to \mathbb{R}_{>0}$ such that:
\begin{enumerate}
 \item The function $\sigma_{q, \mu}$ extends to a smooth (in fact, holomorphic) function on the $L^{2}_k$ connections:
\[
\sigma_{q, \mu} : \mathscr{A}_{E , k}^c \to \mathbb{C}.
\]
satisfying the following bounds: for every $n\geq 0$ and $\dot{\mathbb{A}}_{i} \in \Omega^1 (Y , \mathfrak{sl}(E) )_k$
\[
| (\nabla^{(n)} \sigma_{q, \mu} )_\mathbb{A}  ( \dot{\mathbb{A}}_1 , \ldots , \dot{\mathbb{A}}_n ) | \leq h_n  (\| \Phi \|_{L^\infty (Y)} ) \cdot \prod_{i = 1}^n \| \dot{\mathbb{A}}_i  \|_{L^{2}_{k}(Y)}.
\]
\item The function $V_{q, \mu}$ extends to a smooth function
\[
V_{q,\mu}: \mathscr{A}_{E, k}^c \to \Omega^1 (Y , \mathfrak{sl}(E) )_k
\]
i.e. a smooth section of the tangent bundle of $\mathscr{A}_{E , k}^c$, satisfying similar bounds for $n \geq 0$:
\begin{align*}
\| (\nabla^{(n)} T_{\mathbb{A}})( \dot{\mathbb{A}}_1 , \ldots , \dot{\mathbb{A}}_n ) \|_{L^{2}_k (Y)} \leq h_n  (\| \Phi \|_{L^\infty (Y)} ) \cdot \prod_{i = 1}^n \| \dot{\mathbb{A}}_i  \|_{L^{2}_{k}(Y)} .
\end{align*}
\end{enumerate}
\end{proposition}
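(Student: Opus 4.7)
The plan is to reduce both assertions to parametric ODE analysis on the model solid torus $S^1 \times D^2$ via the embedding $q$. For $k \geq 2$, the Sobolev embedding $L^2_k(Y) \hookrightarrow C^{k-2}(Y)$ makes the pull-back $q^*\mathbb{A}$ at least continuous on $S^1 \times D^2$, and the parallel transport along each loop $q_z = q(\,\cdot\,, z)$, $z \in D^2$, is then governed by a family of linear ODEs depending smoothly on $z$ and on $\mathbb{A}$. Smoothness (and $J$-holomorphicity) of $\sigma_{q,\mu}$ in $\mathbb{A}$ follows from the classical theorem on smooth dependence of ODE solutions on parameters, and the iterated variational derivatives are given by the Duhamel--Dyson series as $n$-fold iterated integrals over the simplex $\{0\leq t_1 \leq \cdots \leq t_n\leq 1\}$ of the variations $q_z^*\dot{\mathbb{A}}_i$ sandwiched between parallel transport operators.

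For the bounds in part (1), the key observation is that in the decomposition $\mathbb{A} = A + i\Phi$ the $SU(2)$-part $A$ acts by skew-hermitian endomorphisms on the fibers of $E$, so Grönwall's inequality applied to the parallel transport ODE yields a pointwise operator-norm bound $\|T_t(\mathbb{A},z)\| \leq \exp\bigl( L \cdot \|\Phi\|_{L^\infty}\bigr)$, where $L$ denotes the (uniformly bounded) length of $q_z$. Each Duhamel iterated integral contributing to $(\nabla^{(n)}\sigma_{q,\mu})(\dot{\mathbb{A}}_1,\ldots,\dot{\mathbb{A}}_n)$ is then bounded in absolute value by $C_n \exp\bigl((n+1)L \|\Phi\|_{L^\infty}\bigr) \cdot \prod_i \|q_z^*\dot{\mathbb{A}}_i\|_{L^1(S^1)}$. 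Integrating against $\mu$ and applying the chain $L^2_k(Y) \hookrightarrow C^0(Y) \hookrightarrow L^1(S^1\times\{z\})$ yields the required estimate.

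Part (2) is the more delicate assertion, as it asks for $L^2_k$-control of a $1$-form on $Y$. Since $V_{q,\mu}$ is supported in $q(S^1\times D^2)$ with $\mu$ vanishing near $\partial D^2$, this reduces to bounding $T_q(\mathbb{A})$ in $L^2_k(S^1\times D^2)$. The crucial structural fact is that $T_q$ is $\nabla_{\mathbb{A}}$-parallel along the $S^1$-direction, whence $\partial_t T_q = [T_q, q^*\mathbb{A}(\partial_t)]$ is algebraic in $T_q$ and $\mathbb{A}$; iterating, all $S^1$-directional derivatives of $T_q$ are polynomial expressions in $T_q$ and the $t$-derivatives of $q^*\mathbb{A}$, with $L^2$-norms controlled by the Grönwall bound on $T_q$ combined with Leibniz and Sobolev multiplication. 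For $D^2$-derivatives, the classical variation-of-loop formula expresses $\partial_{z_j}T_q$ as a path-ordered integral of $F_{\mathbb{A}}$ evaluated on the infinitesimal sweep $\partial_t q_z \wedge \partial_{z_j} q_z$, sandwiched between parallel transports. Iterating this and combining with the Duhamel expansion for variations in $\mathbb{A}$ exhibits $\nabla^{(n)} V_{q,\mu}(\dot{\mathbb{A}}_1,\ldots,\dot{\mathbb{A}}_n)$ as an iterated integral in which every parallel-transport factor is pointwise bounded via Grönwall (producing the exponential dependence on $\|\Phi\|_{L^\infty}$) and the remaining factors carry at most $k$ derivatives of $\mathbb{A}$ or of the $\dot{\mathbb{A}}_i$, which are handled by the continuous restriction $L^2_k(Y) \to L^2_k(S^1\times D^2)$ and Sobolev multiplication on a $3$-manifold.

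The principal bookkeeping obstacle is to organise these iterated-integral expansions so that in every summand each variation $\dot{\mathbb{A}}_i$ is differentiated at most $k$ times and each parallel-transport factor retains the clean Grönwall bound depending only on $\|\Phi\|_{L^\infty}$, without any $\mathbb{A}$-derivatives leaking into the transport operators. Once this combinatorial decomposition is carried out for each $n$, both statements follow by assembling the pieces with standard ODE-regularity and Sobolev multiplication lemmas, with the function $h_n$ taken to be an explicit polynomial times $\exp\bigl((n+1) L \cdot x\bigr)$.
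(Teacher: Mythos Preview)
Your treatment of part~(1) matches the paper's: both establish a loopwise Gr\"onwall estimate $|\mathrm{Hol}(\mathbb{A})| \leq c\,\exp(c\|\Phi\|_{L^2(S^1)})$ together with inductive Duhamel bounds on the directional derivatives, then integrate against $\mu$ over $D^2$ and use $L^2_k(Y) \hookrightarrow C^0(Y)$ to control the $L^2$ norm along each loop.

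For part~(2) the approaches genuinely diverge. You propose a direct attack on $\|T_q(\mathbb{A})\|_{L^2_k(S^1\times D^2)}$: $S^1$-derivatives via the parallel-transport relation $\partial_t T_q = -[q^*\mathbb{A}(\partial_t), T_q]$, and $D^2$-derivatives via the variation-of-loop formula expressing $\partial_{z_j}T_q$ as a path-ordered integral of the curvature $F_\mathbb{A}$. This is correct in outline, and you have correctly flagged the main difficulty: at order $k$ one must expand a tree of nested iterated integrals, each term a product of undifferentiated parallel-transport factors (bounded by Gr\"onwall) and factors carrying up to $k$ derivatives of $\mathbb{A}$ (controlled by Sobolev multiplication, using $k\geq 2$ in dimension~$3$). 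The paper avoids this combinatorics entirely by a Hilbert-bundle device borrowed from Kronheimer: one regards the space of connections along the loops as a Hilbert bundle $\mathscr{H}\to D^2$ with fiber $\mathscr{H}_z = L^2(S^1, q^*\mathfrak{sl}(E)|_{S^1\times z})$, and the loopwise lemma already shows that holonomy is a smooth \emph{fiberwise} map $\underline{\mathrm{Hol}}:\mathscr{H}\to q^*\mathfrak{gl}(E)|_{0\times D^2}$ with all fiber derivatives uniformly bounded in terms of $\|\Phi\|$. The general principle that such a bundle map induces a smooth map on $L^2_k$ sections (for $k\geq 2$) then delivers the $L^2_k$ bound on $T_\mathbb{A}$ in one stroke, since an $L^2_k$ connection on $Y$ restricts to an $L^2_k$ section of $\mathscr{H}$. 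Your approach has the virtue of being self-contained and explicit; the paper's buys a dramatic simplification by never differentiating the holonomy in $z$ and never invoking the curvature at all.
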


\begin{remark}
Since $k \geq 2$ then the $L^{2}_k$ space continuously embeds in $C^0$, and thus the quantity $\| \Phi \|_{L^\infty}$ is defined for $\Phi \in L^{2}_k$. Unlike in the $SU(2)$ case (see \cite{taubes-casson}), when $k = 1$ we are no longer able to ensure the differentiability of of the functions $\sigma_{q , \mu}$ and $V_{q , \mu}$ on the $L^{2}_1$ connections.
\end{remark}

We will establish Proposition \ref{proposition:boundsholonomy} in what follows. First, recall the Frobenius norm on $\mathfrak{gl}(2,\mathbb{C} )$ given by $|M| :=  \sqrt{\mathrm{Tr}(M M^\ast )}$. Some properties of this norm that we shall use are:
\begin{itemize}
\item With the chosen normalization, we have $|M \cdot M^\prime |  \leq \frac{1}{\sqrt{2}} |M| \cdot |M^\prime|$ for any two $M, M^\prime \in \mathfrak{gl}(2, \mathbb{C} )$. 
\item $| \cdot |$ is invariant under right or left multiplication by $U(2)$. In particular, $| \cdot |$ is well-defined on the associated bundles $SL(E)$ and $\mathfrak{sl}(E)$. 
\item The projection $\pi : \mathfrak{gl}(2,\mathbb{C} ) \to \mathfrak{sl}(2,\mathbb{C} )$ to the trace-less part is an orthogonal projection with respect to the Frobenius inner product, so we have
\begin{align}
|M|^2 = |\pi(M) |^2 + \frac{1}{2} |\mathrm{Tr}M|^2. \label{trace-freepart}
\end{align}
\end{itemize}

We have the following estimate: 

\begin{lemma}\label{lemma:boundsholonomy}
Let $E \to S^1 = \mathbb{R}/\mathbb{Z}$ be a an $SU(2)$ bundle with a smooth $SL(2,\mathbb{C})$ connection $\mathbb{A} = A + i \Phi$. Then the Frobenius norm of the holonomy of $\mathbb{A}$ along $S^1$ satisfies the following bound:
 \[
|\mathrm{Hol} (\mathbb{A} ) |^2 \leq 2 \cdot  \mathrm{exp}\int_{0}^1 \sqrt{2}  |\Phi (t )| dt .
\]
Furthermore, for every $n \geq 0$ there is an increasing continuous function $h_n : \mathbb{R} \to \mathbb{R}_{>0}$ such that for any collection $\dot{\mathbb{A}}_{i}$ ($i = 1, \ldots ,n $) of smooth sections of $\mathfrak{sl}(E) \to S^1$ we have the following bounds on directional derivatives:
\[
| (\nabla^{(n)} \mathrm{Hol} )_\mathbb{A}  (\dot{\mathbb{A}}_1 , \ldots , \dot{\mathbb{A}}_n ) | \leq h_n ( \| \Phi \|_{L^2 (S^1 )} ) \cdot \prod_{i = 1}^n \| \dot{\mathbb{A}}_i \|_{L^2 (S^1 )} .
\]
In particular, it follows that the holonomy along $S^1$ extends to a smooth function on the space $\mathscr{A}_0 (S^1)$ of $L^2$-regularity $SL(2,\mathbb{C})$ connections on $E \to S^1$:
\[
\mathrm{Hol} : \mathscr{A}_0 (S^1 ) \to SL(2, \mathbb{C} ).
\]
\end{lemma}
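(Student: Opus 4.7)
\medskip

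\noindent\textbf{Proof proposal.} The plan is to realise the holonomy as the terminal value $H(1)$ of the solution of a linear ODE on $SL(2,\mathbb{C})$, and then apply Gr\"onwall's inequality together with Duhamel's formula for the higher derivatives. First, after passing to a parallel trivialisation of $E$ along $S^1$ for the unitary connection $A$, we may assume $A = 0$; the hermitian trivialisation is preserved and $\mathbb{A}$ becomes $i\Phi(t)\,dt$ with $\Phi(t)\in\mathfrak{su}(2)$ for each $t$. (Note that the Frobenius norm $|\cdot|$ is invariant under the change of trivialisation, so the bounds we will prove are gauge-independent.) The holonomy $H(t)\in SL(2,\mathbb{C})$ then satisfies the linear ODE
\begin{equation*}
\dot H(t) = -i\Phi(t)\,H(t), \qquad H(0) = I.
\end{equation*}

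\noindent Next, I would derive the main bound on $|H|$. Since $(i\Phi)^{\ast}=i\Phi$ is hermitian, differentiating $|H|^2=\mathrm{Tr}(HH^{\ast})$ gives $\tfrac{d}{dt}|H|^2 = -2\,\mathrm{Tr}(i\Phi\,HH^{\ast})$. Because $HH^{\ast}$ is positive semi-definite, the standard bound $|\mathrm{Tr}(QP)|\leq |Q|_{\mathrm{op}}\cdot\mathrm{Tr}(P)$ for $P\geq 0$ and $Q$ hermitian yields $|\mathrm{Tr}(i\Phi\,HH^{\ast})|\leq |i\Phi|_{\mathrm{op}}\cdot |H|^2$. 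Since $i\Phi$ is traceless hermitian of size $2\times 2$, its two eigenvalues are opposite and $|i\Phi|_{\mathrm{op}} = |\Phi|/\sqrt{2}$. Combining these, $\tfrac{d}{dt}|H|^2 \leq \sqrt{2}\,|\Phi(t)|\,|H|^2$, and Gr\"onwall's inequality together with $|H(0)|^2 = |I|^2 = 2$ gives the stated bound $|\mathrm{Hol}(\mathbb{A})|^2 \leq 2\exp\!\int_0^1 \sqrt{2}\,|\Phi(t)|\,dt$.

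\noindent For the higher derivative estimates, consider a variation $\dot{\mathbb{A}}_i = \dot A_i + i\dot\Phi_i$. By Duhamel's formula, the first variation $K_1(t) := (\nabla\mathrm{Hol})_\mathbb{A}(\dot{\mathbb{A}}_1)$ is given by $K_1(1) = -\int_0^1 H(1)H(s)^{-1}\dot{\mathbb{A}}_1(s)H(s)\,ds$. Since $H(s)\in SL(2,\mathbb{C})$, one has $|H(s)^{-1}|=|H(s)|$, so Gr\"onwall's bound on $|H|$ together with H\"older's inequality bounds $|K_1(1)|$ by an exponential factor in $\|\Phi\|_{L^2(S^1)}$ times $\|\dot{\mathbb{A}}_1\|_{L^2(S^1)}$; $L^2\hookrightarrow L^1$ on $S^1$ handles the norm comparison. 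Inductively, the $n$-th derivative is expressed by iterated Duhamel integrals involving products of copies of $H$, $H^{-1}$, and the variations $\dot{\mathbb{A}}_i$, so the same estimate iterates and produces a bound of the required form with $h_n$ depending continuously (in fact polynomially, composed with exp) on $\|\Phi\|_{L^2(S^1)}$.

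\noindent Finally, the extension from smooth to $L^2$ connections follows from density: the above $L^2$-continuous estimates show that $\mathrm{Hol}$, viewed on the smooth connections, has $C^n$-bounds on every $L^2$-bounded set, so $\mathrm{Hol}$ extends uniquely to a smooth map $\mathscr{A}_0(S^1)\to SL(2,\mathbb{C})$ whose derivatives satisfy the stated bounds. The main obstacle I anticipate is combinatorial bookkeeping of the iterated Duhamel expansion for $\nabla^{(n)}\mathrm{Hol}$ and verifying that each term indeed splits as a product of factors controlled by $\|\Phi\|_{L^2}$ (for the $H$'s) and $\|\dot{\mathbb{A}}_i\|_{L^2}$ (for the variations), so that the final constant can be packaged into a single continuous function $h_n$ of $\|\Phi\|_{L^2}$.
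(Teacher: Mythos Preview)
Your proposal is correct and follows essentially the same route as the paper: trivialise by an $A$-parallel frame, bound $\tfrac{d}{dt}|H|^2$ and apply Gr\"onwall, then use Duhamel/variation-of-constants and induction for the higher derivatives. Your observation that $|H^{-1}|=|H|$ for $H\in SL(2,\mathbb{C})$ is slightly sharper than the paper's general inequality $|M^{-1}|\leq \sqrt{2}\,|M|/|\det M|$, and your use of $|\mathrm{Tr}(QP)|\leq |Q|_{\mathrm{op}}\mathrm{Tr}(P)$ is a cleaner way to reach the same differential inequality, but these are cosmetic differences within the same argument.
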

\begin{proof}
 We fix once and for all a trivialization of the pullback of $E$ along $[0,1] \to S^1$ which is $A$-covariant constant. Thus, the holonomy $\mathrm{Hol}(\mathbb{A}) \in SL(2, \mathbb{C} )$ is given as
\[
\mathrm{Hol}(\mathbb{A} ) = \xi (1) \cdot \mathrm{Hol} (A)
\]
where $\xi : [0,1]\to SL(2, \mathbb{C}) \subset \mathfrak{gl}(2,\mathbb{C} )$ is the unique solution to the initial value problem
\begin{align}
\big( \frac{\partial}{\partial t} + i \Phi (t ) \big) \cdot \xi(t) = 0 , \quad \xi(0) = I .\label{IVP}
\end{align}
Now, observe that
\begin{align*}
\frac{\partial}{\partial t} | \xi (t)|^2 & = \mathrm{Tr}\big( \frac{ \partial \xi}{\partial t} \xi^\ast + \xi (\frac{\partial \xi}{\partial t})^\ast \big)\\
& = - 2 \mathrm{Tr}( i \Phi(t ) \cdot \xi \xi^\ast ) \text{  using (\ref{IVP}) and } \Phi^\ast = -\Phi\\
& \leq  2 |\Phi (t )|\cdot  |\xi \xi^\ast |  \leq \sqrt{2}|\Phi (t )|\cdot  |\xi  (t) |^{2} .
\end{align*}
Integrating, we obtain: for all $t \in [0,1]$
\begin{align}
| \xi (t) |^2 \leq 2 \cdot \mathrm{exp} \int_{0}^t \sqrt{2} | \Phi (x) | dx . \label{xi(t)}
\end{align}

The required bounds on $\mathrm{Hol}(\mathbb{A} )$ now follow from (\ref{xi(t)}) with $t = 1$. For the bounds on the higher directional derivatives we proceed as follows. Similarly as above, the holonomy of the connection $\mathbb{A} + \sum_{i =1}^n s_i \dot{\mathbb{A}}_i $ is given as $\xi_s (1) \cdot \mathrm{Hol}(A)$ where, for each $s \in \mathbb{R}^n$, $\xi_s : [0,1] \to SL(2, \mathbb{C} )$ is the unique solution of the initial value problem
\begin{align}
\big( \frac{\partial}{\partial t} + i \Phi (t) + \sum_{i=1}^n s_i \dot{\mathbb{A}}(t) \big) \cdot \xi_s (t) = 0 , \quad \xi_s (0) = I .\label{hols}
\end{align}
Define the following $\mathfrak{gl}(2,\mathbb{C})$-valued functions on $[0,1]$
\[
X(t) : =  ( \frac{\partial}{\partial s_1 }  \cdots \frac{\partial}{\partial s_n } \xi_s (t )  )|_{s = 0}  \quad , \quad Y_i (t) := (\frac{\partial}{\partial s_1 } \cdots \widehat{\frac{\partial}{\partial s_i } } \cdots  \frac{\partial}{\partial s_n } \xi_s (t) )|_{s = 0} .
\]
and note that 
\[
(\nabla^{(n)} \mathrm{Hol} )_\mathbb{A}  (\dot{\mathbb{A}}_1 , \ldots , \dot{\mathbb{A}}_n ) = X(1) \cdot \mathrm{Hol} (A) \quad , \quad (\nabla^{(n-1)} \mathrm{Hol} )_\mathbb{A}  (\dot{\mathbb{A}}_1 , \ldots , \widehat{ i} , \ldots , \dot{\mathbb{A}}_n )  = Y_i (1) \cdot \mathrm{Hol}(A).\]

Differentiating (\ref{hols}) in all the $s_i$ variables and setting $s_i = 0$
yields the identity:
\begin{align}
( \frac{\partial }{\partial t} + i \Phi (t) ) \cdot X(t)  + \sum_{i = 1}^n  \dot{\mathbb{A}}(t) \cdot Y_i (t) = 0 . \label{diffIVP}
\end{align}
$X(t)$ is the unique solution of the initial value problem given by (\ref{diffIVP}) and $X(0) = 0$. A short calculation gives the closed formula
\begin{align}
X(t) = -  \xi (t) \cdot  \sum_{i = 1}^n \int_{0}^t \xi(x)^{-1} \cdot \dot{\mathbb{A}}_i (x) \cdot Y_i (x) dx .\label{X}
\end{align}

Now, it is easy to see that for an invertible matrix $M \in GL(2, \mathbb{C} )$ the Frobenius norm of its inverse satisfies the bound
\begin{align}
| M^{-1}| \leq \sqrt{2} \frac{ |M|}{|\mathrm{det} M | }.\label{norminverse} 
\end{align}
Combining (\ref{xi(t)}), (\ref{norminverse}) and (\ref{X}) yields the following estimate:
\begin{align*}
| X(t) | 
& \leq \big( \mathrm{exp} \int_{0}^t \sqrt{2} | \Phi (x) | dx \big)  \cdot \sum_{i = 1}^n \int_{0}^t  | \dot{\mathbb{A}}_i (x) | \cdot |Y_i (x) |dx .
\end{align*}
The required bounds on the directional derivatives of the holonomy map now follow inductively from this estimate.
\end{proof}

\begin{proof}[Proof of Proposition \ref{proposition:boundsholonomy}]

When $\mathbb{A}$ and the $\dot{\mathbb{A}}_{i}$ are smooth then the bounds from assertion (1) follow from Lemma \ref{lemma:boundsholonomy}, noting the fact that the $L^\infty$ and $L^{2}_k$ norms on $Y$ with $k\geq 2$ control the $L^2$ norm on the restriction to any loop $q_z : S^1 \hookrightarrow Y$. In turn, such bounds imply the smoothness of $\sigma_{q,\mu}$ as a function on the $L^{2}_k$ connections.



For the smoothness and bounds on $V_{q , \mu}$ we consider a variant of the argument in \cite[Proof of Proposition 7]{kronheimer-higher}. Namely, let $\mathscr{H}$ stand for the Hilbert bundle over $D^2$ with fibers
\[
\mathscr{H}_z = L^2 (S^1  , ( q^\ast \mathfrak{sl}(E) )|_{S^1 \times z}), \quad z \in D^2.
\]
Fixing a smooth unitary connection $A_0$ on $E$ identifies $\mathscr{H}_z$ as the space of $L^2$ connections along the loop over $z\in D^2$. Furthermore, $A_0$ yields a canonical connection on $\mathscr{H}$, which we use to define Sobolev spaces of sections of $\mathscr{H}$. By Lemma \ref{lemma:boundsholonomy}, we have a (non-linear) smooth bundle map given by taking holonomy:
\[
\underline{\mathrm{Hol}}:  \mathscr{H}  \to  (q^\ast \mathfrak{gl}(E))|_{0 \times D^2} 
\]
This induces a smooth map on $L^{2}_k$ sections
\begin{align}
\mathrm{Hol}: L^{2}_k ( D^2 , \mathscr{H}) \to L^{2}_k (D^2 , (q^\ast \mathfrak{gl}(E))|_{0 \times D^2 } )\label{mapsections}
\end{align}
which by Lemma \ref{lemma:boundsholonomy} satisfies bounds of the form: for an $L^{2}_k$ section $s$ of $\mathscr{H}$, written in components as $s = (s_A , s_\Phi )$, and $k\geq 2$
\[
\| (\nabla^{(n)} \mathrm{Hol})_s  (\dot{s}_1 , \ldots , \dot{s}_n ) \|_{L^{2}_k (D^2 )} \leq h_n ( \| s_\Phi \|_{L^{\infty}(D^2 )} ) \cdot \prod_{i =1}^n \| \dot{s}_{i}\|_{L^{2}_k (D^2 )}. 
\]
An $L^{2}_k$ connection $\mathbb{A}$ on $Y$ gives, by restriction to each loop, an $L^{2}_k$ section of $\mathscr{H}$, from which we obtain $T_\mathbb{A}$ by applying the map $\mathrm{Hol}$, then applying the projection $\pi : \mathfrak{gl}(2,\mathbb{C} ) \to \mathfrak{sl}(2,\mathbb{C})$ and then multiplying with $\ast \mu$. Thus, it follows that $T_\mathbb{A} $ lies in $L^{2}_k$ and defines a smooth function on the $L^{2}_k$ connections satisfying the required bounds.
\end{proof}

\subsubsection{Separating properties}

The crucial property of the holonomy functions $\sigma_q$ that we shall use to obtain transversality results is the following:

\begin{proposition}\label{proposition:embedding}
    Let $\mathscr{Z} \subset \mathscr{B}_{E}^\ast$ be a compact finite-dimensional submanifold. Then there exists a finite collection of disjoint smooth embeddings $q_1 , \ldots , q_{N}$ (together with corresponding square-roots and $2$-forms), such that the map
    \[
    \underline{\sigma} := (\sigma_{q_1} , \ldots , \sigma_{q_N}) : \mathscr{B}_{E}^\ast \rightarrow \mathbb{C}^N
    \]
    induces a smooth embedding of $\mathscr{Z}$ into $\mathbb{C}^N$, given by the restriction $\underline{\sigma}|_\mathscr{Z} : \mathscr{Z} \hookrightarrow \mathbb{C}^N $.
\end{proposition}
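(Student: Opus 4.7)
The plan is first to show that the (possibly infinite) joint map $\{\sigma_{q,\mu}\}: \mathscr{B}_E^\ast \to \mathbb{C}^\infty$, as $(q,\mu)$ ranges over all admissible data, restricts to an injective immersion on $\mathscr{Z}$; then to extract a finite subfamily by a compactness argument; and finally to perturb the chosen $q_i$'s so that they become pairwise disjoint. Since $\mathscr{Z}$ is compact, an injective immersion is automatically an embedding.

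For the \emph{immersion} step at $[\mathbb{A}_0] \in \mathscr{Z}$, given a nonzero tangent vector represented by $\dot{\mathbb{A}} \in T_{[\mathbb{A}_0]}\mathscr{Z} \subset \mathscr{K}_{(A_0,\Phi_0)}$, I would produce $(q,\mu)$ with
\[
(d\sigma_{q,\mu})_{\mathbb{A}_0}(\dot{\mathbb{A}}) = -\int_Y \mathrm{Tr}\bigl(\dot{\mathbb{A}} \wedge T_q(\mathbb{A}_0)\mu\bigr) \neq 0.
\]
Letting $\mu$ approach a delta function supported at $0 \in D^2$, the problem reduces to the existence of an embedded loop $\gamma: S^1 \hookrightarrow Y$ with
\[
\int_{S^1} \mathrm{Tr}\bigl(\gamma^\ast\dot{\mathbb{A}}(t) \cdot T_\gamma(\mathbb{A}_0)(t)\bigr)\, dt \neq 0,
\]
where $T_\gamma(\mathbb{A}_0)$ is the $\mathbb{A}_0$-parallel section of $\gamma^\ast\mathfrak{sl}(E)$ whose value at the basepoint is $\pi_E(\widetilde{\mathrm{Hol}_{\mathbb{A}_0}}(\gamma))$. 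Supposing this integral vanishes for every such loop, I would localize to an arbitrary $y \in Y$ by considering loops that are small perturbations of concatenations $\gamma_\ast \# \ell_v$ of a ``probe loop'' $\gamma_\ast$ based at $y$ (producing non-central holonomy $g \in SL(E_y)$) with a short arc $\ell_v$ tangent to a prescribed direction $v \in T_yY$. Since $\mathbb{A}_0$ is stable, its holonomy representation is Zariski-dense in $SL(2,\mathbb{C})$, so as $\gamma_\ast$ varies the trace-free parts $\pi_E(g)$ span $\mathfrak{sl}(E_y)$; combined with the freedom in $v$, this forces $\dot{\mathbb{A}}(y) = 0$ for every $y$, contradicting $\dot{\mathbb{A}} \neq 0$.

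For the \emph{injectivity} step, suppose $[\mathbb{A}_0] \neq [\mathbb{A}_1]$ in $\mathscr{Z}$. Each $\sigma_{q,\mu}$ is $\mathscr{G}_E^c$-invariant, and the natural map $\mathscr{B}_E^\ast \to \mathscr{A}_E^c/\mathscr{G}_E^c$ is injective on the stable locus (each complex orbit meets $\mathscr{C}_E$ in a single $\mathscr{G}_E$-orbit, a consequence of the Kähler quotient framework of \S \ref{subsection:orbitspaces}). Taking $\mu$ to a delta limit thus reduces the problem to showing that the Wilson loop functions $\mathbb{A} \mapsto \mathrm{Tr}\,\widetilde{\mathrm{Hol}_{\mathbb{A}}}(\gamma)$ distinguish distinct complex orbits of stable connections---a standard fact from the invariant theory of $SL(2,\mathbb{C})$, in which a stable representation is recovered up to conjugation from its character.

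Finally, compactness of $\mathscr{Z}$ together with the openness of the pointwise immersion and injectivity conditions (applied to the compact spaces $ST\mathscr{Z}$ and a suitable compactification of $\mathscr{Z}\times\mathscr{Z}\setminus\Delta$, with near-diagonal behavior controlled by the immersion) allows one to extract a finite subcollection $(q_1,\mu_1),\ldots,(q_N,\mu_N)$ inducing a smooth embedding of $\mathscr{Z}$ into $\mathbb{C}^N$. To arrange disjointness of the solid tori $q_i(S^1 \times D^2)$, I would shrink each $q_i$ radially (rescaling $\mu_i$ accordingly, which leaves the values of $\sigma_{q_i,\mu_i}$ essentially unchanged after normalization) and then isotope the central loops generically inside the $3$-manifold $Y$ to be pairwise disjoint; the bounds in Proposition \ref{proposition:boundsholonomy} ensure that the embedding property persists under sufficiently small $C^\infty$-perturbations of the $q_i$'s. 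The main obstacle is the density claim underlying the immersion step: namely, that the elements $\pi_E(\widetilde{\mathrm{Hol}_{\mathbb{A}_0}}(\gamma))$ span $\mathfrak{sl}(E_y)$ as $\gamma$ varies over loops based at $y$, which is where the stability of $\mathbb{A}_0$ is essential.
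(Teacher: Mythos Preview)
Your overall strategy---showing the joint holonomy-trace map is an injective immersion on $\mathscr{Z}$, then extracting finitely many data by compactness, and finally perturbing to disjoint solid tori---matches the paper's. The injectivity step and the final perturbation step are essentially the same as in the paper.

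The immersion step, however, contains a gap. Your localisation argument aims to conclude $\dot{\mathbb{A}}(y)=0$ pointwise from the vanishing of all $(d\sigma_\gamma)_{\mathbb{A}_0}(\dot{\mathbb{A}})$. This cannot be right as stated: the functions $\sigma_\gamma$ are $\mathscr{G}_E^c$--invariant, so their derivatives annihilate every $\dot{\mathbb{A}}$ of the form $d_{\mathbb{A}_0}\xi$ with $\xi$ complex, and such $\dot{\mathbb{A}}$ are certainly not pointwise zero. The most one can conclude from the vanishing of all trace-derivatives is that $\dot{\mathbb{A}}$ is tangent to the $\mathscr{G}_E^c$--orbit; one then uses $\dot{\mathbb{A}}\in\mathscr{K}_{(A_0,\Phi_0)}$ (the $L^2$--orthogonal complement of that tangent space) to deduce $\dot{\mathbb{A}}=0$. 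Your ``short arc $\ell_v$'' device does not obviously extract pointwise information either: an out-and-back arc contributes zero to the line integral, while a genuine small loop contributes only at second order in its size. Separately, stability of $\mathbb{A}_0$ gives irreducibility of the holonomy group, not Zariski-density in $SL(2,\mathbb{C})$ (think of an irreducible flat $SU(2)$ connection); the spanning claim you actually need---that the trace-free parts $\pi_E(g)$ span $\mathfrak{sl}(E_y)$---follows instead from Burnside's theorem applied to the irreducible holonomy group.

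The paper sidesteps this delicate localisation by factoring through the character variety of a free group. One first chooses finitely many based loops $\gamma_1,\ldots,\gamma_M$ so that the holonomy map $\widetilde{\mathrm{Hol}}(\underline{\gamma}):\mathscr{B}_E^\ast\to (SL(2,\mathbb{C})^{\times M})_\ast/SL(2,\mathbb{C})$ lands in the smooth (stable) locus and restricts to a smooth embedding of $\mathscr{Z}$; here one appeals directly to the standard fact (cf.\ \cite[\S 5.5.1]{donaldson-floer}) that holonomy along sufficiently many loops determines a connection up to complex gauge and separates tangent directions transverse to the orbit. One then composes with the classical algebraic embedding $\mathscr{M}(F_M,SL(2,\mathbb{C}))\hookrightarrow\mathbb{C}^N$ given by traces of words $X_{i_1}\cdots X_{i_r}$ in the generators. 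This packages the immersion and injectivity into a single clean step, makes the choice of the $N$ loops explicit (as concatenations of the $\gamma_i$), and avoids any pointwise analysis; the final deformation to disjoint embedded loops and thickening to solid tori is then as in your sketch.
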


Before establishing this, we now recall various facts about character schemes that we shall need (we refer to \cite{culler-shalen,sikora,free} for further details). The $SL(2, \mathbb{C})$--\textit{character scheme} of a finitely-presented group $\Gamma$ is the affine scheme (over $\mathbb{C}$) given by the affine GIT quotient of the $SL(2,\mathbb{C})$--representation scheme $\mathrm{Hom}(\Gamma , SL(2,\mathbb{C})$ (an affine scheme) by the conjugation action of $SL(2,\mathbb{C})$:
\[
\mathscr{M} (\Gamma , SL(2,\mathbb{C} ) ):= \mathrm{Hom}(\Gamma , SL(2,\mathbb{C} ) ) \sslash SL(2,\mathbb{C}) .
\]
The (closed) points of the character scheme $\mathscr{M}(\Gamma , SL(2, \mathbb{C} )$ correspond one-to-one with the $SL(2,\mathbb{C} )$--orbits of polystable representations.  A representation $\rho : \Gamma \rightarrow SL(2,\mathbb{C} )$ is \textit{polystable} (or completely-reducible) if every $\rho$-invariant line $L \subset \mathbb{C}^2$ admits a $\rho$-invariant complementary line $L^\prime \subset \mathbb{C}^2$, and it is \textit{stable} (or irreducible) if there exist no $\rho$-invariant line $L \subset \mathbb{C}^2$.

When $\Gamma = F_M $ is the \textit{free group} on $M$ letters the character variety is well-understood. In this case, the non-singular locus in $\mathscr{M}(\Gamma , SL(2, \mathbb{C} ) )$ coincides with the locus of stable orbits \cite[\S 5]{free}. The set of stable representations of $F_M$ is explicitly described as the the open set $( SL(2, \mathbb{C})^{\times M})_\ast \subset SL(2, \mathbb{C})^{\times M}$ of $M$-tuples of matrices which leave no line in $\mathbb{C}^2$ simultaneously invariant. Thus
\[
\mathscr{M}(F_M , SL(2, \mathbb{C}) )^{non-sing} = (SL(2, \mathbb{C})^{\times M})_\ast / SL(2, \mathbb{C} ).
\]

For a finitely-presented group $\Gamma$, the character scheme $\mathscr{M}(\Gamma , SL(2,\mathbb{C} )$ can be embedded as an affine subscheme of some affine space $\mathbb{C}^N$ via the embedding which sends a conjugacy class of representation $[\rho]$ to its trace on a suitable collection of $N$ elements of $\Gamma$ \cite[Corollary 1.4.5]{culler-shalen}. When $\Gamma = F_M$ it suffices to use the collection of words given by 
\[
X_{i_1}\cdots X_{i_M}
\]
where $X_1, \dots , X_M$ stand for the $M$ letters generating $F_M$, and the indices $i_1 , \ldots , i_r  $ run over all possible collections of distinct positive integers $\leq M$ (see the proof of \cite[Proposition 1.4.1]{culler-shalen}).

\begin{proof}[Proof of Proposition \ref{proposition:embedding}]

Recall that a stable connection $\mathbb{A} \in \mathscr{C}_{E}^\ast $ on $E$ leaves no complex line subbundle $L \subset E$ invariant under parallel transport, by Proposition \ref{proposition:invariant}. In particular, one can find a finite collection of disjoint embedded loops $\underline{\gamma} = (\gamma_1 , \ldots , \gamma_M )$ at a given basepoint $x \in Y$ (equipped with square-roots) such that the associated holonomy map
\begin{align}
\widetilde{\mathrm{Hol}}(\underline{\gamma})  : \mathscr{B}_{E,k}^\ast \rightarrow  (SL(2, \mathbb{C})^{\times M}) / SL(2, \mathbb{C} ) \label{map1}
\end{align}
maps the compact subset $\mathscr{Z} \subset \mathscr{B}_{E}^\ast$ into the manifold locus $(SL(2, \mathbb{C})^{\times N})_\ast / SL(2, \mathbb{C} )$. 

As noted in \cite[\S 5.5.1]{donaldson-floer}, it is a basic fact that a connection $\mathbb{A}$ is determined up to gauge-equivalence by its holonomy. In particular:
\begin{itemize}
\item given two connections $\mathbb{A}_0 , \mathbb{A}_1 \in \mathscr{A}_{E }^c$ that are not in the same $\mathscr{G}_{E}^c$--orbit, there exists $\underline{\gamma}$ such that $\widetilde{\mathrm{Hol}}(\underline{\gamma})$ takes on distinct values at $\mathbb{A}_0$ and $\mathbb{A}_1$, and
\item given a conection $\mathbb{A} \in \mathscr{A}_{E }^c$ and a tangent vector $\dot{\mathbb{A}} \in T_\mathbb{A} \mathscr{A}_{E }^c$ not lying in the tangent space to the $\mathscr{G}_{E}^c$--orbit at $\mathbb{A}$, there exists $\underline{\gamma}$ such that the derivative of $\widetilde{\mathrm{Hol}} (\underline{\gamma} )$ along $\dot{\mathbb{A}}$ is non-trivial.
\end{itemize}
Thus, because $\mathscr{Z}$ is a compact submanifold of $\mathscr{B}_{E}^\ast$, then one can choose the loops $\underline{\gamma} $ so that the map (\ref{map1}) yields a \textit{smooth embedding} of $\mathscr{Z}$ into the manifold locus $(SL(2, \mathbb{C})^{\times M})_\ast / SL(2, \mathbb{C} )$. 

Next, fix an embedding of $\mathscr{M}(F_M , SL(2, \mathbb{C} ) )$ as an affine subscheme of some $\mathbb{C}^N$. In particular, this restricts to a smooth embedding of the manifold locus
\begin{align}
(SL(2, \mathbb{C})^{\times M})_\ast / SL(2, \mathbb{C} ) \hookrightarrow \mathbb{C}^N \label{map2} .
\end{align}
The composition of the maps (\ref{map1}) and (\ref{map2}) gives a map $\mathscr{B}_{E}^\ast \rightarrow \mathbb{C}^N$ inducing a smooth embedding of $\mathscr{Z}$ into $\mathbb{C}^N$. We then modify this map as follows:
\begin{itemize}
\item Slightly deform the $N$ loops in $Y$ obtained as the concatenations $\gamma_{i_1} \cdots \gamma_{i_M}$ to obtain $N$ \textit{disjoint} embedded loops 

\item Thicken the $N$ disjoint embedded loops from the previous step to obtain disjoint embeddings $q_i : S^1 \times D^2 \hookrightarrow Y$ of solid tori neighborhoods with sufficiently concentrated $2$-forms $\mu_i$.
\end{itemize}
It is clear that the map $\underline{\sigma} : \mathscr{B}_{E }^\ast \rightarrow \mathbb{C}^N$ resulting from this procedure will still induce a smooth embedding of $\mathscr{Z}$ into $\mathbb{C}^N$, as required. 
\end{proof}

\subsection{Perturbing the Chern--Simons functional}

For a closed oriented $3$-manifold, consider the functional $S = \mathrm{Re}(e^{-\theta} CS)$ and the perturbed functional
\begin{align}
 S_\varepsilon (A , \Phi ) = S(A, \Phi ) + \varepsilon \cdot \| \Phi \|_{L^2}^2 + \varepsilon^2 \cdot f ( \underline{\sigma}(A, \Phi ) ).\label{perturbedS}
\end{align}
Here, the function $\underline{\sigma} = (\sigma_{q_1} , \ldots , \sigma_{q_N } )$ is constructed by using a collection of disjoint embedded solid tori $q_i$, square-root data and bump forms, and $f : \mathbb{C}^N \to \mathbb{R}$ is a smooth function.

Acting on the space $\mathscr{A}_{E}^c$ of complex connections, the function $S_\varepsilon$ is still invariant under the unitary gauge group $\mathscr{G}_{E}$, but no longer under the complex gauge group $\mathscr{G}_{E }^c$. Thus, when $\theta = \pi/2$ so that there are no periods of $S$ under the $\mathscr{G}_E$--action, the perturbation $S_\varepsilon$ descends to a well-defined function on the stable orbit space $\mathscr{B}_{E , k}^\ast = \mathscr{C}_{E , k}^\ast / \mathscr{G}_{E , k+1}$ (again, setting $\theta = \pi/2$ is not required; for other $\theta$ the $1$-form $dS_\varepsilon$ descends to $\mathscr{B}_{E}^\ast$, which is all that we need).

We now establish an infinite-dimensional analogue of Theorem \ref{theorem:perturbation2} for the perturbed functional $S_\varepsilon : \mathscr{B}_{E , k}^\ast \to \mathbb{R}$. The proof will be formally identical to the finite-dimensional case, once we have verified that the basic linear analysis and compactness results carry over to the perturbed setting.
\subsubsection{Perturbed Hessian}

First, Proposition \ref{proposition:tangent} holds with $S$ replaced with the perturbed functional $S_\varepsilon$ from (\ref{perturbedS}). Indeed, the formal $L^2$ gradient of the functional $\| \Phi \|_{L^2}^2$ on the affine space of complex connections $\mathscr{A}_{E}^c$ is $\mathrm{grad}\| \Phi \|_{L^2}^2 = ( 0 , 2 \Phi )$, which is tangent to the stable locus $\mathscr{C}_{E}^\ast \subset \mathscr{A}_{E}^c$. On the other hand, the formal $L^2$ gradient of $f ( \underline{\sigma} (A, \Phi ) )$ is tangent to $\mathscr{C}_{E}^\ast$ because this function is invariant under the \textit{complex} gauge group $\mathscr{G}_{E}^c$ (see the Remark to Proposition \ref{proposition:tangent}).

Proposition \ref{proposition:hessian_description} and Theorem \ref{theorem:fredholm} then carry through in the perturbed setting, with $S_\varepsilon$ in place of $S$. Indeed, denote the formal Hessian of $S_\varepsilon : \mathscr{B}_{E, k }^\ast \to \mathbb{R}$ (taken with respect to the $L^2$ metric) by 
\[
\mathscr{D}_{(A, \Phi )}^\varepsilon : \mathscr{K}_{(A, \Phi ) , k } \to \mathscr{K}_{(A, \Phi ) , k-1} .
\]
Then this is a \textit{compact} perturbation of the unperturbed Hessian $\mathscr{D}_{(A, \Phi )}$. Indeed,
\[
\mathscr{D}_{(A, \Phi )}^\varepsilon = \mathscr{D}_{(A, \Phi )} + \varepsilon \mathcal{D} [\mathrm{grad}\| \Phi\|_{L^2}^2 ] + \varepsilon^2 \mathcal{D}[\mathrm{grad} f(\underline{\sigma} )]
\]
since $[\mathrm{grad}\| \Phi\|_{L^2}^2]$ and $[\mathrm{grad} f(\underline{\sigma} )]$ (the formal $L^2$ gradients of $\| \Phi\|_{L^2}^2$ and $f ( \underline{\sigma} )$ on $\mathscr{B}_{E , k}^\ast$ with respect to the $L^2$ metric) are sections of $\mathscr{K}_{k}$ (the $L^{2}_k$ completion of the tangent bundle of $\mathscr{B}_{E, k}^\ast$) rather than just $\mathscr{K}_{k-1}$ (cf. Proposition \ref{proposition:boundsholonomy}), and hence their vertical derivatives $[\mathrm{grad}\| \Phi\|_{L^2}^2]$ and $\mathcal{D} [\mathrm{grad} f ( \underline{\sigma} )]$ are compact operators from $\mathscr{K}_{k}$ to $\mathscr{K}_{k-1}$.

\begin{remark}Proposition \ref{proposition:pluriharmonic} no longer holds for $S_\varepsilon$. In fact, the function $\| \Phi \|^2_{L^2}$ on $\mathscr{B}_{E ,k}^\ast$ is \textit{strictly plurisubharmonic}: for the flat Hessian $\mathrm{Hess} \| \Phi \|_{L^2}^2 $ and  $v ,w  \in \mathscr{K}_{(A, \Phi ), k}$ we now have a \textit{positive} symmetric form
\[
\langle (\mathrm{Hess} \| \Phi \|_{L^2}^2 ) Jv , ( \mathrm{Hess} \| \Phi \|_{L^2}^2 )Jw \rangle_{L^2} + \langle (\mathrm{Hess} \| \Phi \|_{L^2}^2 )v , (\mathrm{Hess} \| \Phi \|_{L^2}^2 )w \rangle_{L^2}  = 2\langle v , w \rangle_{L^2}  .
\]
The perturbation $f( \underline{\sigma} )$ can be taken strictly plurisubharmonic as well, by requiring $f : \mathbb{C}^N \to \mathbb{R}$ to be strictly plurisubharmonic (since $\underline{\sigma}$ is holomorphic), and in this case the whole functional $S_\varepsilon$ will be strictly plurisubharmonic on $\mathscr{B}_{E, k}^\ast$.
\end{remark}

We now make some comments about the spectral flow of families of Fredholm operators (for the definition we refer to \cite{APS,taubes-casson,KM}). Consider a continuous paths of self-adjoint Fredholm operators on a real Hilbert: $L_t : H \to H$, $t \in [0,1]$. We will only need to consider the following situation:
\begin{itemize}
\item $H$ is equipped with an isometric linear complex structure $J$ (hence $H$ is a complex Hilbert space)
\item For each $i = 0 , 1$, $L_i$ satisfies either $\langle L_i  Jv , L_i Jw \rangle + \langle L_i v , L_i w \rangle = 0$ for any $v,w \in H$ (hence, $\mathrm{Ker}L_0 \subset (H , J)$ is a $\mathbb{C}$-linear finite-dimensional subspace), or else $L_i$ is invertible.
\end{itemize}
In this situation, there is associated to the path $L_t$ a well-defined \textit{mod 2 spectral flow}:
\[
\mathbf{sf}_{2} (L_t ) \in \mathbb{Z}/2 .
\]
The mod $2$ spectral flow has the following basic properties:
\begin{itemize}
\item it only depends on the underlying homotopy class of the path $L_t$
\item it behaves additively with respect to composition of paths
\item it vanishes if the kernel of $L_t$ is a $\mathbb{C}$-linear subspace of $(H , J )$ for every $t \in [0,1]$. 
\end{itemize}

\subsubsection{$L^\infty$ bounds on $\Phi$ and compactness}

It is well-known that on the moduli space of polystable $GL(2,\mathbb{C})$ projectively flat $GL(2, \mathbb{C})$ connections $\mathscr{M}_{E}$ on $(Y, g )$ the function $\| \Phi \|_{L^2}$ is \textit{proper}--thus controls the non-compactness of $\mathscr{M}_{E}$ \cite{taubes-flat}. In particular, since the $L^\infty$ norm is stronger than the $L^2$ norm, then the function $\| \Phi \|_{L^\infty }$ is proper on $\mathscr{M}_{E }$ (note: since $k \geq 2$ then $\Phi$ is continuous; thus its $L^\infty$ norm is defined). 

The corresponding \textit{perturbed} moduli space $\mathscr{M}_{E}^{\varepsilon} \subset \mathscr{B}_{E,k}$ is defined as the $\mathscr{G}_{E,k+1}$--orbits of polystable configurations $(A, \Phi )$ (i.e. $d_{A}^\ast \Phi = 0$) which are critical points of the $\mathscr{G}_{E,k+1}$--invariant function $S_\varepsilon$ acting on $\mathscr{A}_{E, k}^c$. The stable locus $\mathscr{M}_{E}^{\ast , \varepsilon}$ in $\mathscr{M}_{E}^\varepsilon$ is given by the critical points of $S_\varepsilon$ on the Hilbert manifold of stable orbits $\mathscr{B}_{E,k}^\ast = \mathscr{C}_{E, k}^\ast / \mathscr{G}_{E , k+1}$.


\begin{proposition}\label{proposition:compactness}
For any constant $\varepsilon_0 >0$, the function $\psi (A, \Phi ):= \| \Phi\|_{L^\infty }$ is a proper function on the union of the perturbed moduli spaces $\mathscr{M}_{E}^\varepsilon \subset \mathscr{B}_{E,k}$ over all $\varepsilon$ with $0 \leq |\varepsilon| \leq \varepsilon_0 $. \end{proposition}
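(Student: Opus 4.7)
The plan is to show that any sequence $[(A_n, \Phi_n)] \in \mathscr{M}_E^{\varepsilon_n}$ with $|\varepsilon_n| \leq \varepsilon_0$ and $\|\Phi_n\|_{L^\infty} \leq C$ admits a subsequence converging in $\mathscr{B}_{E,k}$. After extracting a subsequence I may assume $\varepsilon_n \to \varepsilon_\infty \in [-\varepsilon_0, \varepsilon_0]$. The critical point equation $dS_\theta^{\varepsilon_n} = 0$ on the polystable slice combines with the polystability condition $d_{A_n}^* \Phi_n = 0$; writing out the gradient and inverting the (constant, invertible) rotation by $e^{i\theta}$ built into $S_\theta$, one obtains equations of the schematic form
\begin{align*}
F_{A_n} - \tfrac{I}{2}F_\lambda - \tfrac{1}{2}[\Phi_n,\Phi_n] &= \varepsilon_n \cdot \alpha_n(\Phi_n) + \varepsilon_n^2 \cdot \beta_n, \\
d_{A_n} \Phi_n &= \varepsilon_n \cdot \alpha_n'(\Phi_n) + \varepsilon_n^2 \cdot \beta_n', \\
d_{A_n}^* \Phi_n &= 0,
\end{align*}
where $\alpha_n, \alpha_n'$ are linear in $\Phi_n$ and the terms $\beta_n, \beta_n'$ are the components of the $L^2$-gradient of $f(\underline{\sigma}(A_n,\Phi_n))$, projected onto the appropriate slice.

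The first key step is to extract a priori bounds. By Proposition \ref{proposition:boundsholonomy}, the holonomy perturbation terms $\beta_n, \beta_n'$ are bounded in $L^2_k$ by a continuous function of $\|\Phi_n\|_{L^\infty}$, hence uniformly in $n$. In particular $\|F_{A_n}\|_{L^p}$ is uniformly bounded for every $p < \infty$. Next, I would apply Uhlenbeck's compactness theorem on the closed $3$-manifold $Y$: passing to a subsequence, there exist gauge transformations $u_n \in \mathscr{G}_{E,k+1}$ such that the connections $u_n \cdot A_n$ are bounded in $L^2_1$, converge weakly in $L^2_1$ and strongly in every $L^p$, and satisfy a global Coulomb-type gauge relative to a smooth background. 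Together with $d_{u_n\cdot A_n}^*(u_n \cdot \Phi_n) = 0$ and the uniform $L^\infty$ bound on $\Phi_n$, elliptic estimates for the operator $d_A \oplus d_A^*$ (applied with frozen coefficients along the subsequence) yield uniform $L^2_1$ bounds on $u_n \cdot \Phi_n$.

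The third step is the bootstrap. The augmented system of Remark \ref{remark:ellipticity}, with the auxiliary fields $\Psi_1, \Psi_2$ set to zero and the perturbation terms moved to the right-hand side, is elliptic modulo $\mathscr{G}_E$. Since the right-hand side is bounded uniformly in $L^2_k$ (again by Proposition \ref{proposition:boundsholonomy} for $\beta_n, \beta_n'$, and trivially for the $\alpha_n, \alpha_n'$ using the $L^2_1$ bounds just obtained), iterating elliptic regularity promotes the uniform $L^2_1$ bounds on $u_n \cdot (A_n, \Phi_n)$ to uniform $L^2_k$ bounds. A further subsequence then converges in $L^2_k$ to a polystable configuration, giving the desired convergent subsequence in $\mathscr{B}_{E,k}$.

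The main obstacle is the non-local nature of the holonomy perturbation: the terms $\beta_n, \beta_n'$ depend on $\mathbb{A}_n$ through parallel transport along the loops $q_i(\,\cdot\,,z)$, and not pointwise on $(A_n,\Phi_n)$. This would be fatal to a pure bootstrap argument if the bound on these terms required Sobolev regularity of $\Phi_n$. The essential point is that Proposition \ref{proposition:boundsholonomy} controls them in $L^2_k$ \emph{purely in terms of} $\|\Phi_n\|_{L^\infty}$, so that they play the role of a uniformly bounded inhomogeneous term in the elliptic bootstrap—this is precisely what makes the argument go through uniformly in $\varepsilon_n$.
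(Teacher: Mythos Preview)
Your proposal is correct and follows essentially the same route as the paper: write out the perturbed critical-point equations, invoke Proposition~\ref{proposition:boundsholonomy} to bound the holonomy-perturbation terms purely in terms of $\|\Phi\|_{L^\infty}$, obtain a uniform curvature bound, apply Uhlenbeck gauge-fixing, and bootstrap. The one cosmetic difference is that the paper extracts the $L^2$ curvature bound via a Bochner--Weitzenb\"ock estimate (citing \cite{taubes-flat}), whereas you read off an $L^\infty$ (hence $L^p$) bound on $F_{A_n}$ directly from the first equation, using that $[\Phi_n,\Phi_n]$, $\varepsilon_n\Phi_n$, and the holonomy term are all $L^\infty$-bounded; your route is slightly more direct here but leads to the same conclusion.
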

\begin{proof}
The perturbed equations are:
\begin{align}
 \ast ( F_{A} - \frac{1}{2} [\Phi , \Phi ] ) &= \varepsilon \Phi + \varepsilon^2 \mathfrak{q}_2 (A, \Phi ) \label{perteq1}\\
 \ast d_{A}\Phi  &= \varepsilon^2 \mathfrak{q}_1 (A, \Phi ) \label{perteq2}\\
 d_{A}^\ast \Phi &= 0 \label{perteq3}
\end{align}
where $(\mathfrak{q}_1 , \mathfrak{q}_2 )$ are the components of the vector field $\mathrm{grad} f(\underline{\sigma} )$ on $\mathscr{A}_{E}^c$.

For a given configuration $(A, \Phi )$ a short calculation with the Bochner--Weitzenböck formula yields the following estimate (see \cite[Equation 2.4]{taubes-flat}), where $c_0$ is a constant depending on the Ricci tensor of the metric:
\begin{align*}
\| \nabla_A \Phi \|_{L^2}^2 + \frac{1}{4}\| [\Phi , \Phi ]\|_{L^2}^2 + \| F_A \|_{L^2}^2 \leq c_0 \| \Phi \|_{L^2}^2 + \int_{Y}| F_A -\frac{1}{2}[\Phi , \Phi ] |^2 + |d_{A}\Phi |^2 + |d_{A}^\ast \Phi |^2 ;
\end{align*}
Suppose $(A, \Phi )$ solves (\ref{perteq1}-\ref{perteq3}): then we obtain 
\begin{align}
\|\nabla_A \Phi \|_{L^2}^2 + \frac{1}{4}\| [\Phi , \Phi ]\|_{L^2}^2 + \| F_A \|_{L^2}^2 \leq ( c_0 + 2\varepsilon^2 ) \| \Phi \|_{L^2}^2 + \varepsilon^4 \| \mathfrak{q}_{1}(A, \Phi )\|_{L^2}^2 + 2\varepsilon^4 \| \mathfrak{q}_{2}(A, \Phi ) \|_{L^2}^2 . \label{boundscurv}
\end{align}

Suppose now we are given bounds
\begin{align}
\| \Phi \|_{L^\infty} \leq C \quad , \quad |\varepsilon|\leq \varepsilon_0 \label{boundsphieps}
\end{align}
Then Proposition \ref{proposition:boundsholonomy} gives an $L^\infty$ bound on $\mathfrak{q}_1 (A, \Phi )$ and $\mathfrak{q}_2 (A , \Phi )$. Thus (\ref{boundscurv}) gives an $L^2$ curvature bound 
\begin{align*}
\| F_A \|_{L^2}^2 \leq C^\prime \label{curvaturebound} .
\end{align*}
Given this, it is a routine application of Uhlenbeck's Local Gauge-Fixing Theorem \cite{uhlenbeck}, together with standard elliptic bootstrapping and patching arguments (see e.g. in \cite[\S 2,\S 4]{DK}), to show that a sequence $(A_n , \Phi_n )$ of $L^{2}_k$ solutions of (\ref{perteq1}-\ref{perteq3}) with uniform bounds (\ref{boundsphieps}) satisfies uniform $L^{2}_{k+1}$ bounds after applying a sequence of global $L^{2}_{k+1}$ unitary gauge transformations. Thus, $(A_n , \Phi_n )$ converges in $L^{2}_k$ after passing to a subsequence. This completes the proof.
\end{proof}

\subsubsection{The Perturbation and Localisation Theorems for the Complex Chern--Simons functional}

With the above understood, we are now in good shape to discuss the infinite-dimensional version of Theorems \ref{theorem:perturbation1} and \ref{theorem:perturbation2}.

The setup for this results is the following. We let $(Y, g )$ be a closed, oriented, Riemannian $3$-manifold, equipped with a $U(2)$ bundle $E \to Y$ with fixed determinant. Let $q_1 , \ldots , q_N : S^1 \times D^2 \hookrightarrow Y$ be a finite collection of disjoint smooth embeddings (equipped with `square roots
' and $2$-forms), and let $\underline{\sigma} : \mathscr{B}_{E , k}^\ast \rightarrow \mathbb{C}^N$ denote the associated holonomy function (and we choose $k \geq 2$). Let $f : \mathbb{C}^N \rightarrow \mathbb{R}$ be a smooth function. Consider, for each $\varepsilon \in \mathbb{R} \setminus 0$, the smooth functional $S_\varepsilon  : \mathscr{B}_{E , k}^\ast \rightarrow \mathbb{R}$ given by
\begin{equation*}
S_\varepsilon (A , \Phi ) = S(A, \Phi ) + \varepsilon \| \Phi \|_{L^2 (Y)}^2 + \varepsilon^2 f (\underline{\sigma} (A, \Phi ) ).
\end{equation*}

We make the following assumptions:
\begin{enumerate}
\item[(A1)] The moduli space of stable flat connections $\mathscr{M}_{E }^\ast = \mathrm{Crit}(S)$ is Morse--Bott (cf. Definition \ref{definition:morse-bott})

\item[(A2)] The restriction of the function $ \| \Phi \|_{L^{2}  }^2$ onto $\mathscr{M}_{E }^\ast$ is a Morse--Bott function; denote its critical locus by $\mathscr{Z} = \mathrm{Crit}( \| \Phi \|^{2}_{L^{2}}|_{\mathscr{M}_{E}^\ast } )$

\item[(A3)] $\mathscr{Z}$ is compact

\item[(A4)] The function $\underline{\sigma} : \mathscr{B}_{E , k }^\ast \rightarrow \mathbb{C}^N$ restricts to a smooth embedding of $\mathscr{Z}$ (cf. Proposition \ref{proposition:embedding})

\item[(A5)] The function $f$ restricts `generically' onto $\mathscr{Z}$: the smooth function on $\mathscr{Z}$ given by $\mathfrak{f} := \frac{1}{2} (d\| \Phi \|_{L^2}^2 ) ( \lambda ) + (f \circ \underline{\sigma} )|_{\mathscr{Z}}$ is Morse; here $\lambda$ stands for any section of the bundle of Lagrange multipliers $\Lambda \rightarrow \mathscr{Z}$ 
(see below)
\item[(A6)] The bundle $E \to Y$ is \textit{admissible}: either $Y$ is a homology $3$-sphere or $c_1 (E)$ defines an odd class in $H^{2}(Y, \mathbb{Z} )/\mathrm{Torsion}$.
\end{enumerate}

\begin{definition}
The \textit{bundle of Lagrange multipliers} associated to the Lagrange functional $\mathscr{L}: T\mathscr{B}_{E , k}^\ast \to \mathbb{R}$, $\mathscr{L}([\mathbb{A}] , \lambda ) = \| \Phi \|_{L^2}^2 + (dS)_{\mathbb{A}} (\lambda )$, is the affine bundle $\Lambda \rightarrow \mathscr{Z} = \mathrm{Crit}( \| \Phi \|_{L^2}^{2}|_{\mathscr{M}_{E}^\ast} )$ with fibers
\[
\Lambda_{[\mathbb{A}]} = \big\{ \lambda \in \mathscr{K}_{[\mathbb{A}] , k} = T_{[\mathbb{A}]} \mathscr{B}_{E , k}^\ast \, | \,\mathscr{D}_{\mathbb{A}} (\lambda ) = -  2 (0 , \Phi )  \big\} \subset T_{[\mathbb{A}]} \mathscr{B}_{E , k}^\ast ,
\]
with associated vector bundle $T\mathscr{M}_{E}^{\ast}|_{\mathscr{Z} } \rightarrow \mathscr{Z}$.
\end{definition}
\begin{theorem}[Perturbation Theorem]\label{theorem:perturbation1general}
Assume (A1)-(A5) hold. Then there exists an open neighborhood $\mathscr{U}_0$ of $\mathscr{Z} \subset \mathscr{B}_{E , k}^\ast$ and a constant $\varepsilon_0 >0$, such that when $0 < | \varepsilon| \leq \varepsilon_0$ then the critical points of $S_\varepsilon : \mathscr{B}_{E , k}^\ast \to \mathbb{R}$ in the neighborhood $\mathscr{U}_0$ are non-degenerate and there is a bijection
\[
F_\varepsilon: \mathrm{Crit}\mathfrak{f}  \cong \mathrm{Crit}S_\varepsilon \cap \mathscr{U}_0   .
\]
Furthermore, if for a point $[\mathbb{A}_0] \in \mathscr{Z}$ we denote $[\mathbb{A}_\varepsilon  ]:= F_\varepsilon ( [\mathbb{A}_0] )$, then the mod $2$ spectral flow of the path of operators $\mathscr{D}_{\mathbb{A}_\varepsilon}^t$ from $t = 0$ to $t = \varepsilon$ is:  for $\pm \varepsilon >0$
\[
\mathbf{sf}_{2} ( \mathscr{D}_{\mathbb{A}_\varepsilon}^t ) = \mathrm{ind}(\pm \| \Phi \|_{L^2}^{2}|_{\mathscr{M}_{E}^\ast} , [\mathbb{A}_0 ] )  + \mathrm{ind}(\mathfrak{f} , [\mathbb{A}_0 ])  .
\]
\end{theorem}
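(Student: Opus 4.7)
The plan is to reduce to the finite-dimensional Perturbation Theorem \ref{theorem:perturbation1} by combining the Kuranishi models of Corollary \ref{corollary:kuranishi} with the Morse--Bott assumption (A1). Specifically, for each $[\mathbb{A}_0] \in \mathscr{Z}$ the Kuranishi model provides local coordinates $(u,h) \in U \oplus \mathscr{H}^1_{[\mathbb{A}_0]}$ (with $U \subset \mathscr{K}_{[\mathbb{A}_0],k}$ the $L^2$-orthogonal complement of the finite-dimensional Zariski tangent space $\mathscr{H}^1_{[\mathbb{A}_0]}$) in which $S$ takes the form $\frac{1}{2}\langle \mathscr{D}_{[\mathbb{A}_0]} u, u\rangle_{L^2}$ plus a constant, using that (A1) forces the finite-dimensional obstruction term $S'(h)$ in Corollary \ref{corollary:kuranishi} to be constant. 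After this reduction, the perturbed functional $S_\varepsilon$ becomes a formal perturbation of a quadratic form on the Hilbert space $U$ depending smoothly on the finite-dimensional parameter $h \in \mathscr{H}^1_{[\mathbb{A}_0]} \cong T_{[\mathbb{A}_0]}\mathscr{M}_E^\ast$.

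\textbf{Local reduction.} I will then split $\mathscr{H}^1_{[\mathbb{A}_0]} = V \oplus W$ using the Morse--Bott assumption (A2), where $W = T_{[\mathbb{A}_0]}\mathscr{Z}$ and $V$ is its $L^2$-orthogonal complement, so that in suitable coordinates the first-order term $\|\Phi\|_{L^2}^2$ becomes $\frac{1}{2}\langle L_1 v,v\rangle$ plus a function depending only on $w$ (the parametrised Morse--Palais lemma, used in the proof of Corollary \ref{corollary:kuranishi}, applies here). At this point the setup matches \S \ref{subsubsection:localtheory} verbatim, with the single modification that $U$ is now an infinite-dimensional Hilbert space but $L_0 = \mathscr{D}_{[\mathbb{A}_0]}|_U$ is a self-adjoint Fredholm isomorphism (from $U \cap \mathscr{K}_{[\mathbb{A}_0],k}$ to $U \cap \mathscr{K}_{[\mathbb{A}_0],k-1}$) by Theorem \ref{theorem:fredholm} combined with (A1). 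The Implicit Function Theorem for Hilbert spaces then applies to the joint equation $\partial_u S_\varepsilon = 0$ and $\varepsilon^{-1} \partial_v S_\varepsilon = 0$, yielding smooth functions $u(t,\varepsilon)$, $v(t,\varepsilon)$ with $u(t,0) = v(t,0) = 0$, and reducing the remaining critical point equation on $W$ to the vanishing of $\partial_t S_\varepsilon^\prime$. The identification of the leading term $S_0^\prime$ with the intrinsic function $\mathfrak{f}$ of (A5) proceeds exactly as in Lemma \ref{lemma:leading} and the coordinate-free discussion following it; by (A5) this is Morse, so Proposition \ref{prop:localpert} produces the unique non-degenerate critical point $[\mathbb{A}_\varepsilon]$ for each critical point $[\mathbb{A}_0]$ of $\mathfrak{f}$.

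\textbf{Global patching and spectral flow.} The global bijection follows from the compactness of $\mathscr{Z}$ (A3): cover $\mathscr{Z}$ by finitely many neighborhoods on which the local analysis applies, and extract a uniform $\varepsilon_0$. The embedding assumption (A4) ensures the holonomy perturbation data $\underline{\sigma}$ genuinely `sees' the transverse directions, so that the identification of $S_2|_{\mathscr{Z}}$ with $(f\circ \underline{\sigma})|_\mathscr{Z}$ in the definition of $\mathfrak{f}$ is valid. For the spectral flow formula, I will compute the mod $2$ spectral flow of $\mathscr{D}_{\mathbb{A}_\varepsilon}^t$ from $t = \varepsilon$ to $t = 0$ by tracking the eigenvalue-crossings through three contributions: (i) the crossings of the infinite-dimensional block $L_0$ do not occur for small $\varepsilon$ (by the spectral gap of $\mathscr{D}_{[\mathbb{A}_0]}$ restricted to $U$); (ii) the crossings on the $V$ block contribute $\mathrm{ind}(\pm \|\Phi\|_{L^2}^2|_{\mathscr{M}_E^\ast}, [\mathbb{A}_0])$, depending on the sign of $\varepsilon$; (iii) the crossings on the $W$ block contribute $\mathrm{ind}(\mathfrak{f},[\mathbb{A}_0])$, by the index computation of Proposition \ref{prop:localpert}. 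The mod $2$ spectral flow at $t=0$ is well-defined precisely because the kernel of $\mathscr{D}_{[\mathbb{A}_0]}$ is complex-linear (Proposition \ref{proposition:pluriharmonic}), so even-dimensional.

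\textbf{Main obstacle.} The principal technical hurdles are, first, verifying that the Kuranishi reduction combined with the Morse--Palais coordinate change preserves enough smoothness in the parameters $(h,\varepsilon)$ for the Hilbert-space Implicit Function Theorem to apply uniformly over $\mathscr{Z}$---this requires care because the norm gap $L^2_k \to L^2_{k-1}$ in the Fredholm setup (Theorem \ref{theorem:fredholm}) must be compatible with the compact perturbations coming from $\varepsilon \nabla \|\Phi\|_{L^2}^2 + \varepsilon^2 \nabla (f\circ \underline\sigma)$ (which map into $\mathscr{K}_k$ rather than merely $\mathscr{K}_{k-1}$, by Proposition \ref{proposition:boundsholonomy}). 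Second, rigorously identifying the sign contributions in the spectral flow requires unpacking the signature of the Hessian $L_1$ on $V$: the key point is that $\pm L_1$ is (positive or negative) definite according to the sign of $\varepsilon$, so that the crossings through $t=0$ happen in one direction only, yielding the $\mathrm{ind}(\pm \|\Phi\|_{L^2}^2|_{\mathscr{M}_E^\ast}, [\mathbb{A}_0])$ contribution.
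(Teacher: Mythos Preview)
Your proposal is correct and follows essentially the same approach as the paper: reduce to the finite-dimensional argument of \S\ref{subsubsection:localtheory} via the Kuranishi model of Corollary \ref{corollary:kuranishi}, using (A1) to kill the obstruction function, then split $\mathscr{H}^1_{[\mathbb{A}_0]} = V \oplus W$ via (A2), run the Implicit Function Theorem on the Hilbert space $U$, identify the leading term with $\mathfrak{f}$, and invoke (A3) for global patching.

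There is one point in your spectral-flow argument that the paper handles more carefully and that you should make explicit. The block decomposition $U \oplus V \oplus W$ and the eigenvalue analysis of Proposition \ref{prop:localpert} naturally compute the mod $2$ spectral flow of the path $t \mapsto \mathscr{D}_{[\mathbb{A}_t]}^t$, in which \emph{both} the basepoint and the perturbation parameter vary. The theorem, however, asks for the spectral flow of $t \mapsto \mathscr{D}_{[\mathbb{A}_\varepsilon]}^t$, with the basepoint \emph{fixed} at $[\mathbb{A}_\varepsilon]$. The paper bridges this by observing that the first path is homotopic (rel endpoints) to the concatenation of $t \mapsto \mathscr{D}_{[\mathbb{A}_t]}$ (unperturbed Hessians along the path from $[\mathbb{A}_0]$ to $[\mathbb{A}_\varepsilon]$) with $t \mapsto \mathscr{D}_{[\mathbb{A}_\varepsilon]}^t$; since every operator in the first piece has $\mathbb{C}$-linear kernel by Proposition \ref{proposition:pluriharmonic}, its mod $2$ spectral flow vanishes, and the two quantities agree. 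Your sketch jumps directly to ``tracking eigenvalue crossings'' for $\mathscr{D}_{[\mathbb{A}_\varepsilon]}^t$ using a block decomposition that lives at $[\mathbb{A}_0]$, which conflates the two paths; inserting the homotopy step makes the argument rigorous.
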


(Note: the quantity $\mathrm{ind}(\pm \| \Phi \|_{L^2}^{2}|_{\mathscr{M}_{E}^\ast} , [\mathbb{A}_0 ] ) $ taken mod $2$, as above, is independent of the sign $\pm$ since $\mathscr{M}_{E}^\ast$ has even real dimension.)
The proof of Theorem \ref{theorem:perturbation1general} will be given below. We also have the following:
\begin{theorem}[Localisation Theorem]\label{theorem:perturbation2general}
Assume (A1)-(A6) hold. Let $\mathscr{U}\subset \mathscr{B}_{E , k}^\ast$ be an open set, and $C > 0$ a constant such that
\[\mathscr{Z} \subset \mathscr{U} \subset \{  \| \Phi \|_{L^\infty}  < C \} . \]
When $Y$ is a homology $3$-sphere let $\mathscr{U}_{\mathrm{triv}}$ be a given open neighborhood in $\mathscr{B}_{E,k}$ of the trivial connection $[\mathbb{A}_{\mathrm{triv}}]$ which is disjoint from $\mathscr{U}$.

Then there exists a constant $\varepsilon_0 = \varepsilon_0 (Y, g, E, q_i , f , \mathscr{U} , C ) > 0$ such that the following hold for $0 < |\varepsilon|\leq \varepsilon_0$:
\begin{enumerate}
\item If $[\mathbb{A} = (A, \Phi )] \in \mathrm{Crit}S_\varepsilon$ is a critical point of $S_\varepsilon : \mathscr{B}_{E,k}^\ast \to \mathbb{R}$ then either $[\mathbb{A}] \in \mathscr{U}$ or $[\mathbb{A}]\in \mathscr{U}_{\mathrm{triv}} \setminus \{[ \mathbb{A}_\mathrm{triv}] \}$ or $\| \Phi \|_{L^\infty} > C$. Furthermore, the critical points of $S_\varepsilon$ contained in $\mathscr{U}$ are non-degenerate and comprise a finite set.
\item 
The count of critical points $[\mathbb{A}]$ of $S_\varepsilon$ in $\mathscr{U}$ with signs given by the mod $2$ spectral flow of the path of Hessians $\mathscr{D}_{\mathbb{A}}^t$ with $t$ going from $t = 0$ to $t = \varepsilon$ computes the Euler characteristic of the moduli space of stable flat connections: 
\[
\sum_{[\mathbb{A}] \in \mathrm{Crit}(S_\varepsilon ) \cap \mathscr{U}} (-1)^{\mathbf{sf}_{2}(\mathscr{D}_{\mathbb{A}}^t )} = \chi ( \mathscr{M}_{E}^\ast ) . \]
\end{enumerate}
\end{theorem}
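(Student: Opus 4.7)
My plan is to adapt the finite-dimensional Localisation Theorem \ref{theorem:perturbation2} to the Hilbert manifold setting, replacing its Perturbation Theorem \ref{theorem:perturbation1} with the infinite-dimensional counterpart Theorem \ref{theorem:perturbation1general} and using Proposition \ref{proposition:compactness} in place of the properness of the auxiliary function $\psi$ that was assumed in finite dimensions. The proof divides into a compactness step (establishing the dichotomy in assertion (1)) and an index-counting step (assertion (2)).

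For (1), I would argue by contradiction. Suppose there exist sequences $\varepsilon_n \to 0$ and $[\mathbb{A}_n] = [(A_n, \Phi_n)] \in \mathrm{Crit}(S_{\varepsilon_n})$ with $\|\Phi_n\|_{L^\infty} \le C$ but $[\mathbb{A}_n] \notin \mathscr{U} \cup \mathscr{U}_{\mathrm{triv}}$ for all $n$. Applying Proposition \ref{proposition:compactness}, after global gauge-fixing and passing to a subsequence, $[\mathbb{A}_n]$ converges in $L^2_k$ to a polystable flat orbit $[\mathbb{A}_\infty]$. The admissibility hypothesis (A6) together with the open set $\mathscr{U}_{\mathrm{triv}}$ ensures $[\mathbb{A}_\infty]$ is \emph{stable}: when $c_1(E)$ is odd modulo torsion no reducible flat $SL(2,\mathbb{C})$ connection on $E$ exists at all, and when $Y$ is a homology sphere the only reducible polystable flat $SL(2,\mathbb{C})$ connection is the trivial one (since $H_1(Y;\mathbb{Z})=0$ kills all non-trivial characters), which is excluded by $\mathscr{U}_{\mathrm{triv}}$. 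Thus $[\mathbb{A}_\infty] \in \mathscr{M}_E^\ast$, which by (A1) is Morse--Bott, so Corollary \ref{corollary:kuranishi} provides a finite-dimensional slice model in which $S$ takes Morse--Bott normal form along directions normal to $\mathscr{M}_E^\ast$. Applying Lemma \ref{lemma:convergence} in these local coordinates, with $\|\Phi\|_{L^2}^2$ playing the role of $S_1$, then forces $[\mathbb{A}_\infty]$ to be a critical point of $\|\Phi\|_{L^2}^2|_{\mathscr{M}_E^\ast}$, i.e.\ $[\mathbb{A}_\infty]\in\mathscr{Z}\subset\mathscr{U}$, a contradiction. Having secured the dichotomy, shrinking $\varepsilon_0$ further places all critical points of $S_\varepsilon$ in $\mathscr{U}$ inside the neighbourhood $\mathscr{U}_0$ supplied by Theorem \ref{theorem:perturbation1general}, which yields their non-degeneracy and a bijection with the finite set $\mathrm{Crit}(\mathfrak{f})$.

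For (2), the spectral-flow formula in Theorem \ref{theorem:perturbation1general} rewrites the signed count as
\[
\sum_{[\mathbb{A}] \in \mathrm{Crit}(S_\varepsilon) \cap \mathscr{U}} (-1)^{\mathbf{sf}_2(\mathscr{D}_\mathbb{A}^t)} \;=\; \sum_{[\mathbb{A}_0] \in \mathrm{Crit}(\mathfrak{f})} (-1)^{\mathrm{ind}(\|\Phi\|_{L^2}^2|_{\mathscr{M}_E^\ast},\,[\mathbb{A}_0])\,+\,\mathrm{ind}(\mathfrak{f},\,[\mathbb{A}_0])}.
\]
Grouping critical points of $\mathfrak{f}$ by the component $\mathscr{Z}_i \subset \mathscr{Z}$ they lie in, and using that $\mathfrak{f}$ is Morse on each $\mathscr{Z}_i$ with constant Morse--Bott index $\mathrm{ind}_i$ of $\|\Phi\|_{L^2}^2|_{\mathscr{M}_E^\ast}$ along $\mathscr{Z}_i$, the inner sums evaluate to $(-1)^{\mathrm{ind}_i}\,\chi(\mathscr{Z}_i)$. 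Since $\|\Phi\|_{L^2}^2|_{\mathscr{M}_E^\ast}$ is a proper, bounded-below Morse--Bott function by the properness result of Taubes~\cite{taubes-flat} and assumption (A2), the standard Morse--Bott Euler-characteristic identity gives $\sum_i (-1)^{\mathrm{ind}_i}\chi(\mathscr{Z}_i) = \chi(\mathscr{M}_E^\ast)$, completing the proof.

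The hardest part is the compactness argument in (1): Proposition \ref{proposition:compactness} delivers only subsequential convergence to \emph{some} polystable flat orbit, and one must then separately exclude reducible limits (via (A6) and $\mathscr{U}_{\mathrm{triv}}$) and verify that the infinite-dimensional Kuranishi picture near a stable limit $[\mathbb{A}_\infty]$ faithfully reproduces the finite-dimensional Morse--Bott setup required by Lemma \ref{lemma:convergence}. Once this bridge to finite dimensions is in place, the index-counting step is essentially mechanical.
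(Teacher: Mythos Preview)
Your proposal is correct and follows essentially the same route as the paper's proof: a contradiction argument using Proposition \ref{proposition:compactness} and the admissibility hypothesis (A6) to force a stable limit lying in $\mathscr{Z}$ via the Kuranishi model and Lemma \ref{lemma:convergence}, followed by the spectral-flow count from Theorem \ref{theorem:perturbation1general} and the Morse--Bott Euler-characteristic identity. One small point worth making explicit is that properness of $\|\Phi\|_{L^2}^2$ on $\mathscr{M}_E^\ast$ (rather than on all of $\mathscr{M}_E$) again uses (A6) to exclude reducibles, which the paper notes in passing.
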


\begin{proof}[Proof of Theorem \ref{theorem:perturbation1general}]
We can follow the argument from the proof of the finite-dimensional result, Theorem \ref{theorem:perturbation1}, with the following modifications:

\textit{Local perturbation theory.} The arguments from \S\ref{subsubsection:localtheory} carry through with minor modifications to the infinite-dimensional case, crucially making use now of the finite-dimensional Kuranishi models for $S$ from Corollary \ref{corollary:kuranishi}. That is, we have product coordinates $U \times V \times W$ on a neighborhood of a stable flat connection $[\mathbb{A}_0 ] \in \mathscr{M}_{E}^\ast$ in $\mathscr{B}_{E,k}^\ast$, with $[\mathbb{A}_0 ]$ corresponding to $(0,0,0) \in U \times V \times W$. Namely, 
\begin{itemize}
\item $V\oplus W = \mathscr{H}^{1}_{\mathbb{A}_0}$ is the (finite-dimensional) tangent space to $\mathscr{M}_{E}^\ast$
\item $U$ be the $L^2$--orthogonal complement of $\mathscr{H}^1_{\mathbb{A}_0}$ in $\mathscr{K}_{\mathbb{A}_0 , k} = T_{\mathbb{A}_0} \mathscr{B}_{E , k}^\ast$
\item $W$ is the tangent space to $\mathscr{Z}$ (taken to be ${0}$ if $[\mathbb{A}_0 ] \notin \mathscr{Z}$)
\item $V$ is the $L^2$--orthogonal complement of $W$ in $\mathscr{H}^{1}_{\mathbb{A}_0}$. 
\end{itemize}
We also let $U^\prime$ be the $L^2$ orthogonal complement of $\mathscr{H}^{1}_{\mathbb{A}_0}$ in $\mathscr{K}_{\mathbb{A}_0 , k-1}$. We thus have $L^2$ orthogonal decompositions
\[
\mathscr{K}_{\mathbb{A}_0 , k } = U \oplus V \oplus W \quad , \quad \mathscr{K}_{\mathbb{A}_0 , k-1} = U^\prime \oplus V \oplus W .
\]
By the finite-dimensional Kuranishi model for $S$ (Corollary \ref{corollary:kuranishi}) and (A1) we may assume $S$ is in standard form:
\[
S(u,v,w) =\frac{1}{2} \langle \mathscr{D}_{(A,\Phi )} u , u \rangle + \mathrm{constant} ,
\] 
where now $\mathscr{D}_{\mathbb{A}_0}$ gives an isomorphism from $U$ to $U^\prime $ rather than from $U$ to $U$ (this doesn't bring in any additional difficulties). By (A2), a further change of coordinates ensures that $\| \Phi \|_{L^2}^2$ is in standard form on the slice $0 \times V \times W$. The proofs of Lemmas \ref{lemma:convergence}-\ref{lemma:leading} now go through with almost identical notation, using the Implicit Function Theorem for Hilbert spaces.

\textit{Spectral flow calculation.} Proposition \ref{prop:localpert} also goes through using (A3), (A4) and (A5), except the index assertion no longer makes sense (since the spectrum of the Hessians of $S$ and $S_\varepsilon$ is now unbounded in both directions). This is modified as follows. For $0 < |\varepsilon| \leq \varepsilon_0$, the configuration $[\mathbb{A}_\varepsilon ] = F_\varepsilon ( [\mathbb{A}_0 ] )$ is the unique critical point of $S_\varepsilon$ in a small neighborhood $\mathscr{U}_0$ of $\mathbb{A}_0$, and is non-degenerate. Fix a representative $\mathbb{A}_0$ of $[\mathbb{A}_0 ]$ and take the unique representative $\mathbb{A}_\varepsilon$ of $[\mathbb{A}_\varepsilon ]$ contained in the stable Coulomb slice $\mathcal{S}_{\mathbb{A}_0 , k}$ at $\mathbb{A}_0$ (cf. Proposition \ref{proposition:slices}). Consider the path of operators $\mathscr{D}_{\mathbb{A}_t}^t$ for $t$ going from $t = 0$ to $t = \varepsilon$. The argument from the proof of the index assertion in Proposition \ref{prop:localpert}, together with the intrinsic description of the `leading order term' as the function $\mathfrak{f}$, shows that the mod 2 spectral flow of the path $\mathscr{D}_{\mathbb{A}_t}^t$ is given by
\begin{align*}
\mathrm{ind}(\pm \| \Phi \|_{L^2}^{2}|_{\mathscr{M}_{E}^\ast }, [\mathbb{A}_0 ] ) + \mathrm{ind}(\mathfrak{f} , [\mathbb{A}_0 ]).
\end{align*}
But observe that the path $\mathscr{D}_{\mathbb{A}_t}^t$ is homotopic to the concatenation of the following two paths:
\begin{itemize}
\item The path $\mathscr{D}_{\mathbb{A}_t}$ from $\mathscr{D}_{\mathbb{A}_0}$ to $\mathscr{D}_{\mathbb{A}_\varepsilon }$. All operators in this path have $\mathbb{C}$-linear kernel, hence there is no mod $2$ spectral flow. 
\item The path $\mathscr{D}_{\mathbb{A}_\varepsilon}^t$ from $\mathscr{D}_{\mathbb{A}_\varepsilon}$ to $\mathscr{D}_{\mathbb{A}_\varepsilon}^\varepsilon$.
\end{itemize}
Hence the mod $2$ spectral flow of the path $\mathscr{D}_{\mathbb{A}_\varepsilon}^t$ agrees with the required quantity, and this completes the proof.
\end{proof}

\begin{proof}[Proof of Theorem \ref{theorem:perturbation2general}]

Suppose no $\varepsilon_0 >0$ exists making the first assertion in (1) hold. Then we can find a sequence $[\mathbb{A}_n = (A_n , \Phi_n)] \in \mathscr{B}_{E,k}^\ast$ and $\varepsilon_n \neq 0$ with $\varepsilon_n \to 0$, such that
\begin{align*}
& [\mathbb{A}_n ]\in \mathscr{M}_{E, k}^{\varepsilon_n , \ast }\\
& [\mathbb{A}_n ] \notin \mathscr{U} \\
& \| \Phi \|_{L^\infty } \leq C \\
& [\mathbb{A}_n ] \notin \mathscr{U}_{\mathrm{triv}}.
\end{align*}
 By Proposition \ref{proposition:compactness}, the function $\| \Phi \|_{L^\infty }$ is proper when restricted to the union of the moduli spaces $\mathscr{M}_{E , k}^\varepsilon$ with $\varepsilon$ taking values in any compact interval. Thus, after applying a sequence of unitary gauge transformations and passing to a subsequence, we may assume that $\mathbb{A}_n$ converges in $L^{2}_k$ to a configuration $\mathbb{A} = (A, \Phi )$, and necessarily $[(A, \Phi ) ] \in \mathscr{M}_{E}$.
 
If $\mathbb{A}$ is stable, then by the infinite-dimensional analogue of Lemma \ref{lemma:convergence} (as explained above, this is established following the same reasoning as in the finite-dimensional case, using now the finite-dimensional Kuranishi models for $S$ from Corollary \ref{corollary:kuranishi}) we have that $[\mathbb{A}] \in \mathscr{Z} \subset \mathscr{U}$, which contradicts the fact that $\mathbb{A}_n \notin \mathscr{U}$. If $\mathbb{A}$ is not stable, then there exists a $U(1)$ bundle with connection $(L, B )$ and $\phi \in \Omega^1 (Y, i \mathbb{R} )$ such that 
\begin{align}
& E = L \oplus L^{-1} \Lambda \quad , \quad A = B \oplus B^{-1}\lambda \quad , \quad \Phi = \begin{pmatrix}  \phi & 0 \\ 0 &  -\phi \end{pmatrix} \nonumber \\
& F_B - \frac{1}{2}F_\lambda = 0 \quad , \quad d\phi = 0 \quad , \quad d^\ast \phi = 0 .\label{reducibleeq}
\end{align}
From the first equation in (\ref{reducibleeq}) it follows that $c_1 (E)$ is an even class in $H^{2}(Y, \mathbb{Z} )/\mathrm{Torsion}$. Since $E \to Y$ is an admissible bundle then the only possibility is that $Y$ is a homology $3$-sphere and both $L$ and $\Lambda$ are trivial. 
Then $\varphi = 0$ by the second and third equation in (\ref{reducibleeq}), and the first equation has a unique solution modulo $U(1)$ gauge transformations: the trivial connection. 
This contradicts the fact that $\mathbb{A}_n \notin \mathscr{U}_\mathrm{triv}$.

Thus, there is $\varepsilon_0 >0$ such that the first assertion in (1) holds. For the second assertion in (1) we only need to shrink $\varepsilon_0 >0$ further so as to ensure that for $0 < |\varepsilon |\leq \varepsilon_0$ the set $\mathrm{Crit}S_\varepsilon \cap \mathscr{U}$ is contained in a neighborhood $\mathscr{U}_0$ of $\mathscr{Z} \subset \mathscr{B}_{E , k}^\ast$ such that Theorem \ref{theorem:perturbation1general} holds for $\mathscr{U}_0$ and $\varepsilon_0$. For (2), by Theorem \ref{theorem:perturbation1general} we then have: if $\mathscr{Z} = \cup \mathscr{Z}_i$ stands for the decomposition into connected components and $\pm \varepsilon >0$ then
\[
\sum_{[\mathbb{A} ]\in \mathrm{Crit}S_\varepsilon \cap \mathscr{U} } (-1)^{ \mathbf{sf}_{2}(\mathscr{D}_{\mathbb{A}}^t )} = \sum_{i} (-1)^{ \mathrm{ind}(\pm \| \Phi \|_{L^2}^{2}|_{\mathscr{M}_{E}^\ast  }  , \mathscr{Z}_i )} \cdot \chi ( \mathscr{Z}_i ) .
\]
Note that $\| \Phi \|_{L^2}^2$ is a proper and bounded below function on $\mathscr{M}_{E}$, and hence also on $\mathscr{M}_{E}^\ast$ since $E$ is an admissible bundle. It follows that the right-hand side of the above equation computes the Euler characteristic $\chi (\mathscr{M}_{E}^\ast )$ when $\varepsilon >0$ and the compactly-supported Euler characteristic $\chi_c  ( \mathscr{M}_{E}^\ast )$ when $\varepsilon <0$. But observe that both Euler characteristics are identical, since $\mathscr{M}_{E}^\ast$ has \textit{even} real dimension. This completes the proof.
\end{proof}

\subsubsection{The exact case}

So far we have been working with the functional $S = \mathrm{Re}(e^{-i \theta} S )$ for an arbitrary $\theta \in \mathbb{R}/2\pi\mathbb{Z}$. The values $\theta = \pm \pi/2 $ modulo $2\pi$ (cf. \ref{ImCS}) are special since then the functional $S$ is exact, i.e. it has no periods under the action of complex or unitary gauge transformations, and thus defines an honest $\mathbb{R}$-valued function on $\mathscr{B}_{E,k}^\ast $. 

In this case the family of operators $\mathscr{D}_p = \mathscr{D}_{[\mathbb{A}]}^\varepsilon + \delta$ parametrised by $p = ([\mathbb{A}] , \varepsilon , \delta ) \in P := \mathscr{B}_{E,k}^\ast \times \mathbb{R}\times \mathbb{R}$ has the following property:
\begin{lemma}\label{lemma:nospectral}
Suppose $\theta = \pi/2 $ or $3\pi/2$. Let $\gamma : [0,1] \to P$ be a path such that the endpoints $\mathscr{D}_{\gamma(0)}$, $\mathscr{D}_{\gamma(1)}$ are invertible. Then the spectral flow $\mathbf{sf}(\mathscr{D}_{\gamma (t)})$ only depends on $\gamma(0)$ and $\gamma(1)$.
\end{lemma}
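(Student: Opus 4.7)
The plan is to reduce path-independence of spectral flow to the vanishing of spectral flow around closed loops in $P$, and then exploit the pluriharmonicity of $S_\theta$ established in Proposition \ref{proposition:pluriharmonic}. Two paths $\gamma_0, \gamma_1$ in $P$ with the same invertible endpoints give spectral flows differing by $\mathbf{sf}$ around the concatenated loop $\gamma_0 \cdot \overline{\gamma_1}$, and the latter is a homotopy-invariant integer for any family of self-adjoint Fredholm operators, so it suffices to show that the spectral flow around every loop $\gamma : S^1 \to P$ is zero.

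First I would use homotopy invariance of spectral flow around loops to reduce to the unperturbed family. Since $P = \mathscr{B}_{E,k}^\ast \times \mathbb{R}^2$ and the $\mathbb{R}^2$ factor is contractible, the straight-line homotopy $(t,s) \mapsto ([\mathbb{A}(t)], (1-s)\varepsilon(t), (1-s)\delta(t))$ contracts $\gamma$ through loops in $P$ to $\gamma'(t) = ([\mathbb{A}(t)], 0, 0)$. Throughout the homotopy the operators remain self-adjoint Fredholm, by Theorem \ref{theorem:fredholm} together with the fact that the $\varepsilon$- and $\varepsilon^2$-contributions to the Hessian and the $\delta$-shift are compact/bounded perturbations of $\mathscr{D}^0$. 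Hence $\mathbf{sf}(\mathscr{D}_{\gamma(t)}) = \mathbf{sf}(\mathscr{D}^0_{[\mathbb{A}(t)]})$, where the right-hand side depends only on the projected loop in $\mathscr{B}_{E,k}^\ast$.

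Then I would apply Proposition \ref{proposition:pluriharmonic}, which gives $\langle \mathscr{D}^0 v,w\rangle + \langle \mathscr{D}^0 Jv, Jw\rangle = 0$, i.e. pointwise $J^{-1}\mathscr{D}^0_{[\mathbb{A}]}J = -\mathscr{D}^0_{[\mathbb{A}]}$. Because $J$ is a parallel isometry of the Kähler structure on $\mathscr{B}_{E,k}^\ast$ (Corollary \ref{corollary:Bhilbert}), pointwise conjugation by $J$ gives a Hilbert-bundle isomorphism intertwining the family $\mathscr{D}^0_{[\mathbb{A}(t)]}$ with its pointwise negative. Two elementary properties of spectral flow (invariance under pointwise unitary conjugation, and sign-flip under $L \mapsto -L$) then combine to give
\[
\mathbf{sf}\big(\mathscr{D}^0_{[\mathbb{A}(t)]}\big) \;=\; \mathbf{sf}\big(J^{-1}\mathscr{D}^0_{[\mathbb{A}(t)]}J\big) \;=\; \mathbf{sf}\big(-\mathscr{D}^0_{[\mathbb{A}(t)]}\big) \;=\; -\mathbf{sf}\big(\mathscr{D}^0_{[\mathbb{A}(t)]}\big),
\]
forcing the integer to vanish.

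The main obstacle I expect is justifying the two spectral-flow axioms used in the last display in our infinite-dimensional setting, where the spectrum of $\mathscr{D}^0$ is unbounded in both directions. Both are standard for continuous families of self-adjoint Fredholm operators with compact resolvent (which we have by Theorem \ref{theorem:fredholm}), but turning the pointwise identity $J^{-1}\mathscr{D}^0 J = -\mathscr{D}^0$ into a global unitary equivalence of families compatible with the parallel transport used to define spectral flow requires checking that $J$ descends to the Hilbert bundle $\mathscr{K}_k \to \mathscr{B}_{E,k}^\ast$ as a parallel isometry — which is exactly what the Kähler quotient construction of Section \ref{subsection:orbitspaces} delivers.
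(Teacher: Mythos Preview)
Your argument is correct and takes a genuinely different route from the paper. The paper reduces to loops as you do, but then invokes the standard identification of spectral flow around a loop with the index of an associated Fredholm operator on $S^1 \times Y$ (as in \cite[Lemma 14.4.6]{KM}) and cites \cite[Proposition 4.1]{mazzeo-witten} for the vanishing of that index. Your argument instead contracts the $\mathbb{R}^2$ factor to reduce to the unperturbed family $\mathscr{D}^0$, and then uses the symmetry $J^{-1}\mathscr{D}^0 J = -\mathscr{D}^0$ coming from Proposition~\ref{proposition:pluriharmonic} to force the spectral flow to equal its own negative. This is more elementary and entirely self-contained: it avoids both the four-dimensional index theory and the external citation, and (since Proposition~\ref{proposition:pluriharmonic} holds for every $\theta$) it actually proves the statement without the restriction $\theta = \pm\pi/2$. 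The paper's approach, on the other hand, situates the result within the standard spectral-flow/index-theory framework, which is the natural setting if one later wants to compare with invariants defined on cylinders or mapping tori. Your concern about the conjugation step is well-placed but easily resolved: since spectral flow around a loop depends only on the spectral data of the family, pointwise unitary equivalence (with a $t$-dependent unitary $J_t$) already gives $\mathbf{sf}(J_t^{-1}\mathscr{D}^0_t J_t) = \mathbf{sf}(\mathscr{D}^0_t)$, so no parallelism of $J$ is actually needed.
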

\begin{proof}
It suffices to show that if $\gamma$ is a generic loop in $P$ based at a point $\gamma (0) = \gamma(1) = p$ such that $\mathscr{D}_p$ is invertible, then the spectral flow along $\gamma$ vanishes:
\[
\mathbf{sf}(\mathscr{D}_{\gamma (t)}) = 0.
\]
This follows by a standard argument identifying the spectral flow along the loop with the index of a suitable Fredholm operator on $S^1 \times Y$ (see e.g. \cite[Lemma 14.4.6]{KM}). By \cite[Proposition 4.1]{mazzeo-witten}, said operator on $S^1 \times Y$ has vanishing index. 
\end{proof}
We can now attempt to attach an \textit{integral} `Morse index' to the critical points of $S_\varepsilon$ using the spectral flow (as opposed to only a $\mathbb{Z}/2$ index). This is explained in the following refinement of Theorem \ref{theorem:perturbation2general}. Below, $\mathcal{P}_T (\mathscr{M}_{E}^\ast )$ (resp. $\mathcal{P}_{T}^c (\mathscr{M}_{E}^\ast )$) stands for the Poincaré polynomial (resp. compactly-supported) Poincaré polynomial of $\mathscr{M}_{E}^\ast$. 

\begin{theorem}\label{theorem:perturbation2_exact}
Suppose that $\theta = \pi/2$ or $3\pi/2$. Assume (A1)-(A6) hold. Let $\mathscr{Z} = \cup_i \mathscr{Z}_i$ denote the decomposition into connected components. For each $i$ let $\mathscr{U}_i \subset \mathscr{B}_{E,k}^\ast$ be an open subset, and $C>0$ a constant, such that
\begin{itemize}
    \item $\mathscr{Z}_i \subset \mathscr{U}_i \subset \{  \| \Phi \|_{L^\infty}  < C \} . $
    \item $\mathscr{U}_i \cap \mathscr{U}_j = \emptyset$ for $i \neq j$.
\end{itemize}
Set $\mathscr{U} = \cup_i \mathscr{U}_i$. When $Y$ is a homology $3$-sphere let $\mathscr{U}_{\mathrm{triv}}$ be a given open neighborhood in $\mathscr{B}_{E,k}$ of the trivial connection $[\mathbb{A}_{\mathrm{triv}}]$ which is disjoint from $\mathscr{U}$.

Then there exists a constant $\varepsilon_0 = \varepsilon_0 (Y, g, E, q_i , f , \mathscr{U} , C ) > 0$ such that the following hold for $0 < |\varepsilon|\leq \varepsilon_0$:
\begin{enumerate}
\item If $[\mathbb{A} = (A, \Phi )] \in \mathrm{Crit}S_\varepsilon$ is a critical point of $S_\varepsilon : \mathscr{B}_{E,k}^\ast \to \mathbb{R}$ then either $[\mathbb{A}] \in \mathscr{U}$ or $[\mathbb{A}]\in \mathscr{U}_{\mathrm{triv}} \setminus \{[ \mathbb{A}_\mathrm{triv}] \}$ or $\| \Phi \|_{L^\infty} > C$. Furthermore, the critical points of $S_\varepsilon$ contained in $\mathscr{U}$ are non-degenerate and comprise a finite set.
\item If $[\mathbb{A}] $ is a critical point of $S_\varepsilon$ in $\mathscr{U}_i$ then there exists a path $[\mathbb{A}_t]$ defined for $t$ going from $t = 0$ to $t = \varepsilon$ with $[\mathbb{A}_0 ] \in \mathscr{Z}_i$, $[\mathbb{A}_1] = [\mathbb{A}]$ and $[\mathbb{A}_t]\in \mathscr{U}_i$ for all $t$; fix one such path for each critical point in $\mathscr{U}$. Let $\delta = \delta ( \varepsilon )>0$ be a constant small enough so that the Hessian $\mathscr{D}$ at every point of $\mathscr{Z}$ has spectral gap $> \delta$, and so does the perturbed Hessian $\mathscr{D}^\varepsilon$ at every point in $ \mathrm{Crit}(S_\varepsilon )\cap \mathscr{U}$. For each critical point $[\mathbb{A}]$ of $S_\varepsilon$ in $\mathscr{U}$ we assign an integral `Morse index' given by the spectral flow of the path of operators $\mathscr{D}_{\mathbb{A}_t}^t + \delta$ with $t$ going from $t = 0$ to $t = \varepsilon$. 

Define the following Laurent polynomial with non-negative integer coefficients:
\[
M_{T} := \sum_{[\mathbb{A}] \in \mathrm{Crit}(S_\varepsilon ) \cap \mathscr{U}} T^{- \mathbf{sf}(\mathscr{D}_{\mathbb{A}_t}^t + \delta )} \in \mathbb{Z}[T , T^{-1}] . 
\]
Then, evaluating $M_T$ at $T = 1$ gives $\chi (\mathscr{M}_{E}^\ast )$, and we have a `Morse inequality':
\[
M_T \geq \begin{cases}
       \mathcal{P}_T ( \mathscr{M}_{E}^\ast ) &\quad\text{if } \varepsilon >0 \\
       \mathcal{P}^{c}_T ( \mathscr{M}_{E}^\ast ) &\quad\text{if } \varepsilon < 0 .\\
     \end{cases}
\]
(Here, $\geq$ is meant as an inequality involving the individual coefficients in the Laurent polynomial).
\end{enumerate}
\end{theorem}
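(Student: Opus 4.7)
The plan is to reduce the proof to finite-dimensional Morse--Bott theory applied to the pair of functions $(\|\Phi\|_{L^2}^2|_{\mathscr{M}_E^\ast}, \mathfrak{f})$, exploiting that in the exact case the integer spectral flow is path-independent by Lemma \ref{lemma:nospectral}. Assertion (1) follows verbatim from Theorem \ref{theorem:perturbation2general}(1), since neither Proposition \ref{proposition:compactness} nor Theorem \ref{theorem:perturbation1general} used exactness of $S$.

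For assertion (2), I first check that the integer spectral flow index is well-defined. By the choice of $\delta$, both endpoint operators $\mathscr{D}_{\mathbb{A}_0} + \delta$ (at $[\mathbb{A}_0] \in \mathscr{Z}$) and $\mathscr{D}_{\mathbb{A}}^{\varepsilon} + \delta$ (at the perturbed critical point $[\mathbb{A}]$) are invertible, and Lemma \ref{lemma:nospectral} then guarantees that the spectral flow depends only on these endpoints, and in particular is independent of the chosen interpolating path $[\mathbb{A}_t]$ inside $\mathscr{U}_i$.

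Next I carry out the local spectral flow computation. In the Kuranishi coordinates $U \oplus V \oplus W$ of Corollary \ref{corollary:kuranishi} at $[\mathbb{A}_0]$, with $V\oplus W = \mathscr{H}^1_{\mathbb{A}_0}$ and $W = T_{[\mathbb{A}_0]}\mathscr{Z}$, an argument parallel to the proof of Proposition \ref{prop:localpert} shows that the small eigenvalues of $\mathscr{D}_{\mathbb{A}_\varepsilon}^\varepsilon$ separate into $V$-eigenvalues of size $\varepsilon\nu + O(\varepsilon^2)$, with $\nu$ ranging over the eigenvalues of $\mathrm{Hess}(\|\Phi\|_{L^2}^2|_{\mathscr{M}_E^\ast})$ normal to $\mathscr{Z}$, and $W$-eigenvalues of size $\varepsilon^2\mu + O(\varepsilon^3)$, with $\mu$ ranging over the eigenvalues of $\mathrm{Hess}(\mathfrak{f})$ along $W$. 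Choosing $\delta = \delta(\varepsilon)$ small enough to isolate all of these crossings from the rest of the spectrum, a signed count of zero crossings as $t$ varies from $0$ to $\varepsilon$ yields
\[
-\mathbf{sf}\bigl(\mathscr{D}_{\mathbb{A}_t}^t + \delta\bigr) \;=\; \mathrm{ind}\bigl(\mathrm{sign}(\varepsilon)\cdot\|\Phi\|_{L^2}^2|_{\mathscr{M}_E^\ast},\, [\mathbb{A}_0]\bigr) \,+\, \mathrm{ind}\bigl(\mathfrak{f},\, [\mathbb{A}_0]\bigr),
\]
whose mod $2$ reduction recovers the formula in Theorem \ref{theorem:perturbation1general}.

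Finally I assemble the inequality. With $k_i^{\pm} := \mathrm{ind}(\pm \|\Phi\|_{L^2}^2|_{\mathscr{M}_E^\ast}, \mathscr{Z}_i)$ constant along each connected component $\mathscr{Z}_i$, the strong Morse inequalities for $\mathfrak{f}$ on the compact manifold $\mathscr{Z}_i$ yield
\[
M_T \;=\; \sum_i T^{k_i^{\mathrm{sign}(\varepsilon)}} \sum_{[\mathbb{A}]\in\mathrm{Crit}(\mathfrak{f})\cap\mathscr{Z}_i} T^{\mathrm{ind}(\mathfrak{f},[\mathbb{A}])} \;\geq\; \sum_i T^{k_i^{\mathrm{sign}(\varepsilon)}}\,\mathcal{P}_T(\mathscr{Z}_i).
\]
A second application of the Morse--Bott inequalities --- to $\|\Phi\|_{L^2}^2$ (proper and bounded below) on $\mathscr{M}_E^\ast$ when $\varepsilon > 0$, or to $-\|\Phi\|_{L^2}^2$ (whose Morse--Bott theory detects compactly supported cohomology, exactly as used in the finite-dimensional Theorem \ref{theorem:perturbation2}) when $\varepsilon < 0$ --- bounds the right-hand side from below by $\mathcal{P}_T(\mathscr{M}_E^\ast)$ or $\mathcal{P}_T^c(\mathscr{M}_E^\ast)$ respectively. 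Specialization to Euler characteristics (setting $T = -1$) collapses $M_T$ to $\sum (-1)^{\mathbf{sf}_2(\mathscr{D}_{\mathbb{A}}^t)} = \chi(\mathscr{M}_E^\ast)$ by Theorem \ref{theorem:perturbation2general}(2), confirming consistency. The main obstacle is the integer spectral flow calculation: the mod $2$ version of Proposition \ref{prop:localpert} came cleanly from the Kuranishi model, but the integer refinement requires careful tracking of signs of eigenvalue crossings uniformly in $[\mathbb{A}_0]\in\mathscr{Z}$, together with a delicate interplay between $\delta(\varepsilon)$ and the separate $\varepsilon$- and $\varepsilon^2$-scales of the two families of small eigenvalues.
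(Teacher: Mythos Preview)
Your proposal is correct and follows essentially the same route as the paper: reduce (1) to Theorem~\ref{theorem:perturbation2general}, use Lemma~\ref{lemma:nospectral} for path-independence of the integer spectral flow, lift the index calculation of Proposition~\ref{prop:localpert} to $\mathbb{Z}$ to obtain $-\mathbf{sf}(\mathscr{D}_{\mathbb{A}_t}^t + \delta) = \mathrm{ind}(\pm\|\Phi\|_{L^2}^2|_{\mathscr{M}_E^\ast},[\mathbb{A}_0]) + \mathrm{ind}(\mathfrak{f},[\mathbb{A}_0])$, and then chain the Morse inequalities for $\mathfrak{f}$ on each $\mathscr{Z}_i$ with the Morse--Bott inequalities for $\pm\|\Phi\|_{L^2}^2$ on $\mathscr{M}_E^\ast$. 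Your eigenvalue-scale description ($\varepsilon\nu$ on $V$, $\varepsilon^2\mu$ on $W$) makes explicit what the paper leaves implicit in its appeal to Proposition~\ref{prop:localpert}, and your observation that the Euler-characteristic specialization requires $T=-1$ rather than $T=1$ is the right one.
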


\begin{proof}
We only need to prove (2), as the rest is identical to Theorem \ref{theorem:perturbation2general}. For a critical point $[\mathbb{A}] \in \mathrm{Crit}S_\varepsilon \cap \mathscr{U}_i$, the index calculation of Proposition \ref{prop:localpert} now gives the following identity over $\mathbb{Z}$ (rather than over $\mathbb{Z}/2$, compare with Theorem \ref{theorem:perturbation1general}): for $\pm \varepsilon >0$
\[
- \mathbf{sf}(\mathscr{D}_{\mathbb{A}_t}^t + \delta) = \mathrm{ind}(\pm \| \Phi \|_{L^2}^2, [\mathbb{A}_0] ) + \mathrm{ind}(\mathfrak{f}, [\mathbb{A}_0] )
\]
where $[\mathbb{A}_{t}]$ is the path provided by Theorem \ref{theorem:perturbation1general}, with $t$ going from $t = 0$ to $t = \varepsilon$, and $[\mathbb{A}_0] \in \mathscr{Z}_i$. By Lemma \ref{lemma:nospectral}, we may replace $[\mathbb{A}_t]$ by any other path with the same endpoints. Furthermore, the family of operators $\mathscr{D}_{[\mathbb{A}_0]} + \delta$ has no spectral flow as we vary $[\mathbb{A}_0 ]$ in the component $\mathscr{Z}_i$ since $\mathscr{Z}_i$ belongs in the Morse--Bott critical locus $\mathscr{M}_{E}^\ast$. 

We thus have: if $\pm \varepsilon >0$ then
\[
M_T := \sum_{[\mathbb{A} ]\in \mathrm{Crit}S_\varepsilon \cap \mathscr{U} } T^{- \mathbf{sf}(\mathscr{D}_{\mathbb{A}_t}^t )} = \sum_{i} T^{ \mathrm{ind}(\pm \| \Phi \|_{L^2}^{2}|_{\mathscr{M}_{E}^\ast  }  , \mathscr{Z}_i )} \cdot M_T ( \mathscr{Z}_i, \mathfrak{f} ) .
\]
Here $M_T ( \mathscr{Z}_i , \mathfrak{f} )$ is the Morse polynomial of the Morse function $\mathfrak{f}$ on $ \mathscr{Z}_i $, and the Morse inequalities say $M_{T} ( \mathscr{Z}_i , \mathfrak{f} ) \geq \mathcal{P}_T ( \mathscr{Z}_i )$, thus:
\[
M_T \geq \sum_{i} T^{ \mathrm{ind}(\pm \| \Phi \|_{L^2}^{2}|_{\mathscr{M}_{E}^\ast  }  , \mathscr{Z}_i )} \cdot \mathcal{P}_T ( \mathscr{Z}_i  ).
\]
Now, the Morse--Bott inequalities (see e.g. \cite[\S 1]{atiyah-bott}) for the proper and bounded below Morse--Bott function $\| \Phi \|_{L^2}^2$ on $\mathscr{M}_{E}^\ast$ give:
\[
\sum_{i} T^{ \mathrm{ind}(\pm \| \Phi \|_{L^2}^{2}|_{\mathscr{M}_{E}^\ast  }  , \mathscr{Z}_i )} \cdot P_T ( \mathscr{Z}_i  ) \geq \mathcal{P}_{T} ( \mathscr{M}_{E}^\ast ) \text{  or  } \mathcal{P}_{T}^c ( \mathscr{M}_{E}^\ast )
\]
according to whether $\varepsilon >0$ or $\varepsilon < 0$. Putting everything together, this concludes the proof. 
\end{proof}

\subsection{The case of Seifert-fibered $3$-manifolds}

Let $Y = S(N) \to C$ be a closed, oriented, Seifert-fibered $3$-manifold over a closed oriented $2$-orbifold $C$, and equip $Y$ with a Seifert metric $g= \eta^2  + g_C $. We equip $C$ with the holomorphic structure induced from $g_C$. 

\subsubsection{The locus $\mathscr{Z}$} We will now establish the Morse--Bott property of the locus $\mathscr{Z} = \mathscr{Z}_{E}(g)$ given as the critical locus of the restriction of the functional $\| \Phi \|_{L^{2}}^2$ to the moduli space $\mathscr{M}_{E}^\ast (Y, g)$ of stable projectively flat $GL(2, \mathbb{C})$ connections on $E$ with fixed determinant $(\Lambda , \lambda ) $. 

We will use the following notation:
\begin{itemize}
\item By Theorem \ref{theorem:dimensionalreduction} and Remark \ref{remark:tensoring} we may suppose that $(\Lambda , \lambda )$ pulls back from $C$. Thus, fix an orbifold $U(1)$ bundle with connection $(\Lambda_0 , \lambda_0 )$ on $C$, with an identification $\Lambda^2 E \cong \pi^\ast \Lambda_0$. 

 \item $\mathscr{N}_{E}^{\ast} (Y)= \mathscr{M}_{E}^\ast (Y, g ) \cap \{ \Phi = 0\}$ denotes the moduli space of irreducible projectively flat $U(2)$ connections on $E$ with fixed determinant $(\Lambda , \lambda )$. A similar notation, namely $\mathscr{N}_{E_0}^\ast (C)$, is used for the corresponding moduli spaces on an orbifold bundle $E_0$ on $C$, which can now be identified with the moduli space of stable holomorphic structures on $E_0$ with fixed determinant.
 \item For an orbifold complex line bundle $L_0 $ on $C$, $\mathcal{D}_{L_0} (C)$ denotes the moduli space of effective orbifold divisors on $L_0$: pairs $( \mathscr{L}_0 , s )$ where $\mathscr{L}_0$ is an orbifold holomorphic structure on $L_0 \to C$ and $s$ is a holomorphic section of $\mathscr{L}_0$, modulo the action of the complex automorphisms of $L_0$. It is a standard fact that this moduli space is smooth (because $C$ has complex dimension one) and is identified as the symmetric product $\mathcal{D}_{L_0}(C) \cong \mathrm{Sym}^d |C|$ of $|C|$, where $d$ is the degree of the ordinary line bundle given by the desingularisation $|L_0| \to |C|$ \cite[Proposition 2.0.14]{MOY}.
\end{itemize}
 \begin{proposition}\label{proposition:Zmorsebott}
The locus $\mathscr{Z} = \mathrm{Crit}( \| \Phi \|_{L^2}^{2}|_{\mathscr{M}_{E}^\ast (Y)} ) \subset \mathscr{M}_{E}^\ast (Y,g)$ is Morse--Bott and can be described as the disjoint union $\mathscr{Z} = $

\begin{gather*}
     \Big( \bigsqcup_{E_0 \, , \, \Lambda^2 E_0 \cong \Lambda_0 } \mathscr{Z}_{E_0}^{0} \Big) \sqcup   \Big( \bigsqcup_{E_0 \, , \, \Lambda^2 E_0 \cong \Lambda_0 \otimes N } \mathscr{Z}_{E_0}^{1} \Big) \\
\\
 \sqcup  \Big( \bigsqcup_{L_0 \, , \, 0 < \mathrm{deg}L_0 - \frac{1}{2} \mathrm{deg}  \Lambda_0  \leq \frac{1}{2} \mathrm{deg}K_C } \mathscr{Z}_{L_0}^{0} \Big)  \sqcup   \Big( \bigsqcup_{0 < \mathrm{deg}L_0 - \frac{1}{2} \mathrm{deg} ( \Lambda_0 \otimes N ) \leq \frac{1}{2} \mathrm{deg}K_C} \mathscr{Z}_{L_0}^{1} \Big).
\end{gather*}
where:
\begin{enumerate}
\item $\mathscr{Z}_{E_0}^{k} = \mathscr{N}_{E_0}^\ast (C )$ is the moduli space of stable holomorphic orbifold structures on $E_0 $ with fixed determinant $\Lambda_0 \otimes N^{k}$. (These are the minima of $\| \Phi \|_{L^2}^2$ on $\mathscr{M}_{E}^\ast (Y)$).
\item $\mathscr{Z}_{L_0}^{k}$ is a suitable $2^{2g}$--fold cover of the moduli space of effective orbifold divisors $\mathcal{D}_{L_{0}^{-2}\Lambda_0 N^{k} K}$ in the orbifold line bundle $L_{0}^{-2}\Lambda_0 N^{k} K$, where $g$ is the genus of $C$. The Morse--Bott index of $\mathscr{Z}_{L_0}^k$ is given by $2 \mathrm{dim}_{\mathbb{C}} H^0 (C , L_{0}^2 \Lambda_{0}^{-1} N^{-k} K )$.
\end{enumerate}

\end{proposition}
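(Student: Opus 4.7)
Plan. The starting point is the Dimensional Reduction Theorem \ref{theorem:dimensionalreduction}, which identifies $\mathscr{M}_{E}^\ast(Y,g)$ biholomorphically with a disjoint union of moduli spaces $\mathscr{M}_{E_0}^{\ast,\mathrm{Higgs}}(C)$ of stable Higgs bundles on the orbifold Riemann surface $C$, the splitting labelled by the index $k\in\{0,1\}$ recording whether $\det E_0 \cong \Lambda_0$ or $\det E_0 \cong \Lambda_0\otimes N$. Under this identification the Higgs field $\Phi$ on $Y$ reduces (by the vanishing $\alpha=0$ in Corollary \ref{corollary:vanishing}) to a pullback $\Phi = \pi^\ast \Psi$ of a $\mathfrak{g}_{E_0}$--valued $1$-form on $C$, so $\|\Phi\|_{L^2(Y)}^2 = \mathrm{Vol}(S^1)\cdot \|\Psi\|_{L^2(C)}^2$. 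Thus the function whose critical points I must describe is a positive multiple of Hitchin's $L^2$--norm of the Higgs field on each component of the Higgs moduli space on $C$.

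The heart of the argument is Hitchin's theorem (\cite[Theorem 7.1]{hitchin}, extended to the parabolic/orbifold setting by Nasatyr--Steer and Boden--Yokogawa): $\|\Psi\|_{L^2(C)}^2$ is a proper, perfect Morse--Bott function on $\mathscr{M}_{E_0}^{\ast,\mathrm{Higgs}}(C)$, equal to twice the moment map of the Hamiltonian $S^1$--action $(\mathscr{E},\theta)\mapsto (\mathscr{E},e^{i\alpha}\theta)$, and its critical locus is precisely the $S^1$--fixed locus. The fixed points are of two kinds. The first are Higgs bundles with $\theta = 0$, which constitute the moduli space $\mathscr{N}_{E_0}^\ast(C)$ of stable holomorphic orbifold bundles with the prescribed determinant; these are the absolute minima and contribute the components $\mathscr{Z}_{E_0}^k$. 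The second are \emph{Hodge bundles}: $\mathscr{E} = \mathscr{L}_0 \oplus \mathscr{L}_0^{-1}\Lambda_0 N^k$ with a nonzero Higgs field concentrated in the off-diagonal block $\mathscr{L}_0\to\mathscr{L}_0^{-1}\Lambda_0 N^k\otimes K$, giving a holomorphic section $s\in H^0(C,\mathscr{L}_0^{-2}\Lambda_0 N^k K)\setminus 0$.

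For the second family, the stability of $(\mathscr{E},\theta)$ is equivalent to $\deg\mathscr{L}_0 > \tfrac12 \deg(\Lambda_0 N^k)$ (else the destabilizing $\theta$--invariant subline bundle $\mathscr{L}_0$ would violate the parabolic slope inequality), while nonvanishing of $s$ forces $\deg(\mathscr{L}_0^{-2}\Lambda_0 N^k K)\geq 0$, i.e.\ $\deg\mathscr{L}_0 - \tfrac12\deg(\Lambda_0 N^k)\leq \tfrac12\deg K_C$. The data of the critical point consists of the line bundle $L_0 = \mathscr{L}_0$ together with $s$ up to the $\mathbb{C}^\ast$--automorphisms of $L_0$; sending $(L_0,[s])$ to the effective divisor $D=\mathrm{div}(s)$ produces a surjection onto $\mathcal{D}_{L_0^{-2}\Lambda_0 N^k K}$. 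Given $D$, the line bundle $L_0^{-2}\Lambda_0 N^k K$, and hence $L_0^{-2}$, is recovered, and $L_0$ is then determined up to tensoring by a square root of $\mathscr{O}_C$; there are $2^{2g}$ such square roots, giving the claimed $2^{2g}$--fold cover. The Morse--Bott non-degeneracy and the formula for the index follow from Hitchin's analysis of the Hessian along the $S^1$--action: the negative eigenspace is identified with twice the positive-weight part of $\mathbb{H}^1$ of the deformation complex $\mathrm{ad}(\mathscr{E}) \xrightarrow{[\theta,\cdot]} \mathrm{ad}(\mathscr{E})\otimes K$, which upon weight decomposition collapses (via $[\theta,\cdot]$ being injective on the weight-$(+1)$ line $L_0^{-2}\Lambda_0 N^k$) to the space $H^0(C,L_0^2\Lambda_0^{-1} N^{-k}K)$ of complex dimension appearing in the statement.

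The main obstacle I expect is a careful bookkeeping of the orbifold data: ensuring that the Nasatyr--Steer/Boden--Yokogawa extension of Hitchin's theorem, in particular the properness and perfection of the moment map and the computation of the weight-decomposed Hessian, goes through with the correct parabolic degrees and isotropy parameters, and verifying that the group of $2$-torsion line bundles producing the $2^{2g}$ cover is genuinely the one coming from the smooth part $|C|$ rather than the full orbifold Picard group (which, for the specific pairs $(\Lambda_0 N^k,K)$ entering here, is indeed the case since the parabolic weights at each orbifold point are fixed by the isotropy of $E$).
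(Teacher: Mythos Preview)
Your proposal is correct and follows essentially the same route as the paper: reduce via Theorem \ref{theorem:dimensionalreduction} to the Higgs moduli spaces on $C$, identify $\|\Phi\|_{L^2}^2$ with the moment map for the Hamiltonian $S^1$--action, and invoke Hitchin's description of the fixed locus (extended to the orbifold setting). The paper's only substantive difference is that, instead of citing the Hitchin/Nasatyr--Steer/Boden--Yokogawa correspondence to identify the $\Psi\neq 0$ fixed components with effective divisors, it works on the gauge-theoretic side: it writes the fixed-point equations as the abelian vortex system (\ref{fix1})--(\ref{fix2}) and establishes the diffeomorphism $\mathscr{Z}_{L_0}^k \cong \widetilde{\mathcal{D}}_{L_0^{-2}\Lambda_k K}(C)$ directly via the Kazdan--Warner equation. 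This is explicitly flagged in the paper as ``a variant of [Hitchin's] argument''; your approach and the paper's are two presentations of the same idea.

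One small slip to correct: in your stability discussion, the $\theta$--invariant subline bundle is not $\mathscr{L}_0$ but its complement $\mathscr{L}_0^{-1}\Lambda_0 N^k$ (the Higgs field maps $\mathscr{L}_0$ \emph{into} this complement, so $\mathscr{L}_0$ is not invariant, while $\mathscr{L}_0^{-1}\Lambda_0 N^k$ lies in $\ker\theta$). The stability inequality you state, $\deg\mathscr{L}_0 > \tfrac12\deg(\Lambda_0 N^k)$, is nonetheless correct --- it is exactly the condition that $\mathscr{L}_0^{-1}\Lambda_0 N^k$ does not destabilise. The paper instead derives this lower bound from the Chern--Weil formula applied to the curvature equation (\ref{fix1}), which is the gauge-theoretic incarnation of the same fact.
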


\begin{proof}
We combine the Dimensional Reduction Theorem \ref{theorem:dimensionalreduction} with an argument of Hitchin \cite[\S 7]{hitchin} (see also \cite{nasatyr-steer,boden-yokogawa}). Hitchin's argument makes use of the correspondence between moduli spaces of stable Higgs bundles and stable (projectively) flat connections on a Riemann surface (i.e. (\ref{eq1C}-\ref{eq3C}). We give a variant of his argument using basic properties of the Kazdan--Warner equation \cite{kazdan-warner} instead.


Consider an orbifold $U(2)$ bundle $E_0 \to C$ with fixed determinant given by either $(\Lambda_0 , \lambda_0 )$ or $(\Lambda_1 , \lambda_1 ) = (\Lambda_0 \otimes N , \lambda_0 \otimes i \eta )$. We consider both cases simultaneously by writing $(\Lambda_k , \lambda_k )$, and we regard $\Lambda_k$ as a holomorphic orbifold line bundle with the holomorphic structure induced by the connection $\lambda_k$. 
The moduli space $\mathscr{M}_{E_0}^\ast (C , g_C)$ of stable (projectively) flat connections has a \textit{natural $S^1$--action} defined as:\begin{align}
\mathbb{B} = (B, \Psi ) \mapsto (B , \cos \theta \cdot  \Psi + \sin \theta \cdot  \ast_C \Psi ) \quad , \quad \lambda = e^{i \theta} \in S^1 . \label{S1action}
\end{align}
where $\ast_C $ denotes the Hodge star operator of the orbifold metric $g_C$. It is easy to check that (\ref{S1action}) preserves the set of solutions of (\ref{eq1C}-\ref{eq3C}). The $S^1$--action on the moduli spaces $\mathscr{M}^{\ast}_{E_0}(C)$ then induces one on $\mathscr{M}_{E}^\ast (Y)$ by passing through the correspondence of Theorem \ref{theorem:dimensionalreduction}.
The crucial point is that the $S^1$--action on $\mathscr{M}_{E_0}^\ast (C,g_C )$ is Hamiltonian in the $\omega_I$ symplectic form (cf. (\ref{omegaI})), with moment map given by $\mu ( B , \Psi ) = \| \Psi \|_{L^{2}(C)}^2$ (times $-1/2$), as one easily verifies. From this it follows that $\mu$ is a Morse--Bott function on $\mathscr{M}_{E_0}^\ast (C,g_C )$ whose critical loci agree with the $S^1$--fixed points. Then by Theorem \ref{theorem:dimensionalreduction}, it follows that the restriction of $\|\Phi \|_{L^{2}(Y)}^2$ to $\mathscr{M}_{E}^\ast (Y,g)$ is also Morse--Bott.

We now describe the $S^1$--fixed points on $\mathscr{M}_{E_0}^\ast (C, g_C )$. Clearly, the locus $\Psi = 0$ consists of fixed points, and this accounts for the components of the form $\mathscr{Z}_{E_0}^k$. When $\Psi \neq 0$, the fixed points are given by stable configurations $(B, \Psi )$ solving (\ref{eq1C}-\ref{eq3C}) such that for any $\theta \in S^1$ there exists a unitary gauge transformation $u_\theta \in \mathscr{G}_{E_0}$ such that 
\[
u_\theta \cdot B = B \quad , \quad u_\theta \Psi u_{\theta}^{-1} = \cos \theta \cdot \Psi + \sin \theta \cdot \ast_C \Psi .
\]
Since $u_\theta \neq \pm I$ for $\theta \neq 0 , 2\pi , \ldots$, the first equation shows that $B$ reduces to a $U(1)$ connection: $E_0$ has a decomposition $E_0 = L_0 \oplus L_{0}^{-1} \Lambda_k $ and $B = b \oplus b^{-1} \lambda_k$ for a $U(1)$ bundle with connection $(L_0 , b )$ on $C$. The second equation then implies that the diagonal terms in $\Psi$ vanish,
\[
\Psi = \begin{pmatrix} 0 & - \sigma^\ast \\ \sigma & 0 \end{pmatrix}
\]
and $\sigma \in \Omega^1 ( C , L_{0}^{-2} \Lambda_k )$ satisfies $\ast_C \sigma = \pm i \sigma $. After possibly swapping the order of the summands in the decomposition of $E_0$, we may suppose that $\ast_C \sigma = -i \sigma $, i.e. $\sigma $ is a section of $L_{0}^{-2} \Lambda_k  K $ where $K $ is the orbifold canonical bundle of the orbifold Riemann surface $C$. The equations (\ref{eq1C}-\ref{eq3C}) for $(B, \Psi )$ are then easily seen to reduce to the following pair of equations on $(b , \sigma )$:
\begin{align}
 F_{b} - \frac{1}{2} F_{\lambda_k} & = - i |\sigma|^2 \omega_C \label{fix1}\\
 \overline{\partial}_{b} \sigma & = 0 \label{fix2}.
\end{align}

We denote by $\mathscr{Z}_{L_0} ^{k}$ the moduli space of pairs $(b,\sigma )$ with $\sigma \neq 0$ solving (\ref{fix1}-\ref{fix2}), modulo the natural action of the $U(1)$ gauge transformations $u$ of $L_0$ by $(b , \sigma ) \mapsto ( b - u^{-1}du , u \cdot \sigma )$. Already from the fact that the quadratic term in $\sigma$ in the equation (\ref{fix1}) is \textit{non-degenerate}, it follows by a standard argument that the moduli space $\mathscr{Z}_{L_0}^{k}$ is compact (see e.g. \cite[\S 5]{morgan}). If $(b , \sigma )$ lies in this moduli space then $b$ induces a holomorphic structure $\mathscr{L}_0$ on $L_0$ and $\sigma$ defines a non-trivial holomorphic section of $\mathscr{L}_{0}^{-2} \Lambda_k K$. Denote the space of such pairs $(\mathscr{L}_0 , \sigma )$ modulo the complex gauge transformations of $L_0$ by $\widetilde{\mathcal{D}}_{L_{0}^{-2} \Lambda_k K } (C) $. 
In particular, when this space is non-empty then the orbifold degree of $L_{0}^{-2} \Lambda_k K$ must be $\geq 0$, which gives:
\begin{align}
\mathrm{deg} L_0 - \frac{1}{2} \mathrm{deg}(\Lambda_k ) \leq \frac{1}{2} \mathrm{deg} K \label{ineq1}
\end{align}
On the other hand, by (\ref{fix1}), $\sigma \neq 0$ and the Chern--Weil formula we obtain:
\begin{align}
0 < 
\mathrm{deg} L_0 - \frac{1}{2}\mathrm{deg}(\Lambda_k ).\label{ineq2}
\end{align}

To describe the topology of $\mathscr{Z}_{L_0}^k$, we show that provided (\ref{ineq1}-\ref{ineq2}) hold then the following map defines a diffeomorphism:
\[
\mathscr{Z}_{L_0}^k \xrightarrow{\cong} \widetilde{\mathcal{D}}_{L_{0}^{-2} \Lambda_k K }(C) \quad , \quad (b , \sigma ) \mapsto (\mathscr{L}_0 = \overline{\partial}_b , \sigma ).
\]
This follows by a standard argument, which we briefly outline. Given a pair $(\mathscr{L}_0 , \sigma )$ such that $\overline{\partial}_{\mathscr{L}_0} \sigma = 0$ we seek a complex gauge transformation $u$ of $L_0$ such that if $b$ is the Chern connection of the hermitian holomorphic bundle $u \cdot \mathscr{L}_0$ then the pair $(b, \sigma )$ solves (\ref{fix1}). Equivalently, one may seek a new hermitian metric $h_\xi = e^{-\xi} \cdot h_0$, where $h_0$ is the given metric on $L_0$ and $\xi$ is a real-valued function, such that if $b$ is the Chern connection of the hermitian holomorphic bundle $(\mathscr{L}_0 , h_\xi )$ then $(b , \sigma )$ solves (\ref{fix1}) for the new metric $h_\xi$. Expressing (\ref{fix1}) as an equation for the function $\xi$ yields the equation:
\[
\Delta \xi  \cdot \omega + 2 e^{2\xi} | \sigma|^2 \cdot \omega  - 2i ( F_{h_0} - \frac{1}{2}F_{\lambda_k}) = 0 
\]
where $\Delta = d^\ast d $ is the non-negative Laplace operator of $g_C$. By the Theorem of Kazdan--Warner \cite{kazdan-warner} (see also \cite[Lemma 3.4]{bryan-wentworth}) this equation admits a unique smooth solution $\xi$ because
\[
\int_C 2i ( F_{h_0} - \frac{1}{2} F_{\lambda_k } )= 4\pi ( \mathrm{deg}L_0 - \frac{1}{2} \mathrm{deg}\Lambda_k ) > 0.
\]

From the diffeomorphism $\mathscr{Z}_{L_0}^k \cong \widetilde{D}_{L_{0}^{-2} \Lambda_k K} (C)$ the promised description of $\mathscr{Z}_{L_0}^k$ readily follows, as the map $\widetilde{\mathcal{D}}_{L_{0}^{-2} \Lambda_k K}(C) \to \mathcal{D}_{L_{0}^{-2} \Lambda_k K}(C)$ sending $(\mathscr{L}_0 , \sigma )$ to $(\mathscr{L}_{0}^{-2} \Lambda_k K , \sigma )$ is easily seen to be a $2^{2g}$--fold cover.

We are left with calculating the Morse--Bott index of the critical manifold $\mathscr{Z}_{L_0}^k$. For this note that the $\omega_I$--Hamiltonian $S^1$--action on $\mathscr{M}_{E}^\ast$ is also $I$-holomorphic. 
The Morse--Bott index of $\mathscr{Z}_{L_0}^k$ then agrees with twice the weight of the $S^1$--action on the normal bundle to $\mathscr{Z}_{L_0}^k \subset \mathscr{M}^\ast$. Using an argument by Hitchin \cite[Proof of Proposition 7.1]{hitchin} (see also \cite{nasatyr-steer,boden-yokogawa}) one can calculate this weight, which gives the Morse--Bott index  $2 \dim_\mathbb{C} H^0 ( C , L_{0}^2 \Lambda_{0}^{-1} N^{-k} K )$. 
\end{proof}

\begin{remark}\label{remark:dimensions}
For future reference, we note here some information we shall use (in the proof of Corollary \ref{corollary:poincaresheaf}) about the dimensions of the connected components of $\mathscr{M}_{E}^\ast (Y)$ containing the components of $\mathscr{Z}$ described above, for simplicity when $C$ has genus $g = 0$ and $\Lambda_0 = \mathbb{C}$. In this case any orbifold line bundle $L_0 \to C$ admits a unique holomorphic structure up to isomorphism, so we shall not specify it. We have:
\begin{enumerate}
\item $\mathscr{Z}_{E_0}^k = \mathscr{N}_{E_0}^\ast (C)$ lies in a component of $\mathscr{M}_{E}^\ast (Y)$ with twice the dimension.
\item $\mathscr{Z}_{L_0}^k$ lies in a component of $\mathscr{M}_{E}^\ast (Y)$ of complex dimension
\[
2( \mathrm{dim}_\mathbb{C} H^0 (C,L_{0}^{-2}N^k K ) + \mathrm{dim}_\mathbb{C} H^0 (C,L_{0}^{2}N^{-k}K )  -1 ).
\]
\end{enumerate}
Both (1) and (2) follow from the fact \cite[p.95]{hitchin} that at each point $(\mathscr{E} , \theta ) \in \mathscr{M}_{E}^\ast (Y)$ (thought of as a stable Higgs pair with fixed determinant $N^{k}$) the tangent space $T_{(\mathscr{E} , \theta )}\mathscr{M}_{E}^\ast (Y)$ contains an $\omega_J$--Lagrangian subspace $W$ isomorphic to
\[
W \cong H^0 ( C , \mathrm{End}_0 \mathscr{E} \otimes K ) / H^0 ( C , \mathrm{End}_0 \mathscr{E} )\cdot \theta
\]
where $H^0 ( C , \mathrm{End}_0 \mathscr{E} )\cdot \theta$ denotes the subspace of the form $[\psi , \theta]$ where $\psi \in H^0 ( C , \mathrm{End}_0 \mathscr{E} )$. In case (1) then $W \cong H^{1} (C , \mathrm{End}_0 \mathscr{E} )$, which is is the tangent space at $\mathscr{E}$ to the moduli space of stable bundles. In case (2) we have a decomposition $\mathscr{E} = L_0 \oplus L_{0}^{-1} N^{k}$ with respect to which $\theta$ is lower triangular, hence $\theta \in H^0 ( C , L_{0}^{-2} N^{k} K)$. Then $\mathrm{End}_0 \mathscr{E} = \mathscr{O}_C \oplus L_{0}^{-2} N^k \oplus L_{0}^{2}N^{-k}$. We have $H^{0} (C , L_{0}^{-2} N^k )\cdot \theta = 0$ since this subspace is given by $[\psi , \theta ]$ with $\psi$ lower-triangular; and $H^{0} (L_{0}^2 N^{-k} ) \cdot \theta \in H^{0} (C , K ) = 0$ since $g = 0$. Thus in this case
\[
W \cong ( H^{0}(C , L_{0}^{-2} N^{k} K )/\mathbb{C}\theta )  \oplus H^{0}(C , L_{0}^{2} N^{-k} K )
\]
which has the required dimension.
\end{remark}


\subsubsection{Proofs of main Theorems}

\begin{proof}[Proof of Theorem \ref{theorem:localisationseifert}]
The same result holds under the assumption that $E \to Y$ is an admissible $U(2)$ bundle (which is more general than $Y$ being an integral homology sphere). Then the moduli space $\mathscr{N}^{\ast}_{E}(Y)$ of irreducible projectively flat $U(2)$ connections with fixed determinant is compact. From Theorem \ref{theorem:dimensionalreduction} the moduli space $\mathscr{M}^{\ast}_{E}(Y)$ is Morse--Bott and by Proposition \ref{proposition:Zmorsebott} the critical locus $\mathscr{Z}$ is compact and Morse--Bott. The result now follows from Theorem \ref{theorem:perturbation2general}
\end{proof}

\begin{proof}[Proof of Theorem \ref{theorem:localisationseifert_exact}]
It remains to show that we can achieve equality in the inequality of Theorem \ref{theorem:perturbation2_exact}(2). Since we are assuming that $Y$ is a homology $3$-sphere, then the genus of $C$ is zero and hence the components of $\mathscr{Z} \subset \mathscr{M}^\ast$ are either components of the irreducible $SU(2)$ moduli space $\mathscr{N}^\ast (Y)$ or diffeomorphic to a complex projective space $\mathbb{C}P^e \cong \mathrm{Sym}^e \mathbb{C}P^1$ of suitable dimension $e$. 

We recall that a Morse--Bott function $g$ on a smooth manifold $M$ is called \textit{perfect} if $g$ achieves equality in the Morse--Bott inequalities \cite{atiyah-bott}:
\[
\sum_{i} T^{\mathrm{ind}(g , Z_i )} \cdot \mathcal{P}_T (Z_i ) = \mathcal{P}_T (M).
\] 
where $Z_i$ denote the connected components of the critical locus of $g$.
In this definition, we also allow $M$ to be non-compact, in which case we assume that $g$ is proper, bounded from below and has compact critical locus.

Of course $\mathbb{C}P^e$ admits a perfect Morse function, and for a Seifert-fibered integral homology $3$-sphere then $\mathscr{N}^\ast (Y)$ admits a perfect Morse function as well \cite{kirk-klassen,bauer-okonek,furutasteer}. Thus, there exists a perfect Morse function $g$ on $\mathscr{Z}$. We may then arrange so that the function $\mathfrak{f} : \mathscr{Z} \to \mathbb{R}$ agrees with $g$. Indeed, $\mathfrak{f} = \frac{1}{2} (d \| \Phi \|_{L^{2}}^2 )(\lambda ) + (f \circ \underline{\sigma})|_{\mathscr{Z}}$, so choosing any smooth function $f: \mathbb{C}^N \to \mathbb{R}$ whose restriction to $\mathscr{Z} \subset \mathbb{C}^N$ (embedded via $\underline{\sigma}$) agrees with $g - \frac{1}{2}(d \| \Phi \|_{L^{2}}^2 )(\lambda )$ will do.

On the other hand, the proper and bounded below function $\| \Phi \|_{L^{2}}^2$  on $\mathscr{M}^\ast$ is a perfect Morse--Bott function. This follows from a result of Frankel \cite{frankel} (also used in \cite[\S 7]{hitchin}) since this function is the moment map for a Hamiltonian $S^1$ action on the Kähler manifold $(\mathscr{M}^\ast , I , \omega_I )$ (as was explained in the proof of Proposition \ref{proposition:Zmorsebott}). 

With this understood, following the argument in the proof of Theorem \ref{theorem:perturbation2_exact} we have: if $\pm \varepsilon >0$ and $|\varepsilon|\leq \varepsilon_0$ then
\begin{align*}
M_T & := \sum_{[\mathbb{A} ]\in \mathrm{Crit}S_\varepsilon \cap \mathscr{U} } T^{- \mathbf{sf}(\mathscr{D}_{\mathbb{A}_t}^t )} \\
& = \sum_{i} T^{ \mathrm{ind}(\pm \| \Phi \|_{L^2}^{2}|_{\mathscr{M}^\ast  }  , \mathscr{Z}_i )} \cdot M_T ( \mathscr{Z}_i, \mathfrak{f} )  \\
& = \sum_{i} T^{ \mathrm{ind}(\pm \| \Phi \|_{L^2}^{2}|_{\mathscr{M}^\ast  }  , \mathscr{Z}_i )} \cdot \mathcal{P}_T ( \mathscr{Z}_i) \\
& = \mathcal{P}_{T}(\mathscr{M}^\ast ) \text{ or } \mathcal{P}_{T}^{c} (\mathscr{M}^\ast ) \text{  (according to whether } \varepsilon>0 \text{ or } \varepsilon< 0 \text{)}
\end{align*}
where the third line uses the perfection of the Morse function $\mathfrak{f}$ on each $\mathscr{Z}_i$, i.e. $M_T ( \mathscr{Z}_i , \mathfrak{f} ) = P_T (\mathscr{Z}_i  )$, and the last line uses the perfection of the Morse--Bott function $\| \Phi \|_{L^{2}}^2 $ on $\mathscr{M}^\ast$, together with the fact that it is proper and bounded below.
\end{proof}

\section{Calculations}\label{section:calculations}

Let $Y = S(N) \to C^2$ be a Seifert-fibered $3$-manifold over a $2$-orbifold $C$ with exactly $n$ orbifold points with isotropy orders $\alpha_1 , \ldots , \alpha_n$. Then (see \cite[Theorem 2.3]{furutasteer}):
\begin{itemize}
\item $Y$ is a rational homology $3$-sphere if and only if the genus of $C$ is zero (i.e. $|C| \cong S^2$)
\item $Y$ is an integral homology $3$-sphere if and only if the genus of $C$ is zero, the topological orbifold Picard group of $C$ is infinite cyclic ($\mathrm{Pic}^t (C) \cong \mathbb{Z}$), and $N$ is one of the two generators of $\mathrm{Pic}^t (C)$.
\end{itemize}

Throughout this section we assume that $Y \to C$ is a Seifert-fibered integral homology $3$-sphere over a $2$-orbifold $C = S^2 (\alpha_1 , \ldots , \alpha_n )$. Our goal is to calculate of the Poincaré polynomial of the moduli space of stable $SL(2, \mathbb{C})$ flat connections $\mathscr{M}^\ast (Y)$ (on the trivial $SU(2)$ bundle).

\subsection{The locus $\mathscr{Z}$}

For a given complex structure on the $2$-orbifold $C$, let $\mathcal{D}_{< d}(C)$ be the moduli space of orbifold effective divisors on $C$ of orbifold degree $< d$. 
\begin{corollary}\label{corollary:ZZHS3}
Let $Y = S(N) \to C$ be a Seifert-fibered integral homology $3$-sphere, equipped with a Seifert metric $g$ (and hence also a complex structure on $C$). Then there is a diffeomorphism:
\[
\mathscr{Z}(g) \cong \mathscr{N}^\ast (Y ) \sqcup \mathcal{D}_{<\mathrm{deg}K}(C).
\]
\end{corollary}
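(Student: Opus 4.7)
The plan is to derive this directly from the general description of $\mathscr{Z}(g)$ in Proposition~\ref{proposition:Zmorsebott} by specializing its indexing data to the integral homology $3$-sphere setting. Since $Y$ is an integral homology $3$-sphere, the base orbifold $C$ has genus $g = 0$, the determinant $\Lambda$ is trivial so we may set $\Lambda_0 = \mathscr{O}$, and by hypothesis $\mathrm{Pic}^t(C) \cong \mathbb{Z}$ is generated by $N$. Two immediate simplifications follow: first, the $2^{2g}$--fold cover appearing in Proposition~\ref{proposition:Zmorsebott}(2) collapses, so $\mathscr{Z}^k_{L_0} \cong \mathcal{D}_{L_0^{-2}N^k K}(C)$; and second, since $H^1(|C|,\mathscr{O}) = 0$, every orbifold line bundle on $C$ admits a unique holomorphic structure and is isomorphic to $N^m$ for a unique $m \in \mathbb{Z}$.

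First I will identify the components of the form $\mathscr{Z}^k_{E_0} = \mathscr{N}^\ast_{E_0}(C)$. The two families indexed by orbifold $U(2)$ bundles $E_0$ with $\det E_0 \cong \mathscr{O}$ or $\det E_0 \cong N$ assemble into the moduli space $\mathscr{N}^\ast(Y)$ of irreducible $SU(2)$ flat connections on $Y$, via the Furuta--Steer dimensional reduction theorem; equivalently, one can obtain this by restricting Theorem~\ref{theorem:dimensionalreduction} to the locus $\{\Phi = 0\} \subset \mathscr{M}^\ast(Y)$, which by Corollary~\ref{corollary:vanishing} recovers exactly the irreducible $SU(2)$ flat connections.

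Next I will assemble the components $\mathscr{Z}^k_{L_0}$ into $\mathcal{D}_{<\mathrm{deg}K}(C)$. Writing $K = N^\ell$ and $L_0 = N^m$, the map $(L_0, k) \mapsto D := L_0^{-2} N^k K = N^{-2m + k + \ell}$ translates the admissibility range $0 < \mathrm{deg}L_0 - \tfrac{k}{2}\mathrm{deg}N \leq \tfrac{1}{2}\mathrm{deg}K$ into $0 \leq \mathrm{deg}D < \mathrm{deg}K$. The key combinatorial point is that for each integer $d \in \{0, 1, \ldots, \ell - 1\}$ the parity of $\ell - d$ uniquely determines $k \in \{0, 1\}$ such that $m = (k + \ell - d)/2$ is a valid positive integer in the allowed range, giving a bijection between admissible pairs $(L_0, k)$ and topological classes of effective orbifold divisors of orbifold degree strictly less than $\mathrm{deg}K$. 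Assembling the moduli spaces $\mathcal{D}_{L_0^{-2}N^k K}(C) = \mathcal{D}_D(C)$ along this bijection yields $\mathcal{D}_{<\mathrm{deg}K}(C)$; combining with the previous step produces the claimed diffeomorphism.

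The only nontrivial ingredient is the combinatorial bijection described above, and even this is routine once the assumption $\mathrm{Pic}^t(C) \cong \mathbb{Z}\cdot N$ has been unpacked. All other ingredients are either restatements of Proposition~\ref{proposition:Zmorsebott} or standard facts about the orbifold Picard group of a genus zero orbifold with infinite cyclic $\mathrm{Pic}^t$, so the proof should be short.
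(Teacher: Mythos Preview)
Your proposal is correct and follows essentially the same approach as the paper: both specialize Proposition~\ref{proposition:Zmorsebott} to the integral homology sphere setting (genus zero, $\Lambda_0=\mathscr{O}$, $\mathrm{Pic}^t(C)=\mathbb{Z}\cdot N$), identify the $\Phi=0$ components with $\mathscr{N}^\ast(Y)$ via the Furuta--Steer reduction, and then match the line bundles $L_0^{-2}N^kK$ bijectively with those of degree in $[0,\mathrm{deg}K)$. The only cosmetic difference is that the paper splits into the two cases $d$ odd (``non-spin'') and $d$ even (``spin'') and writes out the lists of line bundles explicitly in each case, whereas you handle both at once via the parity observation that $k\in\{0,1\}$ is forced by the congruence $k\equiv \ell-d\pmod 2$; these are the same argument presented in two ways.
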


\begin{proof}



By Proposition \ref{proposition:Zmorsebott} the locus $\mathscr{Z}(g) \cap \{ \Phi \neq 0\} $ is diffeomorphic to:
\begin{align*}
\Big( \bigsqcup_{L_0 \, , \, 0 < \mathrm{deg}L_0  \leq \frac{1}{2} \mathrm{deg}K } \mathcal{D}_{L_{0}^{-2} K}(C) \Big)  \sqcup   \Big( \bigsqcup_{0 < \mathrm{deg}L_0 - \frac{1}{2} \mathrm{deg}  N  \leq \frac{1}{2} \mathrm{deg}K} \mathcal{D}_{L_{0}^{-2}  N K }(C) \Big).
\end{align*}
The orbifold line bundles on $C$ with orbifold degree $< \mathrm{deg}K$ are 
\begin{align}
\mathbb{C} = N^0 , N^{-1} , \ldots , N^{-d +1}.\label{Zbundles}
\end{align}
Consider now the following two cases:

\textit{Non-spin case: $d = 2m +1$, for some $m\in \mathbb{Z}_{\geq 0}$.} 
The line bundles $L_{0}^{-2} K$ with $0 < \mathrm{deg}L_0 \leq \frac{1}{2} \mathrm{deg}K$ are given by:
\begin{align}
N^{-1} , N^{-3} , \ldots , N^{-d +2}\label{Z1nonspin}
\end{align}
and the line bundles $L_{0}^{-2} N K $ with $0 < \mathrm{deg}L_0 - \frac{1}{2}\mathrm{deg}N \leq \frac{1}{2}\mathrm{deg}K$ are given by 
\begin{align}
\mathbb{C} = N^{0} , N^{-2} , \ldots , N^{-d +1}.\label{Z2nonspin}
\end{align}
The two collections (\ref{Zbundles}) and (\ref{Z1nonspin}-\ref{Z2nonspin}) consist of the same line bundles. 
 This establishes the required result in the non-spin case.

\textit{Spin case: $d = 2m$, for some $m \in \mathbb{Z}_{>0}$.} 
The line bundles $L_{0}^{-2} K$ with $0 < \mathrm{deg}L_0 \leq \frac{1}{2} \mathrm{deg}K$ are given by:
\begin{align}
\mathbb{C} = N^{0} , N^{-2} , \ldots , N^{-d +2}\label{Z1spin}
\end{align}
and the line bundles $L_{0}^{-2} N K $ with $0 < \mathrm{deg}L_0 - \frac{1}{2}\mathrm{deg}N \leq \frac{1}{2}\mathrm{deg}K$ are given by: 
\begin{align}
 N^{-1} , N^{-3} , \ldots , N^{-d +1}.\label{Z2spin}
\end{align}
Both collections (\ref{Zbundles}) and (\ref{Z1spin}-\ref{Z2spin}) consist of the same line bundles. 
 This establishes the required result in the spin case.
\end{proof}

The space $\mathcal{D}_{< \mathrm{deg}K}(C)$ has one connected component $\mathcal{D}_L$ for each orbifold line bundle $L$ with $0 \leq \mathrm{deg}L < \mathrm{deg}K$ (namely, $\mathcal{D}_L (C)$ is the moduli space of orbifold effective divisors in the line bundle $L$). Through the isomorphism established in Corollary \ref{corollary:ZZHS3}, the Morse--Bott index of $\| \Phi \|_{L^2}^{2}|_{\mathscr{M}^\ast}$ along the critical submanifold $\mathcal{D}_L (C) \subset \mathscr{Z}$ is given by Proposition \ref{proposition:Zmorsebott} by
\[
\mathrm{ind}( \| \Phi \|_{L^2}^{2}|_{\mathscr{M}^\ast } , \mathcal{D}_L) = 2 \dim_\mathbb{C} H^0 ( C , L^{-1}K^2 )
\]
where $L$ is understood to be equipped with its unique holomorphic structure (since $C$ has genus zero). In what follows we denote the complex dimension of the $k$th sheaf cohomology groups by $h^k$. Using again the fact that $\| \Phi \|_{L^2}^2$ is a perfect Morse--Bott function on $\mathscr{M}^\ast (Y)$ we obtain:
\begin{align}
\mathcal{P}_T ( \mathscr{M}^\ast (Y) ) = \mathcal{P}_T (\mathscr{N}^\ast (Y)) + \sum_{0 \leq \mathrm{deg}L < \mathrm{deg}K} (T^2)^{h^0 (C , L^{-1} K^2) } \cdot \mathcal{P}_T (\mathcal{D}_L (C)) \label{poincareM}
\end{align}

\subsection{Formulae for Poincaré polynomials}

Concretely, the connected components of $\mathcal{D}_{<d}(C)$, the moduli space of orbifold effective divisors on $C$ (for some orbifold complex structure on $C$), are indexed by $(n+1)$--tuples of non-negative integers $\underline{e} = (e , \beta_1 , \ldots , \beta_n ) \in (\mathbb{Z}_{\geq 0})^{n+1}$ such that 
\begin{align}
\beta_i < \alpha_i \quad \text{ and } \quad \mathrm{deg}(\underline{e}) := e + \sum_{i=1}^n \frac{\beta_i }{\alpha_i} < d , \label{ineqab}
\end{align}
and the connected component of $\mathcal{D}_{<d} (C)$ corresponding to $\underline{e}$ is diffeomorphic to $\mathrm{Sym}^{e} S^2 \cong \mathbb{C}P^{e}$. 

Namely, an orbifold effective divisor $(\mathscr{L} , s )$ (i.e. an orbifold  holomorphic line bundle with an non-trivial holomorphic section up to $\mathbb{C}^\ast$--rescaling) vanishes on the $i$th orbifold point of $C$ with a fractional order, whose fractional part is $\beta_i / \alpha_i $ with $0 \leq \beta_i < \alpha_i$, and $e$ is the (ordinary) degree of the desingularised holomorphic line bundle $|\mathscr{L}|\to |C|$ (see \cite{MOY,furutasteer} for details).


\begin{proof}[Proof of Corollary \ref{corollary:poincare}]

Let $\mathscr{Z}(\underline{e} )\subset \mathscr{Z}$ be the connected component of $\mathcal{D}_{< \mathrm{deg} K }(C) \subset \mathscr{Z}$ (cf. Corollary \ref{corollary:ZZHS3}) parametrised by the data $\underline{e} = (e , \beta_1 , \ldots , \beta_n ) \in (\mathbb{Z}_{\geq 0})^{n+1}$, with 
\[
\beta_i < \alpha_i \quad , \quad \mathrm{deg}(\underline{e}) < \mathrm{deg}K = - \chi (C ) = -2 + n - \sum_{i =1}^n \frac{1}{\alpha_i } .
\]
Let $L_{\underline{e}}$ denote the unique orbifold line bundle corresponding to the data $\underline{e}$ (equipped with its unique orbifold holomorphic structure up to isomorphism, since $C$ has genus zero). We have $\mathscr{Z}(\underline{e}) = \mathcal{D}_{L_{\underline{e}}} (C) \cong \mathbb{C}P^e$. 

By (\ref{poincareM}) we have
\[
\mathcal{P}_T ( \mathscr{M}^\ast (Y) ) = \mathcal{P}_T (\mathscr{N}^\ast (Y)) + \sum_{\underline{e}} (T^2)^{h^0 (C , L_{\underline{e}}^{-1} K^2) } \cdot \mathcal{P}_T (\mathbb{C}P^e) .
\]
By the orbifold versions of Serre duality and the Riemann--Roch Theorem we have
\[
h^0 ( L_{\underline{e}} K^{-1} ) - h^0 ( L_{\underline{e}}^{-1} K^2 ) = h^0 ( L_{\underline{e}} K^{-1} ) - h^1 (L_{\underline{e}}K^{-1} ) = 1 + \mathrm{deg}| L_{\underline{e}} K^{-1} |.
\]
But $\mathrm{deg} L_{\underline{e}} K^{-1} <0$, hence:
\[
 h^0 ( L_{\underline{e}}^{-1} K^2 ) = -1 - \mathrm{deg}| L_{\underline{e}} K^{-1} |
\]
To compute $\mathrm{deg}| L_{\underline{e}} K^{-1} |$ (the ordinary degree of the desingularisation of $L_{\underline{e}} K^{-1}$), note that the anticanonical bundle $K^{-1}$ has $\mathrm{deg}|K^{-1}| = 2-n$ and isotropy $1$ at each orbifold point of $C$, and $L_{\underline{e}}$ has $\mathrm{deg}|L_{\underline{e}}| = e$ and isotropy $\beta_i$ at the $i$th orbifold point. From this we obtain:
\[
\mathrm{deg}| L_{\underline{e}} K^{-1} | = e + 2-n + \sum_{i = 1}^n \lfloor \frac{\beta_i +1}{\alpha_i}\rfloor .
\]
Thus
\begin{align*}
 h^0 ( L_{\underline{e}}^{-1} K^2 )  = -e -3 + n - \sum_{i = 1}^n \lfloor \frac{\beta_i +1}{\alpha_i}\rfloor  = -\chi(C) - \mathrm{deg} (\underline{e}) -1  +\sum_{i=1}^n   \big\{ \frac{\beta_i +1}{\alpha_i }\big\}.
 \end{align*}
 From this calculation, the promised formula for $\mathcal{P}_T ( \mathscr{M}^\ast (Y))$ now follows.
\end{proof}


\subsection{Relation to the Milnor number}

\begin{proof}[Proof of Corollary \ref{corollary:milnornumber}]
Let $Y = S(N)$ have Seifert data (we use the conventions of \cite[\S 2]{MOY}):
\[
(b , \gamma_1 , \ldots, \gamma_n )
\]
where $b, \gamma_i \in \mathbb{Z}$, $0 < \gamma_i < \alpha_i$ and $\gamma_i$ is coprime to $\alpha_i$. Recall also, that the Seifert data of the canonical bundle $K $ is $(-2, \alpha_1 -1 , \ldots , \alpha_n -1 )$. We orient $Y$ as the link of an isolated singularity, which means that the orbifold degree $\mathrm{deg}N = b + \sum_{i =1}^n \gamma_i /\alpha_i$ is $<0$.

The geometric genus $p_g (X,0)$ of a weighted-homogeneous normal isolated surface singularity $(X, 0)$ can be computed by the following formula due to Pinkham \cite[Theorem 5.7]{pinkham} and Dolgachev \cite{dolgachev} (also attributed to Demazure). We use a formulation due to Némethi (see \cite[Remark 11.14]{nemethiplumbed} or \cite[\S 5.1.26]{nemethibook}, and beware that Némethi's $b_0$ is $-b$ in our convention):
\begin{align*}
p_g (X,0) = \sum_{\ell \geq 0} \max \{ 0 , -N(\ell) -1\} .
\end{align*}
where 
\[
N(\ell) := - \ell b - \sum_{i= 1}^n \lceil \frac{\ell \gamma_i}{\alpha_i}\rceil . 
\]

We observe the following: for $\ell \in \mathbb{Z}_{\geq 0}$
\begin{align*}
\mathrm{deg}| K \otimes N^{\ell}|  & = - 2 + \ell b  + \sum_{i =1}^n \lfloor \frac{\alpha_i -1 + \gamma_i }{\alpha_i}\rfloor\\
& = -2 + \ell b + \sum_{i =1}^n \lceil \frac{\ell \gamma_i}{\alpha_i}\rceil \\
& = -2 - N (\ell )
\end{align*}
and hence we have $-N(\ell ) -1 = \mathrm{deg}|K \otimes N^{\ell}| +1$. By this formula, the integer $-N(\ell) -1$ is positive if and only if $\mathrm{deg}|K \otimes N^{\ell}| \geq0$.

Since $C$ has genus zero, the condition $\mathrm{deg}|K \otimes N^{\ell}| \geq0$ implies that $K \otimes N^{\ell}$ admits non-trivial holomorphic sections (since for an orbifold holomorphic line bundle $L \to C$ we have $H^{0}(C ,L) \cong H^{0} (|C|, |L|)$, see e.g. \cite[Proposition 2.0.14]{MOY}). Hence:
\begin{itemize}
\item $\mathrm{deg}N^{-\ell} \leq \mathrm{deg}K$, i.e. $\ell \leq - \frac{\mathrm{deg}K}{\mathrm{deg}N}$.
\item $h^0 ( C , K \otimes N^{\ell} ) = \mathrm{deg}|K \otimes N^{\ell}| +1$. One way of seeing this is as follows. Consider for $\mathscr{L} := K \otimes N^{\ell}$ the short exact sequence of sheaves
\[
0 \xrightarrow{s} \mathscr{O}_C \to \mathscr{L}\to\mathscr{L}|_{D} \to 0
\]
associated to a non-trivial holomorphic section $s$ of $\mathscr{L}$ (where $\mathscr{L}|_{D}$ is defined as the cokernel sheaf of multiplication with $s$; here $D$ stands for the `zero locus' of $s$). Since $\mathscr{L}|_D$ is supported on a zero-dimensional locus, then $H^{1} (C , \mathscr{L}|_D ) =0$. Since $C$ has genus zero then $H^1 (C , \mathscr{O}_C ) = 0$. Hence by the long exact sequence in cohomology we obtain $H^{1} ( C , \mathscr{L} ) = 0$, i.e. $h^1 (C , \mathscr{L} ) = 0$. Now, by this vanishing and the orbifold version of the Riemann--Roch formula we obtain the required identity (here we also use that $C$ has genus zero):
\begin{align*}
h^{0} (C , \mathscr{L} )  = h^0 (C , \mathscr{L} ) - h^{1} (C , \mathscr{L} ) = 1 + \mathrm{deg}|\mathscr{L}| .
\end{align*}

\end{itemize}

Putting everything together, and using the fact that $N$ generates the topological Picard group of $C$ (since $Y$ is an integral homology sphere), gives:\\
\begin{align*}
p_g (X, 0)  = \sum_{0 \leq \ell \leq -\frac{\mathrm{deg}K}{\mathrm{deg}N}}h^{0} ( C, K \otimes N^{\ell}) =   \sum_{ 0 \leq \mathrm{deg}L \leq \mathrm{deg}K}h^{0} ( C, L ) = \sum_{ 0 \leq \mathrm{deg}L < \mathrm{deg}K}h^{0} ( C, L )\end{align*}
where the last identity follows from the fact that $H^{0} (C , K ) = 0$ when $C$ has genus zero. The right-hand side in this identity is $\chi ( \mathcal{D}_{< \mathrm{deg}K}(C))$, thus (\ref{poincareM}) gives:
\[
\chi ( \mathscr{M}^\ast (Y) ) = \chi ( \mathscr{N}^\ast (Y) ) + p_g (X, 0 ) .
\]
At this point, what remains of the proof of Corollary \ref{corollary:milnornumber} was already explained in \S \ref{subsection:milnor}. 
\end{proof}

\subsection{Comparison with sheaf-theoretic $SL(2, \mathbb{C})$ Floer cohomology}

We conclude by discussing the gauge-theoretic interpretation of the sheaf-theoretic $SL(2, \mathbb{C})$ Floer cohomology of Abouzaid--Manolescu.

For a closed oriented $3$-manifold $Y$, Abouzaid--Manolescu \cite{abouzaid-manolescu} have defined a \textit{sheaf-theoretic $SL(2, \mathbb{C})$ Floer cohomology} $HP^\ast (Y)$ and also cohomology with \textit{compact-supports} $HP^{\ast}_c (Y )$, both of which are finitely-generated $\mathbb{Z}$--graded abelian groups. These invariants arise from certain perverse sheaf $P^\bullet (Y)$ on the stable locus $\mathscr{M}^{\ast}(Y)$ in the $SL(2, \mathbb{C})$ character variety, defined using the methods of \cite{bussi,joyce}, by passing to hypercohomology or hypercohomology with compact-supports: $HP^\ast (Y) = \mathbb{H}^\ast ( \mathscr{M}^\ast (Y) , P^\bullet (Y) )   $, $HP^{\ast}_c (Y) = \mathbb{H}^{\ast}_{c} ( \mathscr{M}^\ast (Y) , P^\bullet (Y) )$. It remains an open problem to describe these invariants using gauge theory, which would likely help to elucidate their geometric content. 

Consider the \textit{sheaf-theoretic $SL(2,\mathbb{C})$ Casson invariant} $\lambda^P (Y) \in \mathbb{Z}$, which is defined as the Euler characteristic of $HP^\ast (Y)$; and more generally the \textit{sheaf-theoretic Poincaré polynomial}
\[
\mathcal{P}_{T}^P (Y) := \sum_{n \in \mathbb{Z}} T^n \cdot \mathrm{rank}_{\mathbb{Z}} HP^n (Y) \in \mathbb{Z}[T , T^{-1}].
\]
One may also consider their compactly-supported analogues, which we denote $\lambda^{P}_c (Y)$ and $\mathcal{P}_{T }^{P,c} (Y)$, but they contain the same information: $\lambda_{c}^P (Y) = \lambda^P (Y)$ and $\mathcal{P}_{T }^{P,c} (Y) = \mathcal{P}_{T^{-1}}^P (Y)$ \cite[Proposition 8.2]{abouzaid-manolescu}. We obtain a gauge-theoretic interpretation of these invariants when $Y$ is a Seifert-fibered homology sphere:
\begin{corollary}\label{corollary:sheaf}
Let $Y$ be a Seifert-fibered homology $3$-sphere. Then:
\begin{enumerate}
\item In the situation of Theorem \ref{theorem:localisationseifert}(2) we have
\begin{align*}
  \sum_{[\mathbb{A}] \in \mathscr{M}_{\theta }^{\ast , \varepsilon} \cap \mathscr{U}} (-1)^{\mathbf{sf}_{2}(\mathscr{D}_{[\mathbb{A}],\theta}^t )} = \chi (\mathscr{M}^\ast (Y) ) =  \lambda^P (Y) 
  \end{align*}
\item In the situation of Theorem \ref{theorem:localisationseifert_exact}(2) we have
\begin{align*}
  \sum_{[\mathbb{A}] \in [\mathbb{A}] \in \mathscr{M}_{\theta }^{\ast , \varepsilon} \cap \mathscr{U}} T^{ \mathbf{sf}(\mathscr{D}_{[\mathbb{A}_t] , \theta}^t + \delta ) - \mathrm{dim}_{\mathbb{C}}\mathrm{Ker}\mathscr{D}_{[\mathbb{A}_0], \theta}} =  \begin{cases}
       \mathcal{P}_{T}^P ( Y ) &\quad\text{if } \varepsilon >0 \\
       \mathcal{P}^{P,c}_T ( Y ) &\quad\text{if } \varepsilon < 0 .\\
     \end{cases} 
  \end{align*}
  \end{enumerate}
\end{corollary}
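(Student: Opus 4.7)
The plan is to deduce both parts by combining the localisation statements of Theorems \ref{theorem:localisationseifert}--\ref{theorem:localisationseifert_exact} with a direct computation of the Abouzaid--Manolescu invariants $\lambda^P(Y)$ and $\mathcal{P}^P_T(Y)$ for a Seifert-fibered homology sphere. Part (1) is essentially immediate: Theorem \ref{theorem:localisationseifert}(2) identifies the signed count on the left with $\chi(\mathscr{M}^\ast(Y))$, and the identity $\chi(\mathscr{M}^\ast(Y)) = \lambda^P(Y)$ for Seifert-fibered integral homology spheres is Formula 35 of \cite{abouzaid-manolescu}.

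For part (2) the first step is to express $\mathcal{P}^P_T(Y)$ and $\mathcal{P}^{P,c}_T(Y)$ in terms of the ordinary and compactly-supported Poincaré polynomials of the smooth character variety. By Theorem \ref{theorem:dimensionalreduction}, $\mathscr{M}^\ast(Y)$ is a disjoint union $\bigsqcup_i \mathscr{M}^\ast_i$ of smooth complex manifolds of complex dimensions $d_i$. On each such component, the Abouzaid--Manolescu perverse sheaf $P^\bullet(Y)$---locally modelled on vanishing cycles of the holomorphic Chern--Simons functional restricted to the Kuranishi chart of Corollary \ref{corollary:kuranishi}---should reduce to the shifted constant sheaf $\mathbb{Z}_{\mathscr{M}^\ast_i}[d_i]$: the Morse--Bott property forces the obstruction function $S'$ of Corollary \ref{corollary:kuranishi} to vanish, and the vanishing cycles of a non-degenerate holomorphic quadratic form produce precisely the shifted constant sheaf. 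Passing to hypercohomology then yields
\[
\mathcal{P}^P_T(Y) = \sum_i T^{-d_i}\, \mathcal{P}_T(\mathscr{M}^\ast_i), \qquad \mathcal{P}^{P,c}_T(Y) = \sum_i T^{-d_i}\, \mathcal{P}^c_T(\mathscr{M}^\ast_i).
\]
To finish the proof I would then match this with Theorem \ref{theorem:localisationseifert_exact}(2) by grouping the gauge-theoretic sum according to the component $\mathscr{M}^\ast_i$ containing the limit point $[\mathbb{A}_0] \in \mathscr{Z}_i$. By Proposition \ref{proposition:pluriharmonic}, $\mathrm{Ker}\,\mathscr{D}_{[\mathbb{A}_0], \theta}$ is $J$-complex-linear, and by the Morse--Bott property (Theorem \ref{theorem:dimensionalreduction}) it coincides with the full tangent space $T_{[\mathbb{A}_0]}\mathscr{M}^\ast_i$; hence $\dim_\mathbb{C}\mathrm{Ker}\,\mathscr{D}_{[\mathbb{A}_0],\theta} = d_i$ is constant on each group, and pulling out the uniform factor $T^{-d_i}$ converts Theorem \ref{theorem:localisationseifert_exact}(2) into the required identity.

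The main obstacle is the identification of $P^\bullet(Y)|_{\mathscr{M}^\ast_i} \cong \mathbb{Z}_{\mathscr{M}^\ast_i}[d_i]$. The Abouzaid--Manolescu sheaf is built via Joyce's d-critical structures and Bussi's gluing procedure, and a priori carries a canonical orientation twist by a $\mathbb{Z}/2$-local system; one must verify that this local system is trivial in the Seifert-fibered setting. At the level of Euler characteristics this triviality is implicit in the identity $\chi(\mathscr{M}^\ast(Y)) = \lambda^P(Y)$ already invoked, but upgrading it to the sheaf-theoretic level requires checking that the Kuranishi charts of Corollary \ref{corollary:kuranishi} are compatible with the d-critical charts underlying $P^\bullet(Y)$ and that the transition data preserves a global trivialisation of the orientation line. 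Once this orientation issue is settled, the remainder of the argument is a bookkeeping exercise combining Theorem \ref{theorem:localisationseifert_exact}(2) with the shifted-constant-sheaf computation above.
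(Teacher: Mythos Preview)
Your argument is correct and, at the level of the gauge-theoretic matching, identical to the paper's: both group the sum in Theorem \ref{theorem:localisationseifert_exact}(2) by the component $\mathscr{M}^\ast_i$ containing $[\mathbb{A}_0]$, use the Morse--Bott property to identify $\mathrm{dim}_{\mathbb{C}}\mathrm{Ker}\mathscr{D}_{[\mathbb{A}_0],\theta}$ with $d_i = \mathrm{dim}_{\mathbb{C}}\mathscr{M}^\ast_i$, and pull out the factor $T^{-d_i}$.

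The only difference is in how the identity $\mathcal{P}_T^P(Y) = \sum_i T^{-d_i}\,\mathcal{P}_T(\mathscr{M}^\ast_i)$ is obtained. The paper does not re-derive it: it simply cites \cite[Formula 35]{abouzaid-manolescu}, which already records (for Seifert-fibered homology spheres, where $\mathscr{M}^\ast$ is smooth) that the perverse sheaf $P^\bullet(Y)$ on each component is the shifted constant sheaf, so that $\mathcal{P}_T^P(Y)$ and $\mathcal{P}_T^{P,c}(Y)$ coincide with the complex-dimension-shifted Poincar\'e polynomials of $\mathscr{M}^\ast(Y)$. In particular, the orientation/local-system obstacle you flag is already disposed of in \cite{abouzaid-manolescu}; you do not need to revisit the Joyce--Bussi gluing or compare Kuranishi charts with d-critical charts. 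Your longer route would work, but it re-proves a result that is available off the shelf.
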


The proof will be given below. We also provide formulae for $\lambda^P (Y)$ and $\mathcal{P}_{T}^P (Y)$. To describe these, for a given smooth compact manifold $M$ with connected components $M_j$ we consider the `normalized' Poincaré polynomial $\widehat{\mathcal{P}}_T (M) := \sum_j (T^{-1})^{ \mathrm{dim}M_j}\cdot \mathcal{P}_T (M_j ) \in \mathbb{Z}[T^{-1}]$.
\begin{corollary}\label{corollary:poincaresheaf} Let $Y$ be a Seifert-fibered homology $3$-sphere. Then:
\begin{enumerate}
\item  $\mathcal{P}_{T}^P (Y)  = \widehat{\mathcal{P}}_T ( \mathscr{N}^\ast (Y)) + \sum_{\underline{e}}\widehat{\mathcal{P}}_T ( \mathbb{C}P^e )$
\item $\lambda^P (Y)  = - 2 \lambda (Y) + \sum_{\underline{e}} \chi ( \mathbb{C}P^e )  $ 
\end{enumerate}
where $\underline{e} = ( e , \beta_1 , \ldots , \beta_n )$ ranges over all vectors of non-negative integers such that $\beta_i < \alpha_i $ and $ \mathrm{deg}(\underline{e}) < -\chi (C)$.
\end{corollary}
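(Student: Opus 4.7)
The plan is to derive both identities from the Abouzaid--Manolescu sheaf-theoretic framework combined with the perfect Morse--Bott decomposition of $\mathscr{M}^\ast(Y)$ under $\|\Phi\|_{L^2}^2$ already invoked in the proof of Corollary \ref{corollary:poincare}.

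For part (2), \cite[Formula 35]{abouzaid-manolescu} gives $\lambda^P(Y) = \chi(\mathscr{M}^\ast(Y))$ for a Seifert-fibered homology sphere. Combining this with Corollary \ref{corollary:poincare}(2) and the Fintushel--Stern identity $\chi(\mathscr{N}^\ast(Y)) = -2\lambda(Y)$ yields the stated formula in one line.

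For part (1), the key point is that by Theorem \ref{theorem:dimensionalreduction} the critical locus $\mathscr{M}^\ast(Y)$ of the complex Chern--Simons functional is smooth (in fact Morse--Bott in $\mathscr{B}^\ast$). Consequently, on each connected component $M_j \subset \mathscr{M}^\ast(Y)$ of complex dimension $d_j$ the Abouzaid--Manolescu perverse sheaf $P^\bullet$ restricts to (a shift of) the constant sheaf $\mathbb{Q}_{M_j}[d_j]$, reducing the sheaf-theoretic Poincaré polynomial to
\[
\mathcal{P}_T^P(Y) = \sum_j T^{-d_j}\mathcal{P}_T(M_j).
\]
Each $M_j$ is Kähler in the complex structure $I$ and carries the Hamiltonian $S^1$-action with moment map $\|\Phi\|_{L^2}^2$ used in the proof of Proposition \ref{proposition:Zmorsebott}; this moment map is a perfect Morse--Bott function whose critical components, by Corollary \ref{corollary:ZZHS3}, are the components of $\mathscr{N}^\ast(Y)$ and the $\mathbb{C}P^e$'s indexed by $\underline{e}$. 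I would then verify the dimensional identity
\[
\mathrm{ind}(\|\Phi\|_{L^2}^2, \mathscr{Z}_i) + \dim_\mathbb{R}\mathscr{Z}_i = d_{j(i)},
\]
where $M_{j(i)}$ denotes the component of $\mathscr{M}^\ast(Y)$ containing $\mathscr{Z}_i$, by direct comparison between the Morse--Bott indices computed in Proposition \ref{proposition:Zmorsebott} and the component dimensions recorded in Remark \ref{remark:dimensions}. Combined with the perfection of $\|\Phi\|_{L^2}^2$ on each $M_j$, this identity rewrites $T^{-d_j}\mathcal{P}_T(M_j)$ as a sum of $\widehat{\mathcal{P}}_T(\mathscr{Z}_i)$ over the components $\mathscr{Z}_i \subset M_j$, yielding part (1).

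The main obstacle will be justifying the identification of $P^\bullet|_{M_j}$ with the shifted constant sheaf. This requires unpacking the Abouzaid--Manolescu construction via derived critical loci and checking that the orientation/sign data on the smooth critical locus of the complex Chern--Simons functional is trivialisable on each component---an orientation-theoretic verification that is the only non-formal input beyond the structural results already established in the paper.
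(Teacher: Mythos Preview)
Your proposal is correct and follows essentially the same route as the paper: reduce $\mathcal{P}_T^P(Y)$ to the complex-normalised Poincar\'e polynomial $\widetilde{\mathcal{P}}_T(\mathscr{M}^\ast(Y)) = \sum_j T^{-d_j}\mathcal{P}_T(M_j)$, then use perfection of $\|\Phi\|_{L^2}^2$ together with the dimensional identity $\mathrm{ind}(\mathscr{Z}_i) + \dim_{\mathbb{R}}\mathscr{Z}_i = d_{j(i)}$ (verified via Proposition~\ref{proposition:Zmorsebott} and Remark~\ref{remark:dimensions}) to rewrite each summand as $\widehat{\mathcal{P}}_T(\mathscr{Z}_i)$. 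The only difference is that what you flag as ``the main obstacle'' --- identifying $P^\bullet|_{M_j}$ with $\mathbb{Q}_{M_j}[d_j]$ on the smooth critical locus --- is not re-derived in the paper but simply cited as \cite[Formula~35]{abouzaid-manolescu} (already invoked in the proof of Corollary~\ref{corollary:sheaf}), so no additional orientation-theoretic work is required.
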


\begin{proof}[Proof of Corollary \ref{corollary:sheaf}]

For a complex manifold $M$ consider the following `complex normalized' version of the Poincaré polynomial (resp. compactly-supported Poincaré polynomial): decomposing $M$ into connected components $M_{j}$ we set
\begin{align*}
\widetilde{\mathcal{P}}_T (X  ) := \sum_{j } (T^{-1})^{\mathrm{dim}_{\mathbb{C}} M_j} \cdot \mathcal{P}_T (M_j ) \\  \widetilde{\mathcal{P}}_{T}^{c} (X ) := \sum_{j } (T^{-1})^{\mathrm{dim}_{\mathbb{C}} X_j} \cdot \mathcal{P}_{T}^{c} (X_j).
\end{align*}

For a Seifert-fibered integral homology $3$-sphere, the Poincaré polynomials of the sheaf-theoretic $SL(2,\mathbb{C})$ Floer cohomologies recover the complex normalised Poincaré polynomials of $\mathscr{M}^\ast (Y)$ \cite[Formula 35]{abouzaid-manolescu}:
\begin{align*}
\mathcal{P}_{T}^{P} (Y) =\widetilde{\mathcal{P}}_{T} (\mathscr{M}^\ast (Y) )\quad , \quad \mathcal{P}_{T}^{P,c} (Y) =\widetilde{\mathcal{P}}_{T}^{c} (\mathscr{M}^\ast (Y) ).
\end{align*}

Since $\mathscr{M}^\ast (Y)$ is the Morse--Bott critical locus of the functional $S_\theta$ by Theorem \ref{theorem:dimensionalreduction}, then its complex dimension along the connected component containing the point $[\mathbb{A}_0] \in \mathscr{M}^\ast (Y)$ is $\mathrm{dim}_\mathbb{C} \mathrm{Ker} \mathscr{D}_{[\mathbb{A}_0 ] , \theta }$. The argument from the proof of Theorem \ref{theorem:localisationseifert_exact} readily gives the required formula for $\mathcal{P}_{T}^{P} (Y)$ and $\mathcal{P}_{T}^{P,c}(Y)$ as a gauge-theoretic count of critical points.
\end{proof}

\begin{proof}[Proof of Corollary \ref{corollary:poincaresheaf}]

Recall from the proof of Corollary \ref{corollary:sheaf} that $\mathcal{P}_{T}^P (Y) = \widetilde{\mathcal{P}}_T (\mathscr{M}^\ast (Y))$. Using again the fact that $\| \Phi \|_{L^2}^2$ is a perfect Morse--Bott function, together with Corollary \ref{corollary:ZZHS3} and Remark \ref{remark:dimensions}(2), we obtain
\begin{align*}
\widetilde{\mathcal{P}}_T (\mathscr{M}^\ast (Y) ) & = \widehat{\mathcal{P}}_T ( \mathscr{N}^\ast (Y)) + \sum_{0 \leq \mathrm{deg}L < \mathrm{deg}K} T^{2 h^0 (L^{-1}K ) - 2( h^0 (L) + h^0 ( L^{-1}K^2) -1 )} \mathcal{P}_T ( \mathcal{D}_L (C) )\\
& = \widehat{\mathcal{P}}_T ( \mathscr{N}^\ast (Y)) + \sum_{0 \leq \mathrm{deg}L < \mathrm{deg}K} \widehat{\mathcal{P}}_T ( \mathcal{D}_L (C) )
\end{align*}
from which the required formula now follows.
\end{proof}

\bibliographystyle{alpha}
\bibliography{main.bib}

\end{document}